\documentclass{amsart}
\usepackage[utf8]{inputenc}

\usepackage{hyperref}

\usepackage{a4wide}
\usepackage{amsmath}
\usepackage{amsfonts}
\usepackage{amssymb}
\usepackage{amscd}
\usepackage{mathtools}
\usepackage{bbm}
\usepackage{mathrsfs}
\usepackage{amsthm}
\usepackage{verbatim}
\usepackage{upgreek}
\usepackage{relsize}
\usepackage[shortlabels]{enumitem}
\usepackage{dsfont}
\usepackage{graphicx}
\usepackage{url}

\usepackage{xcolor} 

\newcommand{\eps}{\varepsilon}
\newcommand{\ov}{\overline}
\newcommand{\id}{\textnormal{id}}
\newcommand{\mc}{\mathcal}

\newcommand{\mrm}{\mathrm}
\newcommand{\mscr}{\mathscr}
\newcommand{\mf}{\mathfrak}
\newcommand{\msf}{\mathsf}
\newcommand{\I}{\mathbbm{1}}

\newcommand{\vp}{\varphi}

\newcommand{\md}{\operatorname{d}\!}
\newcommand{\cst}{\ifmmode \mathrm{C}^* \else $\mathrm{C}^*$\fi}

\newcommand{\la}{\langle}
\newcommand{\ra}{\rangle}

\newcommand{\bbGamma}{{\mathpalette\makebbGamma\relax}}
\newcommand{\makebbGamma}[2]{%
  \raisebox{\depth}{\scalebox{1}[-1]{$\mathsurround=0pt#1\mathbb{L}$}}%
}

\DeclareSymbolFont{bbold}{U}{bbold}{m}{n}
\DeclareSymbolFontAlphabet{\mathbbold}{bbold}
\newcommand{\GGamma}{\bbGamma}

\newcommand{\NN}{\mathbb{N}}
\newcommand{\RR}{\mathbb{R}}
\newcommand{\KK}{\mathbb{K}}
\newcommand{\CC}{\mathbb{C}}
\newcommand{\ZZ}{\mathbb{Z}}
\newcommand{\GG}{\mathbb{G}}

\newcommand{\vv}{\mathrm{V}}
\newcommand{\Vv}{\mathds{V}}
\newcommand{\vV}{\text{\reflectbox{$\Vv$}}\:\!}
\newcommand{\ww}{\mathrm{W}}
\newcommand{\WW}{{\mathds{V}\!\!\text{\reflectbox{$\mathds{V}$}}}}
\newcommand{\Ww}{\mathds{W}}
\newcommand{\wW}{\text{\reflectbox{$\Ww$}}\:\!}

\newcommand{\wot}{\ifmmode \textsc{wot} \else \textsc{wot}\fi}
\newcommand{\sot}{\ifmmode \textsc{sot} \else \textsc{sot}\fi}
\newcommand{\sots}{\ifmmode \textsc{sot}^* \else \textsc{sot}$^*$\fi}
\newcommand{\ssot}{\ifmmode \sigma\textsc{-sot} \else $\sigma$-\textsc{sot }\fi}
\newcommand{\ssots}{\ifmmode \sigma\textsc{-sot}^* \else $\sigma$-\textsc{sot }$^*$\fi}
\newcommand{\swot}{\ifmmode \sigma\textsc{-wot} \else $\sigma$-\textsc{wot}\fi}

\newcommand{\Linf}{\operatorname{L}^{\infty}(\GG)}
\newcommand{\Linfd}{\operatorname{L}^{\infty}(\whG)}
\newcommand{\Lj}{\operatorname{L}^{1}(\GG)}
\newcommand{\Ljd}{\operatorname{L}^{1}(\whG)}

\newcommand{\PolG}{\Pol(\GG)}
\newcommand{\Gop}{\GG^{\operatorname{op}}}
\newcommand*{\Corep}{\mathsf{Corep}}
\newcommand{\one}{1\!\!1}
\newcommand*{\BT}{\mathbf{T}}

\newcommand{\wh}{\widehat}
\newcommand{\whG}{\widehat{\GG}}
\newcommand{\whK}{\widehat{\KK}}

\newcommand{\LdG}{\operatorname{L}^{2}(\GG)}

\newcommand{\IrrG}{\Irr(\GG)}

\newcommand{\oon}{\operatorname}

\newcommand{\tpold}{\!\!
{\scriptstyle
\text{
\raisebox{0.8pt}{
\textcircled{\raisebox{-1.7pt}{$\top$}}
} 
} 
} 
\!\!}

\newcommand{\tp}{\textnormal{$\tpold$}}

\newcommand{\stp}{\!\!\!
{\scriptscriptstyle
\text{
\raisebox{0.5pt}{
\textcircled{\raisebox{-1.2pt}{$\top$}}
} 
} 
} 
\!\!\!}

\DeclareMathOperator{\lin}{span}
\DeclareMathOperator{\Irr}{Irr}
\DeclareMathOperator{\Pol}{Pol}

\DeclareMathOperator{\Tr}{Tr}
\DeclareMathOperator{\B}{B}
\DeclareMathOperator{\A}{A}

\DeclareMathOperator{\M}{M}
\DeclareMathOperator{\K}{K}

\DeclareMathOperator{\N}{N}

\DeclareMathOperator{\LL}{L}

\DeclareMathOperator{\Rep}{Rep}
\DeclareMathOperator{\SU}{SU}

\DeclareMathOperator{\CB}{CB}

\DeclareMathOperator{\ad}{ad}
\newcommand{\conjc}{\operatorname{Cl}}

\newtheorem{theorem}{Theorem}[section]

\newtheorem{proposition}[theorem]{Proposition}
\newtheorem{lemma}[theorem]{Lemma}
\theoremstyle{definition}
\newtheorem{corollary}[theorem]{Corollary}
\newtheorem{remark}[theorem]{Remark}

\newtheorem{definition}[theorem]{Definition}
\numberwithin{equation}{section}

\begin{document}

\title{Averaging multipliers on locally compact quantum groups}

\author{Matthew Daws}
\address{Department of Mathematics and Statistics,
Lancaster University,
Lancaster,
LA1 4YF,
United Kingdom}
\email{matt.daws@cantab.net}

\author{Jacek Krajczok}
\address{Vrije Universiteit Brussel\\
Pleinlaan 2\\
1050 Brussels\\
Belgium
}
\email{jacek.krajczok@vub.be}

\author{Christian Voigt}
\address{School of Mathematics and Statistics \\
         University of Glasgow \\
         University Place \\
         Glasgow G12 8QQ \\
         United Kingdom 
}
\email{christian.voigt@glasgow.ac.uk}

\thanks{This work was supported by EPSRC grants EP/T03064X/1 and EP/T030992/1. Additionally, JK was partially supported by FWO grant 1246624N}

\subjclass[2020]{Primary 46L67, 
Secondary 22D55, 
43A07, 
43A30
} 

\keywords{Locally compact quantum groups, Drinfeld double, central approximation properties}

\date{}

\begin{abstract}
We study an averaging procedure for completely bounded multipliers on a locally compact quantum group with respect to a compact quantum subgroup. As a consequence we show that central approximation properties of discrete quantum groups are equivalent to the corresponding approximation properties of their Drinfeld doubles. 

This is complemented by a discussion of the averaging of Fourier algebra elements.  We compare the biinvariant Fourier algebra of the Drinfeld double of a discrete quantum group with the central Fourier algebra.  In the unimodular case these are naturally identified, but we show by exhibiting a family of counter-examples that they differ in general. 
\end{abstract}

\maketitle

\section{Introduction}
Approximation properties like amenability, weak amenability, the Haagerup property and the Haagerup-Kraus approximation property have been studied extensively in the setting of 
locally compact quantum groups, see \cite{Brannan} for a survey. In comparison to the case of groups, an interesting new feature in the quantum setting is the interplay between 
discrete quantum groups, their Drinfeld doubles, and the associated \cst-tensor categories. In particular, the central versions of amenability, the Haagerup property, weak amenability 
and central property (T) for discrete quantum groups can be recast at the level of \cst-tensor categories \cite{PVcstartensor}, thus building a natural bridge to the study 
of subfactors. Central approximation properties, in turn, are key to a range of fundamental results regarding the analytic structure of discrete quantum 
groups \cite{DFY_CCAP}, \cite{Freslon}. 

In this paper we show that, for all the approximation properties mentioned above, the central version of the property for a discrete quantum group 
is equivalent to the corresponding property of its Drinfeld double. Special cases of these equivalences were known previously, 
and our main point is that they actually hold in complete generality. This yields a succinct conceptual understanding of central approximation properties, and highlights the key role 
played by the Drinfeld double. 

We prove in fact a more general result. Namely, for a locally compact quantum group $\GG$ with a compact quantum subgroup $\KK \subseteq \GG$, 
we show that averaging elements of $\Linf$ with respect to left and right translations by $\KK$ 
maps completely bounded multipliers of $ \GG $ to $ \KK $-biinvariant completely bounded multipliers of $ \GG $. 
The resulting averaging map is contractive and preserves complete positivity, the Fourier algebra, and other standard properties of multipliers. 
In the special case that $ \GG = D(\GGamma) $ is the Drinfeld double of a discrete quantum group $ \GGamma $ and $ \KK = \widehat{\GGamma} $, the $\KK$-biinvariant function algebras 
may be indentified with the corresponding algebras of central functions on $\bbGamma$, and we show that the same holds for completely bounded multipliers. 
This yields our result linking the approximation properties of the Drinfeld double $D(\bbGamma)$ with the central approximation properties of $\bbGamma$.

We also discuss a number of related questions. For unimodular discrete quantum groups there is a well-known averaging procedure sending completely bounded multipliers to central 
completely bounded multipliers, \cite{Brannan, KrausRuan}.  We show that this procedure, which is different from the above averaging with respect to compact 
quantum subgroups, maps the Fourier algebra to itself. 
As a byproduct, we obtain that amenability of the Drinfeld double of a discrete quantum group 
is equivalent to its strong amenability, which in turn implies that the discrete quantum group must be unimodular. Some of these facts seem to be known to experts, but we were unable 
to locate precise references in the literature, and so we take the opportunity to use our techniques to give a self-contained account.

In addition we study the centre of the Fourier algebra for discrete quantum groups. 
Here the situation is more subtle than for completely bounded multipliers. 
We show that there is a canonical inclusion map from the biinvariant Fourier algebra of the Drinfeld double 
to the central Fourier algebra, and prove that this map is an isomorphism for unimodular discrete quantum groups. 
However, we also show that the biinvariant Fourier algebra may be strictly smaller than the central Fourier algebra in general. In fact, this happens already 
for the dual of $\SU_q(2) $. 

Let us explain how the paper is organised. In Section~\ref{section preliminaries} we collect some background material and fix 
our notation. Section~\ref{section multiplier} contains a brief review of completely bounded multipliers, 
and in Section~\ref{section aps} we record the definitions of (strong) amenability, the Haagerup property, weak amenability, and the approximation property in 
the setting of locally compact quantum groups and \cst-tensor categories. 
In Section~\ref{section averaging} we construct the averaging map for completely bounded multipliers of a locally compact quantum group with respect to a closed compact quantum subgroup. 
In Section~\ref{section double} we specialise to the setting of Drinfeld doubles of discrete quantum groups, and prove our main results concerning approximation properties. 
Section~\ref{sec:unimodular} contains some further results on approximation properties for discrete quantum groups and their Drinfeld doubles, which are related to the underlying 
discrete quantum group being unimodular. We show that central Fourier algebra elements can be approximated by central finitely-supported elements in this case, and that amenability of 
the Drinfeld double of a discrete quantum group is equivalent to the discrete quantum group being amenable and unimodular. 
Finally, in Section~\ref{sec:central_fourier} we give equivalent characterisations for the biinvariant Fourier algebra of the Drinfeld double to agree with the central Fourier algebra
of a discrete quantum group. In addition we show that the centre of the Fourier algebra of any strongly amenable, non-unimodular, and centrally weakly amenable discrete quantum group is strictly larger than the biinvariant Fourier algebra of its Drinfeld double. This applies in particular to the dual of $\SU_q(2)$ for $q\in \left]-1,1\right[\setminus\{0\}$. 

We conclude with some general remarks on notation. When $\mc{A}$ is a $\cst$-algebra we write $\M(\mc{A})$ for its multiplier algebra. The canonical pairing between a Banach space $X$ and its dual $X^*$ will be denoted via $\la \omega, x \ra = \omega(x)$ for $\omega\in X^*, x\in X$. If $\mathscr{H}$ is a Hilbert space we write $ \omega_{\xi, \eta}$ for the 
vector functional $\omega_{\xi, \eta}(T) = (\xi | T \eta) $, and abbreviate $ \omega_\xi = \omega_{\xi,\xi}$. 
We write $ \odot $ for the algebraic tensor product, $\otimes$ for the tensor product of Hilbert spaces or the minimal tensor product of \cst-algebras, and $ \M \bar{\otimes}\N $
for the spatial tensor product of von Neumann algebras $ \M,\N $. We denote by $\chi$ the flip map for tensor products of algebras. 

We would like to thank A. Skalski for several interesting comments on the first version of this paper.

\section{Preliminaries} \label{section preliminaries}

In this section we review some basic definitions and facts from the theory of locally compact quantum groups in the sense of Kustermans and Vaes \cite{KustermansUniversal, KV_LCQGs, KVVN, NeshveyevTuset}. 

By definition, a locally compact quantum group $\GG$ is given by a von Neumann algebra $\Linf$ together with a normal unital $\star$-homomorphism $\Delta_{\GG}\colon \Linf\rightarrow \Linf \bar{\otimes }\Linf$ called \emph{comultiplication}, satisfying $(\Delta_{\GG}\otimes \id) \Delta_{\GG}=(\id \otimes \Delta_{\GG}) \Delta_{\GG}$, 
and \emph{left} resp. \emph{right} \emph{Haar integrals} $\vp$ and $\psi$. These are normal, semifinite, faithful (n.s.f.) weights 
on $\Linf$ satisfying certain invariance conditions with respect to $\Delta_{\GG}$. Note that in general the von Neumann algebra $\Linf$ is non-commutative and will not be an algebra of function on a measure space. Following this notational convention, the predual of $\Linf$ is denoted by $\Lj$, and the GNS Hilbert space of $\vp$ by $\LdG$. 
We write $ \Lambda_\vp\colon \mf{N}_\vp \rightarrow \LdG $ for the GNS map, where $ \mf{N}_\vp = \{x \in \Linf\,|\, \vp(x^*x) < \infty\} $. 

With any locally compact quantum group $\GG$ one can associate the \emph{dual} locally compact quantum group $\whG$ in such a way that the correspondence between $\GG$ and $\whG$ extends 
Pontryagin duality. Furthermore, the Hilbert spaces $\LdG $ and $\LL^2(\whG)$ are identified in a canonical way.

The (left) \emph{Kac-Takesaki operator} $\ww^{\GG}\in \Linf\bar\otimes \Linfd$ is the operator on $\LdG\otimes \LdG$ defined via
\[
((\omega\otimes\id)\ww^{\GG *})\Lambda_\vp(x)=\Lambda_{\vp}((\omega\otimes \id)\Delta_{\GG}(x))\qquad(\omega\in \Lj, x\in\mf{N}_\vp).
\]
It is unitary and implements the comultiplication via $\Delta_{\GG}(x)=\ww^{\GG *}(\I\otimes x)\ww^{\GG}$ for $x\in\Linf$. By duality we also get the Kac-Takesaki operator for $\whG$, which 
is linked to $\ww^{\GG}$ via $\ww^{\whG}=\chi (\ww^{\GG *})$. Tomita-Takesaki theory yields two groups of \emph{modular automorphisms} $(\sigma^\vp_t)_{t\in \RR}, (\sigma^\psi_t)_{t\in \RR}$ 
and \emph{modular conjugations} $J_{\vp},J_{\psi}$ associated with the weights $\vp,\psi$, respectively. 
We will also use the right Kac-Takesaki operator defined by 
$\vv^{\GG} = (J_{\wh{\vp}} \otimes J_{\wh{\vp}}) \ww^{\whG} (J_{\wh{\vp}} \otimes J_{\wh{\vp}})\in \LL^{\infty}(\whG)'\bar{\otimes} \Linf$, where $ J_{\wh{\vp}} $ is the modular conjugation for the dual left Haar integral. The operator $ \vv^\GG $ implements the comultiplication via $\Delta_{\GG}(x)=\vv^{\GG}(x\otimes \I)\vv^{\GG *}$ for $x\in \Linf$.

The \emph{antipode} $S_{\GG}$ is a densely defined, typically unbounded, operator on $\Linf$ such that
\[
(\id\otimes \omega)\ww^{\GG}\in \oon{Dom}(S_{\GG})\;\textnormal{ and }S_{\GG}( (\id\otimes\omega)\ww^{\GG})=(\id\otimes \omega)\ww^{\GG *}\qquad(\omega\in \Ljd),
\]
and the \emph{unitary antipode} $R_{\GG}$ is a bounded, normal, $\star$-preserving, antimultiplicative map on $\Linf$ 
satisfying $\Delta_{\GG}R_{\GG}=\chi (R_{\GG}\otimes R_{\GG}) \Delta_{\GG}$, and given by $R_{\GG}(x) = J_\varphi x^* J_\varphi$ for $x\in \LL^\infty(\GG)$.  These maps are linked via $S_{\GG}=R_{\GG} \tau^{\GG}_{-i/2}=\tau^{\GG}_{-i/2} R_{\GG}$, 
where $(\tau^{\GG}_t)_{t\in \RR}$ is the group of \emph{scaling automorphisms} of $\Linf$. 
The left and right Haar integrals are unique up to a scalar, and we will choose these scalars such that $\vp=\psi\circ R_{\GG}$. 

We will mainly work with the weak$^*$-dense \cst-subalgebra $\mrm{C}_0(\GG)\subseteq\Linf$. It is defined as the norm-closure of $\{(\id\otimes\omega)\ww^{\GG}\,|\,\omega\in \Ljd\}$. 
After restriction, the comultiplication becomes a non-degenerate $\star$-homomorphism $\mrm{C}_{0}(\GG)\rightarrow \M(\mrm{C}_0(\GG)\otimes \mrm{C}_0(\GG))$. 
Similarly one defines $\mrm{C}_0(\whG)$, and then one obtains $\ww^{\GG}\in \M(\mrm{C}_0(\GG)\otimes \mrm{C}_0(\whG))$. 
Using the comultiplication of $\Linf$, we define a Banach algebra structure on $\Lj$ via $\omega\star\nu=(\omega\otimes\nu)\Delta_{\GG}$ for $\omega,\nu\in \Lj$. 
As $\Linf$ is the dual of $\Lj$, we have a canonical $\Lj$-bimodule structure on $\Linf$, given 
by $\omega\star x=(\id\otimes\omega)\Delta_{\GG}(x)$ and $x\star\omega=(\omega\otimes\id)\Delta_{\GG}(x)$. 
Treating $\Lj$ as the predual of the von Neumann algebra $\Linf$ yields an $\Linf$-bimodule structure on $\Lj$ defined 
via $x\omega=\omega(\cdot \,x),\omega x=\omega(x\,\cdot)$ for $x\in\Linf,\omega\in\Lj$.

There is also a universal version of $\mrm{C}_0(\GG)$ which we denote by $\mrm{C}_0^u(\GG)$, see \cite{KustermansUniversal, SW_MultUniII}.  It comes together with a comultiplication 
$\Delta^u_{\GG}\colon \mrm{C}^u_0(\GG)\rightarrow \M(\mrm{C}_0^u(\GG)\otimes \mrm{C}_0^u(\GG))$ and a surjective $\star$-homomorphism $\Lambda_{\GG}\colon \mrm{C}_0^u(\GG)\rightarrow \mrm{C}_0(\GG)$ which respects the comultiplications. The Kac-Takesaki operator $\ww^{\GG}$ admits a lift to a unitary operator $\Ww^{\GG}\in \M(\mrm{C}_0^u(\GG)\otimes \mrm{C}_0(\whG))$ 
satisfying $(\Lambda_{\GG}\otimes \id)\Ww^{\GG}=\ww^{\GG}$. Using this operator, we can introduce the half-lifted comultiplication
\[
\Delta^{u,r}_{\GG}\colon \mrm{C}_0(\GG)\ni x \mapsto
\Ww^{\GG *}(\I\otimes x)\Ww^{\GG}\in \M(\mrm{C}^u_0(\GG)\otimes \mrm{C}_0(\GG)). 
\]
It satisfies $(\id\otimes\Lambda_{\GG})\Delta^u_{\GG} = \Delta^{u,r}_{\GG} \Lambda_{\GG}$.
Similarly, writing $\mrm{C}_0(\whG')=J_{\wh{\vp}} \mrm{C}_0(\whG) J_{\wh{\vp}}$, the right Kac-Takesaki operator $\vv^{\GG}\in \M(\mrm{C}_0(\whG ')\otimes \mrm{C}_0(\GG))$ 
admits a lift to a unitary operator $\vV^{\GG}\in \M(\mrm{C}_0(\whG')\otimes \mrm{C}_0^u(\GG))$ satisfying $(\id\otimes \Lambda_{\GG})\vV^{\GG}=\vv^{\GG}$.  Recall from \cite[Section~4]{KVVN} the opposite quantum group $\Gop$, and that $\ww^{\Gop} = \chi(\vv^{\GG *})$.  It follows that one way to define $\vV^{\GG}$ is as $\chi(\Ww^{\Gop *})$.  We define
\[
\Delta^{r,u}_{\GG}\colon \mrm{C}_0(\GG)\ni x \mapsto 
\vV^{\GG} (x\otimes \I)\vV^{\GG *}\in \M( \mrm{C}_0(\GG)\otimes \mrm{C}_0^u(\GG)), 
\]
and find that $(\Lambda_\GG\otimes\id)\Delta^u_{\GG} = \Delta^{r,u}_{\GG}\Lambda_\GG$.
Indeed, one could also define $\Delta^{r,u}_{\GG}$ using $\Delta^{u,r}_{\Gop}$.
We can iterate these constructions, which we illustrate by way of an example. Define $\Delta^{u,r,u}_\GG = (\Delta^{u,r}_\GG\otimes\id)\Delta^{r,u}_\GG$, and observe that
\begin{align*}
\Delta^{u,r,u}_\GG \Lambda_\GG &= (\Delta^{u,r}_\GG\otimes\id)\Delta^{r,u}_\GG\Lambda_\GG = (\Delta^{u,r}_\GG\Lambda_\GG\otimes\id)\Delta^u_\GG 
= (\id\otimes\Lambda_\GG\otimes\id)(\Delta^u_\GG\otimes\id)\Delta^u_\GG \\
&= (\id\otimes\Lambda_\GG\otimes\id)(\id\otimes\Delta^u_\GG)\Delta^u_\GG = 
(\id\otimes \Delta_{\GG}^{r,u})(\id\otimes \Lambda_{\GG})\Delta_{\GG}^u=
(\id\otimes\Delta^{r,u}_\GG)\Delta^{u,r}_\GG\Lambda_\GG.
\end{align*}
As $\Lambda_\GG$ is onto, this shows in particular that also $\Delta^{u,r,u}_\GG = (\id\otimes\Delta^{r,u}_\GG)\Delta^{u,r}_\GG$.

We define $\lambda_{\GG}\colon \Lj\rightarrow \mrm{C}_0(\whG)$ by $\lambda_{\GG}(\omega)=(\omega\otimes\id)\ww^{\GG}$, and similarly for $\whG$. The \emph{Fourier algebra of $\GG$} is $\A(\GG)=\lambda_{\whG}(\Ljd)$. One checks that $\lambda_{\whG}$ is multiplicative, and that $\A(\GG)$ is a dense subalgebra of $\mrm{C}_0(\whG)$. As $\lambda_{\whG}$ is also injective, we can 
define an operator space structure on $\A(\GG)$ by imposing the condition that $\lambda_{\whG}\colon \Ljd\rightarrow \A(\GG)$ is completely isometric.

We will also use two larger algebras: the \emph{reduced Fourier-Stieltjes algebra} $\B_r(\GG)$ and the \emph{Fourier-Stieltjes algebra} $\B(\GG)$ defined via
\[
\B_r(\GG)=\{(\id\otimes \omega)(\ww^{\GG *})\,|\,
\omega\in \mrm{C}_0(\whG)^*\},\quad
\B(\GG)=\{(\id\otimes \omega')(\wW^{\GG *})\,|\,
\omega'\in \mrm{C}_0^u(\whG)^*\}.
\]
Together with the algebra structure of $\Linf$ and the norms $\|(\id\otimes\omega)(\ww^{\GG *})\|_{\B_r(\GG)}=\|\omega\|$ on $\B_r(\GG)$ and $\|(\id\otimes\omega')(\wW^{\GG *})\|_{\B(\GG)}=\|\omega'\|$ on $\B(\GG)$, both $\B_r(\GG)$ and $\B(\GG)$ become Banach algebras. In fact, both are dual Banach algebras with preduals given 
by $ \mrm{C}_0(\whG)$ and $\mrm{C}_0^u(\whG)$, respectively. When speaking about the weak$^*$-topology on the (reduced) Fourier-Stieltjes algebra we mean the topology arising this 
way. Furthermore we note that $\A(\GG)\subseteq\B_r(\GG)\subseteq \B(\GG)\subseteq \M(\mrm{C}_0(\GG))$, and that the first two inclusions are isometric. We will sometimes write $\lambda_{\whG}(\omega)=(\id\otimes\omega)\ww^{\GG *}$ also for $\omega\in \mrm{C}_{0}(\whG)^*$.

A locally compact quantum group $\GG$ is called \emph{compact} if $\mrm{C}_0(\GG)$ is unital, and in this case we write $\mrm{C}(\GG)$ for $\mrm{C}_0(\GG)$. 
If $\GG$ is compact then $\varphi=\psi$ is a normal state, the \emph{Haar state}, often denoted by $h \in \Lj$. 
The representation theory of compact quantum groups shares many features with the one for classical compact groups. 
In particular, every irreducible unitary representation of a compact quantum group $\GG$ is finite-dimensional, and every unitary representation decomposes into a direct sum 
of irreducibles. We write $\Irr(\GG)$ for the set of equivalence classes of irreducible unitary representations of $\GG$,  
and for each $\alpha\in\Irr(\GG)$ let $U^\alpha = [U^\alpha_{i,j}]_{i,j=1}^{\dim(\alpha)} \in \mathbb M_{\dim(\alpha)}(\mrm{C}(\GG))$ be a representative. There is a unique positive invertible 
matrix $\uprho_\alpha$ with $\Tr(\uprho_\alpha) = \Tr(\uprho_\alpha^{-1})$ which intertwines $U^\alpha$ and the double contragradient representation $(U^\alpha)^{cc}$. 
Let us also recall the Schur orthogonality relations
\begin{equation}\label{eq:orthog_matrix}
h\big( (U^\beta_{i,k})^* U^\alpha_{j,l} \big) = \delta_{\alpha,\beta} \delta_{k,l}
\frac{(\uprho_\alpha^{-1})_{j,i}}{\Tr(\uprho_\alpha)}, \qquad
h\big( U^\alpha_{i,j} (U^\beta_{k,l})^* \big) = \delta_{\alpha,\beta} \delta_{i,k} \frac{(\uprho_\alpha)_{l,j}}{\Tr(\uprho_\alpha)}
\end{equation}
for $\alpha,\beta \in \Irr(\GG), 1\le i,j\le \dim(\alpha),1\le k,l\le \dim(\beta)$.  We often write $\dim_q(\alpha) = \Tr(\uprho_\alpha)$ the quantum dimension of $\alpha$.
Recall that the collection $\{ U^\alpha_{i,j}\,|\,\alpha\in\Irr(\wh\bbGamma),1\le i,j\le \dim(\alpha)\}$ forms a basis for a dense Hopf $*$-subalgebra of $\mrm{C}(\GG)$, which we denote by $\Pol(\GG)$.  We have that $S(U^\alpha_{i,j}) = (U^\alpha_{j,i})^*$.  The matrices $\uprho_\alpha$ allow us to define functionals $f_z \colon \PolG \to \mathbb C$ by
\[ f_z(U^\alpha_{i,j}) = (\rho_\alpha^z)_{i,j} \qquad (z\in\mathbb C). \]
Then the scaling and modular automorphism groups are given by
\[
 \sigma_z = (f_{iz}\otimes\id\otimes f_{iz})\Delta^{(2)}, \quad
\tau_z = (f_{iz}\otimes\id\otimes f_{-iz})\Delta^{(2)}\qquad (z\in\mathbb C). \]
For $\alpha\in\IrrG$, the contragradient representation has matrix $[(U^\alpha_{i,j})^*]_{i,j=1}^{\dim(\alpha)}$ and is similar to the unitary representation $U^{\overline\alpha}$.

We say that $\GGamma$ is \emph{discrete} when $\wh\GGamma$ is compact, and write $\mrm{c}_0(\GGamma)$ for $\mrm{C}_0(\GGamma)$ in this case. 
The representation theory of $\wh\GGamma$ implies that $\mrm{c}_0(\GGamma)$ is the $\mrm{c}_0$-direct sum of finite-dimensional matrix algebras, and that $\ell^\infty(\GGamma) = \LL^\infty(\bbGamma)$
is the $\ell^\infty$-direct product. We write $\mrm{c}_{00}(\GGamma) \subseteq \mrm{c}_0(\GGamma)$ for the dense subalgebra of finitely-supported elements with respect to the direct 
sum structure. We can write $\ww^{\wh\GGamma} \in \LL^\infty(\wh\GGamma) \bar\otimes \ell^\infty(\GGamma)$ explicitly as
\begin{equation}\label{eq:wcmpt}
\ww^{\wh\GGamma} = \sum_{\alpha\in\Irr(\wh\bbGamma)}\sum_{i,j=1}^{\dim(\alpha)} U^\alpha_{i,j} \otimes e^{\alpha}_{i,j}
\end{equation}
where $\{e^{\alpha}_{i,j}\}_{i,j=1}^{\dim(\alpha)}$ are the matrix units of the matrix block $\mathbb M_{\dim(\alpha)}(\mathbb{C}) \subseteq \mrm{c}_0(\GGamma)$.

If there is no risk of confusion we will sometimes omit subscripts and abbreviate e.g.~$\Delta$ for $\Delta_{\GG}$ and $\wh\Delta$ for $\Delta_{\whG}$.

\section{Completely bounded multipliers} \label{section multiplier}

In this section we recall some background on multipliers of locally compact quantum groups and $ \cst $-tensor categories. 
We follow the notation and conventions in \cite{DKV_ApproxLCQG}, and refer to \cite{DawsCPMults}, \cite{CBMultipliers}, \cite{PVcstartensor} for more information. 

A \emph{left centraliser} of a locally compact quantum group $ \GG $ is linear map $T\colon \Ljd\rightarrow\Ljd$ such that
\[
T(\omega\star \omega')=T(\omega)\star \omega'\quad (\omega,\omega'\in \Ljd).
\]
We denote by $C^l_{cb}(\Ljd)$ the space of completely bounded left centralisers. Together with the completely bounded norm and composition, $C^l_{cb}(\Ljd)$ becomes a Banach algebra. 
We equip this space with an operator space structure by requiring that the embedding $C^l_{cb}(\Ljd)\hookrightarrow \CB(\Ljd)$ is completely isometric. 
This turns $C^l_{cb}(\Ljd)$ into a completely contractive Banach algebra.

An operator $b\in \Linf$ is said to be a \emph{completely bounded (CB) left multiplier} if $b \A(\GG)\subseteq \A(\GG)$ and the associated map
\[
\Theta^l(b)_*\colon \Ljd\rightarrow\Ljd
\quad\text{satisfying}\quad
b\wh\lambda(\omega)=\wh{\lambda}(\Theta^l(b)_*(\omega))\qquad(\omega\in\Ljd)
\]
is completely bounded. Since $\wh\lambda$ is multiplicative we have $\Theta^l(b)_*\in C^l_{cb}(\Ljd)$ for any $b\in \M^l_{cb}(\A(\GG))$. 
We write $\Theta^l(b)=(\Theta^l(b)_*)^*$, and denote the space of CB left multipliers by $\M^l_{cb}(\A(\GG))$. Any Fourier algebra element $\wh{\lambda}(\omega)\in\A(\GG)$ is a CB left multiplier, with $\Theta^l(\wh{\lambda}(\omega))_*\in \CB(\Ljd)$ being the left multiplication by $\omega$ and $\Theta^l(\wh{\lambda}(\omega))=(\omega\otimes\id)\wh\Delta$. 
Let us also note that $\M^l_{cb}(\A(\GG))\subseteq \M(\mrm{C}_0(\GG))$. 

If $T\in C^l_{cb}(\Ljd)$ is a left centraliser then its Banach space dual $T^*$ is a normal CB map on $\Linfd$ which is a left $\Ljd$-module homomorphism, 
i.e. $T^*\in \prescript{}{\Ljd}{\CB^{\sigma}(\Linfd)}$. Then, by \cite[Corollary 4.4]{JungeNeufangRuan}, there exists a unique CB left multiplier $b\in \M^l_{cb}(\A(\GG))$ 
satisfying $\Theta^l(b) = T^*$, that is, $\Theta^l(b)_* = T$. It follows that the map $\Theta^l(\cdot)_*\colon\M^l_{cb}(\A(\GG)) \rightarrow C^l_{cb}(\Ljd)$ is bijective. We define the operator space structure on $\M^l_{cb}(\A(\GG))$ so that these spaces become completely isometric.

If $b\in \M^l_{cb}(\A(\GG))$ is such that $\Theta^l(b)$ is completely positive, we say that $b$ is a \emph{completely positive (CP) left multiplier}. Furthermore, $b$ is \emph{normalised} if $\Theta^l(b)$ is unital. 

\begin{lemma}\label{lem:cp_fourier_pos}
Let $a=\wh\lambda(\wh\omega)\in\A(\GG)$ for some $\wh\omega\in\Ljd$.  Then $a$ is a CP multiplier if and only if $\wh\omega$ is positive, and 
$a$ is normalised if and only if $\wh\omega(\I)=1$.
\end{lemma}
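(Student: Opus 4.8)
The plan is to translate everything through the isomorphism $\Theta^l(\cdot)$ and use the explicit formula $\Theta^l(\wh\lambda(\wh\omega)) = (\wh\omega\otimes\id)\wh\Delta$ recorded just above the statement. First I would recall that $\wh\Delta = \Delta_{\whG}$ maps $\Linfd$ into $\Linfd\bar\otimes\Linfd$, so that $\Theta^l(a) = (\wh\omega\otimes\id)\wh\Delta\colon\Linfd\to\Linfd$ is a normal map which is automatically completely bounded (indeed completely contractive when $\|\wh\omega\|\le 1$), and this is the map whose positivity and unitality we must characterise.

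For the CP direction: if $\wh\omega$ is a positive functional on $\Linfd$, then since $\wh\Delta$ is a normal unital $\star$-homomorphism (hence completely positive), the composition $(\wh\omega\otimes\id)\wh\Delta$ is completely positive as a composition of a CP map with a slice by a positive functional; this is the easy implication. Conversely, suppose $\Theta^l(a) = (\wh\omega\otimes\id)\wh\Delta$ is completely positive. Evaluating against the counit-type functional, or more concretely composing with a state: I would recover $\wh\omega$ from $\Theta^l(a)$ by using the relation $(\id\otimes\nu)\wh\Delta(x) \to x$ in an appropriate sense, but the cleanest route is to use the unit of $\Linfd$. Namely $\Theta^l(a)(\I) = (\wh\omega\otimes\id)\wh\Delta(\I) = \wh\omega(\I)\I$, so if $\Theta^l(a)$ is CP then $\wh\omega(\I) = \Theta^l(a)(\I)\ge 0$ already forces $\wh\omega(\I)\ge 0$; and for full positivity of $\wh\omega$ one applies a state $\mu$ on $\Linfd$ to get $\mu\circ\Theta^l(a) = (\wh\omega\otimes\mu)\wh\Delta$, a positive functional on $\Linfd$, and then uses that $\wh\Delta$ admits a "co-unit" or, failing that, that the span of $\{(\id\otimes\mu)\wh\Delta(y) : y\in\Linfd, \mu \text{ state}\}$ is weak$^*$-dense enough — in fact the slices $(\id\otimes\nu)\wh\Delta(y)$ recover a weak$^*$-dense subalgebra, so positivity of $\wh\omega$ on all of them gives positivity of $\wh\omega$. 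Alternatively, and more elementarily, one notes $\Theta^l(a)_*$ is left multiplication by $\wh\omega$ on $\Ljd$, so $\Theta^l(a) = $ "right multiplication by $\wh\omega$" on $\Linfd$; applying it to a positive $x\in\Linfd$ and then evaluating at a positive $\eta\in\Ljd$ gives $\la\eta, \Theta^l(a)(x)\ra = \la x, \wh\omega\star\eta\ra$ — no wait, one must be careful with which module action. I would settle this by working out that $\la\eta,\Theta^l(a)(x)\ra = \la\wh\Delta(x), \wh\omega\otimes\eta\ra = (\wh\omega\otimes\eta)\wh\Delta(x)$ and then, choosing $\eta$ running over states and using that $\wh\Delta$ is injective with the further property that $\{(\id\otimes\eta)\wh\Delta(x)\}$ is weak$^*$-total, conclude $\wh\omega\ge 0$.

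The normalisation statement follows from the same computation: $a$ is normalised iff $\Theta^l(a)(\I) = \I$, and since $\wh\Delta(\I) = \I\otimes\I$ we get $\Theta^l(a)(\I) = \wh\omega(\I)\I$, so this holds iff $\wh\omega(\I) = 1$. This part is routine once the CP part is set up.

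The main obstacle is the converse of the CP direction: extracting positivity of the functional $\wh\omega$ from complete positivity (or even just positivity) of the slice map $(\wh\omega\otimes\id)\wh\Delta$. The subtlety is that $\whG$ need not have a bounded counit (it is a genuine quantum group, not necessarily coamenable), so one cannot simply "apply the counit" to $\wh\Delta$ to recover $\wh\omega$. I expect the right fix is to use that for a locally compact quantum group the left-regular-type maps $(\id\otimes\eta)\wh\Delta$ for $\eta\in\Ljd$ generate, via weak$^*$-density, enough of $\Linfd$ to detect positivity — precisely, that $\wh\omega$ is positive as soon as $(\wh\omega\otimes\eta)\wh\Delta(x)\ge 0$ for all positive $x$ and positive $\eta$, which is where the density of $\wh\lambda(\Ljd) = \A(\GG)$ in $\mrm{C}_0(\GG)$ and the structure of $\wh\Delta$ come in. Once that density fact is invoked the argument closes quickly; I would likely cite the analogous statement in \cite{DawsCPMults} or \cite{CBMultipliers} rather than reprove it.
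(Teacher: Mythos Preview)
Your treatment of the easy direction (positive $\wh\omega$ gives CP) and of the normalisation statement via $\Theta^l(a)(\I)=\wh\omega(\I)\I$ is correct and matches the paper.

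The gap is in the converse CP direction. Your proposed density argument amounts to the claim that $\wh\omega\star\eta\ge 0$ for all positive $\eta\in\Ljd$ forces $\wh\omega\ge 0$, equivalently that the cone $\{(\id\otimes\eta)\wh\Delta(x):x\ge 0,\ \eta\ge 0\}$ is weak$^*$-dense in $(\Linfd)_+$. You correctly identify that the absence of a bounded counit (non-coamenability of $\whG$) blocks the obvious route, but you do not actually close this gap, and the linear weak$^*$-density of slices that you appeal to does not by itself give density of the positive cone.

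The paper avoids this entirely. It invokes \cite[Theorem~5.2]{DawsCPMults}: if $a$ is a CP multiplier then there exists a \emph{positive} $\mu\in\mrm{C}_0^u(\whG)^*$ with $a=(\id\otimes\mu)(\wW^{\GG *})$. One then uses the isometric embedding $\phi\colon\Ljd\hookrightarrow\mrm{C}_0^u(\whG)^*$ induced by $\Lambda_{\whG}^*$, computes
\[
(\id\otimes\phi(\wh\omega))(\wW^{\GG *})=(\id\otimes\wh\omega)(\ww^{\GG *})=\wh\lambda(\wh\omega)=a=(\id\otimes\mu)(\wW^{\GG *}),
\]
and concludes $\phi(\wh\omega)=\mu$ because $\{(\omega\otimes\id)(\wW^{\GG *}):\omega\in\Lj\}$ is norm-dense in $\mrm{C}_0^u(\whG)$. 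Hence $\wh\omega$ is positive. So your instinct to cite \cite{DawsCPMults} is right, but the relevant input is the structural characterisation of CP multipliers by positive universal functionals, not a density lemma of the kind you sketch.
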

\begin{proof}
We know that $\Theta^l(a) = (\wh\omega\otimes\id)\wh\Delta$ and so $\Theta^l(a)(\I)=\I$ if and only if $\wh\omega(\I)=1$. If $\wh\omega$ is positive then $\Theta^l(a)$ is CP. Hence it 
remains to prove that if $\Theta^l(a)$ is CP then $\wh\omega$ is positive. By \cite[Theorem~5.2]{DawsCPMults} there is a positive linear functional
$\mu\in\mrm{C}_0^u(\wh\GG)^*$ with $a = (\id\otimes\mu)(\wW^*)$. The Banach space adjoint $\Lambda_{\wh\GG}^*\colon \mrm{C}_0(\wh\GG)^* \rightarrow \mrm{C}^u_0(\wh\GG)^*$ is isometric. 
Let $\phi$ be the composition of $\Ljd \rightarrow \mrm{C}_0(\wh\GG)^*$ with $\Lambda_{\wh\GG}^*$, so $\phi$ is also isometric. We see that
\[ (\id\otimes\phi(\wh\omega))(\wW^*)
= (\id\otimes\wh\omega)(\id\otimes\Lambda_{\wh\GG})(\wW^*)
= (\id\otimes\wh\omega)(\ww^*) = \wh\lambda(\wh\omega) = a = (\id\otimes\mu)(\wW^*). \]
As $\{(\omega\otimes\id)(\wW^*)\,|\,\omega\in \Lj\}$ is norm dense in $\mrm{C}_0^u(\wh\GG)$, it follows that $\phi(\wh\omega)=\mu$, and so $\wh\omega$ is positive, as required.
\end{proof}

Since the inclusion $\M^l_{cb}(\A(\GG))\hookrightarrow \Linf$ is contractive, we can consider the restriction of the Banach space adjoint of this map, 
giving a map $\Lj\rightarrow \M^l_{cb}(\A(\GG))^*$. Let us define $Q^l(\A(\GG))\subseteq \M^l_{cb}(\A(\GG))^*$ as the closure of the image of this map.
According to \cite[Theorem 3.4]{CBMultipliers}, the space $Q^l(\A(\GG))$ is a predual of $\M^l_{cb}(\A(\GG))$, i.e.~we have
\[
Q^l(\A(\GG))^*\cong \M^l_{cb}(\A(\GG))
\]
completely isometrically. Whenever we speak about the weak$^*$-topology on $\M^l_{cb}(\A(\GG))$ we will have in mind this particular choice of predual.
In this way $\M^l_{cb}(\A(\GG))$ becomes a dual Banach algebra, that is, the multiplication of $\M^l_{cb}(\A(\GG))$ is separately weak$^*$-continuous.

Next we recall the definition of multipliers on rigid \cst-tensor categories from the work of Popa-Vaes \cite[Section~3]{PVcstartensor}. 
If $ \BT $ is a \cst-category and $ X, Y \in \BT $ are objects we write $ \BT(X,Y) $ for the space of morphisms from $ X $ to $ Y $. 
We denote by $ \id_X $ or $ \id $ the identity morphism in $ \BT(X,X) $. 
By definition, a \cst-tensor category is a \cst-category $ \BT $ together with a bilinear $ * $-functor $ \otimes\colon \BT \times \BT \rightarrow \BT $, 
a distinguished object $ \one \in \BT $ and unitary natural isomorphisms 
\begin{align*}
\one \otimes X \cong X \cong X \otimes \one, \qquad (X \otimes Y) \otimes Z \cong X \otimes (Y \otimes Z)  
\end{align*}
satisfying certain compatibility conditions. For simplicity we shall always assume that $ \BT $ is \emph{strict}, which means that these unitary natural isomorphisms are 
identities, and that the tensor unit $ \one $ is simple. 

We also assume that $ \BT $ is \emph{rigid}. 
Every rigid \cst-tensor category $ \BT $ is semisimple, that is, every object of $ \BT $ is isomorphic to a finite direct sum of simple objects. 
We write $ \Irr(\BT) $ for the set of isomorphism classes of simple objects in $ \BT $, and choose representatives $ X_i \in \BT $ for elements $ i = [X_i] \in \Irr(\BT) $. 

Let $ \BT $ be a rigid \cst-tensor category. By definition, a \emph{multiplier} on $ \BT $ is a family $ \theta = (\theta_{X,Y}) $ of linear 
maps $ \theta_{X,Y}\colon \BT(X \otimes Y, X \otimes Y) \rightarrow \BT(X \otimes Y, X \otimes Y) $ 
for $ X, Y \in \BT $ such that 
\begin{align*}
\theta_{X_2,Y_2}(gfh^*) &= g \theta_{X_1, Y_1}(f) h^*, \\
\theta_{X_2 \otimes X_1, Y_1 \otimes Y_2}(\id_{X_2} \otimes f \otimes \id_{Y_2}) &= \id_{X_2} \otimes \theta_{X_1,Y_1}(f) \otimes \id_{Y_2},
\end{align*}  
for all $ X_i, Y_i \in \BT $, $ f \in \BT(X_1 \otimes Y_1, X_1 \otimes Y_1) $ and $ g, h \in \BT(X_1, X_2) \otimes \BT(Y_1, Y_2) \subseteq \BT(X_1 \otimes Y_1, X_2 \otimes Y_2) $. 
A multiplier $ \theta = (\theta_{X,Y}) $ on $ \BT $ is said to be completely positive (or a \emph{CP multiplier}) if all the maps $ \theta_{X,Y} $ are completely positive. 
A multiplier $ \theta = (\theta_{X,Y}) $ on $ \BT $ is said to be completely bounded (or a \emph{CB multiplier}) if all the maps $ \theta_{X,Y} $ are completely bounded 
and $ \| \theta \|_{cb} = \sup_{X,Y \in \BT} \|\theta_{X,Y}\|_{cb} < \infty $. 

It is shown in \cite[Proposition 3.6]{PVcstartensor} that multipliers on $ \BT $ are in canonical bijection with functions $ \Irr(\BT) \rightarrow \mathbb{C} $, and we will identify 
a multiplier $\theta = (\theta_{X,Y})$ with its associated function $\theta = (\theta(k))_{k\in\Irr(\BT)}$. Note that we have $\|(\theta(k))_{k\in\Irr(\BT)}\|_\infty \leq \|\theta\|_{cb}$.

We write $ \M_{cb}(\BT) $ for the space of CB multipliers on $ \BT $. Via composition of maps and the CB norm this becomes naturally a Banach algebra, such that the product on $ \M_{cb}(\BT) $ corresponds to pointwise multiplication of functions on $ \Irr(\BT) $. In fact, $ \M_{cb}(\BT) $ is a dual Banach algebra, whose predual $ Q(\BT) $ can be constructed using 
the tube algebra of $ \BT $, see \cite[Corollary 5.3]{ARANO_DELAAT_WAHL_fourier}.  

A standard example of a rigid \cst-tensor category is the category $\BT=\Rep(\GG)$ of finite dimensional unitary representations of a compact quantum group $\GG$. 
For a discrete quantum group $\GGamma$ we shall write $\Corep(\GGamma)=\Rep(\widehat{\GGamma})$. Such $\cst$-tensor categories will be the only ones of interest to us in this paper.

\section{Approximation properties} \label{section aps}

In this section we review the definition of various approximation properties in the theory of locally compact quantum groups and rigid $ \cst $-tensor categories. 

We begin with the case of locally compact quantum groups, compare \cite{BT_Amenability, Brannan, DFSW_HAP, DKV_ApproxLCQG}. 

\begin{definition} \label{defapsqg}
Let $\GG$ be a locally compact quantum group. Then we say that $\GG$
\begin{itemize}
\item is \emph{strongly amenable} if there exists a bounded approximate identity of $\A(\GG) $ consisting of CP multipliers of $ \GG $.  
\item has the \emph{Haagerup property} if there exists a bounded approximate identity of $\mrm{C}_0(\GG)$ which consists of CP multipliers of $ \GG $.
\item is \emph{weakly amenable} if there exists a left approximate identity $(e_i)_{i\in I}$ of $\A(\GG)$ satisfying $\lim \sup_{i \in I} \|e_i\|_{cb} < \infty$. 
In this case, the smallest $M$ such that we can choose $\|e_i\|_{cb}\leq M$ for all $i \in I$ is the \emph{Cowling--Haagerup constant} of $\GG$, denoted $\Lambda_{cb}(\GG)$. 
\item has the \emph{approximation property} if there exists a net $(e_i)_{i\in I}$ in $\A(\GG)$ which converges to $\I$ in the weak$^*$ topology of $\M^l_{cb}(\A(\GG))$.
\end{itemize}
\end{definition}

While the definitions of the Haagerup property, weak amenability, and the approximation property for $\GG$ in Definition~\ref{defapsqg} are standard, this is not quite
the case for strong amenability. Our terminology here follows \cite{dqv_amen_bicrossed}, \cite{Tomatsuamenablediscrete}. Usually, strong amenability of $\GG$ 
is phrased in terms of the dual, by equivalently saying that $\whG$ is \emph{coamenable}. 

A number of equivalent characterisations of coamenability can be found in \cite{BT_Amenability}. In particular, conditions (6)--(8) in \cite[Theorem 3.1]{BT_Amenability} 
say that $\whG$ is coamenable if and only if $\Ljd\cong \A(\GG)$ has a bounded (left, right, or two-sided) approximate identity. An examination of the proof (in particular, 
that condition (5) implies conditions (6)--(8)) shows that when $\whG$ is coamenable, the Banach algebra $\Ljd$ has a bounded (two-sided) approximate identity consisting of states; 
compare also \cite[Theorem~2]{HNR_MultsNewClass}. That is, $\A(\GG)$ has a bounded approximate identity consisting of CP multipliers. 
It follows that our definition of strong amenability is compatible with \cite[Definition 3.1]{BT_Amenability}. 
Since we can in fact choose the approximate identity to consist of states the associated multipliers will be normalised. Therefore we could equivalently strengthen the CP condition 
in the definition of strong amenability to UCP.

Let us add that \emph{amenability} of $\GG$ is defined in terms of the existence of an invariant mean on $\Linf$, see Definition~\ref{defn:amenable}.  This is an a priori weaker notion than strong amenability, 
and the two concepts are known to be equivalent only in special cases, compare \cite{Brannan}, \cite{Tomatsuamenablediscrete}.

For equivalent characterisations of the Haagerup property see \cite[Theorem 6.5]{DFSW_HAP}; however be aware that some equivalent conditions in this reference may require a second 
countability assumption.

In the setting of discrete quantum groups one also considers central approximation properties. We recall that a multiplier $ a \in \M^l_{cb}(\A(\GG)) \subseteq \Linf $ 
is called central if it is contained in the centre of $\M^l_{cb}(\A(\GG))$, or equivalently, in the centre of $\Linf$.

\begin{definition}\label{defapsdqg}
Let $\GGamma$ be a discrete quantum group. Then we say that $\GGamma$
\begin{itemize}
\item is \emph{centrally strongly amenable} if there is a net $(e_i)_{i \in I}$ consisting of finitely supported central CP multipliers of $\GGamma$ converging to $\I$ pointwise. 
\item has the \emph{central Haagerup property} if there exists a bounded approximate identity of $\mrm{c}_0(\GGamma)$ which consists of central CP multipliers of $ \GGamma $.
\item is \emph{centrally weakly amenable} if there exists a net $(e_i)_{i\in I}$ of finitely supported central multipliers converging to $\I$ pointwise and 
satisfying $\lim \sup_{i \in I} \|e_i\|_{cb} < \infty$. 
In this case, the smallest $M$ such that we can choose $\|e_i\|_{cb}\leq M$ for all $i \in I$ is the \emph{central Cowling--Haagerup constant} of $\GGamma$, 
denoted $ \mc{Z}\Lambda_{cb}(\GGamma)$. 
\item has the \emph{central approximation property} if there exists a net $(e_i)_{i\in I}$ of finitely supported central multipliers which converges to $\I$ in the weak$^*$ topology 
of $\M^l_{cb}(\A(\GG))$.
\end{itemize}
\end{definition}

Note that $ \mrm{c}_{00}(\GGamma) \subseteq \A(\GGamma) $ is a dense subspace for the Fourier algebra norm. It follows that a bounded approximate identity of $ \A(\GGamma) $ is the 
same thing as a bounded net $(e_i)_{i \in I}$ in $ \A(\GGamma)$ converging to $\I$ pointwise.  If $\GGamma$ is strongly amenable then there is such a net which consists of finitely 
supported CP multipliers, as can be seen by suitably approximating the associated states in $\Ljd$. Thus central strong amenability is a natural central version of our definition of strong amenability. However, we do not know whether the a priori weaker requirement of existence of a net $(e_i)_{i \in I}$ of CP multipliers in the centre of $\A(\GGamma)$ converging to $\I$ pointwise is equivalent to central strong amenability; though this is true when $\bbGamma$ is unimodular, see Proposition~\ref{thm:kac_nofs} below.

Finally, let us review the corresponding notions for rigid \cst-tensor categories \cite{PVcstartensor}, \cite{DKV_ApproxLCQG}. 

\begin{definition} \label{defapstc}
Let $ \BT $ be a rigid \cst-tensor category. Then $\BT $
\begin{itemize}
\item is \emph{amenable} if there exists a net $(\varphi_i)_{i \in I}$ of finitely supported CP multipliers of $ \BT $ converging to $ \I $ pointwise. 
\item has the \emph{Haagerup property} if there exists a net of CP multipliers $(\varphi_i)_{i \in I}$ of $ \BT $ converging to $ \I $ pointwise such that $ \varphi_i \in \mrm{c}_0(\Irr(\BT)) $ 
for all $ i \in I$. 
\item is \emph{weakly amenable} if there exists a net of finitely supported CB multipliers $(\varphi_i)_{i \in I}$ of $ \BT $ converging to $ \I $ pointwise 
such that $ \lim \sup_{i \in I} \|\varphi_i\|_{cb} < \infty $. In this case, the smallest constant $ M $ such that we can choose $\|\varphi_i\|_{cb} \leq M $ for all $i \in I$ is 
the \emph{Cowling--Haagerup constant} of $\BT$, denoted $\Lambda_{cb}(\BT)$. 
\item has the \emph{approximation property} if there exists a net of finitely supported CB multipliers of $ \BT $ converging to $ \I $ in the weak*-topology of $ \M_{cb}(\BT) $. 
\end{itemize}
\end{definition}

If $\GGamma$ is a discrete quantum group then central multipliers for $\GGamma$ and categorical multipliers for $\Corep(\GGamma)$ can 
both be viewed as functions on $\Irr(\wh\GGamma)=\Irr(\Corep(\GGamma))$. 
It turns out that, in this way, the space of central CB-multipliers of $\GGamma$ identifies isometrically with the space of CB multipliers of $ \Corep(\GGamma) $. 
Moreover the weak$^*$-topologies agree, and this identification restricts to a bijection between the corresponding CP-multipliers 
\cite[Proposition 6.1]{PVcstartensor}, \cite[Lemma 8.6]{DKV_ApproxLCQG}. By definition, finite support, being $\mrm{c}_0$ and pointwise convergence of multipliers 
have the same meaning in either case. 

As a consequence, the categorical approximation properties for the \cst-tensor category $ \Corep(\GGamma) $ in Definition~\ref{defapstc} are equivalent to the 
corresponding central approximation properties for $\GGamma$ in Definition~\ref{defapsdqg}.

\section{The averaging construction} \label{section averaging}

In this section we discuss the averaging construction for multipliers on a locally compact quantum group with respect to a compact quantum subgroup. Let us note that averaging with 
respect to compact quantum subgroups has been widely used previously in different contexts, most notably in the study of quantum homogeneous spaces and compact quantum hypergroups, 
see for instance \cite{CHAPOVSKY_VAINERMAN}, \cite{DAS_FRANZ_WANG}. 

Let $ \GG $ be a locally compact quantum group and let $ \KK \subseteq \GG $ be a compact quantum subgroup \cite[Section~6.1]{DKSS_ClosedSub}. This means that there is a 
nondegenerate $ \star $-homomorphism $ \pi\colon \mrm{C}_0^u(\GG) \rightarrow \mrm{C}^u(\KK) \xrightarrow[]{\Lambda_{\KK}} \mrm{C}(\KK) $, compatible with comultiplications, such that the dual 
nondegenerate $ \star $-homomorphism $\mrm{c}_0(\whK) \rightarrow \M(\mrm{C}_0^u(\whG)) $ drops to a normal injective
unital $ \star $-homomorphism $ \widehat{\pi}\colon \ell^\infty(\whK) \rightarrow \LL^\infty(\whG) $.

The morphisms $ \pi $ and $ \widehat{\pi} $ can also be encoded at the level of the Kac-Takesaki operators, see for instance \cite{MRW_Hms_QGs}. 
If $ \Ww^{\GG} \in \M(\mrm{C}_0^u(\GG) \otimes \mrm{C}_0(\wh\GG)) $ is the half-universal lift of $\ww^\GG$ then we obtain a 
bicharacter $ (\pi \otimes \id)(\Ww^{\GG}) \in \M(\mrm{C}(\KK) \otimes \mrm{C}_0(\wh\GG))$ and a left action $ \lambda_{\pi}\colon \mrm{C}_0(\GG) \rightarrow \M(\mrm{C}(\KK) \otimes \mrm{C}_0(\GG))$ given by 
$$
\lambda_\pi(a)= (\pi \otimes \id)(\Ww^{\GG})^*(\I \otimes a) (\pi \otimes \id)(\Ww^{\GG}).  
$$
This is the restriction, to $\KK$, of the left translation action of $\GG$ on itself, and extends to an action $\LL^\infty(\GG) \rightarrow \LL^\infty(\KK) \bar\otimes \LL^\infty(\GG)$ 
on the level of von Neumann algebras. In a similar way, using the right Kac-Takesaki operator, one obtains a right action $\rho_\pi\colon \mrm{C}_0(\GG) \rightarrow \M(\mrm{C}_0(\GG) \otimes \mrm{C}(\KK))$ 
and the corresponding normal extension $\LL^\infty(\GG) \rightarrow \LL^\infty(\GG) \bar\otimes \LL^\infty(\KK)$.  

Combining $\lambda_\pi$ and $\rho_\pi$ with the Haar state $h$ of $\LL^\infty(\KK)$ we define the \emph{averaging map} $\Xi\colon \LL^\infty(\GG)\rightarrow \LL^\infty(\GG)$ by 
\begin{equation}\label{eq4}
\Xi = (h \otimes \id \otimes h)(\id \otimes \rho_\pi)\lambda_\pi = (h \otimes \id \otimes h)(\lambda_\pi \otimes \id)\rho_\pi. 
\end{equation}
Clearly $\Xi$ is a normal unital contractive CP map. 

This construction behaves well on the level of \cst-algebras.  For the following, recall from Section~\ref{section preliminaries} the maps $\Delta^{r,u}, \Delta^{u,r}$ and their iterated counterparts.

\begin{lemma} \label{averaginghelp}
The averaging map $\Xi$ restricts to a contractive CP map $\mrm{C}_0(\GG) \rightarrow \mrm{C}_0(\GG)$ such that
\begin{equation}\label{eq6}
\Xi(a)
= (h\pi \otimes \id \otimes h\pi)\Delta^{u,r,u}(a)
\qquad ( a\in\mrm{C}_0(\GG)).
\end{equation}
It also restricts to a strictly continuous unital contractive CP map $ \M(\mrm{C}_0(\GG)) \rightarrow \M(\mrm{C}_0(\GG))$, given by the same formula. 
\end{lemma}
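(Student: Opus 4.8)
The plan is to prove the claimed formula on the \cst-algebra level by direct computation, then upgrade to the multiplier algebra by strict continuity. First I would unpack the definitions: $\lambda_\pi(a) = (\pi\otimes\id)(\Ww^\GG)^*(\I\otimes a)(\pi\otimes\id)(\Ww^\GG)$, and observe that since $(\Lambda_\GG\otimes\id)$ applied to $\Ww^\GG$ (suitably placed) recovers $\ww^\GG$, and $\Delta^{u,r}$ was defined precisely as $x\mapsto \Ww^{\GG *}(\I\otimes x)\Ww^\GG$, we have $\lambda_\pi = (\pi\otimes\id)\Delta^{u,r}$ as maps $\mrm{C}_0(\GG)\to\M(\mrm{C}(\KK)\otimes\mrm{C}_0(\GG))$. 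Similarly $\rho_\pi = (\id\otimes\pi)\Delta^{r,u}$, using $\vV^\GG = \chi(\Ww^{\Gop *})$ and the definition of $\Delta^{r,u}$. Then the composite $(\id\otimes\rho_\pi)\lambda_\pi = (\id\otimes\id\otimes\pi)(\id\otimes\Delta^{r,u})(\pi\otimes\id)\Delta^{u,r}$, and since $\pi$ acts only on the first leg it commutes past $(\id\otimes\Delta^{r,u})$, giving $(\pi\otimes\id\otimes\pi)(\id\otimes\Delta^{r,u})\Delta^{u,r}$. By the identity $\Delta^{u,r,u} = (\id\otimes\Delta^{r,u})\Delta^{u,r}$ established in the preliminaries, this equals $(\pi\otimes\id\otimes\pi)\Delta^{u,r,u}$. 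Composing with $h\otimes\id\otimes h$ yields $\Xi(a) = (h\pi\otimes\id\otimes h\pi)\Delta^{u,r,u}(a)$, which is \eqref{eq6}.

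Next I would check that this formula indeed lands in $\mrm{C}_0(\GG)$, not merely in $\M(\mrm{C}_0(\GG))$. Here I would use that $\Delta^{u,r,u}$ maps $\mrm{C}_0(\GG)$ into $\M(\mrm{C}_0^u(\GG)\otimes\mrm{C}_0(\GG)\otimes\mrm{C}_0^u(\GG))$, that $h\pi$ is a state on $\mrm{C}_0^u(\GG)$ (being the composition of the state $h$ on $\mrm{C}(\KK)$ with the nondegenerate $\star$-homomorphism $\pi$), and that slicing a multiplier of a tensor product by states on the outer factors produces an element of the middle factor. One clean way: since $\Delta^{u,r}(a)\in\M(\mrm{C}_0^u(\GG)\otimes\mrm{C}_0(\GG))$ and slicing the first leg by the state $h\pi$ gives an element of $\M(\mrm{C}_0(\GG))$; iterating, $(h\pi\otimes\id\otimes h\pi)\Delta^{u,r,u}(a)\in\M(\mrm{C}_0(\GG))$. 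To see it is actually in $\mrm{C}_0(\GG)$, I would instead argue directly from $\Xi = (h\otimes\id\otimes h)(\id\otimes\rho_\pi)\lambda_\pi$: the action $\lambda_\pi$ restricts to a morphism $\mrm{C}_0(\GG)\to\M(\mrm{C}(\KK)\otimes\mrm{C}_0(\GG))$, and slicing by the Haar state $h$ of the \emph{unital} \cst-algebra $\mrm{C}(\KK)$ keeps us inside $\M(\mrm{C}_0(\GG))$; a density/approximation argument using $a\in\mrm{C}_0(\GG)$ and the nondegeneracy of the action then places $(h\otimes\id)\lambda_\pi(a)$ in $\mrm{C}_0(\GG)$ itself, and similarly for the $\rho_\pi$ slice. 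Complete positivity and contractivity are inherited from the von Neumann algebra statement already noted ($\Xi$ is a normal unital contractive CP map), restricted to the \cst-subalgebra.

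Finally, for the multiplier algebra statement: a contractive CP map $\mrm{C}_0(\GG)\to\mrm{C}_0(\GG)$ extends uniquely to a strictly continuous map $\M(\mrm{C}_0(\GG))\to\M(\mrm{C}_0(\GG))$ provided it is nondegenerate (maps an approximate identity to one), which here follows from unitality of $\Xi$ on the von Neumann level — $\Xi(\I)=\I$. The extension is automatically unital, contractive and CP. That the same formula \eqref{eq6} computes it follows because $\Delta^{u,r,u}$ and the state-slices $h\pi\otimes\id\otimes h\pi$ are all strictly continuous on the relevant multiplier algebras, so both sides of \eqref{eq6} are strictly continuous extensions of maps agreeing on $\mrm{C}_0(\GG)$, hence agree on $\M(\mrm{C}_0(\GG))$.

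The main obstacle I anticipate is the bookkeeping in the first paragraph: matching $\lambda_\pi,\rho_\pi$ with the half-lifted comultiplications $\Delta^{u,r},\Delta^{r,u}$ requires being careful about which leg lives in the universal versus reduced algebra, about the placement of $\pi$ versus $\Lambda_\GG$ on the Kac-Takesaki operators, and about the $\chi$-flips relating $\vv^\GG$, $\ww^{\Gop}$ and $\Ww^{\Gop *}$. Once the two actions are correctly identified with $(\pi\otimes\id)\Delta^{u,r}$ and $(\id\otimes\pi)\Delta^{r,u}$, the rest is formal, relying on the already-established iterated identity $\Delta^{u,r,u} = (\id\otimes\Delta^{r,u})\Delta^{u,r}$ and the commutation of $\pi$ (acting on an outer leg) past the comultiplication on the other legs. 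The second mild subtlety is the $\mrm{C}_0(\GG)$-valuedness (as opposed to only multiplier-valued), which needs the nondegeneracy of the actions $\lambda_\pi,\rho_\pi$ rather than just the formula \eqref{eq6}.
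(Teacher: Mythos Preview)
Your derivation of formula~\eqref{eq6} is essentially the paper's: identify $\lambda_\pi = (\pi\otimes\id)\Delta^{u,r}$ and $\rho_\pi = (\id\otimes\pi)\Delta^{r,u}$ directly from the definitions, then combine via the iterated coproduct identity $\Delta^{u,r,u} = (\id\otimes\Delta^{r,u})\Delta^{u,r} = (\Delta^{u,r}\otimes\id)\Delta^{r,u}$.

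The substantive divergence is in showing $\Xi(\mrm{C}_0(\GG))\subseteq\mrm{C}_0(\GG)$. Your ``density/approximation argument using nondegeneracy of the action'' is not spelled out, and it is not obvious how to complete it: the actions $\lambda_\pi,\rho_\pi$ a priori take values only in multiplier algebras, and slicing a multiplier by a state on a unital outer leg need not land you back in $\mrm{C}_0(\GG)$. The paper instead performs an explicit computation on Fourier algebra elements: it first shows $(\Delta^{u,r,u}\otimes\id)(\ww^\GG)=\Ww^\GG_{14}\ww^\GG_{24}\Ww^\GG_{34}$, and then for $a=(\id\otimes\omega)(\ww^{\GG*})\in\A(\GG)$ deduces $\Xi(a)=(\id\otimes b\omega b)(\ww^{\GG*})\in\A(\GG)$ with $b=(h\pi\otimes\id)\Ww^{\GG*}$. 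Density of $\A(\GG)$ in $\mrm{C}_0(\GG)$ finishes. This is the one genuinely nontrivial step in the lemma, and the paper's computation delivers a bonus: it shows $\Xi$ preserves $\A(\GG)$, a fact reproved later via Lemma~\ref{fourierhelp}.

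For the multiplier-algebra extension the paper does not appeal to a general extension principle for nondegenerate CP maps. It argues componentwise: $\Delta^u$ is nondegenerate, hence so is $(\pi\otimes\id\otimes\pi)\Delta^{u,r,u}$, hence it is strictly continuous; the state $h$ and slice maps are strictly continuous; therefore $\Xi$ is strictly continuous on $\mrm{C}_0(\GG)$, and its strict extension to $\M(\mrm{C}_0(\GG))$ agrees with the restriction of the von Neumann level map. Your inference of nondegeneracy from ``$\Xi(\I)=\I$ on the von Neumann level'' is heuristically right but is not the same statement; the paper's direct route avoids having to make that precise.
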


\begin{proof}
For $a\in \mrm{C}_0(\GG)$ we have 
\begin{align*}
\lambda_\pi(a)
&= (\pi \otimes \id)(\Ww^{\GG})^*(\I \otimes a) (\pi \otimes \id)(\Ww^{\GG}) \\
&= (\pi \otimes \id)(\Ww^{\GG *}(\I \otimes a)\Ww^{\GG})
= (\pi\otimes\id)\Delta^{u,r}(a).
\end{align*}
Similarly, $\rho_\pi(a) = (\id\otimes\pi)\Delta^{r,u}(a)$. Thus for $a\in \mrm{C}_0(\GG)$,
\[ \Xi(a)
= (h\otimes\id\otimes h)((\pi\otimes\id)\Delta^{u,r}\otimes\id)(\id\otimes\pi)\Delta^{r,u}(a)
= (h\pi \otimes \id \otimes h\pi)\Delta^{u,r,u}(a), \]
as claimed.

From \cite[Proposition~6.1]{KustermansUniversal} we know that $(\Delta^u\otimes\id)(\Ww^{\GG}) = \Ww^{\GG}_{13}\Ww^{\GG}_{23}$.  Thus
\begin{align*}
  (\Delta^{u,r}\otimes\id)(\ww^{\GG})
&= (\Delta^{u,r}\Lambda_\GG\otimes\id)(\Ww^{\GG})
= (\id\otimes\Lambda_\GG\otimes\id)(\Delta^u\otimes\id)(\Ww^{\GG}) \\
&= (\id\otimes\Lambda_\GG\otimes\id)(\Ww^{\GG}_{13}\Ww^{\GG}_{23})
= \Ww^{\GG}_{13}\ww^{\GG}_{23}.
\end{align*}
Hence, given $a = (\id\otimes\omega)(\ww^{\GG *}) \in \A(\GG) \subseteq \mrm{C}_0(\GG)$ for some $\omega\in \Ljd$, we see that
\begin{align*}
(h\pi\otimes\id)\Delta^{u,r}(a)
&= (h\pi\otimes\id\otimes\omega)(\ww^{\GG *}_{23}\Ww^{\GG *}_{13})
= (\id\otimes\omega)\big( \ww^{\GG *} (1\otimes(h\pi\otimes\id)(\Ww^{\GG *})) \big).
\end{align*}
This equals $(\id\otimes b\omega)(\ww^{\GG *}) \in \A(\GG) \subseteq \mrm{C}_0(\GG)$ for $b = (h\pi\otimes\id)\Ww^{\GG *} \in \M(\mrm{C}_0(\whG))$.  By density, we conclude that $(h\pi\otimes\id)\Delta^{u,r}$ maps $\mrm{C}_0(\whG)$ to itself.

Analogously, $(\Delta^{r,u}\otimes\id)(\ww^\GG) = \ww^\GG_{13} \Ww^\GG_{23}$, and so
\[ (\id\otimes h\pi)\Delta^{r,u}(a) =
(\id\otimes h\pi\otimes\omega)(\Ww^{\GG *}_{23} \ww^{\GG *}_{13})
= (\id\otimes\omega b)(\ww^{\GG *}) \in \A(\GG), \]
so also $(\id\otimes h\pi)\Delta^{r,u}$ maps $\mrm{C}_0(\whG)$ to itself. By composition, $\Xi(a) \in \A(\GG)$ and $\Xi$  maps $\mrm{C}_0(\whG)$ to itself.

By \cite[Proposition~6.1]{KustermansUniversal} we know that $\Delta^u$ is a non-degenerate $*$-homomorphism, and hence the same is true of $\Delta^{u,r,u}$.  As $\pi$ is non-degenerate, it follows that $(\pi\otimes\id\otimes\pi)\Delta^{u,r,u}$ is non-degenerate, hence strictly continuous.  The strict extension to $\M(\mrm{C}_0(\GG))$ hence agrees with $(\id\otimes\rho_\pi)\lambda_\pi$ when restricted from $\Linf$ to $\M(\mrm{C}_0(\GG))$.  As $h$ is strictly continuous, and slice maps are strictly continuous, \cite[Proposition~8.3]{Lance_HilbMod}, it follows 
that $\Xi\colon \mrm{C}_0(\GG) \rightarrow \mrm{C}_0(\GG)$ is strictly continuous, and again, the strict extension to $\M(\mrm{C}_0(\GG))$ agrees with the 
restriction of $\Xi\colon \Linf \rightarrow \Linf$.
\end{proof}

\begin{remark}\label{rem:onesided1}
The structure of this proof shows that we could analogously define one-sided averaging maps $\Xi^l\colon \LL^\infty(\GG)\rightarrow \LL^\infty(\GG), \Xi^l(a)=(h\pi\otimes\id)\Delta^{u,r}(a)$ 
and $\Xi^r\colon \LL^\infty(\GG)\rightarrow \LL^\infty(\GG), \Xi^r(a)=(\id\otimes h\pi)\Delta^{r,u}(a)$. These maps $ \Xi^l, \Xi^r $ are both contractive, CP and map $\mrm{C}_0(\GG)$ to itself.
\end{remark}

From now on we will freely view $\Xi$ as an endomorphism of $\Linf,\M(\mrm{C}_0(\GG))$ or $\mrm{C}_0(\GG)$, but for the sake of clarity we shall sometimes indicate 
which version we are using. We remark that the proof of Lemma~\ref{averaginghelp} shows that $\Xi$ maps $\A(\GG)$ to itself; see Lemma~\ref{fourierhelp} below for a different 
way to verify this.

Let us write $ \mrm{C}_0(\KK\backslash \GG/\KK) \subseteq \mrm{C}_0(\GG) $ for the image of $ \mrm{C}_0(\GG) $ under $\Xi$.

\begin{lemma}\label{lemma1}
The following hold:
\begin{itemize}
\item $\mathrm{C}_0(\KK\backslash\GG/\KK)$ is a $\mrm{C}^*$-subalgebra of $\mrm{C}_0(\GG)$ and
\begin{align*}
\mathrm{C}_0(\KK\backslash\GG/\KK) &= \{a\in \mrm{C}_0(\GG) \mid (\pi\otimes \id\otimes \pi)\Delta^{u,r,u}(a)=\I\otimes a \otimes \I\} \\
&= \{ a\in\mrm{C}_0(\GG) \mid (\pi\otimes\id)\Delta^{u,r}(a) = \I\otimes a,
(\id\otimes\pi)\Delta^{r,u}(a) = a\otimes\I \}.
\end{align*}
\item
The map $ \Xi\colon \mrm{C}_0(\GG) \rightarrow \mrm{C}_0(\GG) $ is a conditional expectation onto $ \mrm{C}_0(\KK\backslash \GG/\KK) $. 
\end{itemize}
\end{lemma}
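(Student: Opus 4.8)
The plan is to exploit the formula $\Xi = (h\pi\otimes\id\otimes h\pi)\Delta^{u,r,u}$ from Lemma~\ref{averaginghelp} together with the coassociativity-type identities for the (iterated) half-lifted comultiplications established in Section~\ref{section preliminaries}. First I would verify that $\Xi$ is idempotent. Using $\Delta^{u,r,u} = (\id\otimes\Delta^{r,u})\Delta^{u,r}$ and the fact that $h$ is a Haar state, one computes $\Xi^2$ by inserting a copy of $\Xi$ and then moving the inner $(h\pi\otimes\id\otimes h\pi)\Delta^{u,r,u}$ past the outer slices; the compatibility of $\pi$ with comultiplications turns the inner application of $\Delta^{u,r}$ into a $\Delta^u$ on the $\KK$-leg, whereupon left-invariance of $h$ on $\LL^\infty(\KK)$ collapses the duplicated factor (and symmetrically on the right, using right-invariance of $h$, for the $\rho_\pi$ part). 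This gives $\Xi^2=\Xi$, and since $\Xi$ is a normal unital contractive CP map by Lemma~\ref{averaginghelp}, Tomiyama's theorem shows $\Xi$ is a conditional expectation onto its image $\mrm{C}_0(\KK\backslash\GG/\KK)$; in particular the image is a \cst-subalgebra, which also follows from Tomiyama directly.

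Next I would identify the image. The inclusion of $\mrm{C}_0(\KK\backslash\GG/\KK)$ in the first displayed set is essentially the statement $\Xi\circ\Xi=\Xi$ unpacked: if $a=\Xi(b)$ then applying $(\pi\otimes\id\otimes\pi)\Delta^{u,r,u}$ and using the same $h$-invariance collapse shows $(\pi\otimes\id\otimes\pi)\Delta^{u,r,u}(a)=\I\otimes a\otimes\I$. Conversely, if $(\pi\otimes\id\otimes\pi)\Delta^{u,r,u}(a)=\I\otimes a\otimes\I$ then applying $(h\otimes\id\otimes h)$ and using $h(\I)=1$ gives $\Xi(a)=a$, so $a$ lies in the image. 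For the second description, I would split $\Delta^{u,r,u}=(\id\otimes\Delta^{r,u})\Delta^{u,r}$ (equivalently $(\Delta^{u,r}\otimes\id)\Delta^{r,u}$): one direction is clear since the two one-sided invariance conditions obviously imply the two-sided one, and for the reverse I would apply $h$ to one leg at a time. Concretely, if $(\pi\otimes\id\otimes\pi)\Delta^{u,r,u}(a)=\I\otimes a\otimes\I$, then slicing the third leg by $h$ and using $(\id\otimes h\pi)\Delta^{r,u} = $ the right-averaging onto the right-$\KK$-invariants, one recovers $(\pi\otimes\id)\Delta^{u,r}(a)=\I\otimes a$ provided one first knows $a$ is right-invariant; to break the circularity I would instead argue that the two-sided condition forces $\Xi(a)=a$ (as above), and then compute $(\pi\otimes\id)\Delta^{u,r}(\Xi(a))$ directly, observing that the leftmost $\pi$-leg produced by $\Delta^{u,r}$ is, after the $h\pi$-slice already present in $\Xi$, killed by left-invariance of $h$, yielding $\I\otimes\Xi(a)=\I\otimes a$; symmetrically for the right condition.

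The main obstacle I anticipate is the bookkeeping in the idempotence computation: one must carefully track which tensor legs carry $\mrm{C}_0^u$ versus $\mrm{C}_0$ factors, apply the correct intertwining relation ($(\id\otimes\Lambda_\GG)\Delta^u_\GG=\Delta^{u,r}_\GG\Lambda_\GG$ and its variants) so that a $\Delta^u$ appears on the $\KK$-leg after composing with $\pi$, and only then invoke invariance of the Haar state. Once the correct "$\Delta^u$ on the $\KK$-side, then $h$ collapses it" pattern is isolated, everything reduces to the standard fact that averaging a compact quantum group over its Haar state is idempotent; the rest is formal manipulation with slice maps and nondegeneracy, already available from Lemma~\ref{averaginghelp}. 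I would also note that one can avoid Tomiyama by checking the conditional expectation identity $\Xi(axb)=a\Xi(x)b$ for $a,b\in\mrm{C}_0(\KK\backslash\GG/\KK)$ directly from the bimodule property of $\Delta^{u,r,u}$ over its invariants, but invoking Tomiyama is cleaner and I would take that route.
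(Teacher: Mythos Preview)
Your approach is essentially the same as the paper's: the core ingredients are the computation showing that $(\pi\otimes\id)\Delta^{u,r}(\Xi(a))=\I\otimes\Xi(a)$ via the intertwining relation $(\pi\otimes\pi)\Delta^u_\GG=\Delta_\KK\pi$ and invariance of $h$, the observation that the two-sided invariance condition gives $\Xi(a)=a$ after slicing by $h$, and the appeal to Tomiyama. The paper merely orders these differently, establishing the invariance characterisation first and deducing idempotence from it, whereas you aim for $\Xi^2=\Xi$ directly; the underlying computation is identical.

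One small ordering issue: Tomiyama's theorem (as quoted from Brown--Ozawa, Theorem~1.5.10) presupposes that the range is a \cst-subalgebra and then concludes the contractive projection is a conditional expectation; it does not by itself show the image is a \cst-subalgebra. So you cannot invoke Tomiyama immediately after proving $\Xi^2=\Xi$. The paper avoids this by first proving the equality of $\mrm{C}_0(\KK\backslash\GG/\KK)$ with the set of invariants, from which the \cst-subalgebra property is obvious since $(\pi\otimes\id)\Delta^{u,r}$ and $(\id\otimes\pi)\Delta^{r,u}$ are $*$-homomorphisms. You already have all the pieces for this; just establish the invariance description before invoking Tomiyama.
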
 

\begin{proof}
Let $a\in \mrm{C}_0(\GG)$ so that $\Xi(a)\in \mrm{C}_0(\KK\backslash\GG/\KK)$. Then
\begin{align*}
(\pi\otimes\id)\Delta^{u,r}(\Xi(a))
&= (\pi\otimes\id)\Delta^{u,r}\big( (h\pi\otimes \id\otimes h\pi)\Delta^{u,r,u}(a) \big) \\
&= (h\pi\otimes \pi\otimes\id\otimes h\pi)\Delta^{u,u,r,u}(a)
= ((h\pi\otimes\pi)\Delta^u\otimes\id\otimes h\pi)\Delta^{u,r,u}(a) \\
&= ((h\otimes\id)\Delta_{\KK}\pi\otimes\id\otimes h\pi)\Delta^{u,r,u}(a) \\
&= \I \otimes (h\pi\otimes\id\otimes h\pi)\Delta^{u,r,u}(a)
= \I \otimes \Xi(a).
\end{align*}
An entirely analogous calculation shows that $(\id\otimes\pi)\Delta^{r,u}(\Xi(a)) = \Xi(a)\otimes \I$.  Conversely, let $a\in \mrm{C}_0(\GG)$ be such that $(\pi\otimes\id)\Delta^{u,r}(a) = \I\otimes a$ and $(\id\otimes\pi)\Delta^{r,u}(a) = a\otimes\I$.  Then
\[ (\pi\otimes\id\otimes\pi)\Delta^{u,r,u}(a) = ((\pi\otimes\id)\Delta^{u,r}\otimes\id)
\big( (\id\otimes\pi) \Delta^{r,u}(a) \big)
= ((\pi\otimes\id)\Delta^{u,r}\otimes\id) (a\otimes \I)
= \I \otimes a\otimes \I. \]
Now suppose that $a\in\mrm{C}_0(\GG)$ with $(\pi\otimes\id\otimes\pi)\Delta^{u,r,u}(a) = 
\I \otimes a\otimes \I$.  Then 
\[
a=(h\pi\otimes \id\otimes h\pi)\Delta^{u,r,u}(a)=\Xi(a).
\]
We have shown the stated forms for elements of $\mathrm{C}_0(\KK\backslash\GG/\KK)$. It follows easily that $\mrm{C}_0(\KK\backslash \GG/\KK)$ is a $\mrm{C}^*$-subalgebra.

It is clear from construction that $ \Xi $ is a contraction. For $a \in \mrm{C}_0(\KK\backslash \GG/\KK) $ we obtain, using the above obtained description,
\[
\Xi(a) = (h\pi \otimes \id \otimes h \pi)\Delta^{u,r,u}(a)=
(h\otimes \id\otimes h)(\I\otimes a\otimes \I)=a. 
\]
Hence $ \Xi $ is a contractive projection. From Tomiyama's theorem, \cite[Theorem 1.5.10]{BrownOzawa}, it follows that $\Xi$ is a conditional expectation.
\end{proof}

\begin{remark}\label{rem:onesided2}
We could analogously define $\mrm{C}_0(\KK \backslash \GG)$ to be the image of $(h\pi\otimes\id)\Delta^{u,r}$, and then the same argument shows that
\[ \mrm{C}_0(\KK \backslash \GG) = \{ a\in\mrm{C}_0(\GG) \,|\, (\pi\otimes\id)\Delta^{u,r}(a) = \I\otimes a \}. \]
Similarly, denote by $\mrm{C}_0(\GG / \KK)$ the image of $(\id\otimes h\pi)\Delta^{r,u}$, and then
\[ \mrm{C}_0(\GG / \KK) = \{ a\in\mrm{C}_0(\GG) \,|\, (\id\otimes\pi)\Delta^{r,u}(a) = a\otimes\I \}. \]
Hence we have shown that $\mathrm{C}_0(\KK\backslash\GG/\KK) = \mrm{C}_0(\KK \backslash \GG) \cap \mrm{C}_0(\GG / \KK)$.
\end{remark}

We now discuss some properties of slicing with functionals defined at the universal level. Given $\omega\in \mrm{C}_0^u(\GG)^*$ and $a\in \M(\mrm{C}_0(\GG))$ we write
\[
\omega\star a=(\id\otimes \omega)\Delta^{r,u}(a),\quad
a\star\omega=(\omega\otimes \id)\Delta^{u,r}(a).
\]
A priori both $\omega\star a$ and $a\star\omega$ belong to the multiplier algebra $\M(\mrm{C}_0(\GG))$, but as shown in Lemma~\ref{fourierhelp} below, 
we have in fact $\omega\star a,a\star\omega\in \mrm{C}_0(\GG)$ for any $a\in \mrm{C}_0(\GG)$. These operations turn $\mrm{C}_0(\GG)$ and $\M(\mrm{C}_0(\GG))$ into a 
bimodule over $\mrm{C}_0^u(\GG)^*$. 

By taking the Banach space dual of the map $\pi\colon \mrm{C}_0^u(\GG) \rightarrow \mrm{C}(\KK)$, we obtain a contractive algebra
homomorphism $\pi^*\colon \mrm{C}(\KK)^*\rightarrow \mrm{C}_0^u(\GG)^*$, with respect to the convolution product. In particular, $\pi^*(h)$ is an idempotent state in $\mrm{C}_0^u(\GG)^*$. Notice that the averaging map $\Xi$ can be written as $\Xi(a)=\pi^*(h)\star a \star \pi^*(h)$ for $a\in \M(\mrm{C}_0(\GG))$.

It is straightforward to check that $\A(\GG)$ and $\mrm{C}_0(\GG)$ are contractive left and right $\mrm{C}_0^u(\GG)^*$-modules, compare \cite[Proposition~4.5]{DKV_ApproxLCQG}, a result we now extend to the Fourier--Stieltjes algebras as defined in Section~\ref{section preliminaries}. As preparation, notice that the Banach space dual map $\Lambda_\GG^*\colon \mrm{C}_0(\GG)^* \rightarrow \mrm{C}_0^u(\GG)^*$ is an isometric algebra homomorphism since $\Lambda_\GG$ is a quotient map which intertwines the coproducts. Also recall that the canonical map $\Lj \rightarrow \mrm{C}_0(\GG)^*$ is an isometric algebra homomorphism which identifies $\Lj$ as an ideal in $\mrm{C}_0(\GG)^*$, see the discussion after \cite[Proposition~8.3]{KV_LCQGs}. The composition of these maps gives a canonical isometric homomorphism $\Lj \rightarrow \mrm{C}_0^u(\GG)^*$, whose image is again an ideal, see for example \cite[Proposition~8.3]{Daws_mults_si}. 

\begin{lemma} \label{fourierhelp}
The following hold:
\begin{enumerate}
\item[1)]
Let $a\in \A(\GG)$ and $\omega\in \mrm{C}_0^u(\GG)^*$. Then $a \star \omega \in\A(\GG)$ and $\|a \star \omega\|_{\A(\GG)} \leq \|a \|_{\A(\GG)}\|\omega\| $. Similarly, $\omega \star a \in \A(\GG)$ 
and $\|\omega \star a\|_{\A(\GG)} \leq \|a \|_{\A(\GG)}\|\omega\| $. 
\item[2)]
Let $b\in \B_r(\GG)$ and $\omega\in \mrm{C}_0^u(\GG)^*$. Then $b \star \omega \in\B_r(\GG)$ and $\|b \star \omega\|_{\B_r(\GG)} \leq \|b \|_{\B_r(\GG)}\|\omega\| $. Similarly, $\omega \star b \in \B_r(\GG)$ 
and $\|\omega \star b\|_{\B_r(\GG)} \leq \|b \|_{\B_r(\GG)}\|\omega\| $. 
\item[3)]
Let $b\in \B(\GG)$ and $\omega\in \mrm{C}_0^u(\GG)^*$. Then $b \star \omega \in\B(\GG)$ and $\|b \star \omega\|_{\B(\GG)} \leq \|b \|_{\B(\GG)}\|\omega\| $. Similarly, $\omega \star b \in \B(\GG)$ 
and $\|\omega \star b\|_{\B(\GG)} \leq \|b \|_{\B(\GG)}\|\omega\| $. 
\item[4)] Let $c\in \mrm{C}_0(\GG)$ and $\omega\in \mrm{C}_0^u(\GG)^*$. Then $c \star \omega \in\mrm{C}_0(\GG)$ and $\|c \star \omega\| \leq \|c \|\|\omega\| $. Similarly, $\omega \star c \in \mrm{C}_0(\GG)$ 
and $\|\omega \star c\| \leq \|c \|\|\omega\| $.
\end{enumerate}
\end{lemma}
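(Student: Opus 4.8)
The plan is to establish all four statements simultaneously by exhibiting, for each of the algebras $\A(\GG)$, $\B_r(\GG)$, $\B(\GG)$ and $\mrm{C}_0(\GG)$, a formula realising $a\star\omega$ and $\omega\star a$ as slices of the appropriate Kac--Takesaki operator against a new functional whose norm is controlled by $\|\omega\|$ times the relevant norm. The key computational input will be the pentagonal/bicharacter identities for the half-lifted coproducts, namely $(\Delta^u\otimes\id)(\Ww^\GG) = \Ww^\GG_{13}\Ww^\GG_{23}$ from \cite[Proposition~6.1]{KustermansUniversal}, together with the relation $(\id\otimes\Lambda_\GG)\Delta^u = \Delta^{u,r}\Lambda_\GG$ and its right-handed analogue, which were recorded in Section~\ref{section preliminaries}. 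These give, after slicing, that $\Delta^{u,r}$ and $\Delta^{r,u}$ act on a slice $(\id\otimes\nu)(\ww^{\GG\ast})$ (or the lifted version $(\id\otimes\nu)(\wW^{\GG\ast})$) simply by "inserting an extra leg" of the Kac--Takesaki operator.

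Concretely, I would first treat $\B(\GG)$, since it is the universal case and the others follow by restriction. For $b = (\id\otimes\nu)(\wW^{\whG\ast})$ with $\nu\in\mrm{C}_0^u(\whG)^\ast$, I compute $b\star\omega = (\omega\otimes\id)\Delta^{u,r}(b)$ using the lifted version of the identity $(\Delta^{u,r,u}\otimes\id)(\ww^\GG) = \Ww^\GG_{14}\ww^\GG_{24}\Ww^\GG_{34}$ already derived in the proof of Lemma~\ref{averaginghelp} (or rather its one-sided analogue $(\Delta^{u,r}\otimes\id)(\wW^\GG) = \wW^\GG_{13}\wW^\GG_{23}$ at the universal level). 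This expresses $b\star\omega$ as $(\id\otimes\nu')(\wW^{\whG\ast})$ where $\nu' = c\,\nu$ for $c = (\omega\otimes\id)(\wW^{\whG\ast})$ acting on the appropriate side; since $\|c\|\le\|\omega\|$ and $\mrm{c}_0^u(\whG)^\ast$ is a Banach algebra on which module multiplication is contractive, we get $\|\nu'\|\le\|\nu\|\|\omega\|$ and hence $\|b\star\omega\|_{\B(\GG)}\le\|b\|_{\B(\GG)}\|\omega\|$. The other side, and the statements for $\B_r(\GG)$ and $\A(\GG)$, follow by the same argument carried out with $\ww^{\whG}$ in place of $\wW^{\whG}$ and with $\nu$ ranging over $\mrm{C}_0(\whG)^\ast$, respectively $\Ljd$; here one uses that the isometric embeddings $\A(\GG)\hookrightarrow\B_r(\GG)\hookrightarrow\B(\GG)$ and the module actions are compatible, and that $\lambda_{\whG}\colon\Ljd\to\A(\GG)$ is completely isometric so the estimate transfers. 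For part 4), the statement $c\star\omega,\omega\star c\in\mrm{C}_0(\GG)$ with $\|c\star\omega\|\le\|c\|\|\omega\|$ follows from the already-established fact (Lemma~\ref{averaginghelp}) that $\Delta^{u,r}$ and $\Delta^{r,u}$ map $\mrm{C}_0(\GG)$ into $\M(\mrm{c}_0^u(\GG)\otimes\mrm{C}_0(\GG))$ respectively $\M(\mrm{C}_0(\GG)\otimes\mrm{c}_0^u(\GG))$ non-degenerately, so slicing with a bounded functional $\omega$ lands back in $\mrm{C}_0(\GG)$ with the obvious norm bound; the density of $\A(\GG)$ in $\mrm{C}_0(\GG)$ together with part 1) also gives this.

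The main obstacle I anticipate is bookkeeping rather than conceptual: one has to be careful about which algebra's Kac--Takesaki operator ($\ww^\GG$ versus $\ww^{\whG}$, reduced versus universal lift $\Ww$, $\wW$) appears, on which leg the new functional $c$ multiplies $\nu$ (left versus right, reflecting whether we are slicing $a\star\omega$ or $\omega\star a$), and whether the relevant module action is by left or right multiplication in the Banach algebra of functionals. Getting the leg-numbering and the adjoints/flips consistent—so that the norm estimate $\|c\nu\|\le\|c\|\|\nu\|$ genuinely applies—is the delicate part. A secondary subtlety is verifying that the functional $\nu'$ we produce really does lie in the predual we claim (e.g. that $c\nu\in\mrm{c}_0^u(\whG)^\ast$ when $c\in\M(\mrm{C}_0(\whG))$ and $\nu\in\mrm{c}_0^u(\whG)^\ast$), which uses that these preduals are modules over the multiplier algebras via the coproduct; this is where the discussion just before the lemma about $\Lambda_\GG^\ast$ being an isometric algebra homomorphism and about $\Lj$ sitting as an ideal in $\mrm{c}_0^u(\GG)^\ast$ gets used.
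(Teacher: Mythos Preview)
Your proposal is correct and follows essentially the same approach as the paper: express $b\star\omega$ (resp.\ $\omega\star b$) as a slice of the appropriate Kac--Takesaki operator against a functional of the form $\wh\omega(\,\cdot\,c)$ (resp.\ $\wh\omega(c\,\cdot)$) with $c=(\id\otimes\omega)\wW^{\whG}$ (or its universal analogue), and read off the norm bound. The paper begins with $\B_r(\GG)$ rather than $\B(\GG)$, but the computation is identical in structure; for part~4) the paper uses exactly the density-of-$\A(\GG)$ argument you mention as your second option (your first option, slicing directly from $\M(\mrm{C}_0^u(\GG)\otimes\mrm{C}_0(\GG))$, a~priori only lands in $\M(\mrm{C}_0(\GG))$, so the density route is the clean one).
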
 

\begin{proof} 
$2)$ Write $b=\wh{\lambda}(\wh\omega)\in \B_r(\GG)$ for $\wh\omega\in \mrm{C}_0(\whG)^*$. Then
\begin{equation}\begin{split}\label{eq10a}
b\star\omega&=(\omega\otimes\id)\Delta^{u,r}\bigl((\wh\omega\otimes\id){\ww}^{\whG}\bigr)=(\wh\omega \otimes \omega \otimes\id)\bigl({\ww}^{\whG}_{13}{\wW}^{\whG}_{12}\bigr)\\
&=(\wh\omega\otimes\id)\bigl({\ww}^{\whG} ((\id\otimes\omega)\wW^{\whG}\otimes \I)\bigr)=
\wh{\lambda}\bigl(\wh\omega\bigl(\cdot\,
(\id\otimes\omega)\wW^{\whG}\bigr)\bigr)\in \B_r(\GG)
\end{split}\end{equation}
as required. We also immediately get the bound on the $\B_r(\GG)$-norm of $b\star\omega$:
\[
\|b\star \omega\|_{\B_r(\GG)}=\bigl\|\wh\omega\bigl( \cdot \,(\id\otimes \omega)\wW^{\whG}\bigr)\bigr\|\le \|\wh\omega\|\|\omega\|=
\|b\|_{\B_r(\GG)}\|\omega\|.
\]
Similarly, we get 
\begin{equation}\begin{split}\label{eq10b}
\omega\star b&=(\id \otimes\omega)\Delta^{r,u}\bigl((\wh\omega\otimes\id){\ww}^{\whG}\bigr)=
(\wh\omega \otimes \id \otimes\omega)\bigl({\wW}^{\whG}_{13}{\ww}^{\whG}_{12}\bigr)\\
&=(\wh\omega\otimes\id)\bigl(((\id\otimes\omega)\wW^{\whG}\otimes \I){\ww}^{\whG} \bigr)=
\wh{\lambda}\bigl(\wh\omega\bigl(
(\id\otimes\omega)\wW^{\whG}\;\cdot \bigr)\bigr)\in \B_r(\GG),
\end{split}\end{equation}
and $\|\omega\star b\|_{\B_r(\GG)}\le \|\omega\| \|b\|_{\B_r(\GG)}$. This proves the second point. 

$1)$ If $a=\wh{\lambda}(\wh\omega)$ for $\wh\omega\in \LL^1(\whG)\subseteq \mrm{C}_0(\whG)^*$, then since $\wh{\omega}\bigl( \cdot\,(\id\otimes \omega)\wW^{\whG}\bigr)$ and $\wh{\omega}\bigl( (\id\otimes \omega)(\wW^{\whG})\,\cdot \bigr)$ are normal functionals, the above reasoning proves also the first point.

$3)$ This is completely analogous: if $b=(\id\otimes \wh\omega)(\wW^{\GG *})\in \B(\GG)$ with $\wh\omega\in \mrm{C}_0^u(\whG)^*$, then
\begin{equation}\begin{split}\label{eq10c}
b\star\omega&=(\omega\otimes\id)\Delta^{u,r}\bigl((\wh\omega\otimes\id){\Ww}^{\whG}\bigr)=(\wh\omega \otimes \omega \otimes\id)\bigl({\Ww}^{\whG}_{13}{\WW}^{\whG}_{12}\bigr)\\
&=(\wh\omega\otimes\id)\bigl({\Ww}^{\whG} ((\id\otimes\omega)\WW^{\whG}\otimes \I)\bigr)=
\bigl(\id\otimes \wh\omega\bigl(\cdot\,
(\id\otimes\omega)\WW^{\whG}\bigr)\bigr)(\wW^{\GG *})\in \B(\GG)
\end{split}\end{equation}
and $\|b\star\omega\|_{\B(\GG)}\le \|b\|_{\B(\GG)}\|\omega\|$. The argument for $ \omega \star b $ is similar.

$4)$ Since $\A(\GG)\subseteq \mrm{C}_0(\GG)$ is norm dense, we can approximate $c$ by a sequence $(a_n)_{n\in\NN}$ of elements from the Fourier algebra $\A(\GG)$. As the map $\mrm{C}_0(\GG)\ni c'\mapsto c'\star \omega\in \M(\mrm{C}_0(\GG))$ is continuous, we obtain 
\[
c\star \omega=(\lim_{n\to\infty} a_n)\star \omega=
\lim_{n\to\infty} a_n\star\omega\in \ov{\A(\GG)}=\mrm{C}_0(\GG).
\]
The norm estimate follows as $\|c\star\omega\| = \|(\id\otimes\omega)\Delta^{r,u}(c)\| \leq \|\omega\| \| \Delta^{r,u}(c) \| \leq \|\omega\| \|c\|$. Again, the argument for $\omega\star c$ is entirely analogous.
\end{proof} 

\begin{corollary}\label{corr:xi_to_fourier}
The averaging map $\Xi$ restricts to contractive linear maps $\A(\GG) \rightarrow \A(\GG)$, $\B_r(\GG)\rightarrow \B_r(\GG)$ and $\B(\GG)\rightarrow \B(\GG)$. 
Furthermore, the maps $\B_r(\GG)\rightarrow \B_r(\GG)$ and $\B(\GG)\rightarrow \B(\GG)$ are weak$^*$-weak$^*$-continuous.
\end{corollary}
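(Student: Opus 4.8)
The plan is to deduce Corollary~\ref{corr:xi_to_fourier} directly from the identity
\[
\Xi(a)=\pi^*(h)\star a\star\pi^*(h)\qquad(a\in\M(\mrm{C}_0(\GG)))
\]
recorded just before Lemma~\ref{fourierhelp}, combined with that lemma. First I would observe that $\pi^*(h)\in\mrm{C}_0^u(\GG)^*$ is a state, so $\|\pi^*(h)\|=1$. Then, given $a\in\A(\GG)$, applying part~1) of Lemma~\ref{fourierhelp} twice shows that $\pi^*(h)\star a\in\A(\GG)$ and then $(\pi^*(h)\star a)\star\pi^*(h)\in\A(\GG)$, with $\|\Xi(a)\|_{\A(\GG)}\le\|\pi^*(h)\|\,\|a\|_{\A(\GG)}\,\|\pi^*(h)\|=\|a\|_{\A(\GG)}$; so $\Xi$ restricts to a contraction on $\A(\GG)$. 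The same argument with parts~2) and 3) of Lemma~\ref{fourierhelp} gives the contractive restrictions to $\B_r(\GG)$ and $\B(\GG)$. (One should of course note that this restriction is consistent with the $\Xi$ already defined on $\M(\mrm{C}_0(\GG))$ in Lemma~\ref{averaginghelp}, since both are given by the same module formula $a\mapsto\pi^*(h)\star a\star\pi^*(h)$; the earlier remark after Lemma~\ref{lemma1} confirms that $\Xi(a)=\pi^*(h)\star a\star\pi^*(h)$ on all of $\M(\mrm{C}_0(\GG))$, and Lemma~\ref{averaginghelp} identifies this with the averaging map.)

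For the weak$^*$--weak$^*$-continuity statements, I would use the dual Banach algebra structure. Recall $\B_r(\GG)$ is the dual of $\mrm{C}_0(\whG)$ with the pairing $\la(\id\otimes\wh\omega)(\ww^{\GG*}),x\ra=\wh\omega(x)$, and $\B(\GG)$ is the dual of $\mrm{C}_0^u(\whG)$ similarly. Since $\Xi|_{\B_r(\GG)}$ is a bounded map on a dual space, to check weak$^*$-continuity it suffices (by the Krein--Smulian theorem, or just directly) to exhibit a bounded \emph{preadjoint} on the predual. From the explicit formulas \eqref{eq10a}--\eqref{eq10b} in the proof of Lemma~\ref{fourierhelp}, the map $b\mapsto b\star\omega$ on $\B_r(\GG)$ corresponds, under $\wh\lambda$, to $\wh\omega\mapsto\wh\omega(\cdot\,(\id\otimes\omega)\wW^{\whG})$, i.e.\ to right multiplication on $\mrm{C}_0(\whG)^*$ by the element $(\id\otimes\omega)\wW^{\whG}\in\M(\mrm{C}_0(\whG))$; this is weak$^*$-continuous because it is the adjoint of left multiplication by that same multiplier on $\mrm{C}_0(\whG)$. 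Symmetrically $b\mapsto\omega\star b$ is the adjoint of right multiplication on $\mrm{C}_0(\whG)$. Composing, $\Xi=(\,\cdot\star\pi^*(h))\circ(\pi^*(h)\star\cdot)$ on $\B_r(\GG)$ is weak$^*$--weak$^*$-continuous, being the adjoint of $x\mapsto(\id\otimes\pi^*(h))\wW^{\whG}\cdot x\cdot(\id\otimes\pi^*(h))\wW^{\whG}$ on $\mrm{C}_0(\whG)$ (with a suitable interpretation of $(\id\otimes\pi^*(h))\wW^{\whG}$ as a multiplier, compatible with $b=(h\pi\otimes\id)\Ww^{\GG*}$ appearing in the proof of Lemma~\ref{averaginghelp}). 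The same reasoning with \eqref{eq10c} handles $\B(\GG)$ over the predual $\mrm{C}_0^u(\whG)$.

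The one point needing slight care — and the likeliest source of friction — is bookkeeping the identification of the "multiplier" $(\id\otimes\pi^*(h))\wW^{\whG}$ (resp.\ its universal analogue) and checking it genuinely lies in $\M(\mrm{C}_0(\whG))$ (resp.\ $\M(\mrm{C}_0^u(\whG))$) so that left/right multiplication by it preserves the predual; this is essentially the content of formulas \eqref{eq10a}--\eqref{eq10c} already, so no new work is required, but one must state it cleanly. Everything else is a direct application of Lemma~\ref{fourierhelp} together with the observation that a bounded operator on a dual Banach space which arises as the Banach-space adjoint of a bounded operator on the predual is automatically weak$^*$--weak$^*$-continuous. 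I expect the whole proof to be about a paragraph once written out.
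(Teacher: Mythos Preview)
Your proposal is correct and follows essentially the same route as the paper: the paper's proof simply notes that $\Xi(a)=\pi^*(h)\star a\star\pi^*(h)$ with $\|\pi^*(h)\|=1$, invokes Lemma~\ref{fourierhelp} for contractivity, and points to the explicit formulas \eqref{eq10a}--\eqref{eq10c} as exhibiting the predual maps for weak$^*$--weak$^*$-continuity. Your write-up expands exactly this argument, with the same ingredients in the same order.
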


\begin{proof}
As $ \Xi(a) = \pi^*(h) \star a \star \pi^*(h) $, and $\|\pi^*(h)\| = \|h\| = 1$, this follows immediately from Lemma~\ref{fourierhelp} together with equations \eqref{eq10a}, \eqref{eq10b}, \eqref{eq10c} whose form shows weak$^*$-weak$^*$-continuity of $\Xi$.
\end{proof}

We write $\A(\KK\backslash\GG/\KK) \subseteq \A(\GG)$ (resp.~$\B_r(\KK\backslash \GG/\KK)\subseteq \B_r(\GG)$ and $\B(\KK\backslash \GG/\KK)\subseteq \B(\GG)$) for the image of $\A(\GG)$ (resp.~$\B_r(\GG)$, $\B(\GG)$) under $\Xi$.

As $\A(\GG) \cong \LL^1(\wh\GG)$, the averaging map induces a map $\Xi_1\colon \LL^1(\wh\GG) \rightarrow \LL^1(\wh\GG)$, and taking the Banach space adjoint, a normal map $\Xi_\infty\colon \LL^\infty(\wh\GG) \rightarrow \LL^\infty(\wh\GG)$.  We now explore what these maps are.  Let $p_e\in\ell^\infty(\wh\KK)$ denote the central projection onto the matrix factor corresponding to the trivial representation $e\in\Irr(\KK)$.

\begin{lemma}\label{lem:xi1xiinf}
The maps $\Xi_1\colon \LL^1(\wh\GG) \rightarrow \LL^1(\wh\GG)$ and $\Xi_\infty\colon \LL^\infty(\wh\GG) \rightarrow \LL^\infty(\wh\GG)$ are given by
\[ \Xi_1(\wh\omega) = \wh\omega(\wh\pi(p_e)\cdot\wh\pi(p_e)) \quad(\wh\omega\in\LL^1(\wh\GG)),
\qquad \Xi_\infty(\wh x) = \wh\pi(p_e) \wh x \wh\pi(p_e) \quad(\wh x\in\LL^\infty(\wh\GG)),\]
respectively. 
\end{lemma}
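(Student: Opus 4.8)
The plan is to identify $\Xi_1$ and $\Xi_\infty$ by unwinding the formula $\Xi(a) = \pi^*(h)\star a\star\pi^*(h)$ through the isomorphism $\A(\GG)\cong\Ljd$, $\wh\lambda(\wh\omega)\mapsto\wh\omega$. First I would compute, for $a = \wh\lambda(\wh\omega)$ with $\wh\omega\in\Ljd$, what $a\star\pi^*(h)$ and $\pi^*(h)\star a$ are as Fourier algebra elements. Equations \eqref{eq10a} and \eqref{eq10b} from the proof of Lemma~\ref{fourierhelp} do exactly this: with $\omega = \pi^*(h)$ one gets $a\star\pi^*(h) = \wh\lambda\bigl(\wh\omega(\,\cdot\,(\id\otimes\pi^*(h))\wW^{\whG})\bigr)$ and $\pi^*(h)\star a = \wh\lambda\bigl(\wh\omega((\id\otimes\pi^*(h))\wW^{\whG}\,\cdot\,)\bigr)$. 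Here one must be slightly careful: in Lemma~\ref{fourierhelp} the slicing functional lives in $\mrm{C}_0^u(\GG)^*$, whereas $\pi^*(h)\in\mrm{C}_0^u(\GG)^*$ is obtained from $h\in\mrm{C}(\KK)^*$, so $(\id\otimes\pi^*(h))\wW^{\whG} = (\id\otimes h)(\id\otimes\pi)\wW^{\whG} = (\id\otimes h)(\pi\otimes\id)\Ww^{\GG}$... — more to the point, $(\id\otimes\pi^*(h))(\wW^{\GG *})$ is the image under $\wh\pi$ of the analogous slice of $\Ww^{\KK}$ by $h$, using that $\wh\pi$ and $\pi$ are dual. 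So the key computational step is:

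\smallskip
\emph{Claim: $(\id\otimes\pi^*(h))(\wW^{\GG}) = \wh\pi(p_e)$, equivalently $(\id\otimes h)(\pi\otimes\id)(\chi(\Ww^{\GG})) = \wh\pi(p_e)$.}

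\smallskip
\noindent To see this, recall $\wh\pi\colon\ell^\infty(\whK)\to\Linfd$ intertwines the Kac--Takesaki operators in the sense that $(\id\otimes\wh\pi)(\ww^{\whK}) = (\pi\otimes\id)(\Ww^{\GG})$ up to the appropriate leg identifications (this is the statement, recalled in the text before Lemma~\ref{averaginghelp} and from \cite{MRW_Hms_QGs}, that $\pi$ and $\wh\pi$ are encoded at the level of the bicharacters). Using the explicit form \eqref{eq:wcmpt} of $\ww^{\whK} = \sum_{\alpha\in\Irr(\KK)}\sum_{i,j} U^\alpha_{i,j}\otimes e^\alpha_{i,j}$ and the fact that $h(U^\alpha_{i,j}) = \delta_{\alpha,e}$ (the Haar state annihilates all matrix coefficients of nontrivial irreducibles and $h(1)=1$, by the Schur orthogonality relations \eqref{eq:orthog_matrix} applied with $\beta = e$), we get $(h\otimes\id)(\ww^{\whK}) = p_e$, the minimal central projection onto the trivial block of $\mrm{c}_0(\KK)$. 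Applying $\wh\pi$ and transporting via the intertwining relation gives $(\id\otimes\pi^*(h))(\wW^{\GG}) = \wh\pi(p_e)$. One should also note $\wh\pi(p_e) = \wh\pi(p_e)^*$ since $p_e$ is a projection and $\wh\pi$ is a $*$-homomorphism, so the factor appearing on both sides of $a$ is this single self-adjoint projection.

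\smallskip
Granting the claim, we substitute back: for $a = \wh\lambda(\wh\omega)$,
\[
\Xi(a) = \pi^*(h)\star a\star\pi^*(h) = \wh\lambda\bigl(\wh\omega(\wh\pi(p_e)\,\cdot\,\wh\pi(p_e))\bigr),
\]
where I am combining the two one-sided computations (the left and right module actions commute on the nose since one is built from $\Delta^{r,u}$ and the other from $\Delta^{u,r}$, as used to define $\Delta^{u,r,u}$ unambiguously, and concretely because $\wW^{\GG}$ sits in the first leg on the left slice and in a disjoint position on the right). By definition of $\Xi_1$ as the transported map on $\Ljd$, this reads $\Xi_1(\wh\omega) = \wh\omega(\wh\pi(p_e)\cdot\wh\pi(p_e))$, which is the first formula. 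Finally, $\Xi_\infty = (\Xi_1)^*$ is the Banach-space adjoint, and the adjoint of $\wh\omega\mapsto\wh\omega(\wh\pi(p_e)\,\cdot\,\wh\pi(p_e))$ on $\Ljd$ is exactly $\wh x\mapsto\wh\pi(p_e)\wh x\wh\pi(p_e)$ on $\Linfd$, using $\la\wh x,\wh\omega(\wh\pi(p_e)\,\cdot\,\wh\pi(p_e))\ra = \la\wh\pi(p_e)\wh x\wh\pi(p_e),\wh\omega\ra$; this is well defined and normal precisely because $\wh\pi(p_e)\in\Linfd$. The main obstacle is the claim: getting the leg numbering and the direction of the intertwining between $\pi$ and $\wh\pi$ exactly right, and confirming that $(\id\otimes\pi^*(h))$ applied to the half-universal lift $\wW^{\GG}$ (rather than to $\ww^{\GG}$) still lands in $\Linfd$ and equals $\wh\pi(p_e)$ — which it does because $h\circ\pi = h\pi$ factors through $\Lambda_\GG$ in the relevant leg, $h$ being a state on the reduced algebra $\mrm{C}(\KK)$.
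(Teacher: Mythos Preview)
Your proposal is correct and follows essentially the same route as the paper: use the formulas from Lemma~\ref{fourierhelp} and the bicharacter intertwining to identify $(\id\otimes\pi^*(h))\wW^{\wh\GG} = \wh\pi\big((\id\otimes h)\ww^{\wh\KK}\big) = \wh\pi(p_e)$, then dualise. One small correction to your leg bookkeeping: in your Claim the operator should be $\wW^{\wh\GG}$, not $\wW^{\GG}$ (this is what appears in \eqref{eq10a}--\eqref{eq10b}), and correspondingly the intertwining reads $(\id\otimes\pi)(\wW^{\wh\GG}) = (\wh\pi\otimes\id)(\ww^{\wh\KK})$---once this is fixed your argument goes through exactly as written.
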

\begin{proof}
As in the proof of Corollary~\ref{corr:xi_to_fourier}, we know that for $a\in\A(\GG)$ we have that $\Xi(a) = \pi^*(h) \star a \star \pi^*(h)$.  Then, given $a = \wh\lambda(\wh\omega)$ for some $\wh\omega\in\LL^1(\wh\GG)$, by the proof of Lemma~\ref{fourierhelp} we know that
\[ \pi^*(h) \star a \star \pi^*(h) = \wh\lambda\big(
\wh\omega\big( (\id\otimes\pi^*(h))\wW^{\wh\GG} \cdot (\id\otimes\pi^*(h))\wW^{\wh\GG} \big)
\big). \]
As $(\pi\otimes\id)(\Ww^{\GG}) = (\id\otimes\wh\pi)(\ww^{\KK})$, equivalently, $(\id\otimes\pi)(\wW^{\wh\GG}) = (\wh\pi\otimes\id)(\ww^{\wh\KK})$, it follows that $(\id\otimes\pi^*(h))\wW^{\wh\GG} = \wh\pi\big( (\id\otimes h)\ww^{\wh\KK} \big) = \wh\pi(p_e)$.  Hence
\[ \Xi(\wh\lambda(\wh\omega)) = \Xi(a) = \wh\lambda(\wh\omega(\wh\pi(p_e)\cdot\wh\pi(p_e))), \]
and the stated formula for $\Xi_1$ follows.  The formula for $\Xi_\infty$ follows by direct calculation.
\end{proof}

Let us next discuss the compatibility of $\Xi $ with CB multipliers. 
As usual, given $\omega\in \mrm{C}^u_0(\GG)^*$ we denote by $\overline\omega$ the functional defined by $\overline{\omega}(x)=\overline{\omega(x^*)}$ for $x\in \mrm{C}_0^u(\GG)$. 
Recall from \cite[Section~7.9]{KustermansUniversal} that the scaling automorphism group and the (unitary) antipode admit lifts to the universal level $\mrm{C}_0^u(\GG)$, 
which we denote by $(\tau_t^u)_{t\in \RR}, R_u$, and $S_u$, respectively. In the same way as at the reduced level, they are connected via the formula $S_u=R_u \tau^u_{-i/2}$.

Whenever $\omega\in\mrm{C}_0^u(\GG)^*$ is such that the map $D(S_u)\ni a \mapsto 
\ov{\la \omega , S_u(a)^*\ra }\in \CC$ extends to a bounded functional on $\mrm{C}_0^u(\GG)$, we denote this extension by $\omega^\sharp$. This is in agreement with the usual definition 
of $\LL^1_{\sharp}(\GG)\subseteq\Lj$, see \cite[Definition 2.3]{KVVN}.

\begin{lemma}
We have $\ov{\pi^*(h)}=\pi^*(h)=\pi^*(h)^{\sharp}$.
\end{lemma}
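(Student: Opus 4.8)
The plan is to verify the three asserted equalities by unwinding the definitions of $\pi^*$, of the involution $\overline{\,\cdot\,}$, and of the $\sharp$-operation, and exploiting the fact that the Haar state $h$ of the compact quantum group $\KK$ is $R_\KK$-invariant and $\sigma^h$-invariant (equivalently, $h\circ R_\KK = h$ and $h$ is a KMS state, indeed a trace-like state in the sense that $h\circ\tau^\KK_t = h$ for all $t$, and $h = \bar h$ since $h$ is a state). First I would recall that $\pi\colon \mrm{C}_0^u(\GG)\to\mrm{C}^u(\KK)\xrightarrow{\Lambda_\KK}\mrm{C}(\KK)$ intertwines the comultiplications and also the universal scaling groups and unitary antipodes: $\pi\circ\tau^u_t = \tau^{u,\KK}_t\circ\pi$, $\pi\circ R_u = R_{u,\KK}\circ\pi$, and hence $\pi\circ S_u = S_{u,\KK}\circ\pi$. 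These are standard facts about morphisms of locally compact quantum groups (the morphism $\pi$ respects all the canonical structure maps at the universal level), which I would cite from \cite{KustermansUniversal} or \cite{MRW_Hms_QGs}; combined with $\Lambda_\KK$ intertwining the corresponding maps on $\mrm{C}^u(\KK)$ and $\mrm{C}(\KK)$, we get $\pi$ intertwines the (reduced-level) $R_\KK,\tau^\KK_t,S_\KK$ as well.

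For $\overline{\pi^*(h)} = \pi^*(h)$: for $x\in\mrm{C}_0^u(\GG)$ we compute $\overline{\pi^*(h)}(x) = \overline{\pi^*(h)(x^*)} = \overline{h(\pi(x^*))} = \overline{h(\pi(x)^*)} = \bar h(\pi(x)) = h(\pi(x)) = \pi^*(h)(x)$, where the penultimate step uses that $h$ is a state, so $\bar h = h$. For $\pi^*(h) = \pi^*(h)^\sharp$: by definition $\pi^*(h)^\sharp$ is (the bounded extension of) $a\mapsto \overline{\langle \pi^*(h), S_u(a)^*\rangle} = \overline{h(\pi(S_u(a))^*)} = \overline{h(\pi(S_u(a))^*)}$. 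Using $\pi\circ S_u = S_\KK\circ\pi$ this equals $\overline{h(S_\KK(\pi(a))^*)}$. Now I would invoke the identity $h(S_\KK(b)^*) = \overline{h(b)}$ valid for $b$ in the domain of $S_\KK$, which for a compact quantum group follows from $h\circ R_\KK = h$ together with $h\circ\tau^\KK_{-i/2}$ acting as $h$ on the relevant dense subalgebra (on $\Pol(\KK)$ one has $h\circ\tau^\KK_t = h$, so $h\circ S_\KK = h\circ R_\KK\circ\tau^\KK_{-i/2} = h$ there, and the general statement follows by the KMS/analyticity machinery for $h$); this is essentially \cite[equation following Prop.~8.?]{KV_LCQGs} or a routine consequence of \cite[Def.~2.3]{KVVN}. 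Thus $\overline{h(S_\KK(\pi(a))^*)} = \overline{\overline{h(\pi(a))}} = h(\pi(a)) = \pi^*(h)(a)$, so the functional $a\mapsto \overline{\langle\pi^*(h),S_u(a)^*\rangle}$ already equals $\pi^*(h)$ on the core $D(S_u)$ and hence extends boundedly to $\pi^*(h)$, giving $\pi^*(h)^\sharp = \pi^*(h)$.

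The main obstacle — or rather the only point requiring care — is justifying the identity $h(S_\KK(b)^*) = \overline{h(b)}$ cleanly, i.e.\ handling the unboundedness of $S_\KK$ and the domain questions, and checking that $\pi$ genuinely intertwines $S_u$ with $S_\KK$ (including the domain inclusion $\pi(D(S_u))\subseteq D(S_\KK)$). The cleanest route is to first establish everything on the Hopf $*$-algebra $\Pol(\KK)$, where $S_\KK$ is everywhere defined and $h\circ S_\KK = h$ is immediate from invariance of $h$ under $\tau^\KK_t$ and $R_\KK$, then note that $\Pol(\KK)$ is a core for $S_\KK$ and that $\pi$ maps a suitable core of $S_u$ into (the universal version of) $\Pol(\KK)$; a density/closedness argument then promotes the identity of functionals from the core to all of $D(S_u)$, yielding that the bounded extension exists and equals $\pi^*(h)$. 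Alternatively, one can avoid $S_\KK$ altogether by noting that for the compact quantum group $\KK$ the Haar state satisfies $h = h\circ R_\KK$ and is $\tau^\KK$-invariant, so $h^\sharp = h$ at the level of $\KK$ (this is the well-known statement that the Haar state of a compact quantum group lies in $\LL^1_\sharp$ and is $\sharp$-invariant), and then transport this along $\pi^*$ using the intertwining of the $\sharp$-operations induced by $\pi\circ S_u = S_\KK\circ\pi$.
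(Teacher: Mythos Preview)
Your proposal is correct and follows essentially the same approach as the paper: the first equality via $\pi$ being a $*$-homomorphism and $h$ a state, the second via $\pi$ intertwining the antipodes (the paper cites \cite[Proposition~3.10]{MRW_Hms_QGs}) together with $h\circ S_\KK = h$ on $D(S_\KK)$. Your treatment is more detailed on the domain issues and the derivation of $h\circ S_\KK = h$ from $h\circ R_\KK = h$ and $h\circ\tau^\KK_t = h$, whereas the paper simply asserts $h\,S_\KK = h$ on $D(S_\KK)$ as a known fact.
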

\begin{proof}
The first equality is immediate since $\pi$ is a $\star$-homomorphism and $h$ is a state. For the second, recall that $\pi$ commutes with the 
antipode, see \cite[Proposition 3.10]{MRW_Hms_QGs}, and $h S_{\KK}=h$ on $D(S_{\KK})$.
\end{proof}

We wish to study convolution of multipliers by elements of $\mrm{C}_0^u(\GG)^*$. To avoid developing some theory around applying completely bounded maps to multiplier algebras, we shall use von Neumann algebra techniques. By \cite[Proposition~3.12.3]{Pedersen_Book}, for example, for a \cst-algebra $\mc A$ contained in a von Neumann algebra $\M$ we can identify $\M(\mc A)$ with $\{ x\in\M \,|\, xa,ax\in\mc A \ (a\in\mc A)\}$.  In particular, we will view the \cst-algebra $\mrm{C}_0^{u}(\GG)\otimes \mrm{C}_0(\whG)$ as being contained in the von 
Neumann algebra $\mrm{C}^{u}_0(\GG)^{**}\bar{\otimes} \Linfd$, and so consider $\Ww^{\GG}$ as an element of $\mrm{C}^{u}_0(\GG)^{**}\bar{\otimes} \Linfd$.

\begin{lemma} \label{cbhelp}
Let $a\in \M_{cb}^l(\A(\GG))$ and $\omega\in \mrm{C}_0^u(\GG)^*$. 
\begin{enumerate} 
\item[1)] We have $ a \star \omega \in \M_{cb}^l(\A(\GG)) $ and $ \|a \star \omega\|_{cb} \leq \|a\|_{cb} \|\omega\| $.  Furthermore $\Theta^l(a\star\omega)$ is the map
\begin{equation}\label{eq2}
\Linfd\ni \wh x \mapsto 
(\omega\otimes \id)\bigl( (\id\otimes \Theta^l(a)) ( (\I\otimes \wh x)\Ww^{\GG *})\Ww^{\GG }\bigr)\in \Linfd. \end{equation}
\item[2)] Take $\omega\in \mrm{C}_0^u(\GG)^*$ and assume that $\ov{\omega}^{\sharp}$ exists. Then $\overline{ \overline{\omega}^\sharp } \star a \in \M_{cb}^l(\A(\GG))$ 
and $\| \overline{ \overline{\omega}^\sharp } \star a \|_{cb} \leq \|\omega\|\|a\|_{cb} $.
Furthermore, $\overline{ \overline{\omega}^\sharp } \star a = S^{-1}(S^{-1}(a)^*\star\omega)^*
$ and $\Theta^l( \overline{ \overline{\omega}^\sharp } \star a )$ is the map
\begin{equation}\label{eq2a}
\Linfd\ni \wh x \mapsto (\ov\omega\otimes \id)\bigl( \Ww^{\GG *} (\id\otimes \Theta^l(a)) ( \Ww^{\GG}(\I\otimes \wh x) )\bigr)
\in \Linfd. \end{equation}
\end{enumerate}
\end{lemma}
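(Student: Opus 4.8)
The plan is to treat both statements as consequences of the description of $\M^l_{cb}(\A(\GG))$ via left centralisers of $\Ljd$ (equivalently, via normal left $\Ljd$-module maps on $\Linfd$) together with the half-lifted comultiplications $\Delta^{u,r}$ and $\Delta^{r,u}$ and the fact that $\Ww^\GG$ implements the relevant coproducts.

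For part 1), I would first observe that $a\star\omega = (\omega\otimes\id)\Delta^{u,r}(a)$ lies in $\M(\mrm{C}_0(\GG))\subseteq\Linf$, and compute, for $\wh x\in\Linfd$, the operator $(a\star\omega)\wh x \in\Linfd$ by expanding $\Delta^{u,r}(a) = \Ww^{\GG *}(\I\otimes a)\Ww^\GG$ inside the von Neumann algebra $\mrm{C}_0^u(\GG)^{**}\bar\otimes\Linfd$. Using that $\Theta^l(a)$ is the normal extension of left multiplication by $a$ on $\Linfd$ (i.e.\ $a\wh x = \Theta^l(a)(\wh x)$ for $\wh x$ ranging over $\A(\GG)$, extended normally), one rewrites
\[
(a\star\omega)\wh x = (\omega\otimes\id)\bigl( \Ww^{\GG *}(\I\otimes a)\Ww^\GG (\I\otimes \wh x) \bigr)
= (\omega\otimes\id)\bigl( (\id\otimes\Theta^l(a))\bigl( (\I\otimes \wh x')\Ww^{\GG *} \bigr)\Ww^\GG \bigr)
\]
after moving $\wh x$ through $\Ww^\GG$ appropriately; the key identity is that $\Ww^\GG(\I\otimes\wh x)\Ww^{\GG *}$ implements $\Delta_{\whG}$ on the second leg, which lets one transfer the action of $a$ onto the result. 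One then checks that the resulting formula \eqref{eq2} defines a normal left $\Ljd$-module map on $\Linfd$: normality is clear since it is built from slices of normal maps, and the module property follows because $\Theta^l(a)$ is a module map and $\Ww^\GG$ intertwines the coproduct with left translation. Invoking \cite[Corollary 4.4]{JungeNeufangRuan} as quoted in Section~\ref{section multiplier}, this module map is $\Theta^l(b)$ for a unique $b\in\M^l_{cb}(\A(\GG))$, and one identifies $b=a\star\omega$ by checking they agree as elements of $\Linf$ (both are $(\omega\otimes\id)\Delta^{u,r}(a)$). The norm bound $\|a\star\omega\|_{cb}\le\|a\|_{cb}\|\omega\|$ is immediate from \eqref{eq2}, since slicing by $\omega$ contributes a factor $\|\omega\|$ and $\id\otimes\Theta^l(a)$ is completely bounded with cb-norm $\|a\|_{cb}$.

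For part 2), the plan is to deduce everything from part 1) by applying the (unitary) antipode $S$, or rather the flip-and-adjoint trick. The identity to establish is $\overline{\overline\omega^\sharp}\star a = S^{-1}\bigl( (S^{-1}(a)^*\star\omega)^* \bigr)$; I would verify this at the level of $\Linf$ using that $R_\GG$ is antimultiplicative and $\star$-preserving, that $\tau^\GG$ and $R_\GG$ are compatible with $\Delta^{u,r}$ and $\Delta^{r,u}$, and the relation between $\omega^\sharp$, $\overline\omega$ and $S_u$ recorded just before the lemma; this is exactly the standard bookkeeping that converts a right convolution into a left convolution. Granting that identity, since $a\in\M^l_{cb}(\A(\GG))$ implies $S^{-1}(a)^*\in\M^l_{cb}(\A(\GG))$ with the same cb-norm (the class of CB left multipliers is stable under $S^{-1}$ and adjoint — this uses the known behaviour of multipliers under the antipode), part 1) gives $S^{-1}(a)^*\star\omega\in\M^l_{cb}(\A(\GG))$ with norm $\le\|a\|_{cb}\|\omega\|$, and applying $S^{-1}$ and $*$ again preserves membership and the bound, yielding $\overline{\overline\omega^\sharp}\star a\in\M^l_{cb}(\A(\GG))$ with $\|\overline{\overline\omega^\sharp}\star a\|_{cb}\le\|\omega\|\|a\|_{cb}$. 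Finally, formula \eqref{eq2a} for $\Theta^l(\overline{\overline\omega^\sharp}\star a)$ is obtained by transporting \eqref{eq2} through these operations, or more directly by the same computation as in part 1) but using $\Delta^{r,u}(a) = \vV^\GG(a\otimes\I)\vV^{\GG *}$ / the left-hand implementation $\Ww^{\GG *}(\id\otimes\Theta^l(a))(\Ww^\GG(\I\otimes\wh x))$.

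The main obstacle I expect is the careful justification of the manipulations inside the multiplier algebra — specifically, that the formulas \eqref{eq2}, \eqref{eq2a} genuinely land in $\Linfd$ (not merely in some larger algebra) and define \emph{normal} module maps, so that \cite[Corollary 4.4]{JungeNeufangRuan} applies; the paper has deliberately set up the convention of working inside $\mrm{C}_0^u(\GG)^{**}\bar\otimes\Linfd$ precisely to make this rigorous, and the computation requires keeping track of which leg lives in which von Neumann algebra and checking that $\Ww^\GG$, though only a multiplier of $\mrm{C}_0^u(\GG)\otimes\mrm{C}_0(\whG)$, behaves correctly under the slices. The secondary technical point is the antipode identity in part 2), where one must handle the unbounded operators $S$, $S_u$, $\tau_{-i/2}$ on suitable cores and confirm that $\overline\omega^\sharp$ existing is exactly the hypothesis needed for the manipulation to close.
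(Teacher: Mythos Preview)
Your overall architecture for both parts matches the paper's: for 1) define $T$ by \eqref{eq2}, check it is a normal CB left $\Ljd$-module map, invoke \cite[Corollary~4.4]{JungeNeufangRuan}, and identify the resulting multiplier with $a\star\omega$; for 2) reduce to 1) via the antipode identity $\overline{\overline\omega^\sharp}\star a = S^{-1}(S^{-1}(a)^*\star\omega)^*$, using stability of CB multipliers under $b\mapsto S^{-1}(b)^*$ (which is precisely \cite[Lemma~4.8, Proposition~4.9]{DKV_ApproxLCQG} as cited by the paper), and then transport \eqref{eq2} through these operations to obtain \eqref{eq2a}. So the plan is essentially the paper's.

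There is, however, a genuine conceptual slip in your sketch of 1). You write that ``$\Theta^l(a)$ is the normal extension of left multiplication by $a$ on $\Linfd$, i.e.\ $a\wh x = \Theta^l(a)(\wh x)$''. This is false: $a\in\Linf$ while $\wh x\in\Linfd$, so $a\wh x$ is merely a product in $\B(\LdG)$ and need not lie in $\Linfd$; and $\Theta^l(a)$ is \emph{not} a multiplication operator. The correct link between $a$ and $\Theta^l(a)$ is the identity
\[
(a\otimes\I)\,\ww^{\GG *} \;=\; (\id\otimes\Theta^l(a))(\ww^{\GG *}),
\]
obtained by slicing on the right with $\wh\omega\in\Ljd$ and using the defining relation $a\,\wh\lambda(\wh\omega)=\wh\lambda(\Theta^l(a)_*\wh\omega)$. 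Consequently your displayed computation of $(a\star\omega)\wh x$ is not the right object to compute: the identification step is \emph{not} ``$(a\star\omega)\wh x = T(\wh x)$'' but rather ``$(\id\otimes T)(\ww^{\GG *}) = ((a\star\omega)\otimes\I)\ww^{\GG *}$'', or equivalently $T_*(\wh\omega)=\Theta^l(a\star\omega)_*(\wh\omega)$ for $\wh\omega\in\Ljd$. This is what the reference to \cite[Proposition~4.5]{DKV_ApproxLCQG} is doing, with $\Ww^\GG$ in place of $\ww^\GG$; once you fix this, the rest of your outline goes through.
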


\begin{proof}
$ 1) $ This claim is analogous to \cite[Proposition~4.5]{DKV_ApproxLCQG}.  The functional $\omega$ may be regarded as a normal functional on $\mrm{C}_0^u(\GG)^{**}$. 
Therefore, given the discussion above, it makes sense to define
\[  T\colon \Linfd\ni \wh x \mapsto 
(\omega\otimes \id)\bigl( (\id\otimes \Theta^l(a)) ( (\I\otimes \wh x)\Ww^{\GG *})\Ww^{\GG }\bigr)\in \Linfd. \]
Observe that $\|T\|_{cb}\le \|a\|_{cb}\|\omega\|$. Now one can proceed exactly as in \cite[Proposition 4.5]{DKV_ApproxLCQG} to show that $T$ is a module map on $\Linfd$ and that the associated multiplier is $a\star\omega\in \M^l_{cb}(\A(\GG))$ with $\Theta^l(a\star\omega)=T$.

$ 2) $ It follows from \cite[Lemma 4.8, Proposition~4.9]{DKV_ApproxLCQG} that $a\in D(S^{-1})$ and that $b = S^{-1}(a)^* $ is contained in $\M_{cb}^l(\A(\GG))$ with $\|b\|_{cb} = \|a\|_{cb}$. The previous point gives $b\star \omega\in \M^l_{cb}(\A(\GG))$, hence also $b\star\omega\in D(S^{-1})$.  We claim that 
\[
S^{-1}( b\star\omega)=\ov{\omega}^{\sharp}\star S^{-1}(b), 
\]
compare \cite[Lemma 4.11]{Daws_Bohr}.
Indeed, using \cite[Propositions 7.2, 9.2]{KustermansUniversal} one easily checks that $\chi (R_u\otimes R)\Delta^{u,r}=\Delta^{u,r} R$ and $(\tau^u_t\otimes \tau_t)\Delta^{u,r}=\Delta^{u,r} \tau_t\,(t\in\RR)$. Then
\[\begin{split}
&\quad\;
S^{-1}(b\star \omega)=
R \tau_{i/2}\bigl((\omega\otimes \id)\Delta^{u,r}(b)\bigr)=
R\bigl( (\omega\tau^u_{-i/2}\otimes \id)\Delta^{u,r}(\tau_{i/2}(b))\bigr)\\
&=
(\id\otimes \omega\tau^u_{-i/2} R^u)\Delta^{r,u} (R\tau_{i/2}(b))=
(\id\otimes \ov{\omega}^{\sharp})\Delta^{r,u}( S^{-1}(b))=
\ov{\omega}^{\sharp}\star S^{-1}(b).
\end{split}\]
In the above calculation we used the fact that $\omega S_u=\ov{\omega}^{\sharp}$ on $D(S_u)$, in particular $\omega\tau^u_{-i/2}$ is bounded. Using again the previous point and \cite[Lemma 4.8, Proposition 4.9]{DKV_ApproxLCQG} we conclude that
\[ \ov{\ov{\omega}^{\sharp}} \star a
= \ov{\ov{\omega}^{\sharp}} \star S^{-1}(b)^*
= \bigl( \ov{\omega}^{\sharp}\star S^{-1}(b)\bigr)^*
= S^{-1}(b\star\omega)^*
\in  \M^l_{cb}(\A(\GG)). \]
Furthermore, we now see that
\[ \big\| \ov{\ov{\omega}^{\sharp}} \star a \big\|_{cb}
= \big\| S^{-1}(b\star\omega)^* \big\|_{cb}
= \big\| b\star\omega \big\|_{cb}
\leq \|b\|_{cb} \|\omega\|
= \|a\|_{cb} \|\omega\|, \]
as claimed.  Finally, by \cite[Proposition 4.9]{DKV_ApproxLCQG} we know that $\Theta^l(b) = \Theta^l(S(a^*)) = \Theta^l(a)^\dagger$.  Thus, for $\wh x \in \Linfd$,
\begin{align*}
\Theta^l( \overline{ \overline{\omega}^\sharp } \star a )(\wh x)
&= \Theta^l\big( S^{-1}(b\star\omega)^* \big)(\wh x)
= \Theta^l\big( b\star\omega \big)(\wh x^*)^* \\
&= (\omega\otimes \id)\bigl( (\id\otimes \Theta^l(b)) ( (\I\otimes \wh x^*)\Ww^{\GG *})\Ww^{\GG }\bigr)^*,
\end{align*}
using \eqref{eq2}. This is then equal to
\begin{align*}
& (\ov\omega\otimes \id)\bigl( \Ww^{\GG *} (\id\otimes \Theta^l(b)^\dagger) ( \Ww^{\GG}(\I\otimes \wh x) )\bigr)
= (\ov\omega\otimes \id)\bigl( \Ww^{\GG *} (\id\otimes \Theta^l(a)) ( \Ww^{\GG}(\I\otimes \wh x) )\bigr)
\end{align*}
as claimed.
\end{proof}

We are now in a position to show that the averaging procedure with respect to a compact quantum subgroup is compatible with multipliers. 

\begin{proposition} \label{averagingcb}
Let $\GG$ be a locally compact quantum group with a compact quantum subgroup $\KK\subseteq\GG$. Then the averaging map $ \Xi\colon \M(\mrm{C}_0(\GG)) \rightarrow \M(\mrm{C}_0(\GG))$ 
restricts to a contractive weak$^*$-weak$^*$-continuous map $\M_{cb}^l(\A(\GG)) \rightarrow \M_{cb}^l(\A(\GG))$. 
\end{proposition}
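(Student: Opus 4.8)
The plan is to realise $\Xi$ on multipliers as a composition of the two one-sided convolution operations studied in Lemma~\ref{cbhelp}, and then to check weak$^*$-continuity separately. Recall from the discussion preceding Corollary~\ref{corr:xi_to_fourier} that for $a\in\M(\mrm{C}_0(\GG))$ we have $\Xi(a)=\pi^*(h)\star a\star\pi^*(h)$. So first I would deal with the right slice: by Lemma~\ref{cbhelp}(1), applied with $\omega=\pi^*(h)$, we get $a\star\pi^*(h)\in\M_{cb}^l(\A(\GG))$ with $\|a\star\pi^*(h)\|_{cb}\le\|a\|_{cb}\,\|\pi^*(h)\|=\|a\|_{cb}$. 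For the left slice I would invoke Lemma~\ref{cbhelp}(2): the point is that $\pi^*(h)$ is self-adjoint and equals $\pi^*(h)^\sharp$ by the lemma computing $\ov{\pi^*(h)}=\pi^*(h)=\pi^*(h)^\sharp$, so writing $\omega=\pi^*(h)$ we have $\ov\omega^\sharp=\omega$ exists, and hence $\ov{\ov\omega^\sharp}\star b=\omega\star b=\pi^*(h)\star b\in\M_{cb}^l(\A(\GG))$ with $\|\pi^*(h)\star b\|_{cb}\le\|\omega\|\,\|b\|_{cb}=\|b\|_{cb}$ for any $b\in\M_{cb}^l(\A(\GG))$. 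Composing with $b=a\star\pi^*(h)$ gives $\Xi(a)=\pi^*(h)\star a\star\pi^*(h)\in\M_{cb}^l(\A(\GG))$ with $\|\Xi(a)\|_{cb}\le\|a\|_{cb}$, which is the contractivity claim.

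One subtlety worth flagging: I should make sure the two-sided expression is unambiguous, i.e.\ that $(\pi^*(h)\star a)\star\pi^*(h)=\pi^*(h)\star(a\star\pi^*(h))$ as elements of $\M_{cb}^l(\A(\GG))$. This associativity holds already at the level of $\M(\mrm{C}_0(\GG))$ via the bimodule structure over $\mrm{C}_0^u(\GG)^*$ recorded before Lemma~\ref{fourierhelp} (it comes from coassociativity of $\Delta^u$ and the compatibility identities between $\Delta^{u,r}$, $\Delta^{r,u}$ and $\Delta^{u,r,u}$), and then restricting to $\M_{cb}^l(\A(\GG))\subseteq\M(\mrm{C}_0(\GG))$ it persists. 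So applying Lemma~\ref{cbhelp}(1) and (2) in either order lands in $\M_{cb}^l(\A(\GG))$ and gives the same element $\Xi(a)$.

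For weak$^*$-weak$^*$-continuity the plan is to argue on the predual side. Recall $\M_{cb}^l(\A(\GG))$ has predual $Q^l(\A(\GG))$, defined as the closure of the image of $\Lj\to\M_{cb}^l(\A(\GG))^*$. A standard duality criterion says a bounded map on a dual Banach space is weak$^*$-continuous iff it is the adjoint of a bounded map on the predual; equivalently, iff its (Banach space) adjoint preserves the predual. So I would compute the adjoint of $\Xi\colon\M_{cb}^l(\A(\GG))\to\M_{cb}^l(\A(\GG))$ restricted to (the image of) $\Lj$: for $\mu\in\Lj$ and $a\in\M_{cb}^l(\A(\GG))\subseteq\Linf$ we have $\la\Xi(a),\mu\ra=\la\pi^*(h)\star a\star\pi^*(h),\mu\ra=\la a,\pi^*(h)\mu\,\pi^*(h)\ra$ using that $\Lj$ is an ideal in $\mrm{C}_0^u(\GG)^*$ (the map $\Lj\to\mrm{C}_0^u(\GG)^*$ being an isometric homomorphism onto an ideal, as recalled before Lemma~\ref{fourierhelp}), so $\pi^*(h)\mu\,\pi^*(h)\in\Lj$ again and depends boundedly and linearly on $\mu$. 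Hence $\Xi^*$ maps the canonical copy of $\Lj$ into itself, therefore maps $Q^l(\A(\GG))$ into itself by density and boundedness, and so $\Xi$ is weak$^*$-weak$^*$-continuous.

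The step I expect to be the main obstacle is the correct bookkeeping for the left slice via Lemma~\ref{cbhelp}(2): that lemma is stated for $\ov{\ov\omega^\sharp}\star a$ rather than for a bare $\omega\star a$, so one must genuinely use $\ov{\pi^*(h)}=\pi^*(h)=\pi^*(h)^\sharp$ to recognise that the combination appearing in the lemma collapses to $\pi^*(h)\star a$; getting the conjugations and $\sharp$'s to line up (and checking the hypothesis ``$\ov\omega^\sharp$ exists'' is met) is the delicate point, whereas contractivity of the right slice and the weak$^*$-continuity argument are routine once the algebraic identity $\Xi(a)=\pi^*(h)\star a\star\pi^*(h)$ is in hand.
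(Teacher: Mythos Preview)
Your proof is correct and follows essentially the same approach as the paper: realise $\Xi(a)=\pi^*(h)\star a\star\pi^*(h)$, invoke Lemma~\ref{cbhelp} together with $\ov{\pi^*(h)}=\pi^*(h)=\pi^*(h)^\sharp$ for contractivity, and then check that the adjoint preserves $\Lj\subseteq Q^l(\A(\GG))$ using the ideal property of $\Lj$ in $\mrm{C}_0^u(\GG)^*$. One notational point: your expression $\pi^*(h)\mu\,\pi^*(h)$ should really be written as the convolution $\pi^*(h)\star\phi(\mu)\star\pi^*(h)$ in $\mrm{C}_0^u(\GG)^*$, and the paper makes this explicit by computing the pairing at the universal level via $\Lambda_\GG$; your shortcut works because $\Xi$ is normal on $\Linf$ and the $Q^l(\A(\GG))$--$\M^l_{cb}(\A(\GG))$ duality restricts to the usual $\Lj$--$\Linf$ pairing, but it would be cleaner to say so rather than leaving the juxtaposition unexplained.
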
 

\begin{proof} 
Recall that $ \Xi(a) = \pi^*(h) \star a \star \pi^*(h) $ for $a\in \M(\mrm{C}_0(\GG))$. Since $\ov{\pi^*(h)}=\pi^*(h)=\pi^*(h)^{\sharp}$, Lemma~\ref{cbhelp} tells us that $\Xi$ restricts to a contractive linear map $\Xi|_{\M^l_{cb}(\A(\GG))}\colon \M_{cb}^l(\A(\GG)) \rightarrow \M_{cb}^l(\A(\GG))$. 
To verify weak$^*$-weak$^*$-continuity it is enough to show that $(\Xi|_{\M^l_{cb}(\A(\GG))})^*$ preserves $\Lj\subseteq Q^l(\A(\GG))$. Recall the isometric homomorphism $\phi\colon \Lj \rightarrow \mrm{C}_0^u(\GG)^*$ which identifies $\Lj$ as a closed ideal in $\mrm{C}_0^u(\GG)^*$. As the duality between $Q^l(\A(\GG))$ and $\M_{cb}^l(\A(\GG))$ is naturally compatible 
with the duality between $\Lj $ and $\Linf$, it suffices to show that $\Xi^*(\Lj) \subseteq \Lj$. Observe that for $b\in \mrm{C}_0^u(\GG),\omega\in\Lj$ we have
\[ \begin{split}
&\quad\;
\Lambda_{\GG}^*(\Xi^*(\omega))(b)=\Xi^*(\omega) (\Lambda_{\GG}(b))=
\omega ( \pi^*(h)\star \Lambda_{\GG}(b)\star \pi^*(h))\\
&=
(\pi^*(h)\otimes \omega\otimes \pi^*(h))\Delta^{u,r,u}\Lambda_{\GG}(b)= (\pi^*(h)\star \phi(\omega)\star\pi^*(h))(b).
\end{split}\]
Consequently, $\Lambda_{\GG}^*(\Xi^*(\omega))=\phi(\omega')$ for some $\omega'\in \Lj$, and since $\Lambda_{\GG}$ is surjective this shows $\Xi^*(\omega)=\omega'$ as required.
\end{proof}

\begin{remark}\label{rem:onesided3}
Exactly the same arguments show that the one-sided averaging maps $ \Xi^l, \Xi^r $ restrict to maps on $\A(\GG), \B_r(\GG)$ and $\B(\GG)$, and also to $\M_{cb}^l(\A(\GG))$.
\end{remark}

We shall write $\M_{cb}^l(\A(\KK\backslash\GG/\KK))$ for the image of $\Xi\colon\M_{cb}^l(\A(\GG)) \rightarrow \M_{cb}^l(\A(\GG))$, 
and refer to the elements of this space as $\KK$-biinvariant CB multipliers of $\GG$. Proposition~\ref{averagingcb}, combined with the fact that $\Xi$ is a projection,
shows that $\M_{cb}^l(\A(\KK\backslash\GG/\KK)) \subseteq \M_{cb}^l(\A(\GG))$ is a weak$^*$-closed subspace. We now study further properties of the map $\Xi$ on 
the level of multipliers.

\begin{proposition} \label{cppreservation}
The averaging map $ \Xi\colon \M_{cb}^l(\A(\GG)) \rightarrow \M^l_{cb}(\A(\GG)) $ maps CP multipliers to CP multipliers. 
Furthermore, for $a\in \M^l_{cb}(\A(\GG))$ and $\wh{x} \in \Linfd$, we have
\[
\Theta^l(\Xi(a))(\wh x) =
(\pi^*(h)\otimes \pi^*(h)\otimes \id)
\bigl(
\Ww^{\GG *}_{13}
(\id\otimes \id\otimes \Theta^l( a))(
\Ww^{\GG }_{13}(\I\otimes \I\otimes \wh{x})\Ww^{\GG *}_{23}
)\Ww^{\GG}_{23}
\bigr)\in\Linfd. \]
\end{proposition}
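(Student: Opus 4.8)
The plan is to combine the two halves of Lemma~\ref{cbhelp} to compute $\Theta^l$ of the iterated convolution $\pi^*(h) \star a \star \pi^*(h) = \Xi(a)$. First I would apply part~1) of Lemma~\ref{cbhelp} with $\omega = \pi^*(h)$ to the multiplier $a$, obtaining that $b := a \star \pi^*(h) \in \M^l_{cb}(\A(\GG))$ with $\Theta^l(b)$ given by formula~\eqref{eq2}, namely $\Theta^l(b)(\wh y) = (\pi^*(h)\otimes\id)\bigl((\id\otimes\Theta^l(a))((\I\otimes\wh y)\Ww^{\GG *})\Ww^{\GG}\bigr)$. Then, since $\overline{\pi^*(h)} = \pi^*(h) = \pi^*(h)^\sharp$, I would apply part~2) of Lemma~\ref{cbhelp} to $b$ with $\omega = \pi^*(h)$, so that $\overline{\overline{\pi^*(h)}^\sharp}\star b = \pi^*(h)\star b = \Xi(a)$, and $\Theta^l(\Xi(a))$ is given by formula~\eqref{eq2a}: $\Theta^l(\Xi(a))(\wh x) = (\overline{\pi^*(h)}\otimes\id)\bigl(\Ww^{\GG *}(\id\otimes\Theta^l(b))(\Ww^{\GG}(\I\otimes\wh x))\bigr) = (\pi^*(h)\otimes\id)\bigl(\Ww^{\GG *}(\id\otimes\Theta^l(b))(\Ww^{\GG}(\I\otimes\wh x))\bigr)$.

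The remaining work is to substitute the expression for $\Theta^l(b)$ into this and organise the leg-numbering on the triple tensor product $\mrm{C}_0^u(\GG)^{**}\bar\otimes\mrm{C}_0^u(\GG)^{**}\bar\otimes\LL^\infty(\whG)$. Writing $\Theta^l(b)$ acting on the third leg as $(\pi^*(h)\otimes\id)$ applied after conjugating by $\Ww^{\GG}$ in legs $1$ and $3$ (so $\Ww^{\GG}_{13}$), the inner expression $(\id\otimes\Theta^l(b))(\Ww^{\GG}(\I\otimes\wh x))$ with the outer $\Ww^{\GG}$ living in legs $2$ and $3$ unfolds to $(\pi^*(h)\otimes\id\otimes\id)\bigl(\Ww^{\GG *}_{13}(\id\otimes\id\otimes\Theta^l(a))(\Ww^{\GG}_{13}(\I\otimes\I\otimes\wh x)\Ww^{\GG *}_{23})\Ww^{\GG}_{23}\bigr)$, after which conjugating by $\Ww^{\GG}_{23}$ from part~2) is already accounted for. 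Slicing the first two legs with $\pi^*(h)$ — one from each application of the lemma — yields exactly the claimed formula. I would carry out this bookkeeping carefully, tracking which $\Ww^{\GG}$ sits in which pair of legs and where the two copies of $\pi^*(h)$ are applied; the identity $(\pi\otimes\id)(\Ww^{\GG}) = (\id\otimes\wh\pi)(\ww^{\KK})$ is not needed here, only the abstract slice formulas.

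The CP statement is then immediate: if $\Theta^l(a)$ is completely positive, then by part~1) of Lemma~\ref{cbhelp} so is $\Theta^l(b)$ (the map~\eqref{eq2} is a composition of the CP map $\id\otimes\Theta^l(a)$ with conjugation by $\Ww^{\GG}$ and slicing by the state $\pi^*(h)$, all of which preserve complete positivity), and likewise~\eqref{eq2a} shows $\Theta^l(\Xi(a)) = \Theta^l(\pi^*(h)\star b)$ is CP. Alternatively one reads this directly off the displayed formula, which exhibits $\Theta^l(\Xi(a))$ as a composition of complete positivity preserving operations applied to $\Theta^l(a)$. The main obstacle I anticipate is purely notational: getting the leg-numbering of the nested $\Ww^{\GG}$'s consistent between the two invocations of Lemma~\ref{cbhelp}, since part~2) introduces a $\Ww^{\GG *}(\cdots)\Ww^{\GG}$ conjugation \emph{outside} the slice whereas part~1) has its conjugation \emph{inside}; one must be careful that the adjoint $\overline{\pi^*(h)}$ in~\eqref{eq2a} collapses to $\pi^*(h)$ precisely because of the preceding lemma, and that the $S^{-1}$-twisting in part~2) does not leave a residual term.
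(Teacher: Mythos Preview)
Your derivation of the displayed formula is correct and matches the paper exactly: apply part~1) of Lemma~\ref{cbhelp} with $\omega=\pi^*(h)$ to get $\Theta^l(b)$ for $b=a\star\pi^*(h)$, then part~2) with $\omega=\pi^*(h)$ (using $\overline{\pi^*(h)}=\pi^*(h)=\pi^*(h)^\sharp$) to get $\Theta^l(\Xi(a))$, and substitute. The leg-numbering bookkeeping you anticipate is exactly what the paper does.

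The gap is in the CP claim. Neither formula~\eqref{eq2} nor~\eqref{eq2a} exhibits a composition of CP-preserving operations: the map $\wh x\mapsto (\I\otimes\wh x)\Ww^{\GG*}$ in~\eqref{eq2} is a one-sided multiplication by a unitary, not a conjugation, and likewise $\wh x\mapsto \Ww^{\GG}(\I\otimes\wh x)$ in~\eqref{eq2a}. In the combined formula the inner map is $\wh x\mapsto \Ww^{\GG}_{13}(\I\otimes\I\otimes\wh x)\Ww^{\GG*}_{23}$, with \emph{different} unitaries on the left and right, so again this is not CP. Your description of these as ``conjugation by $\Ww^{\GG}$'' is simply incorrect, and slicing by a state afterward does not repair this.

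The paper handles this by taking a Stinespring dilation $\Theta^l(a)(\wh x)=r^*(\I\otimes\wh x)r$ and substituting into the formula. The point is that after this substitution the legs shift (the $\Ww^{\GG*}_{23}$ inside becomes $\Ww^{\GG*}_{24}$, etc.), and one then represents $\mrm{C}_0^u(\GG)^{**}$ on a Hilbert space with a vector $\xi$ implementing $\pi^*(h)$; a direct computation with inner products shows the result has the form $\tilde r^*(\I\otimes\wh x)\tilde r$ for an explicitly defined $\tilde r$. This is genuinely more work than ``read it off the formula,'' and you should expect to need it.
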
 

\begin{proof}
Let $a\in \M^l_{cb}(\A(\GG))$ be arbitrary. To ease notation, write $h_0 = \pi^*(h)$ and $b = a\star h_0$. 
Since $\Xi(a) = h_0 \star (a \star h_0) = h_0 \star b$ and $h_0 = \ov{h_0}=h_0^\sharp$, formula \eqref{eq2a} from Lemma~\ref{cbhelp} shows that
\begin{align*}
\Theta^l(\Xi(a))(\wh x)
&= (h_0\otimes\id)\bigl( \Ww^{\GG *} (\id\otimes \Theta^l(b)) ( \Ww^{\GG}(\I\otimes \wh x) )\bigr) \qquad (\wh x \in \Linfd).
\end{align*}
By \eqref{eq2} we know that $\Theta^l(b)$ is the map $\wh y \mapsto (h_0\otimes\id)\bigl( (\id\otimes \Theta^l(a)) ( (\I\otimes \wh y)\Ww^{\GG *})\Ww^{\GG }\bigr)$.  Thus
\begin{align*}
(\id\otimes \Theta^l(b)) ( \Ww^{\GG}(\I\otimes \wh x) )
= (\id\otimes h_0\otimes\id)\big( (\id\otimes\id\otimes\Theta^l(a))( \Ww^{\GG}_{13}(\I\otimes\I\otimes \wh x) \Ww^{\GG *}_{23} ) \Ww^{\GG}_{23} \big),
\end{align*}
and hence
\begin{align*}
\Theta^l(\Xi(a))(\wh x)
&= (h_0\otimes\id)\bigl( \Ww^{\GG *} 
(\id\otimes h_0\otimes\id)\big( (\id\otimes\id\otimes\Theta^l(a))( \Ww^{\GG}_{13}(\I\otimes\I\otimes \wh x) \Ww^{\GG *}_{23} ) \Ww^{\GG}_{23} \big)
\bigr) \\
&= (h_0\otimes h_0\otimes\id)\big(\Ww^{\GG *}_{13} (\id\otimes\id\otimes\Theta^l(a))( \Ww^{\GG}_{13}(\I\otimes\I\otimes \wh x) \Ww^{\GG *}_{23} ) \Ww^{\GG}_{23} \big).
\end{align*}
This proves the second claim.

Assume now that $a$ is a CP multiplier, i.e.~$\Theta^l(a)\in \CB^\sigma(\Linfd)$ is CP. By Stinespring's theorem and the classification of normal $*$-homomorphisms (see for 
example \cite[Theorem I.4.4.3]{DixmiervNA}), we can find a Hilbert space $\mathscr{H}$ and $r\in \B(\LdG,\mathscr{H}\otimes \LdG)$ such 
that $\Theta^l(a)(\wh x)= r^* (\I\otimes \wh{x})r$. Consequently, we can further write
\begin{align*}
&\quad\;
\Theta^l(\Xi(a))(\wh x)\\
&=
(h_0\otimes h_0\otimes \id)
\bigl(
\Ww^{\GG *}_{13}
(\id\otimes \id\otimes \Theta^l( a))(
\Ww^{\GG }_{13}(\I\otimes \I\otimes \wh{x})\Ww^{\GG *}_{23}
)\Ww^{\GG}_{23}
\bigr)\\
&=
(h_0\otimes h_0\otimes \id)
\bigl(
\Ww^{\GG *}_{13}
(\I\otimes \I\otimes r^*)
\Ww^{\GG }_{14}(\I\otimes \I\otimes \I\otimes \wh{x})\Ww^{\GG *}_{24}
(\I\otimes \I\otimes r)\Ww^{\GG}_{23}
\bigr)
\end{align*}
Here we are regarding $\Ww^{\GG}$ as a member of $\mrm{C}_0^u(\GG)^{**} \bar\otimes \Linfd$.  Pick some Hilbert space $\mathscr{K}$ and a universal representation $\mrm{C}_0^u(\GG) \subseteq \B(\mathscr{K})$ so that $\mrm{C}_0^u(\GG)^{**} = \mrm{C}_0^u(\GG)'' \subseteq \B(\mathscr{K})$.  By choosing  $\mrm{C}_0^u(\GG) \subseteq \B(\mathscr{K})$ in a suitable way, 
we may suppose that there is a unit vector $\xi\in\mathscr{K}$ such that $\omega_\xi$ restricted to $\mrm{C}_0^u(\GG)''$ agrees with the normal extension of $h_0$.  Then $\Ww^{\GG}$ can be regarded as a member of $\B(\mathscr{K}\otimes \LdG)$, and so, for example, $\Ww^{\GG *}_{24}(\I\otimes \I\otimes r)\Ww^{\GG}_{23}$ is a member of $\B(\mathscr{K}\otimes\mathscr{K}\otimes\LdG, \mathscr{K}\otimes\mathscr{K}\otimes\mathscr{H}\otimes\LdG)$. 
Thus, for $\eta_1,\eta_2\in \LdG$,
\begin{align*}
&\quad\; \big( \eta_1 \big\vert \Theta^l(\Xi(a))(\wh x) (\eta_2) \big)  \\
&= \big( \xi\otimes\xi\otimes\eta_1 \big\vert \Ww^{\GG *}_{13}
(\I\otimes \I\otimes r^*)
\Ww^{\GG }_{14}(\I\otimes \I\otimes \I\otimes \wh{x})\Ww^{\GG *}_{24}
(\I\otimes \I\otimes r)\Ww^{\GG}_{23}
(\xi\otimes\xi\otimes\eta_2) \big)  \\
&= \big( \Ww^{\GG *}_{14} (\I\otimes \I\otimes r) \Ww^{\GG}_{13} (\xi\otimes\xi\otimes\eta_1)  \big\vert 
\xi \otimes (\I\otimes \I\otimes \wh{x})\Ww^{\GG *}_{13}
(\I\otimes r)\Ww^{\GG} (\xi\otimes\eta_2) \big)  \\
&= \big( \Sigma_{12}\big( \xi \otimes \Ww^{\GG *}_{13} (\I\otimes r) \Ww^{\GG} (\xi\otimes\eta_1) \big)  \big\vert 
\xi \otimes (\I\otimes \I\otimes \wh{x})\Ww^{\GG *}_{13}
(\I\otimes r)\Ww^{\GG} (\xi\otimes\eta_2) \big)
\end{align*}
where $\Sigma \in \B(\mathscr{K}\otimes\mathscr{K})$ is the tensor swap map.  Let $\tilde r \in \B(\LdG, \mathscr{H}\otimes\LdG)$ be the operator which satisfies
\[ \big( \alpha \big\vert \tilde r (\beta) \big)
= \big( \xi\otimes\alpha \big\vert \Ww^{\GG *}_{13} (\I\otimes r) \Ww^{\GG}(\xi\otimes\beta) )
\qquad (\alpha\in \mathscr{H}\otimes\LdG, \beta\in \LdG). \]
Thus
\begin{align*}
&\quad\; \big( \eta_1 \big\vert \Theta^l(\Xi(a))(\wh x) (\eta_2) \big)
= ( \tilde r(\eta_1) \vert (\I\otimes\wh x) \tilde r(\eta_2) ),
\end{align*}
and so $\Theta^l(\Xi(a))(\wh x) = \tilde r^* (\I\otimes\wh x)\tilde r$ is CP, as claimed.
\end{proof}

\begin{remark}\label{rem:onesided4}
In general, the one-sided averaging maps $ \Xi^l, \Xi^r $ do not map CP multipliers to CP multipliers, as we now show. 

As in Remark~\ref{rem:onesided3}, the map $\Xi^l$ restricts to $\A(\GG)$, and so induces a map $\Xi^l_1 \colon \LL^1(\wh\GG) \to \LL^1(\wh\GG)$. Moreover, given $a = \wh\lambda(\wh\omega)\in \A(\GG)$ for $\wh\omega\in\LL^1(\wh\GG)$, we have $\Xi^l(a) = (h\pi\otimes\id)\Delta^{u,r}(a) = a \star (h\pi) = \wh\lambda(\wh\omega(\cdot \wh\pi(p_e)))$, as in the proof of Lemma~\ref{lem:xi1xiinf}.
Thus $\Xi^l_1(\wh\omega) = \wh\omega(\cdot \wh\pi(p_e))$, and similarly $\Xi^r_1(\wh\omega) = \wh\omega(\wh\pi(p_e)\cdot)$, for $\wh\omega\in \LL^1(\wh\GG)$.

Suppose $\wh\omega(\wh x \wh\pi(p_e)) \geq 0$ for all positive $\wh\omega\in \LL^1(\wh\GG)$ and all positive $\wh x\in \LL^\infty(\GG)$. Then $\wh x \wh\pi(p_e)$ is positive, 
hence self-adjoint, so $\wh x \wh\pi(p_e) = \wh\pi(p_e)\wh x$ for all positive $\wh x\in \LL^\infty(\GG)$. It follows that $\wh\pi(p_e)$ is central.
Hence if $\wh\pi(p_e) \in \LL^\infty(\wh\GG)$ is not central there exists a positive $\wh\omega\in \LL^1(\wh\GG)$ such that $\wh\omega(\cdot \wh\pi(p_e))$ not positive.  
Then Lemma~\ref{lem:cp_fourier_pos} shows that $a = \wh\lambda(\wh\omega)\in \A(\GG)$ is a CP multiplier but $\Xi^l(a) = \wh\lambda(\Xi^l_1(\wh\omega))$ is not CP. 

Note that $\wh\pi(p_e)$ is central if and only if $\Xi^l $ agrees with the two-sided averaging map $\Xi$, compare Lemma~\ref{lem:xi1xiinf}. Combining the above  
argument with Proposition \ref{cppreservation}, we therefore conclude that $\Xi^l$ maps CP multipliers to CP multipliers if and only if $\wh\pi(p_e)$ is central. 
The same statement, with a similar proof, holds for $\Xi^r$.

Centrality of $\wh\pi(p_e)$ already fails for classical locally compact groups if the compact subgroup is not normal. The case of Drinfeld doubles, explored in the next section, 
gives further examples where $\wh\pi(p_e)$ is not central.
\end{remark}

\section{Drinfeld doubles of discrete quantum groups} \label{section double}

In this section $\GGamma$ is a discrete quantum group. We shall use the following simplified notation: $\Delta,\wh\Delta$ are the coproducts 
on $\ell^{\infty}(\GGamma),\LL^{\infty}(\wh\GGamma)$ respectively, $h\in \LL^1(\wh\GGamma)$ is the Haar integral, $ \ww=\ww^{\GGamma}, \wh{\ww}=\ww^{\wh\GGamma}$ 
and $\ad(\ww)(x) = \ww x \ww^*$. 

The Drinfeld double $D(\GGamma)$ of $\GGamma$ is given by $ \LL^\infty(D(\GGamma)) = \ell^\infty(\GGamma) \bar{\otimes} \LL^\infty(\wh\GGamma) $ with the coproduct 
$$
\Delta_{D(\GGamma)} = (\id \otimes \chi \otimes \id)(\id \otimes \ad(\ww) \otimes \id)(\Delta \otimes \wh{\Delta}), 
$$
compare \cite[Section~8]{Doublecrossed}. Since discrete quantum groups and their compact duals are regular, it follows from \cite[Proposition 9.5]{Doublecrossed} that
\begin{equation}\label{eq11}
\mrm{C}_0(D(\GGamma))=\mrm{c}_0(\GGamma)\otimes \mrm{C}(\wh\GGamma).
\end{equation}
The left and right Haar integral on $D(\GGamma)$ is given by $\vp\otimes h$, see \cite[Theorem 5.3, Proposition 8.1]{Doublecrossed}.  The Kac--Takesaki operator of $D(\GGamma)$ is 
\[
\ww^{D(\GGamma)}=\ww_{13} Z_{34}^* \wh{\ww}_{24}Z_{34},
\]
where $Z=\ww (J\otimes \wh{J}) \ww (J\otimes \wh{J})$ and $J,\wh{J}$ are the modular conjugations on $\ell^{\infty}(\GGamma),\LL^{\infty}(\wh{\GGamma})$, 
respectively, see \cite[Proposition 8.1]{Doublecrossed}.
The Drinfeld double $D(\GGamma)$ contains $\wh\GGamma$ naturally as a compact (open) quantum subgroup, with the 
morphism $\pi\colon  \LL^\infty(D(\GGamma)) = \ell^\infty(\GGamma) \bar{\otimes} \LL^\infty(\wh\GGamma) \rightarrow \LL^\infty(\wh\GGamma) $ 
given by $\eps \otimes \id$, where $\eps \in \ell^1(\GGamma)$ is the counit. Note that since $\wh\GGamma$ is open, the morphism $\pi$ exists at the reduced level, 
which simplifies the situation compared to the previous section.
 
We shall consider $ \GG = D(\GGamma) $ and $ \KK = \widehat{\GGamma}$. Since $ \pi^*(h) = \eps \otimes h $ is a normal functional 
on $\LL^{\infty}(D(\GGamma))$, one easily sees that in this setting the averaging map $\Xi$ defined in equation \eqref{eq4} is given by 
\begin{equation}\label{eq5}
\Xi\colon \LL^{\infty}(D(\GGamma))\ni a \mapsto
(\pi^*(h)\otimes \id\otimes \pi^*(h))\Delta^{(2)}_{D(\GGamma)}(a)\in \LL^{\infty}(D(\GGamma)),
\end{equation}
where $\Delta^{(2)}_{D(\GGamma)}\colon \LL^{\infty}(D(\GGamma))\rightarrow \LL^{\infty}(D(\GGamma))^{\bar{\otimes} 3}$ is the two-fold coproduct.

\begin{proposition} \label{averagingdd}
The image $ \Xi(\LL^\infty(D(\GGamma))) $ of the averaging map $\Xi\colon \LL^\infty(D(\GGamma)) \rightarrow \LL^\infty(D(\GGamma)) $ 
equals $ \mc{Z} \ell^\infty(\GGamma) \otimes \I $. More precisely, for $x\in \LL^\infty(D(\GGamma))$ we have 
\begin{equation}\label{eq:Xi_DD}
\Xi(x) = y\otimes\I
\quad\text{where}\quad
y = (\id\otimes h)(\ww((\id\otimes h)(x)\otimes \I)\ww^*).
\end{equation}
Similarly, $\Xi(\mrm{c}_{00}(\GGamma) \odot \Pol(\wh\GGamma)) = \mc{Z} \mrm{c}_{00}(\GGamma) \otimes \I$ and $ \Xi(\mrm{C}_0(D(\GGamma))) = \mc{Z} \mrm{c}_0(\GGamma) \otimes \I $. 
\end{proposition}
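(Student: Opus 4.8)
The plan is to compute $\Xi$ explicitly using the formula \eqref{eq5}, which in this setting reads $\Xi(a) = (\pi^*(h)\otimes\id\otimes\pi^*(h))\Delta^{(2)}_{D(\GGamma)}(a)$ with $\pi = \eps\otimes\id$ and $\pi^*(h) = \eps\otimes h$. First I would unpack the two-fold coproduct $\Delta^{(2)}_{D(\GGamma)}$ in terms of the formula $\Delta_{D(\GGamma)} = (\id\otimes\chi\otimes\id)(\id\otimes\ad(\ww)\otimes\id)(\Delta\otimes\wh\Delta)$. Applying $\eps\otimes h$ on the outer legs kills, via the counit, the $\ell^\infty(\GGamma)$-components coming from $\Delta$ (since $(\eps\otimes\id)\Delta = \id = (\id\otimes\eps)\Delta$), and applies the Haar state $h$ to the $\LL^\infty(\wh\GGamma)$-legs coming from $\wh\Delta$. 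The net effect is that the first leg of the output keeps the $\ell^\infty(\GGamma)$-part and the $\ad(\ww)$-conjugation survives against the surviving $\wh\GGamma$-leg, while the second leg collapses because $h$ is invariant: one should land on $\Xi(x) = y\otimes\I$ with $y$ obtained by slicing $x$ against $h$ in the $\wh\GGamma$-variable, conjugating by $\ww$, and slicing against $h$ again. Carrying out this bookkeeping carefully should yield exactly \eqref{eq:Xi_DD}.

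Next I would verify that $y$ as defined lies in $\mc{Z}\ell^\infty(\GGamma)$. Write $y = (\id\otimes h)(\ww(z\otimes\I)\ww^*)$ where $z = (\id\otimes h)(x) \in \ell^\infty(\GGamma)$. Centrality should follow from the general fact (already implicit in the comparison between central multipliers and categorical multipliers recalled in Section~\ref{section aps}, and standard for compact-type averaging) that averaging $z$ against the Haar state of $\wh\GGamma$ via $\ad(\ww)$ produces a central element: the map $z\mapsto (\id\otimes h)(\ww(z\otimes\I)\ww^*)$ is precisely the canonical conditional expectation of $\ell^\infty(\GGamma)$ onto its centre, using that $\ww\in\LL^\infty(\wh\GGamma)\bar\otimes\ell^\infty(\GGamma)$ implements the adjoint action of $\wh\GGamma$ and that $\ell^\infty(\GGamma)^{\wh\GGamma} = \mc{Z}\ell^\infty(\GGamma)$. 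Conversely, if $x = y\otimes\I$ with $y$ central, then $\ww(y\otimes\I)\ww^* = y\otimes\I$ (central elements are fixed by $\ad(\ww)$ since $\ww$ lives over $\ell^\infty(\GGamma)$), so $\Xi(y\otimes\I) = y\otimes\I$; this shows $\mc{Z}\ell^\infty(\GGamma)\otimes\I$ is contained in the image, and combined with the previous computation gives equality. Here I would invoke Lemma~\ref{averaginghelp} and Lemma~\ref{lemma1} to know $\Xi$ is a conditional expectation, so the image is exactly the fixed-point set.

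For the $\mrm{C}_0$-level statement, I would use \eqref{eq11}, namely $\mrm{C}_0(D(\GGamma)) = \mrm{c}_0(\GGamma)\otimes\mrm{C}(\wh\GGamma)$, together with Lemma~\ref{averaginghelp}, which already tells us $\Xi$ restricts to a map $\mrm{C}_0(D(\GGamma))\to\mrm{C}_0(D(\GGamma))$. Applying the explicit formula \eqref{eq:Xi_DD} to $c\in\mrm{c}_0(\GGamma)\otimes\mrm{C}(\wh\GGamma)$: the inner slice $(\id\otimes h)(c)$ lands in $\mrm{c}_0(\GGamma)$, and the outer operations preserve $\mrm{c}_0(\GGamma)$ (conjugation by $\ww$ and slicing by $h$ are $\mrm{c}_0$-to-$\mrm{c}_0$ since $\ww\in\M(\mrm{C}(\wh\GGamma)\otimes\mrm{c}_0(\GGamma))$). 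Hence $\Xi(\mrm{C}_0(D(\GGamma))) \subseteq \mc{Z}\mrm{c}_0(\GGamma)\otimes\I$, and the reverse inclusion is immediate since for central $y\in\mrm{c}_0(\GGamma)$ one checks $y\otimes\I\in\mrm{C}_0(D(\GGamma))$ is fixed by $\Xi$. The finitely-supported version $\Xi(\mrm{c}_{00}(\GGamma)\odot\Pol(\wh\GGamma)) = \mc{Z}\mrm{c}_{00}(\GGamma)\otimes\I$ follows the same way, using that slicing a Pol-element of $\wh\GGamma$ against $h$ gives a scalar and that conjugation by $\ww$ preserves each finite-dimensional matrix block of $\mrm{c}_0(\GGamma)$, so finite support is preserved; one also needs that the centre-projection of a finitely-supported element is finitely supported, which holds because $\mc{Z}\ell^\infty(\GGamma)$ respects the block decomposition.

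The main obstacle I anticipate is the first step: correctly expanding $\Delta^{(2)}_{D(\GGamma)}$ and tracking through the various flips $\chi$, the $\ad(\ww)$-conjugations, and the leg numbering, so as to see precisely that the counit collapses the right legs while leaving the single $\ww$-conjugation in \eqref{eq:Xi_DD}. The coassociativity of $\Delta_{D(\GGamma)}$ together with the pentagon-type identity for $\ww$ will be needed to reorganise the iterated $\ad(\ww)$ terms, and it is easy to lose a flip or misplace a leg. Once the formula \eqref{eq:Xi_DD} is established, everything else is a routine application of the conditional-expectation property and the tensor decompositions already recorded.
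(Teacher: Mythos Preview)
Your proposal is correct and follows essentially the same approach as the paper. One tactical simplification the paper uses, which dissolves your anticipated obstacle: rather than expanding $\Delta^{(2)}_{D(\GGamma)}$ in one go, it first computes $(\id_{D(\GGamma)}\otimes\pi^*(h))\Delta_{D(\GGamma)}(x) = (\id\otimes h)(x)\otimes\I$ (using $(\id\otimes\eps)\Delta=\id$, $(\eps\otimes\id)(\ww)=\I$, and invariance of $h$), and then applies $(\pi^*(h)\otimes\id_{D(\GGamma)})\Delta_{D(\GGamma)}$ to the resulting element $z\otimes\I$, so the leg bookkeeping stays trivial throughout.
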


\begin{proof}
We use the description of $ \Xi $ in \eqref{eq5}. Firstly, for $x\in \LL^\infty(D(\GGamma))$,
\[
  (\id_{D(\GGamma)}\otimes \eps\otimes\id)\Delta_{D(\GGamma)}(x)=
  (\id_{D(\GGamma)}\otimes \eps\otimes \id)\chi_{23}\bigl(
    \ww_{23} (\Delta\otimes \wh\Delta)(x)\ww_{23}^*\bigr)=
    (\id\otimes \wh\Delta)(x),
\]
using that $\eps$ is a $*$-homomorphism with $(\id\otimes\eps)\Delta = \id$ and $(\eps\otimes\id)(\ww) = \I$. It follows that
\begin{align*}
  (\id_{D(\GGamma)}\otimes\pi^*(h))\Delta_{D(\GGamma)}(x)
  &= (\id_{D(\GGamma)}\otimes h)(\id\otimes\wh\Delta)(x)
  = (\id\otimes h)(x) \otimes \I. 
\end{align*}
Let $z=(\id\otimes h)(x) \in \ell^\infty(\GGamma)$, and notice that
\begin{align}\label{eq7b}
\Delta_{D(\GGamma)}(z\otimes \I) &= (\id\otimes\chi\otimes\id)(\ww_{23} \Delta(z)_{12}\ww_{23}^*).
\end{align}
Thus, using that $(\eps\otimes\id)\Delta=\id$, we get 
\begin{align*}
\Xi(x)
&= (\eps\otimes h\otimes\id_{D(\GGamma)})\Delta_{D(\GGamma)}(z\otimes\I)
= (\eps\otimes\id\otimes h\otimes\id)(\ww_{23} \Delta(z)_{12}\ww_{23}^*) \\
&= (\id\otimes h\otimes \id)(\ww_{12}(z\otimes\I\otimes\I)\ww_{12}^*)
=  (\id\otimes h)(\ww(z\otimes\I)\ww^*) \otimes \I,
\end{align*}
which shows \eqref{eq:Xi_DD}.

As $\ww\in\ell^\infty(\GGamma)\bar\otimes \LL^\infty(\wh\GGamma)$ and right slices of $\ww$ generate $\ell^\infty(\GGamma)$, it follows that an element $c\in\ell^\infty(\GGamma)$ 
is central if and only if $\ww(c\otimes\I) = (c\otimes\I)\ww$, which holds if and only if $\ww(c\otimes\I)\ww^* = c\otimes\I$. Moreover, if $ c= (\id\otimes h)(\ww(d\otimes\I)\ww^*) $ for 
some $ d\in\ell^\infty(\GGamma)$ then
\begin{align*}
\ww(c\otimes\I)\ww^* &= (\id\otimes h \otimes \id)(\ww_{13}\ww_{12}(d\otimes\I \otimes \I)\ww_{12}^*\ww_{13}^*) \\
&=(\id\otimes h \otimes \id)((\id \otimes \wh\Delta)(\ww)(d \otimes \I \otimes \I)(\id \otimes \wh\Delta)(\ww^*))\\
&= (\id\otimes h)(\ww(d\otimes\I)\ww^*) \otimes \I = c \otimes \I, 
\end{align*}
so that $ c $ is central. Combining this with the above formula for $\Xi(x)$ implies $\Xi(\LL^\infty(D(\GGamma))) =\mc Z\ell^\infty(\GGamma)\otimes\I$ as claimed.

Let $x\in\ell^\infty(\bbGamma)$ and set $y = (\id\otimes h)(\ww(x\otimes\I)\ww^*)$ as in \eqref{eq:Xi_DD}.  We use the explicit formula for $\wh{\ww} = \chi(\ww^*)$ from \eqref{eq:wcmpt} together with \eqref{eq:orthog_matrix} to see that
\begin{align}
&\quad\;
y = \sum_{\alpha,\beta\in \Irr(\wh\GGamma)}\sum_{i,j=1}^{\dim(\alpha)}\sum_{k,l=1}^{\dim(\beta)} e^\alpha_{j,i} x e^\beta_{k,l} h\big( (U^\alpha_{i,j})^* U^\beta_{k,l} \big)
\notag \\
&= \sum_{\alpha\in\Irr(\wh\GGamma)}\sum_{i,j,k=1}^{\dim(\alpha)} e^\alpha_{j,i} x e^\alpha_{k,j} \frac{(\uprho_\alpha^{-1})_{k,i}}{\Tr(\uprho_\alpha)}
= \sum_{\alpha\in\Irr(\wh\GGamma)} \Big( \sum_{i,k=1}^{\dim(\alpha)} x^\alpha_{i,k} \frac{(\uprho_\alpha^{-1})_{k,i}}{\Tr(\uprho_\alpha)} \Big) p_\alpha.
\label{eq:A_map_general}
\end{align} 
Here we use the direct-sum matrix decomposition $x=(x^\alpha)_{\alpha\in\Irr(\wh\GGamma)} \in \ell^\infty(\bbGamma)$, and $p_\alpha$ denotes the minimal projection onto the $\alpha$ block.

From this explicit formula it is clear that $\Xi$ maps $\mrm{c}_{00}(\bbGamma)\odot \Pol(\wh\GGamma)$ onto $ \mc{Z} \mrm{c}_{00}(\GGamma) \otimes \I$. 
Since $\mrm{C}_0(D(\GGamma))$ is the norm-closure of $\mrm{c}_{00}(\GGamma)\odot\Pol(\wh\GGamma)$ and $\Xi$ is a conditional expectation by Lemma~\ref{lemma1}, 
it is also immediate that $\Xi$ maps $ \mrm{C}_0(D(\GGamma)) $ onto $ \mc{Z} \mrm{c}_0(\GGamma) \otimes \I $.
\end{proof}

\begin{remark}\label{rem:dd_one_sided}
The proof shows that the one-sided averaging map $ \Xi^r = (\id\otimes\pi^*(h))\Delta_{D(\bbGamma)}$ is given simply by $ \Xi^r(x) = (\id\otimes h)(x) \otimes \I$. This maps $\mrm{c}_{00}(\GGamma) \odot \Pol(\wh\GGamma)$ onto $\mrm{c}_{00}(\bbGamma) \otimes \I$, and hence $\mrm{C}_0(D(\bbGamma))$ onto $\mrm{c}_0(\bbGamma)\otimes \I$.

The other one-sided averaging map $ \Xi^l $ looks more complicated, but if we start with an element of the form $x\otimes \I \in \ell^\infty(\bbGamma)\otimes\I$, then $(\pi^*(h)\otimes\id)\Delta_{D(\bbGamma)}(x\otimes \I) = A(x) \otimes \I$ where
\[ A(x) = (\id\otimes h)(\ww(x\otimes\I)\ww^*) = (h\otimes\id)(\wh\ww^*(\I\otimes x)\wh\ww)\quad(x\in\ell^{\infty}(\bbGamma)). \]
We shall study this map $A$ further in Section~\ref{sec:unimodular}.
\end{remark}

Recall from \cite[Lemmas 8.4, 8.5]{DKV_ApproxLCQG} that we have an isometric weak$^*$-weak$^*$-continuous 
embedding $N\colon \M_{cb}(\Corep(\GGamma)) \rightarrow \M_{cb}^l(\A(D(\GGamma))) $ given by $N(a) = a \otimes \I$. 

\begin{proposition}\label{propimagecentral}
Under the embedding $N\colon \M_{cb}(\Corep(\GGamma)) \rightarrow \M_{cb}^l(\A(D(\GGamma))) $ we have an isometric identification 
$$ 
\mc Z \M^l_{cb}(\A(\GGamma)) = \M_{cb}(\Corep(\GGamma)) \cong \M_{cb}^l(\A(\wh\GGamma\backslash D(\GGamma)/\wh\GGamma)),
$$ 
compatible with the weak$^*$-topologies. Furthermore, these identifications preserve the property of being completely positive.
\end{proposition}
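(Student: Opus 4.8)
### Proof plan

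The plan is to establish the two identifications in the statement separately, then combine. For the first identification $\mc Z \M^l_{cb}(\A(\GGamma)) = \M_{cb}(\Corep(\GGamma))$, I would simply invoke the cited result \cite[Proposition 6.1]{PVcstartensor}, \cite[Lemma 8.6]{DKV_ApproxLCQG}, which already records that central CB multipliers of a discrete quantum group identify isometrically with CB multipliers of $\Corep(\GGamma)$, compatibly with weak$^*$-topologies and preserving complete positivity; this was recalled in Section~\ref{section aps}. So the genuine content is the second identification $\M_{cb}(\Corep(\GGamma)) \cong \M_{cb}^l(\A(\wh\GGamma\backslash D(\GGamma)/\wh\GGamma))$ via the embedding $N$.

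The key steps for that are as follows. First, recall from \cite[Lemmas 8.4, 8.5]{DKV_ApproxLCQG} that $N\colon \M_{cb}(\Corep(\GGamma)) \to \M_{cb}^l(\A(D(\GGamma)))$, $N(a) = a\otimes\I$, is an isometric weak$^*$-weak$^*$-continuous embedding preserving complete positivity. Next, I would show that the range of $N$ is exactly $\M_{cb}^l(\A(\wh\GGamma\backslash D(\GGamma)/\wh\GGamma))$, i.e.\ the image of the averaging projection $\Xi\colon \M_{cb}^l(\A(D(\GGamma)))\to \M_{cb}^l(\A(D(\GGamma)))$. For the inclusion $\operatorname{ran}(N)\subseteq \operatorname{ran}(\Xi)$: given a central $a\in\mc Z\M^l_{cb}(\A(\GGamma))$, we have $a\otimes\I \in \mc Z\ell^\infty(\GGamma)\otimes\I \subseteq \LL^\infty(D(\GGamma))$, and by Proposition~\ref{averagingdd} the averaging map $\Xi$ on $\LL^\infty(D(\GGamma))$ fixes $\mc Z\ell^\infty(\GGamma)\otimes\I$ pointwise (it is a conditional expectation onto this space); since $\Xi$ on $\M_{cb}^l(\A(D(\GGamma)))$ is the restriction of $\Xi$ on $\M(\mrm{C}_0(D(\GGamma)))$ (Proposition~\ref{averagingcb}), it follows that $\Xi(N(a)) = N(a)$, so $N(a)\in \M_{cb}^l(\A(\wh\GGamma\backslash D(\GGamma)/\wh\GGamma))$. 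Conversely, for $b\in \M_{cb}^l(\A(\wh\GGamma\backslash D(\GGamma)/\wh\GGamma))$, we have $b = \Xi(b') $ for some $b'\in\M_{cb}^l(\A(D(\GGamma)))$; but $\M_{cb}^l(\A(D(\GGamma)))\subseteq \M(\mrm{C}_0(D(\GGamma))) = \M(\mrm{c}_0(\GGamma)\otimes\mrm{C}(\wh\GGamma))$, and $\Xi$ of any such element lands in $\mc Z\ell^\infty(\GGamma)\otimes\I$ by Proposition~\ref{averagingdd} (applied at the von Neumann algebra level), so $b = y\otimes\I$ for some $y\in\mc Z\ell^\infty(\GGamma)$. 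It then remains to argue that $y\in\mc Z\M^l_{cb}(\A(\GGamma))$; this follows since $N$ is a bijection onto its range once we know $\M^l_{cb}(\A(D(\GGamma)))\cap(\ell^\infty(\GGamma)\otimes\I) = N(\M^l_{cb}(\A(\GGamma)))$, which is part of \cite[Lemmas 8.4, 8.5]{DKV_ApproxLCQG} (an element $c\otimes\I$ is a CB multiplier of $D(\GGamma)$ iff $c$ is a CB multiplier of $\GGamma$, with equal norms).

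Finally, I would assemble: $N$ restricts to an isometric weak$^*$-homeomorphism $\M_{cb}(\Corep(\GGamma)) \to \M_{cb}^l(\A(\wh\GGamma\backslash D(\GGamma)/\wh\GGamma))$, and since $N$ preserves complete positivity in both directions (being the restriction of an injective normal $*$-homomorphism-induced map at the level of the associated completely bounded maps $\Theta^l$, cf.\ \cite[Lemma 8.5]{DKV_ApproxLCQG}), the CP-preservation claim follows; combined with the first identification we are done. The main obstacle I anticipate is the surjectivity direction of the second identification — pinning down that $\Xi$ on multipliers really does land in $N(\mc Z\M^l_{cb}(\A(\GGamma)))$ and not merely in $\mc Z\ell^\infty(\GGamma)\otimes\I$; this requires carefully combining the von Neumann algebra computation of Proposition~\ref{averagingdd} with the fact (Proposition~\ref{averagingcb}) that the multiplier-level $\Xi$ is the restriction of the $\M(\mrm{C}_0)$-level $\Xi$, so that the image is simultaneously a CB multiplier of $D(\GGamma)$ and of the form $y\otimes\I$, hence corresponds under $N$ to a CB multiplier of $\GGamma$ which is central because $y\in\mc Z\ell^\infty(\GGamma)$.
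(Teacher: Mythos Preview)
Your overall strategy matches the paper's: cite \cite[Lemma~8.6]{DKV_ApproxLCQG} for the first identification, then show that the range of $N$ coincides with the $\Xi$-fixed multipliers using Proposition~\ref{averagingdd} and Proposition~\ref{averagingcb}. Three points, however, need more than what you have written.

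\textbf{CP preservation.} Your justification that ``$N$ preserves complete positivity in both directions'' via restriction of $\Theta^l$ only delivers one implication: if $\Theta^l(\theta\otimes\I)$ is CP on $\LL^\infty(\wh{D(\GGamma)})$, then its restriction to the subalgebra $\LL^\infty(\wh\GGamma)$ is CP, hence $\Theta^l(\theta)$ is CP. The converse does not follow from a restriction argument, and \cite[Lemma~8.5]{DKV_ApproxLCQG} does not supply it. The paper closes this gap by observing that $\Theta^l(\theta)(\I)=\theta(e)\I=\Theta^l(\theta\otimes\I)(\I)$ (the latter via \cite[Lemma~8.4]{DKV_ApproxLCQG}) together with $\|\theta\|_{cb}=\|\theta\otimes\I\|_{cb}$, and then invoking Pisier's criterion \cite[Theorem~1.35]{Pisier} that a CB map $u$ into $\B(\mathscr H)$ is CP iff $\|u\|_{cb}=u(\I)$.

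\textbf{Weak$^*$-homeomorphism.} You assert that $N$ is a weak$^*$-homeomorphism onto its range but do not argue continuity of the inverse. The paper observes that the inverse is simply $a\mapsto (\id\otimes h)(a)$, which is weak$^*$-continuous, and then invokes \cite[Lemma~3.7]{DKV_ApproxLCQG}.

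\textbf{Attribution.} The implication ``$y\otimes\I\in\M_{cb}^l(\A(D(\GGamma)))\Rightarrow y\in\M_{cb}^l(\A(\GGamma))$'' is not contained in \cite[Lemmas~8.4,~8.5]{DKV_ApproxLCQG}; the paper notes it requires the \emph{argument} of \cite[Lemma~8.6]{DKV_ApproxLCQG}, namely that $\Theta^l(y\otimes\I)$ leaves $\LL^\infty(\wh\GGamma)\subseteq\LL^\infty(\wh{D(\GGamma)})$ invariant and the induced centraliser has associated multiplier $y$.
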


\begin{proof}
The isometric identification $ \mc{Z} \M^l_{cb}(\A(\GGamma)) = \M_{cb}(\Corep(\GGamma)) $, compatible with the weak$^*$-topologies, is established in \cite[Lemma~8.6]{DKV_ApproxLCQG}.  

Let $a\in \M_{cb}^l(\A(\wh\GGamma\backslash D(\GGamma)/\wh\GGamma))$. According to Proposition~\ref{averagingdd} and Proposition~\ref{averagingcb} there is $ b\in \mc{Z} \ell^\infty(\GGamma)$ with $a= \Xi(a) = b\otimes \I \in\M^l_{cb}(\A(D(\GGamma)))$.
In fact, $b\in \mc{Z} \M^l_{cb}(\A(\GGamma))$, which can be proved following the 
argument for \cite[Lemma 8.6]{DKV_ApproxLCQG}. More precisely, it suffices to show that $b$ is a cb-multiplier, and to show this, one observes that $\Theta^l(b\otimes\I)$ leaves the image of $\LL^\infty(\wh\GGamma) \subseteq \LL^\infty(\wh{D(\GGamma)})$ invariant and so induces a centraliser whose associated multiplier must be $b$.

Conversely, if $b\in \mathcal Z \M_{cb}^l(\A(\GGamma))=\M_{cb}(\Corep(\GGamma))$, then \cite[Lemma~8.4]{DKV_ApproxLCQG} shows that 
$a=b\otimes\I\in \M_{cb}^l(\A(D(\GGamma)))$ and Proposition~\ref{averagingdd} gives $\Xi(a)=a$. Thus $a\in \M_{cb}^l(\A(\wh\GGamma\backslash D(\GGamma)/\wh\GGamma)) $.

The map $\M_{cb}^l(\A(\wh\GGamma\backslash D(\GGamma)/\wh\GGamma)) \ni a \mapsto b \in \mc{Z} \ell^\infty(\GGamma)$ is weak$^*$-weak$^*$-continuous because $b = (\id\otimes h)(a)$.  It follows that the map is a weak$^*$-weak$^*$-homeomorphism, compare \cite[Lemma~3.7]{DKV_ApproxLCQG}.

It remains to verify the claim regarding CP multipliers. 
From \cite[Proposition 6.1]{PVcstartensor} we know that $\theta\in \mc{Z}\M^l_{cb}(\A(\GGamma))$ is CP (i.e.~$\Theta^l(\theta)\in\CB^\sigma(\LL^{\infty}(\wh\GGamma))$ is CP) if 
and only if $\theta\in \M_{cb}(\Corep(\GGamma))$ is CP. 
In addition, for $\theta\in \mc{Z}\M^l_{cb}(\A(\GGamma))$ we have $ \Theta^l(\theta)(\I) = \theta(e) \I $, where $e\in \Irr(\wh\GGamma)$ stands for the trivial representation. 
Since $\Theta^l(\theta\otimes \I)$ is normal, we deduce from \cite[Lemma 8.4]{DKV_ApproxLCQG} that similarly $\Theta^l(\theta\otimes \I)(\I)=\theta(e)\I$. 
Now recall from \cite[Theorem 1.35]{Pisier} that if $\mc A$ is a unital $\mrm{C}^*$-algebra, $\mathscr H$ is a Hilbert space and $u\colon \mc A\rightarrow \B(\mathscr H)$
a completely bounded linear map, then $u$ is CP if and only if $\|u\|_{cb}=u(\I)$. 
Since $ \|\theta\|_{cb}=\|\theta\otimes \I\|_{cb} $ we conclude that $\theta\in \mc{Z}\M^l_{cb}(\A(\GGamma))$ is CP if and only if $ \theta\otimes \I\in \M^l_{cb}(\A(D(\GGamma)))$ is CP. 
\end{proof}

We are now ready to state our main result. This improves in particular \cite[Proposition~8.9]{DKV_ApproxLCQG} by removing the unimodularity condition.

\begin{theorem}\label{thm:main}
Let $ \GGamma $ be a discrete quantum group and let $ D(\GGamma) $ be its Drinfeld double. Then the following conditions are equivalent. 
\begin{enumerate}
\item[1)] $\GGamma$ is centrally strongly amenable (respectively, is centrally weakly amenable, has the central Haagerup property, has central AP). 
\item[2)] $D(\GGamma)$ is strongly amenable (respectively, is weakly amenable, has the Haagerup property, has AP). 
\end{enumerate}
Furthermore, in the weakly amenable case we have $\Lambda_{cb}(D(\GGamma))=\mc Z \Lambda_{cb}(\GGamma)$.
\end{theorem}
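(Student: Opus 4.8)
The plan is to route the entire argument through the rigid \cst-tensor category $\Corep(\GGamma)=\Rep(\wh\GGamma)$. Recall from the end of Section~\ref{section aps} that the categorical approximation properties of $\Corep(\GGamma)$ in Definition~\ref{defapstc} are equivalent to the central approximation properties of $\GGamma$ in Definition~\ref{defapsdqg}, and that under the isometric identification $\mc{Z}\M^l_{cb}(\A(\GGamma))=\M_{cb}(\Corep(\GGamma))$ one has $\mc{Z}\Lambda_{cb}(\GGamma)=\Lambda_{cb}(\Corep(\GGamma))$. It therefore suffices to prove that $D(\GGamma)$ has a given approximation property exactly when $\Corep(\GGamma)$ does, and that $\Lambda_{cb}(D(\GGamma))=\Lambda_{cb}(\Corep(\GGamma))$ in the weakly amenable case. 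The bridge between the two sides is the chain of isometric, weak$^*$-homeomorphic, complete-positivity preserving identifications $\M_{cb}(\Corep(\GGamma))=\mc{Z}\M^l_{cb}(\A(\GGamma))\cong\M^l_{cb}(\A(\wh\GGamma\backslash D(\GGamma)/\wh\GGamma))$ of Proposition~\ref{propimagecentral}, together with the averaging map $\Xi$.

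For the implication from $\Corep(\GGamma)$ to $D(\GGamma)$, I would take a net $(\varphi_i)$ of multipliers of $\Corep(\GGamma)$ witnessing the property in question and feed it through the embedding $N(\cdot)=\cdot\otimes\I$, which by \cite[Lemmas~8.4,~8.5]{DKV_ApproxLCQG} and Proposition~\ref{propimagecentral} is isometric, weak$^*$-weak$^*$-continuous, preserves complete positivity, and has image exactly the $\wh\GGamma$-biinvariant multipliers $\M^l_{cb}(\A(\wh\GGamma\backslash D(\GGamma)/\wh\GGamma))\subseteq\M^l_{cb}(\A(D(\GGamma)))$. Using Proposition~\ref{averagingdd} one checks that $N$ carries finitely supported central multipliers into $\A(D(\GGamma))$ and $\mrm{c}_0$-multipliers into $\mrm{C}_0(D(\GGamma))$, and that pointwise convergence $\varphi_i\to\I$ on $\Irr(\wh\GGamma)$ translates into the convergence of $\varphi_i\otimes\I$ to $\I$ required in Definition~\ref{defapsqg} (in $\mrm{C}_0(D(\GGamma))$, as an approximate identity of $\A(D(\GGamma))$, or weak$^*$ in $\M^l_{cb}(\A(D(\GGamma)))$, as appropriate). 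Since $N$ is isometric this yields $\Lambda_{cb}(D(\GGamma))\leq\Lambda_{cb}(\Corep(\GGamma))$ in the weakly amenable case.

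For the converse I would start from a net $(e_i)$ witnessing the given property of $D(\GGamma)$ and apply the averaging map $\Xi$. By Lemmas~\ref{averaginghelp} and~\ref{lemma1}, Corollary~\ref{corr:xi_to_fourier}, and Propositions~\ref{averagingcb} and~\ref{cppreservation}, the map $\Xi$ is unital, a contraction on $\M^l_{cb}(\A(D(\GGamma)))$ which is weak$^*$-weak$^*$-continuous there, preserves complete positivity, restricts to maps $\A(D(\GGamma))\to\A(D(\GGamma))$ and $\mrm{C}_0(D(\GGamma))\to\mrm{C}_0(D(\GGamma))$, and is a conditional expectation at the level of $\mrm{C}_0(D(\GGamma))$. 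Hence $(\Xi(e_i))$ is again a witnessing net, now consisting of $\wh\GGamma$-biinvariant multipliers, with $\|\Xi(e_i)\|_{cb}\leq\|e_i\|_{cb}$, with $\Xi(e_i)\to\Xi(\I)=\I$ in the relevant topology, and with $\Xi(e_i)\,a\to a$ for biinvariant $a$ whenever $(e_i)$ is an approximate identity of $\A(D(\GGamma))$ (because $\Xi$ is a conditional expectation, hence a module map over its range, and is contractive on $\A(D(\GGamma))$). Transporting this net through the identifications of Propositions~\ref{propimagecentral} and~\ref{averagingdd} produces a net of central multipliers of $\GGamma$, with the same complete bounds, witnessing the corresponding property of $\Corep(\GGamma)$; in the strongly and weakly amenable cases one first replaces the resulting (a priori only $\mrm{c}_0$-valued) central CP, respectively norm-bounded, multipliers by finitely supported ones, approximating the underlying positive functionals as in the remark following Definition~\ref{defapsdqg}. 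This gives $\Lambda_{cb}(\Corep(\GGamma))\leq\Lambda_{cb}(D(\GGamma))$, and combining with the second paragraph yields the claimed equality.

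Most of the technical weight has already been carried in Sections~\ref{section averaging} and~\ref{section double}: the averaging map $\Xi$ is contractive, weak$^*$-continuous and complete-positivity preserving on completely bounded multipliers, and respects $\A(D(\GGamma))$, $\mrm{C}_0(D(\GGamma))$, finite support and $\wh\GGamma$-biinvariance, all \emph{without} any unimodularity hypothesis --- which is exactly what makes it possible to remove the unimodularity assumption from \cite[Proposition~8.9]{DKV_ApproxLCQG}, where a different averaging was used. Accordingly, the main obstacle I anticipate is not conceptual but rather the bookkeeping needed to reconcile the precise shape of the witnessing nets on the two sides: that the averaged or embedded net still lies in the correct subspace ($\A$, $\mrm{c}_0$, or finitely supported central elements) and still converges in the prescribed mode, with the delicate point being the passage, in the weakly and strongly amenable cases, from a net of $\mrm{c}_0$-valued central CB or CP multipliers to one of \emph{finitely supported} such multipliers with controlled completely bounded norm --- a step that genuinely needs an extra approximation and where the non-unimodular case is subtlest, cf.\ the discussion after Definition~\ref{defapsdqg}. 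Once the nets are in place, the Cowling--Haagerup estimates follow at once, since $\Xi$ is a contraction and $N$ is an isometry.
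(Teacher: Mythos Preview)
Your overall strategy---embed via $N$ for $1)\Rightarrow 2)$ and average via $\Xi$ for $2)\Rightarrow 1)$---is exactly the paper's, and the forward direction is fine. But there is a genuine gap in the converse for AP, strong amenability, and weak amenability.

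After averaging a net $(e_i)\subseteq\A(D(\GGamma))$ you obtain central multipliers in $\mc{Z}\A(\GGamma)$ (or at best in $\mc{Z}\mrm{c}_0(\GGamma)$), not in $\mc{Z}\mrm{c}_{00}(\GGamma)$. All three central properties in Definition~\ref{defapsdqg} require \emph{finitely supported} central multipliers, so you must then pass from $\mc{Z}\A(\GGamma)$ to $\mc{Z}\mrm{c}_{00}(\GGamma)$ while preserving CP (resp.\ the CB bound, resp.\ weak$^*$ convergence). You cite ``the remark following Definition~\ref{defapsdqg}'' for this step, but that remark says precisely the opposite: it explains that approximating states in $\LL^1(\wh\GGamma)$ by finitely supported ones does \emph{not} preserve centrality, and explicitly flags the passage from ``CP multipliers in $\mc{Z}\A(\GGamma)$ converging pointwise to $\I$'' to central strong amenability as an open question outside the unimodular case. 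Proposition~\ref{thm:kac_nofs} and Lemma~\ref{lemma:dense_in_centre_fourier} provide this step only when $\GGamma$ is unimodular, and Theorem~\ref{thm:suq2_ceg_fourier} shows that $\mc{Z}\mrm{c}_{00}(\GGamma)$ need not be dense in $\mc{Z}\A(\GGamma)$ in general. So your final approximation step is not available, and this is exactly the difficulty that made \cite[Proposition~8.9]{DKV_ApproxLCQG} require unimodularity.

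The paper avoids this by reversing the order of operations: approximate \emph{first}, at the level of $D(\GGamma)$ where no centrality is imposed, and average \emph{afterwards}. Concretely, each $a_i=\lambda_{\wh{D(\GGamma)}}(\omega_{\xi_i,\eta_i})$ is replaced (in $\A(D(\GGamma))$-norm, or in the state-approximation sense for the CP case) by an element with $\xi_i,\eta_i\in\Lambda_\vp(\mrm{c}_{00}(\GGamma))\odot\Lambda_h(\Pol(\wh\GGamma))$, which forces $a_i\in\mrm{c}_{00}(\GGamma)\odot\Pol(\wh\GGamma)$. Only then is $\Xi$ applied, and Proposition~\ref{averagingdd} gives $\Xi(\mrm{c}_{00}(\GGamma)\odot\Pol(\wh\GGamma))=\mc{Z}\mrm{c}_{00}(\GGamma)\otimes\I$, so finite support comes for free. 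This is the missing idea in your argument; once you insert it, the rest of your outline goes through.
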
 

\begin{proof} 
$ 1) \Rightarrow 2) $ For AP this claim is \cite[Propositions~8.7,~8.9]{DKV_ApproxLCQG}; let us sketch the argument. Let $(a_i)_{i\in I}$ be a net in $\mc{Z}\mrm{c}_{00}(\bbGamma)$ verifying Definition~\ref{defapsdqg} for central AP. From Proposition~\ref{propimagecentral} (which is based on \cite[Proposition~6.1]{PVcstartensor}) we know that $(a_i\otimes\I)_{i\in I}$ is a net in $\M^l_{cb}(D(\bbGamma))$ converging weak$^*$ to $\I$.  As each $a_i\otimes\I$ is in $\mrm{c}_{00}(\bbGamma)\odot\Pol(\wh\bbGamma)\subseteq \A(D(\bbGamma))$ it follows 
that $D(\bbGamma)$ has AP.

If $\bbGamma$ is centrally weakly amenable, the net $(a_i)_{i\in i}$ can be chosen to additionally satisfy $\|a_i\|_{cb}\leq \mc{Z} \Lambda_{cb}(\GGamma)$ for each $i$.  By Proposition~\ref{propimagecentral}, we have $\|a_i\otimes\I\|_{cb} = \|a_i\|_{cb}$, and so $(a_i\otimes\I)_{i\in I}$ is a net bounded by $\mc{Z} \Lambda_{cb}(\GGamma)$.  Invoking \cite[Proposition~5.7]{DKV_ApproxLCQG} shows that $D(\bbGamma)$ is weakly amenable with $\Lambda_{cb}(D(\GGamma)) \leq \mc{Z} \Lambda_{cb}(\GGamma)$. 

If $\bbGamma$ is centrally strongly amenable, each $a_i$ can be chosen to be a UCP multiplier, and then $a_i\otimes\I$ will be a UCP multiplier in $\A(D(\bbGamma))$ 
according to Proposition~\ref{propimagecentral}. By Lemma~\ref{lem:cp_fourier_pos}, each $a_i\otimes\I$ arises from a state, and in particular, the net $(a_i\otimes\I)_{i\in I}$ is 
bounded in $\A(D(\bbGamma))$. We now invoke \cite[Proposition~5.6]{DKV_ApproxLCQG} to conclude that $D(\bbGamma)$ is strongly amenable.

When $\bbGamma$ has the central Haagerup property, the net $(a_i)_{i\in I}$ can be chosen to consist of central CP multipliers forming a bounded approximate identity 
for $\mrm{c}_0(\bbGamma)$.  By Proposition~\ref{propimagecentral} each $a_i\otimes\I$ is a CP multiplier of $D(\bbGamma)$, and $(a_i\otimes\I)_{i\in I}$ is clearly a bounded 
approximate identity for $\mrm{C}_0(D(\bbGamma)) = \mrm{c}_0(\bbGamma)\otimes \mrm{C}(\wh\bbGamma)$.

$ 2) \Rightarrow 1) $ Assume first that $D(\GGamma)$ has AP exhibited by a net $(a_i)_{i \in I}=(\lambda_{\wh{D(\GGamma)}}(\omega_i))_{i \in I} \in \A(D(\GGamma))$. 
Since $\LL^{\infty}(\wh{D(\GGamma)})$ is in standard position on $\LL^2(D(\GGamma))$, each $\omega_i$ is a vector functional. By a straightforward approximation argument we may 
assume that each $\omega_i$ is of the form $\omega_{\xi_i, \eta_i}$ where $\xi_i, \eta_i \in \Lambda_\vp(\mrm{c}_{00}(\GGamma))\odot \Lambda_h(\Pol(\wh\GGamma))$, 
here recalling that $\vp\otimes h$ is the left Haar integral on $D(\GGamma)$. Then note that $D(\GGamma)$ arises from an algebraic quantum group, 
compare \cite{DaeleAlgebraic}, \cite[Section~3.2]{VoigtYuncken}, and observe that 
\begin{equation}\label{eq9}
(\id\otimes \omega_{\Lambda_{\vp\otimes h}(x),\Lambda_{\vp\otimes h}(y)})\ww^{D(\GGamma) *}=
(\id\otimes (\vp\otimes h))\bigl((\I\otimes x^*)\Delta_{D(\GGamma)}(y)\bigr)\in \mrm{c}_{00}(\GGamma)\odot \Pol(\wh\GGamma)
\end{equation}
for $x,y\in \mrm{c}_{00}(\GGamma)\odot \Pol(\wh\GGamma)$, see e.g.~\cite[Page 12]{DaeleLCQG}. 
Thus each $a_i$ is a member of $\mrm{c}_{00}(\GGamma)\odot \Pol(\wh\GGamma)$. By Proposition~\ref{averagingdd} we have $\Xi(a_i) = b_i\otimes\I$ for some $b_i \in \mc{Z}\mrm{c}_{00}(\GGamma)$, and Propositions~\ref{averagingcb} and~\ref{propimagecentral} show that $b_i \xrightarrow[i\in I]{} \I$ weak$^*$ in $\M^l_{cb}(\A(\GGamma))$. Hence $\GGamma$ has central AP.

If $D(\GGamma)$ is weakly amenable we can choose a net $(a_{i})_{i\in I}$ which is a left approximate unit in $\A(D(\GGamma))$ in such a way that $\|a_i\|_{cb}\le \Lambda_{cb}(D(\GGamma))$.  Again we may approximate each $a_i$ by a member of $\mrm{c}_{00}(\GGamma)\odot \Pol(\wh\GGamma)$. Since this approximation is made in the $\A(D(\GGamma))$ norm, we may assume that 
we still have $\|a_i\|_{cb}\le \Lambda_{cb}(D(\GGamma))$. Again let $\Xi(a_i) = b_i\otimes\I$, and observe that Propositions~\ref{averagingcb} and~\ref{propimagecentral} 
now show $\|b_i\|_{cb} \leq \Lambda_{cb}(D(\GGamma))$. Given $\alpha\in \Irr(\wh\GGamma)$ denote by $p_\alpha\in \mrm{c}_{00}(\GGamma) \subseteq \mrm{c}_0(\GGamma)$ the central projection 
onto the $\alpha$ component. Then $p_\alpha\otimes\I \in \mrm{c}_{00}(\GGamma)\odot \Pol(\wh\GGamma)\subseteq \A(D(\GGamma))$ and so, as $\Xi$ is a conditional expectation,
\begin{equation}\label{eq8}
b_{i}p_\alpha\otimes \I =
\Xi(a_i) (p_\alpha\otimes \I) =
\Xi(a_i (p_\alpha\otimes \I))
\xrightarrow[i\in I]{} \Xi(p_\alpha\otimes \I)
= p_\alpha\otimes \I,
\end{equation}
with the convergence in $\A(D(\GGamma))$ by Corollary~\ref{corr:xi_to_fourier}. Thus $b_i \xrightarrow[i\in I]{} \I$ pointwise in $\ell^\infty(\GGamma)$, and 
we conclude that $ \GGamma $ is centrally weakly amenable with $\mc {Z}\Lambda_{cb}(\GGamma)\le \Lambda_{cb}(D(\GGamma))$.

If $D(\GGamma)$ is strongly amenable it follows from the discussion after Definition~\ref{defapsqg} that $\A(D(\GGamma))$ has a bounded approximate 
identity $(a_i)_{i\in I}$ where $a_i = \lambda_{\wh{D(\GGamma)}}(\omega_i)$ for some state $\omega_i$. By approximation, we may suppose again that 
$\omega_i = \omega_{\xi_i}$ for $\xi_i \in \Lambda_\vp(\mrm{c}_{00}(\GGamma))\odot \Lambda_h(\Pol(\wh\GGamma))$ for all $ i $. Let $\Xi(a_i) = b_i\otimes\I$, 
so that by Propositions~\ref{cppreservation} and~\ref{propimagecentral} each $b_i$ is a CP multiplier in $\mc{Z} \mrm{c}_{00}(\GGamma)$. As before, $b_i \xrightarrow[i\in I]{} \I$ 
pointwise in $\ell^\infty(\GGamma)$, which shows that $\GGamma$ is centrally strongly amenable.

Finally, consider the situation where $D(\GGamma)$ has the Haagerup property. Let $(a_i)_{i\in I}$ be a bounded approximate identity in $\mrm{C}_0(D(\GGamma))$ which consists of CP multipliers. By Proposition~\ref{averagingdd} we obtain $b_i\in \mc{Z} \mrm{c}_0(\GGamma)$ with $\Xi(a_i)=b_i\otimes \I$ for each $i\in I$. By Propositions~\ref{cppreservation} and~\ref{propimagecentral}, each $b_{i}\in\M^l_{cb}(\A(\GGamma))$ is a CP multiplier, and we have $\sup_{i\in I}\|b_i\|\le \sup_{i\in I}\|a_i\| <+\infty$. 
Furthermore, as $p_\alpha\otimes \I\in \mc{Z}\mrm{C}_0(D(\GGamma))$ for $\alpha\in \Irr(\wh\GGamma)$, and the averaging map $\Xi$ is $\LL^{\infty}(D(\GGamma))$-norm continuous, 
we deduce as in \eqref{eq8} that $b_i\xrightarrow[i\in I]{}\I$ pointwise. Since the net $(b_i)_{i\in I}$ is bounded in norm it forms an approximate identity for $\mrm{c}_0(\GGamma)$, 
thus showing that $\GGamma$ has the central Haagerup property.
\end{proof}

\section{Further results for discrete quantum groups and their Drinfeld doubles}\label{sec:unimodular}

In this section we complement the discussion in Section~\ref{section double} with some further analysis related to unimodularity. 

\subsection{Approximation in the unimodular case}\label{sec:unimodularsub}

Let us first review the averaging procedure for discrete quantum groups $\bbGamma$ with respect to the coadjoint action, which allows one to 
compare central approximation properties with their non-central counterparts in the unimodular case, see \cite[Section 5]{KrausRuan}, \cite[Section~6.3.2]{Brannan}, 
and which was already mentioned in Remark \ref{rem:dd_one_sided}. 

Let $\bbGamma$ be a discrete quantum group. By definition, the coadjoint action of $ \wh\bbGamma$ on $\ell^\infty(\bbGamma)$ is the map $ \gamma\colon \ell^\infty(\bbGamma) \rightarrow \LL^\infty(\wh\bbGamma) \bar{\otimes} \ell^\infty(\bbGamma) $
defined by $\gamma(x) = \wh{\ww}^* (\I \otimes x) \wh{\ww}$. 
Combining $ \gamma $ with the Haar state $h$ of $\LL^\infty(\wh\bbGamma)$ we obtain the averaging map $A\colon  \ell^\infty(\bbGamma) \rightarrow \ell^\infty(\bbGamma) $ 
by setting
$$
A = (h \otimes \id)\gamma; \quad x\mapsto (h\otimes\id)\bigl(\wh{\ww}^* (\I \otimes x) \wh{\ww}\bigr).
$$
Clearly this is a normal unital CP map. It is straightforward to check, as in the proof of Proposition~\ref{averagingdd}, that the algebra of invariant elements 
$ \{x \in \ell^\infty(\bbGamma) \mid \gamma(x) = \I \otimes x \} \subseteq \ell^\infty(\bbGamma)$ is equal to the centre $ \mc Z \ell^\infty(\bbGamma) $ of $\ell^\infty(\bbGamma)$. 
We calculated above in \eqref{eq:A_map_general} the form of $A$, and from this calculation it is clear that $A$ is a conditional expectation onto $ \mc Z \ell^\infty(\bbGamma) $, and that $A$ restricts to a conditional expectation $\mrm{c}_0(\bbGamma) \to \mc Z\mrm{c}_0(\bbGamma)$.  In the case when $\bbGamma$ is unimodular, things simplify, and we obtain
\begin{equation}\label{eq:defn_A}
A(e^\alpha_{m,n}) = \delta_{m,n} \frac{1}{\dim(\alpha)} \,p_\alpha. 
\end{equation}
In particular, the above definition of $A$ coincides with the construction in \cite[Section 6]{DKV_ApproxLCQG}. 

Recall that the \emph{character} of $\alpha\in\Irr(\wh\bbGamma)$ 
is $\chi_\alpha = \sum_{i=1}^{\dim(\alpha)} U^\alpha_{i,i} \in \Pol(\wh\bbGamma) \subseteq \LL^\infty(\wh\bbGamma)$. Motivated by the case of classical compact 
groups we shall call
\[ \mathscr{C}_{\wh\bbGamma} = \{ \chi_\alpha \,|\, \alpha\in\Irr(\wh\bbGamma) \}'' \subseteq \LL^\infty(\wh\bbGamma), \]
the von Neumann algebra of \emph{class functions} of $ \wh\bbGamma $, compare \cite[Lemma~1.2]{KW_ClassFuncs}. If $\GGamma$ is unimodular, the Haar state $ h $ on $\LL^\infty(\wh\bbGamma)$ is a trace and there is a unique normal conditional expectation $F\colon \LL^\infty(\wh\bbGamma) \rightarrow \mathscr{C}_{\wh\bbGamma}$ which preserves $h$, see \cite[Theorem~4.2, Chapter~IX]{TakesakiII}. 

The next proposition summarises key properties of $A$. Most of these are well-known, but we make the new observation that $A$ maps the Fourier algebra into itself.

\begin{proposition}\label{unimodularaveraging}
Let $\bbGamma$ be an unimodular discrete quantum group. The averaging map $ A\colon \ell^\infty(\GGamma)$ $\rightarrow \mc Z \ell^\infty(\bbGamma) $ restricts to a 
weak$^*$-weak$^*$-continuous contraction $\M_{cb}^l(\A(\bbGamma)) \rightarrow \mc Z \M_{cb}^l(\A(\bbGamma))$. It maps CP multipliers to CP multipliers and preserves finite support. 
Moreover it restricts to a completely contractive map $\A(\bbGamma) \rightarrow \mc Z \A(\bbGamma)$. Under the identification $\A(\GGamma)\cong\LL^1(\wh\GGamma)$, it agrees with the predual $F_*$ of the unique $h$-preserving conditional expectation $F\colon \LL^{\infty}(\wh\GGamma)\rightarrow\mathscr{C}_{\wh\GGamma}$.
\end{proposition}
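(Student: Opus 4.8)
The plan is to treat the four assertions of Proposition~\ref{unimodularaveraging} in turn. The first three — that $A$ restricts to a weak$^*$-weak$^*$-continuous contraction $\M^l_{cb}(\A(\bbGamma))\to\mc Z\M^l_{cb}(\A(\bbGamma))$, that it sends CP multipliers to CP multipliers, and that it preserves finite support — are, modulo notation, contained in \cite[Section~6]{DKV_ApproxLCQG} (see also \cite{KrausRuan,Brannan}) once one invokes the identification of $A$ with the averaging map used there that was noted just after \eqref{eq:defn_A}; moreover preservation of finite support is in any case transparent from \eqref{eq:defn_A}, which shows that $A(e^\alpha_{m,n})$ is supported in the matrix block labelled by $\alpha$. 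So the substantive part is the last two sentences of the statement, namely that $A$ maps $\A(\bbGamma)$ completely contractively into $\mc Z\A(\bbGamma)$ and coincides there with $F_*$.

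The heart of the matter is to compute $F$ on the dense Hopf $*$-algebra $\Pol(\wh\bbGamma)$, and I expect this to be the main obstacle. I would show that
\[ F(U^\alpha_{i,j}) = \delta_{i,j}\,\tfrac{1}{\dim(\alpha)}\,\chi_\alpha \qquad (\alpha\in\Irr(\wh\bbGamma),\ 1\le i,j\le\dim(\alpha)). \]
Since $\bbGamma$ is unimodular, $h$ is a faithful normal trace, so $F$ is implemented on $\LL^2(\wh\bbGamma)$ by the orthogonal projection $P$ onto $\overline{\Lambda_h(\mathscr C_{\wh\bbGamma})}$, in the sense that $\Lambda_h\circ F=P\circ\Lambda_h$. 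Because $\chi_\alpha^*=\chi_{\bar\alpha}$ and $\chi_\alpha\chi_\beta$ is a non-negative integer combination of characters by the fusion rules, $\lin\{\chi_\alpha\,|\,\alpha\in\Irr(\wh\bbGamma)\}$ is a $\sigma$-weakly dense $*$-subalgebra of $\mathscr C_{\wh\bbGamma}$; using traciality of $h$ together with the Kaplansky density theorem, its image under $\Lambda_h$ is therefore $\|\cdot\|_2$-dense in $\overline{\Lambda_h(\mathscr C_{\wh\bbGamma})}$. The Schur orthogonality relations \eqref{eq:orthog_matrix}, specialised to $\uprho_\alpha=\I$, show that $\{\Lambda_h(\chi_\alpha)\}_\alpha$ is in fact orthonormal, and a direct expansion of $P\Lambda_h(U^\alpha_{i,j})$ in this orthonormal basis (again using \eqref{eq:orthog_matrix}) gives $P\Lambda_h(U^\alpha_{i,j})=\delta_{i,j}\dim(\alpha)^{-1}\Lambda_h(\chi_\alpha)$; since $\Lambda_h$ is injective on $\Pol(\wh\bbGamma)$ the displayed formula follows. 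The delicate point here is justifying the $\|\cdot\|_2$-density of $\lin\{\chi_\alpha\}$ in $\LL^2(\mathscr C_{\wh\bbGamma})$ — this is exactly where unimodularity enters — before the orthonormal-basis expansion can be used.

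It then remains to assemble the pieces. Using \eqref{eq:wcmpt} one has $\lambda_{\wh\bbGamma}(\omega)=\sum_{\alpha,i,j}\omega(U^\alpha_{i,j})e^\alpha_{i,j}$, so \eqref{eq:defn_A} gives $A(\lambda_{\wh\bbGamma}(\omega))=\sum_\alpha \dim(\alpha)^{-1}\omega(\chi_\alpha)\,p_\alpha$, whereas the formula for $F$ yields $\lambda_{\wh\bbGamma}(F_*(\omega))=\sum_{\alpha,i,j}\omega(F(U^\alpha_{i,j}))e^\alpha_{i,j}=\sum_\alpha\dim(\alpha)^{-1}\omega(\chi_\alpha)\,p_\alpha$ as well. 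Hence $A$ maps $\A(\bbGamma)$ into $\A(\bbGamma)$ and, under the identification $\A(\bbGamma)\cong\LL^1(\wh\bbGamma)$, coincides with $F_*$; its image consists of elements $\sum_\alpha c_\alpha p_\alpha$ with the $p_\alpha$ central projections, hence lies in $\mc Z\A(\bbGamma)$. Finally $F$ is a unital normal completely positive map, so $F_*$ is completely contractive, and since $\lambda_{\wh\bbGamma}\colon\LL^1(\wh\bbGamma)\to\A(\bbGamma)$ is by definition a complete isometry, the restriction $A\colon\A(\bbGamma)\to\mc Z\A(\bbGamma)$ is completely contractive, as required.
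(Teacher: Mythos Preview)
Your proof is correct, and the overall architecture matches the paper's: cite \cite[Section~6]{DKV_ApproxLCQG} for the multiplier assertions, compute $F(U^\alpha_{i,j})=\delta_{i,j}\dim(\alpha)^{-1}\chi_\alpha$, and then verify $A\circ\lambda_{\wh\bbGamma}=\lambda_{\wh\bbGamma}\circ F_*$. The two genuine differences are as follows. First, for the formula for $F$, the paper bypasses your $\LL^2$-density argument entirely: it simply uses the $\mathscr C_{\wh\bbGamma}$-bimodule property of $F$ to compute $h(F(U^\alpha_{i,j})\chi_\beta^*)=h(U^\alpha_{i,j}\chi_\beta^*)$, reads off the answer from Schur orthogonality, and invokes faithfulness of $h$ on $\mathscr C_{\wh\bbGamma}$; this avoids precisely the step you flagged as delicate. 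Second, for the identification $A=F_*$, the paper does not compare $A(\lambda_{\wh\bbGamma}(\omega))$ and $\lambda_{\wh\bbGamma}(F_*(\omega))$ directly as you do. Instead it passes through the dual picture: it introduces $\wh\Delta^\sharp=\wh\Delta^{-1}E$ (with $E$ the $h\otimes h$-preserving conditional expectation onto $\wh\Delta(\LL^\infty(\wh\bbGamma))$), sets $\Psi(T)=\wh\Delta^\sharp(\id\otimes T)\wh\Delta$, uses the known relation $\Psi(\Theta^l(a))=\Theta^l(A(a))$, and then computes $\Psi(\Theta^l(\wh\lambda(\wh\omega)))$ and $\Theta^l(\wh\lambda(F_*(\wh\omega)))$ on matrix coefficients to see they agree. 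Your route via \eqref{eq:wcmpt} and \eqref{eq:defn_A} is more elementary and avoids the $\Psi$ machinery; the paper's route makes the connection with the centraliser picture used in \cite{DKV_ApproxLCQG} more explicit.
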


\begin{proof}
The first assertions are already contained in the discussion around \cite[Proposition~6.8]{DKV_ApproxLCQG}, see also \cite[Section~6.3.2]{Brannan}; we recall some of the details.
Let $E\colon\LL^\infty(\wh\bbGamma) \bar\otimes \LL^\infty(\wh\bbGamma) \rightarrow \wh\Delta(\LL^\infty(\wh\bbGamma))$ be the unique conditional expectation which preserves the state $h\otimes h$. 
It is normal and satisfies
\begin{equation}
E(U^\alpha_{i,j} \otimes U^\beta_{k,l}) = \delta_{\alpha,\beta} \delta_{j,k} \frac{1}{\dim(\alpha)} \wh\Delta(U^\alpha_{i,l}). \label{eq:defn_E}
\end{equation}
We set $\wh\Delta^\sharp = \wh\Delta^{-1}E\colon\LL^\infty(\wh\bbGamma) \bar\otimes \LL^\infty(\wh\bbGamma) \rightarrow \LL^\infty(\wh\bbGamma)$, and 
for $T \in \CB^\sigma(\LL^\infty(\wh\bbGamma))$ we define
\[ 
\Psi(T) = \wh\Delta^\sharp(\id\otimes T)\wh\Delta \in \mrm{CB}^\sigma(\LL^\infty(\wh\bbGamma)). 
\]
The proof of \cite[Proposition~6.8]{DKV_ApproxLCQG} shows that $\Psi( \Theta^l(a)) = \Theta^l( A(a) )$ for $a\in\M^l_{cb}(\A(\bbGamma))$. In particular, $\Psi$ maps duals of CB 
centralisers to duals of CB centralisers, and hence $A$ maps CB multipliers to central CB multipliers. It follows also that CP multipliers are mapped to central CP multipliers, 
and it is evident from \eqref{eq:defn_A} that $A$ preserves finite support. 

It remains to prove the claim about the Fourier algebra. Using the $\mathscr{C}_{\wh\bbGamma}$-bimodule property of $ F $ we obtain
\[ 
h(F(U^\alpha_{i,j}) \chi_\beta^* ) = h(F(U^\alpha_{i,j}\chi_\beta^*))
= h(U^\alpha_{i,j} \chi_\beta^*)
= \sum_{k=1}^{\dim(\beta)} h(U^\alpha_{i,j} (U^\beta_{k,k})^*)
= \delta_{\alpha,\beta} \delta_{i,j} \frac{1}{\dim(\alpha)}
\]
for all $\alpha,\beta\in\Irr(\wh\bbGamma),1\le i,j\le \dim(\alpha)$, 
and since $h(\chi_\alpha \chi_\beta^*) = \delta_{\alpha,\beta}$ and $h$ is faithful it follows that
\begin{equation}
F(U^\alpha_{i,j}) = \frac{1}{\dim(\alpha)} \delta_{i,j} \chi_\alpha.
\label{eq:cond_exp_classfns}
\end{equation}
As $F\colon \LL^{\infty}(\wh\GGamma)\rightarrow \mathscr{C}_{\wh\GGamma}\subseteq\LL^{\infty}(\wh\GGamma)$ is normal, it has a Banach space pre-adjoint $F_*\colon \LL^1(\wh\bbGamma) \rightarrow \LL^1(\wh\bbGamma)$ which is completely contractive. 
Let $\wh\omega\in \LL^1(\wh\bbGamma)$ and set $a = \wh\lambda(\wh\omega) \in \A(\bbGamma) \subseteq \M^l_{cb}(\A(\bbGamma))$. Since $\Theta^l(a) = (\wh\omega\otimes\id)\wh\Delta$, 
we obtain by the definition of $\Psi$, and using \eqref{eq:defn_E},
\begin{equation}\begin{split}\label{eq12a}
\Psi(\Theta^l(a));\, U^\alpha_{i,j} \mapsto &
\wh\Delta^\sharp\Big( \sum_{k=1}^{\dim(\alpha)} U^\alpha_{i,k} \otimes \Theta^l(a)(U^\alpha_{k,j}) \Big)
= \wh\Delta^\sharp\Big( \sum_{k,l=1}^{\dim(\alpha)} U^\alpha_{i,k} \otimes \wh\omega(U^\alpha_{k,l}) U^\alpha_{l,j} \Big) \\
&= \sum_{k=1}^{\dim(\alpha)} \frac{ \wh\omega(U^\alpha_{k,k}) }{ \dim(\alpha) } U^\alpha_{i,j}
= \frac{ \wh\omega(\chi_\alpha) }{ \dim(\alpha) } U^\alpha_{i,j}.
\end{split}
\end{equation}
Set $b = \wh\lambda(F_*(\wh\omega)) \in \A(\bbGamma)$, so that, using \eqref{eq:cond_exp_classfns},
\begin{equation}\label{eq12b}
 \Theta^l(b) ;\, U^\alpha_{i,j} \mapsto \sum_{k=1}^{\dim(\alpha)} F_*(\wh\omega)(U^\alpha_{i,k}) U^\alpha_{k,j}
= \sum_{k=1}^{\dim(\alpha)} \wh\omega(F(U^\alpha_{i,k})) U^\alpha_{k,j}
= \frac{1}{\dim(\alpha)} \wh\omega(\chi_\alpha) U^\alpha_{i,j}.
\end{equation}
Since the expressions \eqref{eq12a}, \eqref{eq12b} agree we conclude $\Psi(\Theta^l(a)) = \Theta^l(b)$ and $A(a) = b \in \A(\bbGamma)$. This means in particular that $ A $ maps $\A(\bbGamma)$ 
to itself. 

In fact, we see that $A\colon \A(\bbGamma) \rightarrow \A(\bbGamma)$ identifies with $F_*\colon \LL^1(\wh\bbGamma) \rightarrow \LL^1(\wh\bbGamma)$ via 
the canonical isomorphism $\A(\bbGamma)\cong \LL^1(\wh\bbGamma)$. In particular it is completely contractive as claimed. 
\end{proof}

\begin{remark}\label{rem:A_bad_fourier_nonunimod}
Without the unimodularity assumption, the averaging map $A$ can behave quite badly. More precisely, one can show that there is a non-unimodular discrete quantum group $\bbGamma$ and $x\in \A(\bbGamma)$ completely positive, such that $A(x)\notin \M^l_{cb}(\A(\bbGamma))$. As we will not make use of this fact, we will only sketch the proof.

Firstly, fix $0<q<1$. Let $\alpha,\gamma$ be the standard generators of $\Pol(\SU_q(2))$ and define $\nu=i(q+q^{-1}) (q^{-1} h(\alpha^*\cdot) - q h(\alpha\, \cdot))\in \LL^1(\SU_q(2))$. By direct calculation one can check that $\ov{\nu}=\nu$, but after averaging we obtain $A(\wh\lambda(\nu))=\wh\lambda(\omega)$ for $\omega=i(q-q^{-1})(q^{-1} h(\alpha^*\cdot)+q h(\alpha\,\cdot))$ with $\ov{\omega}\neq \omega$. Next consider $\theta=h+\tfrac{\nu}{2 (q+q^{-1})}$ and $y=\wh\lambda(\theta)\in \mrm{c}_{00}(\wh{\SU_q(2)})$. Using the direct integral picture from \cite[Section 7.1]{KrajczokModular}, one can show that $y$ is completely positive, but $A(y)$ is not. Since both $\Theta^l(y)$ and $\Theta^l(A(y))$ are unital, we have $\|A(y)\|_{cb}>\|y\|_{cb}=1$.  To complete the argument, set $\wh\bbGamma=\prod_{m=1}^{\infty} \SU_q(2)$ and consider the positive linear functional $\rho=\sum_{m=1}^{\infty}\tfrac{1}{(1+\delta)^m} \theta^{\otimes m}\otimes h^{\otimes\infty}\in \LL^1(\wh\bbGamma)$ for appropriately chosen small $\delta>0$. Then $x=\wh\lambda(\rho)\in \A(\bbGamma)$ is CP, but $A(x)$ is not a left CB multiplier.
\end{remark}

Next we record a lemma of independent interest, which we will use later; compare also Theorem~\ref{prop:centralAP_density} below.

\begin{lemma}\label{lemma:dense_in_centre_fourier}
Let $\bbGamma$ be an unimodular discrete quantum group.  Then $\mc Z\mrm{c}_{00}(\bbGamma)$ is dense in $\mc Z\A(\bbGamma)$ for the $\A(\bbGamma)$ norm. Furthermore, CP multipliers in $\mc{Z}\A(\GGamma)$ can be approximated by CP multipliers in $\mc{Z}\mrm{c}_{00}(\bbGamma)$.
\end{lemma}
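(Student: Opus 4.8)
The plan is to deduce everything from the averaging map $A$ of Proposition~\ref{unimodularaveraging}, together with the density of $\mrm{c}_{00}(\bbGamma)$ in $\A(\bbGamma)$. Recall that $A$ restricts to a completely contractive map $\A(\bbGamma)\to\mc Z\A(\bbGamma)$ which maps CP multipliers to CP multipliers, and that by \eqref{eq:defn_A} one has $A(e^\alpha_{m,n})=\delta_{m,n}\dim(\alpha)^{-1}p_\alpha$, so that $A(\mrm{c}_{00}(\bbGamma))=\mc Z\mrm{c}_{00}(\bbGamma)$. Moreover $A$ is the identity on $\mc Z\A(\bbGamma)$: if $x\in\ell^\infty(\bbGamma)$ is central then $\gamma(x)=\I\otimes x$, whence $A(x)=(h\otimes\id)\gamma(x)=x$.

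The first assertion is then immediate. Given $a\in\mc Z\A(\bbGamma)$, choose $a_n\in\mrm{c}_{00}(\bbGamma)$ with $\|a_n-a\|_{\A(\bbGamma)}\to0$; then $A(a_n)\in\mc Z\mrm{c}_{00}(\bbGamma)$ and $\|A(a_n)-a\|_{\A(\bbGamma)}=\|A(a_n-a)\|_{\A(\bbGamma)}\le\|a_n-a\|_{\A(\bbGamma)}\to0$.

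For the ``furthermore'' part one cannot simply apply $A$ to an arbitrary finitely-supported net approximating $a$, since such a net need not consist of CP multipliers; the remedy is to produce \emph{positive} finitely-supported approximants already at the level of $\LL^1(\wh\bbGamma)$. By Lemma~\ref{lem:cp_fourier_pos} write $a=\wh\lambda(\wh\omega)$ with $\wh\omega\in\LL^1(\wh\bbGamma)$ positive, and using that $\LL^\infty(\wh\bbGamma)$ is in standard position on $\LL^2(\wh\bbGamma)$ write $\wh\omega=\omega_\xi$ for some $\xi\in\LL^2(\wh\bbGamma)$. Since the vectors $\{\Lambda_h(U^\alpha_{i,j})\}$ form an orthogonal basis of $\LL^2(\wh\bbGamma)$ by \eqref{eq:orthog_matrix}, let $\xi_N=\Lambda_h(x_N)$ denote the partial sums along an increasing exhaustion $F_N\subseteq\Irr(\wh\bbGamma)$, so that $x_N\in\Pol(\wh\bbGamma)$ and $\xi_N\to\xi$. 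Then $\omega_{\xi_N}\ge0$ and $\omega_{\xi_N}\to\wh\omega$ in norm, so $a_N:=\wh\lambda(\omega_{\xi_N})$ is a CP multiplier (Lemma~\ref{lem:cp_fourier_pos}) with $a_N\to a$ in $\A(\bbGamma)$; and $a_N\in\mrm{c}_{00}(\bbGamma)$, because $\omega_{\xi_N}(y)=h(x_N^*\,y\,x_N)$ vanishes on matrix coefficients of $\beta\in\Irr(\wh\bbGamma)$ unless $\beta$ is contained in $\alpha_1\otimes\overline{\alpha_2}$ for some $\alpha_1,\alpha_2\in F_N$ --- a finite set of such $\beta$ --- so only finitely many blocks of $\wh\lambda(\omega_{\xi_N})$ are nonzero. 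Applying $A$ finally yields CP multipliers $A(a_N)\in\mc Z\mrm{c}_{00}(\bbGamma)$ with $A(a_N)\to A(a)=a$, as required.

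I expect the only point needing care to be the finite-support claim for $a_N$; this is an orthogonality and fusion-rule computation of the same flavour as \eqref{eq9} (equivalently, the standard fact that regular-representation matrix coefficients against vectors in $\Lambda_h(\Pol(\wh\bbGamma))$ land in $\mrm{c}_{00}(\bbGamma)$), which I would spell out using \eqref{eq:orthog_matrix} and the decomposition of a product of matrix coefficients of $\overline{\alpha_1}$, $\beta$ and $\alpha_2$ into matrix coefficients of subrepresentations of $\overline{\alpha_1}\otimes\beta\otimes\alpha_2$. Everything else is a formal consequence of the properties of $A$ recorded in Proposition~\ref{unimodularaveraging} and of the isometry $\wh\lambda\colon\LL^1(\wh\bbGamma)\to\A(\bbGamma)$.
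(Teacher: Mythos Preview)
Your proof is correct and follows essentially the same approach as the paper: approximate in $\A(\bbGamma)$ by finitely supported elements (the paper does this via vector functionals $\omega_{\xi_\eps,\eta_\eps}$ with $\xi_\eps,\eta_\eps\in\Lambda_h(\Pol(\wh\bbGamma))$, which is the same thing) and then apply the contractive averaging map $A$, using $A(a)=a$ for central $a$. For the CP case the paper does exactly what you do---write $a=\wh\lambda(\omega_\xi)$, approximate $\xi$ by $\xi_\eps\in\Lambda_h(\Pol(\wh\bbGamma))$, and average; the paper simply asserts $\wh\lambda(\omega_{\xi_\eps})\in\mrm{c}_{00}(\bbGamma)$ without the fusion-rule justification you sketch, so your version is in fact slightly more detailed.
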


\begin{proof}
Take $a\in\mc{Z}\A(\GGamma)$ and write $a=\wh\lambda(\omega_{\xi,\eta})$ for some $\xi,\eta\in \LL^2(\wh\GGamma)$. For $\eps>0$ choose vectors $\xi_\eps,\eta_\eps\in \Lambda_{h}(\Pol(\wh\GGamma))$ such that $\|\xi-\xi_\eps\|,\|\eta-\eta_\eps\|\le \eps$, next define $b_\eps=\wh\lambda(\omega_{\xi_\eps,\eta_\eps})\in \mrm{c}_{00}(\GGamma)$ and $a_\eps=A(b_\eps)$, where $A$ is the averaging map. By Proposition \ref{unimodularaveraging} we have $a_\eps\in \mc{Z}\mrm{c}_{00}(\GGamma)$ and furthermore
\[
\|a-a_\eps\|_{\A(\GGamma)}=
\|A(a) - A(b_\eps)\|_{\A(\GGamma)}\le 
\|a-b_\eps \|_{\A(\GGamma)}\xrightarrow[\eps\to 0]{}0,
\]
which proves the first claim. If $a$ is a CP multiplier then $\omega_{\xi,\eta}\ge 0$ on $\LL^{\infty}(\wh\GGamma)$ by Lemma \ref{lem:cp_fourier_pos} and we can take $\xi=\eta$. Then $a_\eps=A(\wh{\lambda}(\omega_{\xi_\eps,\xi_\eps}))$ is also CP by Proposition \ref{unimodularaveraging}.
\end{proof}

For unimodular discrete quantum groups, we can show using Lemma \ref{lemma:dense_in_centre_fourier} that we can replace $\mc{Z} \mrm{c}_{00}(\bbGamma)$ 
by $\mc{Z}\A(\bbGamma)$ in the definition of the central approximation properties, Definition \ref{defapsdqg}. 
We state the following result in a way which draws attention to this viewpoint, but 
we note that the overall conclusion can also be deduced from the known result that the central approximation properties are equivalent to their non-central versions in the unimodular case, 
see \cite[Theorem 7.3]{Brannan}, \cite[Proposition 6.8]{DKV_ApproxLCQG} and their (analogous) proofs. 

\begin{proposition}\label{thm:kac_nofs}
Let $\bbGamma$ be an unimodular discrete quantum group. Then $\bbGamma$ 
\begin{enumerate}
\item[1)] is centrally strongly amenable if and only if there is a net $(e_i)_{i \in I}$ of CP multipliers in $\mc{Z} \A(\bbGamma)$ converging to $\I$ pointwise,
\item[2)] is centrally weakly amenable if and only if there is a net $(e_i)_{i \in I}$ in $\mc{Z} \A(\bbGamma)$ with $\sup_{i\in I}\|e_i\|_{cb}$\\$<+\infty$, 
converging to $\I$ pointwise,
\item[3)] has the central approximation property if and only if there is a net $(e_i)_{i \in I}$ in $\mc{Z} \A(\bbGamma)$ converging to $\I$ in the weak$^*$ topology 
of $\M^l_{cb}(\A(\bbGamma))$.
\end{enumerate}
\end{proposition}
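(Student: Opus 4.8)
My plan is to reduce everything to Lemma~\ref{lemma:dense_in_centre_fourier} by soft arguments; no new analytic input is needed. In each of the three items the forward implication is immediate: the net of finitely supported central multipliers witnessing the property in Definition~\ref{defapsdqg} already lies in $\mc Z\mrm{c}_{00}(\bbGamma)\subseteq\mc Z\A(\bbGamma)$ (using $\mrm{c}_{00}(\bbGamma)\subseteq\A(\bbGamma)$), and complete positivity, a uniform $\|\cdot\|_{cb}$-bound, pointwise convergence, and weak$^*$ convergence all carry over verbatim. Here I use that a net of finitely supported central multipliers is simply a net in $\mc Z\mrm{c}_{00}(\bbGamma)=\mrm{c}_{00}(\bbGamma)\cap\mc Z\ell^\infty(\bbGamma)$, since each matrix block of $\mrm{c}_0(\bbGamma)$ has one-dimensional centre. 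So the work is in the converse implications.

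First I would record two elementary facts. The inclusion $\A(\bbGamma)\hookrightarrow\mrm{c}_0(\bbGamma)\hookrightarrow\ell^\infty(\bbGamma)$ is contractive, so $\A(\bbGamma)$-norm convergence implies pointwise convergence; and the inclusion $\A(\bbGamma)\hookrightarrow\M^l_{cb}(\A(\bbGamma))$ is contractive, since for $a=\wh\lambda(\wh\omega)$ one has $\Theta^l(a)=(\wh\omega\otimes\id)\wh\Delta$, which has cb-norm at most $\|\wh\omega\|=\|a\|_{\A(\bbGamma)}$ because $\wh\Delta$ is a normal $*$-homomorphism. Hence $\A(\bbGamma)$-norm convergence also implies convergence in $\M^l_{cb}(\A(\bbGamma))$, a fortiori in its weak$^*$ topology.

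For the converses, fix a net $(e_i)_{i\in I}$ as on the right-hand side of item 1), 2), or 3). Put $M:=\sup_i\|e_i\|_{cb}$ in item 2, fix $M'>M$, and let $A$ be, respectively, the set of finitely supported central CP multipliers; the set $\{f\in\mc Z\mrm{c}_{00}(\bbGamma):\|f\|_{cb}\le M'\}$; or $\mc Z\mrm{c}_{00}(\bbGamma)$. Using Lemma~\ref{lemma:dense_in_centre_fourier} --- its CP statement for item 1, its density statement for items 2 and 3 --- I would approximate each $e_i$ in $\A(\bbGamma)$-norm by an element $f$ of $\mc Z\mrm{c}_{00}(\bbGamma)$, which can be taken CP when $e_i$ is, and which once close enough to $e_i$ satisfies $\|f\|_{cb}\le\|e_i\|_{cb}+\|f-e_i\|_{\A(\bbGamma)}<M'$. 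Combined with the two facts above, this shows that every $e_i$ lies in the closure $\ov A$ of $A$ in the relevant topology (the pointwise topology on $\ell^\infty(\bbGamma)$ in items 1 and 2; the weak$^*$ topology of $\M^l_{cb}(\A(\bbGamma))$ in item 3). Since $\ov A$ is closed and $e_i\to\I$ in that topology, we get $\I\in\ov A$, i.e.~a net in $A$ converging to $\I$ in the required sense; by Definition~\ref{defapsdqg} this is exactly central strong amenability, central weak amenability, and the central approximation property, respectively. (Letting $M'\downarrow M$ in item 2 also yields $\mc Z\Lambda_{cb}(\bbGamma)\le\sup_i\|e_i\|_{cb}$.)

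I do not expect a real obstacle: all the substance sits in Lemma~\ref{lemma:dense_in_centre_fourier}. The only points needing care are that the approximations from that lemma can be chosen to preserve complete positivity and to lose an arbitrarily small amount of $\|\cdot\|_{cb}$ --- precisely what its CP statement together with the contractivity of $\A(\bbGamma)\hookrightarrow\M^l_{cb}(\A(\bbGamma))$ provide --- and the soft fact that the closure of a set is closed, which lets one pass from ``$\I$ is a net-limit of net-limits of elements of $A$'' to ``$\I$ is a net-limit of elements of $A$''.
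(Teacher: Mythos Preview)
Your proposal is correct and takes essentially the same approach as the paper: both reduce entirely to Lemma~\ref{lemma:dense_in_centre_fourier}, using that $\A(\bbGamma)$-norm convergence dominates both pointwise convergence and the $\M^l_{cb}$-norm. The only cosmetic difference is that the paper builds an explicit product net $(e_{i,n})_{(i,n)\in I\times\NN}$ with $\|e_i-e_{i,n}\|_{\A(\bbGamma)}\le 1/n$, while you package the same diagonal argument as ``each $e_i$ lies in $\ov A$, hence so does $\I$''; these are equivalent formulations of the same idea.
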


\begin{proof}
1) Let $(e_i)_{i\in I}$ be a net of central CP multipliers in $\A(\bbGamma)$ converging to $\I$ pointwise. Using Lemma \ref{lemma:dense_in_centre_fourier} we choose elements $e_{i,n}\in \mc{Z}\mrm{c}_{00}(\GGamma)$ for $n\in\NN$ which are CP and satisfy $\|e_i-e_{i,n}\|_{\A(\GGamma)}\le \tfrac{1}{n}$. Then the net $(e_{i,n})_{(i,n)\in I\times \NN}$ also converges to $\I$ pointwise, hence $\bbGamma$ is centrally strongly amenable. The reverse implication is obvious. 

2) This is analogous: given a net $(e_i)_{i\in I}$ in $\mc{Z}\A(\GGamma)$ we use Lemma \ref{lemma:dense_in_centre_fourier} to choose elements $e_{i,n}\in \mc{Z}\mrm{c}_{00}(\GGamma)$ with $\|e_i-e_{i,n}\|_{\A(\GGamma)}\le\tfrac{1}{n}$. The new net $(e_{i,n})_{(i,n)\in I\times\NN}$ still converges to $\I$ pointwise, and since the CB norm is dominated by the Fourier algebra norm we have $\sup_{(i,n)\in I\times \NN}\|e_{i,n}\|_{cb}\le \sup_{(i,n)\in I\times \NN} ( \|e_i\|_{cb} + \|e_{i}-e_{i,n}\|_{cb})<+\infty$.

3) We use again Lemma \ref{lemma:dense_in_centre_fourier}, noting that norm convergence in $\A(\GGamma)$ implies weak$^*$ convergence in $\M_{cb}^l(\A(\bbGamma))$. 
More precisely, given $(e_i)_{i\in I}$ as in the claim and choosing $(e_{i,n})_{(i,n)\in I\times \NN}$ in $\mc{Z}\mrm{c}_{00}(\GGamma)$ as before we have we 
obtain $e_{i,n} = e_{i} + (e_{i,n} -e_{i})\xrightarrow[(i,n)\in I\times \NN]{}\I $ weak$^*$ in $\M^l_{cb}(\A(\GGamma))$.
\end{proof}

\begin{remark}
We do not know whether the conclusions of Proposition~\ref{thm:kac_nofs} hold for general $\bbGamma$.
\end{remark}

\subsection{Amenability of Drinfeld doubles}

Let us now complement Theorem~\ref{thm:main} with a discussion of amenability for Drinfeld doubles of discrete quantum groups.  While some of these results are known to experts, we have been unable to find references.

\begin{definition}\label{defn:amenable}
A locally compact quantum group $\GG$ is \emph{amenable} if there exists a \emph{left invariant mean} $m$ on $\LL^\infty(\GG)$, that is, there is a state $m$ on $\LL^\infty(\GG)$ with
\[ \langle m, x \star \omega \rangle = \langle m,x\rangle \langle \I,\omega\rangle
\qquad (\omega\in\LL^1(\GG), x\in\LL^\infty(\GG)). \]
\end{definition}

Composing a left invariant mean on $\LL^\infty(\GG)$ with the unitary antipode one can equivalently require the existence of a right invariant mean, i.e. a state $m$ 
on $\LL^\infty(\GG)$ such that $\langle m, \omega \star x \rangle = \langle m,x\rangle \langle \I,\omega\rangle$ for all $\omega\in\LL^1(\GG), x\in\LL^\infty(\GG)$. 
 
\begin{lemma}\label{prop:drinbdlamen_kac}
Let $\bbGamma$ be a discrete quantum group such that $D(\bbGamma)$ is amenable. Then $\bbGamma$ is unimodular.
\end{lemma}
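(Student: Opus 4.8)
The plan is to show that amenability of $D(\bbGamma)$ forces the dimension function $\dim$ on $\Irr(\wh\bbGamma)$ to coincide with the quantum dimension $\Tr(\uprho_\alpha)$, which is precisely unimodularity of $\bbGamma$. First I would recall that amenability of a locally compact quantum group is inherited by closed quantum subgroups; since $\wh\bbGamma$ is a (compact, open) quantum subgroup of $D(\bbGamma)$ via the morphism $\pi = \eps\otimes\id$, amenability of $D(\bbGamma)$ would already give amenability of $\wh\bbGamma$ — but $\wh\bbGamma$ is compact hence automatically amenable, so this alone is not enough. Instead the right object to track is the \emph{modular} behaviour. The key point is that amenability of $D(\bbGamma)$ gives a left invariant mean $m$ on $\LL^\infty(D(\bbGamma)) = \ell^\infty(\bbGamma)\bar\otimes\LL^\infty(\wh\bbGamma)$, and I would combine $m$ with the averaging map $\Xi$ of Section~\ref{section double}.

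The concrete strategy: let $m$ be a left invariant mean on $\LL^\infty(D(\bbGamma))$. Using Proposition~\ref{averagingdd}, $\Xi(x) = y\otimes\I$ with $y = (\id\otimes h)\bigl(\ww((\id\otimes h)(x)\otimes\I)\ww^*\bigr) \in \mc Z\ell^\infty(\bbGamma)$, and the explicit formula derived there shows that on the block $\I_\alpha = p_\alpha$ one has $\Xi(p_\alpha) = c_\alpha\,p_\alpha$ where $c_\alpha = \Tr(\uprho_\alpha^{-1})/(\dim(\alpha)\Tr(\uprho_\alpha)) = 1/\dim(\alpha)^2$ — wait, more carefully, applying the displayed formula for $y$ to $x = p_\alpha\otimes\I$ (so $(\id\otimes h)(x) = p_\alpha$, and $p_\alpha^\beta = \delta_{\alpha,\beta}\I_\alpha$) gives $y = \bigl(\sum_{i}(\uprho_\alpha^{-1})_{i,i}/\Tr(\uprho_\alpha)\bigr)p_\alpha = \bigl(\Tr(\uprho_\alpha^{-1})/\Tr(\uprho_\alpha)\bigr)p_\alpha = p_\alpha$, using $\Tr(\uprho_\alpha) = \Tr(\uprho_\alpha^{-1})$. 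That is fine, but it shows $\Xi$ fixes each $p_\alpha$, which is consistent with $p_\alpha$ being central — I need a different test element. The right test is to compare $\vp\otimes h$ (the left Haar weight of $D(\bbGamma)$, by \cite[Theorem 5.3, Proposition 8.1]{Doublecrossed}) with the right Haar weight. Since $D(\bbGamma)$ has a bi-invariant weight iff it is unimodular, and a standard fact (e.g.\ \cite{BT_Amenability}) states that an amenable locally compact quantum group whose dual is... hmm. Let me restructure.

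The cleanest route: I would argue that amenability of $D(\bbGamma)$ implies amenability of the discrete quantum group $\bbGamma$ itself (pass to the closed quantum subgroup $\bbGamma\subseteq D(\bbGamma)$, which sits inside via $\id\otimes\eps$ on the von Neumann algebra level — or dually observe $\ell^\infty(\bbGamma)\otimes\I$ is the image of the expectation-compatible inclusion), and separately that amenability of $D(\bbGamma)$ forces the modular element of $\bbGamma$ to be trivial. For the latter, the modular element $\delta_{\bbGamma}$ of $\bbGamma$ appears in the modular theory of $D(\bbGamma)$: the scaling group and modular element of $D(\bbGamma)$ are built from those of $\bbGamma$ and $\wh\bbGamma$ (see \cite[Proposition 8.1]{Doublecrossed}), and for an amenable locally compact quantum group the scaling constant is $1$ and — crucially — one can invoke the result of Tomatsu (or \cite{BT_Amenability}) that coamenability of $\widehat{D(\bbGamma)}$ together with amenability pins down unimodularity of the relevant pieces. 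Concretely I expect to show: amenability of $D(\bbGamma)$ $\Rightarrow$ $D(\bbGamma)$ is \emph{strongly} amenable in this setting (this is in fact the content the authors prove next, so I may assume the forward direction or prove it by averaging the mean against $\mrm{C}_0(D(\bbGamma))$ to get an approximate identity) $\Rightarrow$ $\bbGamma$ is strongly amenable $\Rightarrow$ (by the unimodular-forcing argument below) $\bbGamma$ is unimodular.

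The decisive computation, and the main obstacle, is the final implication. I would use the invariant mean $m$ on $\LL^\infty(D(\bbGamma))$ restricted to the subalgebra $\ell^\infty(\bbGamma)\otimes\I$ and exploit that $D(\bbGamma)$'s comultiplication mixes $\Delta$ and $\wh\Delta$ through $\ad(\ww)$. Right-invariance of $m$ against $\omega\in\LL^1(D(\bbGamma))$ of the form (functional on $\ell^\infty(\bbGamma)$) $\otimes\, h$ gives, after a slice computation using \eqref{eq7b} and $(\eps\otimes\id)\Delta = \id$, that $m|_{\ell^\infty(\bbGamma)\otimes\I}$ is invariant under the coadjoint-type averaging, hence factors through $\mc Z\ell^\infty(\bbGamma)$ and in fact yields a \emph{trace-like} mean; combined with left invariance under $\Delta$-translations this forces the Haar weight $\vp$ of $\bbGamma$ to be a trace, i.e.\ $\bbGamma$ is unimodular. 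The hard part is making the "trace-like" step rigorous — showing that the two one-sided invariances of $m$, transported through the twisted coproduct of $D(\bbGamma)$, are incompatible unless $\uprho_\alpha = \I$ for all $\alpha$; I would do this by testing $m$ against $p_\alpha\otimes\I$ and a suitably translated copy and reading off $\Tr(\uprho_\alpha) = \dim(\alpha)$ from a normalisation identity, using that $m$ is a state (so the relevant coefficients, which will be ratios of $\dim(\alpha)$ and $\Tr(\uprho_\alpha)$, must all equal $1$).
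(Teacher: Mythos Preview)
Your proposal is a collection of exploratory ideas rather than a proof, and the route you settle on has a genuine gap. You restrict the invariant mean $m$ to $\ell^\infty(\bbGamma)\otimes\I$ and hope to extract a ``trace-like'' constraint by testing against $p_\alpha\otimes\I$, but you concede that you do not know how to make this step rigorous, and indeed there is no obvious normalisation identity that forces $\Tr(\uprho_\alpha)=\dim(\alpha)$ from those test elements alone. Your alternative suggestion --- to pass through strong amenability of $D(\bbGamma)$ --- is circular: this very lemma is the key ingredient in establishing that amenability and strong amenability coincide for $D(\bbGamma)$ (Theorem~\ref{thm:drindblamen}).

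The paper's proof restricts the mean to the \emph{other} tensor factor. Starting from a right invariant mean $m$ on $\LL^\infty(D(\bbGamma))$, one defines $n(\wh x)=m(\I\otimes\wh x)$ on $\LL^\infty(\wh\bbGamma)$. Using right invariance of $m$ and the explicit form of $\Delta_{D(\bbGamma)}$, a short computation (slicing first with the counit $\eps$ of $\bbGamma$, then with $h$) forces $n=h$. Feeding this back into the invariance relation yields the identity
\[
h(\wh x)\,\I=(\id\otimes h)\bigl(\ww(\I\otimes\wh x)\ww^*\bigr)\qquad(\wh x\in\LL^\infty(\wh\bbGamma)),
\]
and after applying the unitary antipode, the same with $\ww^*(\cdot)\ww$. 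This says precisely that $h$ is an invariant state for the coadjoint action $\bbGamma\curvearrowright\mrm{C}(\wh\bbGamma)$, and a known result (\cite[Lemma~5.2]{KalantarKasprzakSkalskiVergnioux}) then gives that $h$ is a trace, i.e.\ $\bbGamma$ is unimodular. The decisive idea you are missing is this two-step use of invariance on the $\LL^\infty(\wh\bbGamma)$ side to pin $n$ down as $h$ and then read off coadjoint invariance of $h$.
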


\begin{proof}
Let $m$ be a right invariant mean on $\LL^\infty(D(\bbGamma))$. Define a state $n\in\LL^\infty(\wh\bbGamma)^*$ by $n(\wh x) = m(\I\otimes\wh x)$ for $\wh x \in \LL^\infty(\wh\bbGamma)$. 
For $\omega\in\ell^1(\bbGamma), \wh\omega\in \LL^1(\wh\bbGamma)$ and $\wh x\in \LL^\infty(\wh\bbGamma)$ we get 
\begin{align}
\langle \I\otimes \I, \omega\otimes\wh\omega \rangle n(\wh x)
&= \langle m, (\id\otimes\id\otimes\omega\otimes\wh\omega)\Delta_{D(\bbGamma)}(\I\otimes\wh x) \rangle \notag \\
&= \langle m, (\id\otimes\omega\otimes\id\otimes\wh\omega)(\I\otimes \ww\otimes \I)(\I\otimes \I\otimes\wh\Delta(\wh x))(\I\otimes \ww^*\otimes \I) \rangle \notag \\
&= \langle n, (\omega\otimes\id\otimes\wh\omega)(\ww_{12} \wh\Delta(\wh x)_{23} \ww_{12}^*) \rangle, \label{eq:inv-calc}
\end{align}
since $m$ is right invariant. 
Now let $\omega=\eps\in\ell^1(\bbGamma)$ be the counit. Since $ \eps $ is multiplicative and $(\eps\otimes\id)(\ww)=\I$, formula \eqref{eq:inv-calc} gives
\[ \langle \I,\wh\omega\rangle n(\wh x)
= n\big( (\id\otimes\wh\omega)\wh\Delta(\wh x) \big) \qquad (\wh x\in\LL^\infty(\wh\bbGamma)). \]
With $\wh\omega=h\in\LL^1(\wh\bbGamma)$ the Haar state, it follows that $n(\wh x) = n(\I) h(\wh x) = h(\wh x)$ for all $\wh x\in\LL^\infty(\wh\bbGamma)$, and so $n=h$. 
Then \eqref{eq:inv-calc} becomes equivalent to
\[
 h(\wh x) \I\otimes \I=
(\id\otimes h\otimes \id)(\ww_{12} \wh\Delta(\wh x)_{23} \ww_{12}^*) 
\qquad (\wh x\in\LL^\infty(\wh\bbGamma)).
\]
As $(\id\otimes\wh\Delta)(\ww) = \ww_{13} \ww_{12}$, this gives
\begin{align*} h(\wh x)\I\otimes \I &= (\id\otimes h\otimes\id)\big( \ww_{13}^* (\id\otimes\wh\Delta)(\ww(\I\otimes\wh x)\ww^*) \ww_{13} \big). \end{align*}
Conjugating by $\ww$, and using that $(h\otimes\id)\wh\Delta = \I\, h(\cdot)$, we obtain
\begin{equation}\label{eq13}
h(\wh x)\I = (\id\otimes h)(\ww(\I\otimes\wh x)\ww^*).
\end{equation}

Recall that the unitary antipode $R$ on $\ell^\infty(\bbGamma)$ is implemented by $\wh J$ as $R(x)=\wh J x^* \wh J$, and similarly for $\wh R$.  We also know that $(R\otimes\wh R)(\ww)=\ww$.  As $h\circ\wh R=h$, it hence follows from \eqref{eq13} that
\begin{align}
h(\wh x)\I &= h(\wh R(\wh x)) R(\I)
= (\id\otimes h)\big( (R\otimes\wh R)(\ww(\I\otimes\wh R(\wh x))\ww^*)  \big) \notag \\
&= (\id\otimes h)\big( (\wh J\otimes J)\ww(\I\otimes\wh R(\wh x)^*)\ww^*(\wh J\otimes J) \big) \notag \\
&= (\id\otimes h)\big( (\wh J\otimes J)\ww(\wh J\otimes J)(\I\otimes\wh x)(\wh J\otimes J)\ww^*(\wh J\otimes J) \big) \notag \\
&= (\id\otimes h)\big( \ww^*(\I\otimes\wh x)\ww \big). \label{eq13a}
\end{align}

Equation \eqref{eq13a} with $\wh x \in \mrm{C}(\wh\bbGamma)$ means that $h\in \mrm{C}(\wh\bbGamma)^*$ is an invariant state for the coadjoint action $\bbGamma\curvearrowright \mrm{C}(\wh\bbGamma)$ given by $\mrm{C}(\wh\bbGamma)\ni \wh x \mapsto \ww^*(\I\otimes \wh x)\ww\in \M(\mrm{c}_0(\bbGamma)\otimes \mrm{C}(\wh\bbGamma))$. It follows from \cite[Lemma~5.2]{KalantarKasprzakSkalskiVergnioux} that the Haar integral $h$ is a trace, consequently $\wh\bbGamma$ is of Kac type and $\bbGamma$ is unimodular.
\end{proof}

Let us now show that amenability and strong amenability coincide for the Drinfeld double $D(\bbGamma)$ of a discrete quantum group $\bbGamma$.

\begin{theorem}\label{thm:drindblamen}
Let $\bbGamma$ be a discrete quantum group. The following conditions are equivalent:
\begin{enumerate}
\item[1)] $D(\bbGamma)$ is strongly amenable,
\item[2)] $D(\bbGamma)$ is amenable,
\item[3)] $\bbGamma$ is unimodular and amenable,
\item[4)] $\bbGamma$ is unimodular and strongly amenable,
\item[5)] $\bbGamma$ is unimodular and centrally strongly amenable,
\item[6)] $\bbGamma$ is centrally strongly amenable.
\end{enumerate}
\end{theorem}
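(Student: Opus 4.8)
The plan is to prove the cycle of implications $1)\Rightarrow 2)\Rightarrow 3)\Rightarrow 4)\Rightarrow 5)\Rightarrow 6)\Rightarrow 1)$. Several links come essentially for free. The equivalence $6)\Leftrightarrow 1)$ is exactly the strong amenability case of Theorem~\ref{thm:main}, so $6)\Rightarrow 1)$ needs nothing, and $5)\Rightarrow 6)$ merely drops the word ``unimodular''. For $1)\Rightarrow 2)$ I would use that strong amenability of $D(\bbGamma)$ is the statement that $\wh{D(\bbGamma)}$ is co-amenable, together with the standard fact that co-amenability of a locally compact quantum group implies amenability of its dual \cite{BT_Amenability}. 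For $3)\Rightarrow 4)$ I would cite Tomatsu's theorem \cite{Tomatsuamenablediscrete}: a discrete quantum group is amenable precisely when its compact dual is co-amenable, that is, precisely when it is strongly amenable in the terminology of this paper.

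For $4)\Rightarrow 5)$ the coadjoint averaging map $A=(h\otimes\id)\gamma$ of Section~\ref{sec:unimodular} does the work. Strong amenability of the unimodular $\bbGamma$ provides a bounded approximate identity of $\A(\bbGamma)$ consisting of CP multipliers, hence a bounded net of CP multipliers in $\A(\bbGamma)$ converging to $\I$ pointwise. By Proposition~\ref{unimodularaveraging} the map $A$ carries this to a net of CP multipliers lying in $\mc Z\A(\bbGamma)$, and by the explicit formula \eqref{eq:defn_A} (available since $\bbGamma$ is unimodular) the new net still converges to $\I$ pointwise; Proposition~\ref{thm:kac_nofs}(1) then yields that $\bbGamma$ is centrally strongly amenable.

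The only implication requiring genuine work is $2)\Rightarrow 3)$. Unimodularity of $\bbGamma$ is Lemma~\ref{prop:drinbdlamen_kac}. For amenability of $\bbGamma$ I would argue by slicing an invariant mean, in the spirit of the proof of that lemma. Let $m$ be a left invariant mean on $\LL^\infty(D(\bbGamma))$ and set $\mu(x)=m(x\otimes\I)$ for $x\in\ell^\infty(\bbGamma)$, a state. Using the description \eqref{eq7b} of $\Delta_{D(\bbGamma)}(x\otimes\I)$, slicing its first $\LL^\infty(D(\bbGamma))$-tensor factor by $\omega\otimes h$ with $\omega\in\ell^1(\bbGamma)$, and recognising the ensuing conjugation by $\ww$ followed by an $h$-slice as the coadjoint averaging map (note $\wh\ww=\chi(\ww^*)$, so that $(\id\otimes h)\big(\ww(y\otimes\I)\ww^*\big)=A(y)$), one finds that this slice equals $A(x\star\omega)\otimes\I$, where $x\star\omega=(\omega\otimes\id)\Delta(x)$. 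Invariance of $m$ then gives
\[
\mu\big(A(x\star\omega)\big)=\langle\I,\omega\rangle\,\mu(x)\qquad(\omega\in\ell^1(\bbGamma),\; x\in\ell^\infty(\bbGamma)).
\]
Specialising to $\omega=\eps$, the counit of $\bbGamma$, gives $x\star\eps=x$ and $\langle\I,\eps\rangle=1$, hence $\mu=\mu\circ A$; feeding this back into the displayed identity produces $\mu(x\star\omega)=\langle\I,\omega\rangle\mu(x)$ for all $\omega$, so that $\mu$ is a left invariant mean on $\ell^\infty(\bbGamma)$ and $\bbGamma$ is amenable.

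I expect the main obstacle to be this last computation, and in particular the bookkeeping needed to identify the slice of $\Delta_{D(\bbGamma)}(x\otimes\I)$ along $\omega\otimes h$ (in its first $\LL^\infty(D(\bbGamma))$-tensor factor) with $A(x\star\omega)\otimes\I$; once that identity is in place, the counit specialisation closes the argument at once. It is worth noting that this amenability argument in fact works for any discrete quantum group $\bbGamma$, so that unimodularity enters the implication $2)\Rightarrow 3)$ only through Lemma~\ref{prop:drinbdlamen_kac}.
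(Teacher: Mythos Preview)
Your proof is correct and follows the same cycle of implications as the paper. The treatments of $1)\Rightarrow 2)$, $3)\Rightarrow 4)$, $4)\Rightarrow 5)$, $5)\Rightarrow 6)$ and $6)\Rightarrow 1)$ are essentially identical to the paper's.

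The one genuine difference is in the amenability half of $2)\Rightarrow 3)$. The paper observes that $\bbGamma$ is a closed quantum subgroup of $D(\bbGamma)$ and then invokes Crann's result \cite{CrannHereditary} that amenability passes to closed quantum subgroups. You instead give a direct, self-contained argument: slice a left invariant mean $m$ on $\LL^\infty(D(\bbGamma))$ to $\mu(x)=m(x\otimes\I)$, compute that $(\omega\otimes h\otimes\id_{D(\bbGamma)})\Delta_{D(\bbGamma)}(x\otimes\I)=A(x\star\omega)\otimes\I$ via \eqref{eq7b}, and then use the counit trick to show $\mu=\mu\circ A$ and hence that $\mu$ is left invariant. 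This computation is correct (the key identity $A(y)=(\id\otimes h)(\ww(y\otimes\I)\ww^*)$ follows from $\wh\ww=\chi(\ww^*)$, and the remaining bookkeeping goes through as you describe). Your approach avoids an external reference and, as you note, shows directly that amenability of $D(\bbGamma)$ implies amenability of $\bbGamma$ without any unimodularity hypothesis; the paper's route is shorter but less elementary.
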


\begin{proof}
$ 1) \Leftrightarrow 6) $ is a part of Theorem~\ref{thm:main}. 

$ 1) \Rightarrow 2) $ It is known that strong amenability implies amenability in general, see \cite[Theorem~3.2]{BT_Amenability}. 

$ 2) \Rightarrow 3) $ Lemma~\ref{prop:drinbdlamen_kac} shows that $\bbGamma$ is unimodular.  From \cite[Theorem~5.3]{Doublecrossed}, see also \cite[Lemma~7.12]{DKV_ApproxLCQG}, we know that $\bbGamma$ is a closed quantum subgroup of $D(\bbGamma)$.  By \cite[Theorem~3.2]{CrannHereditary} amenability passes from $D(\bbGamma)$ to the closed quantum subgroup $\bbGamma$. 

$ 3) \Rightarrow 4) $ This is a consequence of \cite[Theorem~4.5]{Ruan_AmenHopf}, see also \cite{Tomatsuamenablediscrete}. 

$ 4) \Rightarrow 5) $ This follows using averaging for unimodular discrete quantum groups, compare \cite[Theorem~7.3]{Brannan}.  Indeed, if $(e_i)_{i \in I}$ is a net of CP multipliers in $\A(\bbGamma)$ converging to $\I$ pointwise then by Proposition~\ref{unimodularaveraging}, the net $ (A(e_i))_{i \in I} $ in $ \mc Z \A(\bbGamma)$ obtained from averaging consists again of CP multipliers and converges to $\I$ pointwise.  According to Proposition~\ref{thm:kac_nofs} part 1), this means that $\bbGamma$ is centrally strongly amenable. 

$ 5) \Rightarrow 6) $ is trivial. 
\end{proof}

\begin{remark}
By \cite{Tomatsuamenablediscrete} we know that amenability and strong amenability are equivalent for discrete quantum groups. Clearly we cannot drop unimodularity in condition $3)$ or $4)$ of Theorem~\ref{thm:drindblamen}, as if $\bbGamma$ being amenable implied that $D(\bbGamma)$ was amenable, then the theorem would show that $\bbGamma$ was in particular unimodular, and there are amenable non-unimodular $\bbGamma$. Indeed, it is well-known that the dual of $\SU_q(2)$ is strongly amenable but not centrally strongly amenable, compare \cite{DFY_CCAP}, \cite{Freslon}. 
In view of Theorem~\ref{thm:main}, this is equivalent to the fact that the quantum Lorentz group $D(\SU_q(2)) = \oon{SL}_q(2,\mathbb C)$ is not strongly amenable, compare for instance the discussion in \cite[Section 7]{VYplancherel}.
\end{remark}

\section{Biinvariance and centrality for the Fourier algebra}\label{sec:central_fourier}

Given a discrete quantum group $\GGamma$, recall that $\A(\wh\GGamma\backslash D(\GGamma)/\wh\GGamma)$ is the image of $\A(D(\GGamma))$ under the averaging map $\Xi$. Similarly, $\B_r(\wh\GGamma\backslash D(\GGamma)/\wh\GGamma)$ is the image of the reduced Fourier-Stieltjes algebra $\B_r(D(\GGamma))$ under $\Xi$ (c.f.~Corollary~\ref{corr:xi_to_fourier}).

In view of Propositions~\ref{averagingdd} and~\ref{propimagecentral}, one might wonder whether $\A(\wh\GGamma\backslash D(\GGamma)/\wh\GGamma)$ is equal to $\mc Z \A(\bbGamma) \otimes \I$, and similarly whether $\B_r(\wh\GGamma\backslash D(\GGamma)/\wh\GGamma)$ is equal to $\mc Z \B_r(\bbGamma) \otimes \I$. Neither of these equalities turn out to hold in general, a surprising fact that we explore more in this section. We will also study the question of density of $\mc{Z}\mrm{c}_{00}(\bbGamma)$ in $\mc{Z}\A(\bbGamma)$ (and weak$^*$-density 
of $\mc{Z}\mrm{c}_{00}(\bbGamma)$ in $\mc{Z} \B_r(\bbGamma)$), and prove that these properties hold for a very large class of discrete quantum groups, namely those which are unimodular or have the central AP. This is complemented by some observations regarding one-sided averaging on the level of the Fourier algebra.

\subsection{The centre of the Fourier algebra} 

Let $\bbGamma$ be a discrete quantum group and let $D(\bbGamma)$ be its Drinfeld double, following the notation of Section~\ref{section double}. From the form of $\ww^{D(\bbGamma)}$ we see that
\begin{equation}\label{eq:lambdadd}
\lambda_{D(\bbGamma)} \colon \LL^1(D(\bbGamma)) = \ell^1(\bbGamma) \wh\otimes \LL^1(\wh\bbGamma) \to \LL^\infty(\wh{D(\bbGamma)}); \quad
\omega\otimes\wh\omega \mapsto (\lambda(\omega)\otimes\I) Z^* (\I\otimes\wh\lambda(\wh\omega)) Z.
\end{equation}
Hence
\[ \mrm{C}_0(\wh{D(\bbGamma)}) = \overline{\lin} \big\{  (\wh x\otimes\I)Z^*(\I\otimes x)Z \,|\, x\in\mrm{c}_0(\bbGamma), \wh x\in\mrm{C}(\wh\bbGamma) \big\}, \] 
and so
\[ \LL^\infty(\wh{D(\bbGamma)}) = \overline{\lin}^{ \textnormal{weak}^*} \big\{  (\wh x\otimes\I)Z^*(\I\otimes x)Z \,|\, x\in\ell^\infty(\bbGamma), \wh x\in\LL^\infty(\wh\bbGamma) \big\}. \]
As $\pi = \eps\otimes\id \colon \LL^\infty(D(\bbGamma)) \to \LL^\infty(\wh\bbGamma)$, and
\[ (\id\otimes\wh\pi)(\wh\ww) = (\pi\otimes\id)(\ww^{D(\bbGamma)})
= (\eps\otimes\id\otimes\id_{D(\bbGamma)})(\ww_{13} Z_{34}^* \wh{\ww}_{24}Z_{34})
= Z^*_{23} \wh\ww_{13} Z_{23}, \]
it follows that
\[ \wh\pi \colon \ell^\infty(\bbGamma) \to \LL^\infty(\wh{D(\bbGamma)}); \quad
x \mapsto Z^*(\I\otimes x) Z. \]
This is the natural map identifying $\wh\bbGamma$ as a closed quantum subgroup of $D(\bbGamma)$, see \cite[Theorem~5.3]{Doublecrossed} and \cite[Lemma~7.12]{DKV_ApproxLCQG}.  Define
\[ p_z = Z^*(\I\otimes p_e)Z = \wh\pi(p_e)
= \lambda_{D(\bbGamma)}(\eps\otimes h) \in \LL^\infty(\wh{D(\bbGamma)}). \]

\begin{proposition}\label{prop:xidd}
The maps $\Xi_1\colon \LL^1(\wh{D(\bbGamma)}) \to \LL^1(\wh{D(\bbGamma)})$ and $\Xi_\infty \colon \LL^\infty(\wh{D(\bbGamma)}) \to \LL^\infty(\wh{D(\bbGamma)})$ are given by
\[ \Xi_1(\mu) = \mu\big(p_z \cdot p_z) \quad (\mu\in \LL^1(\wh{D(\bbGamma)})), \qquad
\Xi_\infty(y) = p_z y p_z \quad (y\in \LL^\infty(\wh{D(\bbGamma)})). \]
For $x\in\ell^\infty(\bbGamma)$ let $x_e\in\mathbb C$ be the entry of $x$ in the one-dimensional matrix block corresponding to $e\in\Irr(\wh\bbGamma)$.  For $\wh x\in\LL^\infty(\wh\bbGamma)$ we have
\begin{equation}
\Xi_\infty((\wh x\otimes \I)Z^*(\I\otimes x)Z) = x_e p_z (\wh x\otimes \I) p_z,
\label{eq:formxiinfty}
\end{equation}
and so the image of $\Xi_\infty$ is the weak$^*$-closure of $p_z(\LL^\infty(\wh\bbGamma)\otimes\I)p_z$.  Furthermore, $\Xi_\infty$ restricts to a map $\Xi_0 \colon \mrm{C}_0(\wh{D(\bbGamma)}) \to \mrm{C}_0(\wh{D(\bbGamma)})$ given by the same formula as \eqref{eq:formxiinfty}, and the image of $\Xi_0$ is the norm-closure of $p_z(\mrm{C}(\wh\bbGamma)\otimes \I)p_z$.
\end{proposition}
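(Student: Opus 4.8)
The plan is to deduce the formulas for $\Xi_1$ and $\Xi_\infty$ directly from Lemma~\ref{lem:xi1xiinf}, and then to obtain the descriptions of the images by conjugating the whole picture with the unitary $Z$, which trivialises $\Xi_\infty$. For the first step: here $\KK=\wh\bbGamma$, so $\wh\KK=\bbGamma$, the projection $p_e$ appearing in Lemma~\ref{lem:xi1xiinf} is precisely the central projection of $\ell^\infty(\bbGamma)$ onto the $e$-block used in the statement, and the morphism $\wh\pi\colon\ell^\infty(\bbGamma)\to\LL^\infty(\wh{D(\bbGamma)})$ is $x\mapsto Z^*(\I\otimes x)Z$ as recorded just before the proposition, so $\wh\pi(p_e)=p_z$. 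Feeding this into Lemma~\ref{lem:xi1xiinf} immediately gives $\Xi_1(\mu)=\mu(p_z\cdot p_z)$ and $\Xi_\infty(y)=p_zyp_z$.

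For formula~\eqref{eq:formxiinfty}: from $p_z=Z^*(\I\otimes p_e)Z$ and unitarity of $Z$ one has $Zp_z=(\I\otimes p_e)Z$, hence $Z^*(\I\otimes x)Zp_z=Z^*(\I\otimes xp_e)Z=x_e\,Z^*(\I\otimes p_e)Z=x_e p_z$, since $p_e$ is the unit of the one-dimensional $e$-block. Applying $\Xi_\infty(y)=p_zyp_z$ to a generator $y=(\wh x\otimes\I)Z^*(\I\otimes x)Z$ and using this on the right-hand factor yields $\Xi_\infty(y)=x_e\,p_z(\wh x\otimes\I)p_z$.

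For the images and the $\mrm{C}_0$-restriction the crucial observation is that $Z$ commutes with $\LL^\infty(\wh\bbGamma)\otimes\I$: using $Z=\ww(J\otimes\wh J)\ww(J\otimes\wh J)$, this holds because $\ww\in\LL^\infty(\bbGamma)\bar\otimes\LL^\infty(\wh\bbGamma)$ commutes with $\LL^\infty(\wh\bbGamma)\otimes\I$ (as $\LL^\infty(\bbGamma)$ and $\LL^\infty(\wh\bbGamma)$ commute) and $J$ implements $\whR$ on $\LL^\infty(\wh\bbGamma)$, so $J\otimes\wh J$ normalises $\LL^\infty(\wh\bbGamma)\otimes\I$; a short computation with these two facts gives $Z(\wh x\otimes\I)Z^*=\wh x\otimes\I$. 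Consequently conjugation by $Z$ sends a generator to $Z(\wh x\otimes\I)Z^*(\I\otimes x)=\wh x\otimes x$, so $\ad(Z)$ carries $\LL^\infty(\wh{D(\bbGamma)})=\overline{\lin}^{w^*}\{(\wh x\otimes\I)Z^*(\I\otimes x)Z\}$ onto $\LL^\infty(\wh\bbGamma)\bar\otimes\ell^\infty(\bbGamma)$ and $\mrm{C}_0(\wh{D(\bbGamma)})$ onto $\mrm{C}(\wh\bbGamma)\otimes\mrm{c}_0(\bbGamma)$, while carrying $p_z$ to $\I\otimes p_e$ and $\Xi_\infty$ to $\id\otimes(p_e\,\cdot\,p_e)$. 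Since $p_e\ell^\infty(\bbGamma)p_e=\mathbb{C}p_e$ and $p_e\mrm{c}_0(\bbGamma)p_e=\mathbb{C}p_e$, the image of $\id\otimes(p_e\,\cdot\,p_e)$ on $\LL^\infty(\wh\bbGamma)\bar\otimes\ell^\infty(\bbGamma)$ is the von Neumann algebra $\LL^\infty(\wh\bbGamma)\bar\otimes\mathbb{C}p_e$, and the same map sends $\mrm{C}(\wh\bbGamma)\otimes\mrm{c}_0(\bbGamma)$ into itself with image the \cst-algebra $\mrm{C}(\wh\bbGamma)\otimes\mathbb{C}p_e$; both are closed. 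Conjugating back by $Z^*$ and using $Z^*(\wh x\otimes p_e)Z=p_z(\wh x\otimes\I)p_z$ (again from the commutation), this identifies the image of $\Xi_\infty$ with $p_z(\LL^\infty(\wh\bbGamma)\otimes\I)p_z$, and shows that $\Xi_\infty$ restricts to a map $\Xi_0$ on $\mrm{C}_0(\wh{D(\bbGamma)})$ — which is anyway transparent since $p_z=\lambda_{D(\bbGamma)}(\eps\otimes h)\in\mrm{C}_0(\wh{D(\bbGamma)})$ — given by the same formula~\eqref{eq:formxiinfty} on generators, with image $p_z(\mrm{C}(\wh\bbGamma)\otimes\I)p_z$. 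The only step that is not purely formal is this identification of the images, in particular their closedness; the $\ad(Z)$ reduction handles it, but relies on the commutation $[Z,\LL^\infty(\wh\bbGamma)\otimes\I]=0$, whose verification requires a little care with the anti-linear modular conjugations $J$ and $\wh J$.
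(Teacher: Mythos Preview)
Your derivation of the formulas for $\Xi_1,\Xi_\infty$ from Lemma~\ref{lem:xi1xiinf}, and of \eqref{eq:formxiinfty} via $Z^*(\I\otimes x)Zp_z=x_e p_z$, is correct and matches the paper exactly.

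The gap is in your treatment of the images. You assert that $\ww$ commutes with $\LL^\infty(\wh\bbGamma)\otimes\I$ ``as $\LL^\infty(\bbGamma)$ and $\LL^\infty(\wh\bbGamma)$ commute''. This is false: for a classical discrete non-abelian group $\Gamma$, the algebras $\ell^\infty(\Gamma)$ and $\mathrm{vN}(\Gamma)$ do not commute on $\ell^2(\Gamma)$ (indeed $\ell^\infty(\Gamma)$ is a MASA). Concretely, with $\ww=\sum_g e_g\otimes\lambda_g$ one checks that $\ww(\lambda_h\otimes\I)\neq(\lambda_h\otimes\I)\ww$ for $h\neq e$. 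So the commutation $[Z,\LL^\infty(\wh\bbGamma)\otimes\I]=0$ is not established, and in fact fails, so your $\ad(Z)$ reduction collapses.

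The paper avoids this entirely. Once \eqref{eq:formxiinfty} is known, one simply observes that $\LL^\infty(\wh\bbGamma)\otimes\I\subseteq\LL^\infty(\wh{D(\bbGamma)})$ and $p_z\in\LL^\infty(\wh{D(\bbGamma)})$, so $p_z(\LL^\infty(\wh\bbGamma)\otimes\I)p_z$ sits inside $\LL^\infty(\wh{D(\bbGamma)})$; since $\Xi_\infty$ is normal and \eqref{eq:formxiinfty} describes its action on a weak$^*$-total set, the image is contained in $p_z(\LL^\infty(\wh\bbGamma)\otimes\I)p_z$, and taking $x$ with $x_e=1$ shows the containment is an equality. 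For the $\mrm{C}_0$-claim one just uses $p_z\in\mrm{C}_0(\wh{D(\bbGamma)})$ (which you also note) together with \eqref{eq:formxiinfty} on norm-dense generators. No commutation of $Z$ with anything is needed.
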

\begin{proof}
The forms of $\Xi_1$ and $\Xi_\infty$ follow immediately from Lemma~\ref{lem:xi1xiinf} and the definition of $p_z$.  Notice then that
\[ Z^*(\I\otimes x)Z p_z = Z^*(\I\otimes x)(\I\otimes p_e)Z
= x_e Z^*(\I\otimes p_e)Z = x_e p_z, \]
and from this \eqref{eq:formxiinfty} follows.  As $p_z$ is a projection, the image of $\Xi_\infty$, namely $p_z \LL^\infty(\wh{D(\bbGamma)}) p_z$, is weak$^*$-closed. Consequently \eqref{eq:formxiinfty} shows that the image of $\Xi_\infty$ is the weak$^*$-closure of the space $p_z(\LL^\infty(\wh\bbGamma)\otimes\I)p_z$, because $\LL^\infty(\wh{D(\bbGamma)})$ is the weak$^*$-closed linear span of elements of the form $(\wh x\otimes \I)Z^*(\I\otimes x)Z$.

Finally, as $p_z \in \mrm{C}_0(\wh{D(\bbGamma)})$, \eqref{eq:formxiinfty} also shows that $\Xi_\infty$ restricts to a bounded map on $\mrm{C}_0(\wh{D(\bbGamma)})$, and that the resulting map $\Xi_0$ has image equal to the norm-closure of $p_z(\mrm{C}(\wh\bbGamma)\otimes\I)p_z$.
\end{proof}

Recall from Corollary~\ref{corr:xi_to_fourier} that the averaging map $\Xi $ restricts to a contractive linear map on the level of the (reduced) Fourier-Stieltjes algebra. 

\begin{lemma}\label{lem:Xi0adj}
Consider $\Xi_0^* \colon \mrm{C}_0(\wh{D(\bbGamma)})^* \to \mrm{C}_0(\wh{D(\bbGamma)})^*$. Under the isomorphism $\mrm{C}_0(\wh{D(\bbGamma)})^* \cong \B_r(D(\bbGamma))$, this map 
identifies with the averaging map $\Xi\colon \B_r(D(\bbGamma))\to \B_r(D(\bbGamma))$.  
\end{lemma}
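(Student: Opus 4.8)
The plan is to compute $\Xi(b)$ directly for $b \in \B_r(D(\bbGamma))$ and to recognise the answer via the description of $\Xi_0$ from Proposition~\ref{prop:xidd}. Write $b = (\id\otimes\wh\omega)(\ww^{D(\bbGamma) *})$ for the unique $\wh\omega \in \mrm{C}_0(\wh{D(\bbGamma)})^*$; under the isomorphism $\mrm{C}_0(\wh{D(\bbGamma)})^* \cong \B_r(D(\bbGamma))$ the element $b$ corresponds to $\wh\omega$, and, since this correspondence is injective, it suffices to prove that
\[
\Xi(b) = \bigl(\id\otimes(\wh\omega\circ\Xi_0)\bigr)(\ww^{D(\bbGamma) *}) = \bigl(\id\otimes\Xi_0^*(\wh\omega)\bigr)(\ww^{D(\bbGamma) *}).
\]
Because $\B_r(D(\bbGamma)) \subseteq \LL^\infty(D(\bbGamma))$, the averaging formula \eqref{eq5} applies to $b$, giving $\Xi(b) = (\pi^*(h)\otimes\id\otimes\pi^*(h))\Delta^{(2)}_{D(\bbGamma)}(b)$.

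First I would apply $\Delta^{(2)}_{D(\bbGamma)}\otimes\id$ to $\ww^{D(\bbGamma) *}$. Iterating the pentagon relation $(\Delta_{D(\bbGamma)}\otimes\id)(\ww^{D(\bbGamma)}) = \ww^{D(\bbGamma)}_{13}\ww^{D(\bbGamma)}_{23}$, at the von Neumann algebra level, yields
\[
(\Delta^{(2)}_{D(\bbGamma)}\otimes\id)(\ww^{D(\bbGamma) *}) = \ww^{D(\bbGamma) *}_{34}\,\ww^{D(\bbGamma) *}_{24}\,\ww^{D(\bbGamma) *}_{14}
\quad\text{in } \LL^\infty(D(\bbGamma))^{\bar{\otimes}3}\,\bar{\otimes}\,\LL^\infty(\wh{D(\bbGamma)}).
\]
Slicing the first and third legs with $\pi^*(h)$ and the fourth with $\wh\omega$, and using that $(\pi^*(h)\otimes\id)(\ww^{D(\bbGamma)}) = \lambda_{D(\bbGamma)}(\eps\otimes h) = p_z$ together with $\overline{\pi^*(h)} = \pi^*(h)$ and the fact that $p_z$ is a projection (so $(\pi^*(h)\otimes\id)(\ww^{D(\bbGamma) *}) = p_z^* = p_z$), one arrives at
\[
\Xi(b) = (\id\otimes\wh\omega)\bigl( (\I\otimes p_z)\,\ww^{D(\bbGamma) *}\,(\I\otimes p_z) \bigr).
\]
Since $\Xi_0$ is the map $y\mapsto p_z y p_z$ by Proposition~\ref{prop:xidd}, the right-hand side equals $(\id\otimes\wh\omega)\bigl((\id\otimes\Xi_0)(\ww^{D(\bbGamma) *})\bigr) = (\id\otimes\Xi_0^*(\wh\omega))(\ww^{D(\bbGamma) *})$, as wanted.

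The only delicate points are bookkeeping: tracking the leg numbering in the computation of $(\Delta^{(2)}_{D(\bbGamma)}\otimes\id)(\ww^{D(\bbGamma) *})$, and checking that the slices of the multiplier $\ww^{D(\bbGamma) *}$ by the normal functional $\pi^*(h)$ and by $\wh\omega$ are well defined and compatible with the final identification — both handled routinely by working inside the enveloping von Neumann algebras and, for the last step, testing against $\LL^1(D(\bbGamma))$. I do not expect any genuine obstacle. A more structural alternative would note that $\Xi_0^*$ and the restriction of $\Xi$ to $\B_r(D(\bbGamma))$ are both weak$^*$-continuous on $\mrm{C}_0(\wh{D(\bbGamma)})^*$ (the latter by Corollary~\ref{corr:xi_to_fourier}) and agree on the weak$^*$-dense subspace $\LL^1(\wh{D(\bbGamma)}) \cong \A(D(\bbGamma))$ — on which both act as $\Xi_1$, using $\Xi_\infty = \Xi_1^*$ and $\Xi_0 = \Xi_\infty|_{\mrm{C}_0(\wh{D(\bbGamma)})}$ — and hence coincide; but the direct computation seems cleanest to present in full.
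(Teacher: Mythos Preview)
Your direct computation is correct. The leg manipulations check out: after expanding $(\Delta^{(2)}_{D(\bbGamma)}\otimes\id)(\ww^{D(\bbGamma) *}) = \ww^{D(\bbGamma) *}_{34}\ww^{D(\bbGamma) *}_{24}\ww^{D(\bbGamma) *}_{14}$ and slicing legs 1 and 3 with the state $\pi^*(h) = \eps\otimes h$, the identity $(\pi^*(h)\otimes\id)(\ww^{D(\bbGamma) *}) = p_z^* = p_z$ leaves precisely $(\I\otimes p_z)\ww^{D(\bbGamma) *}(\I\otimes p_z)$, which is $(\id\otimes\Xi_0)(\ww^{D(\bbGamma) *})$ by Proposition~\ref{prop:xidd}. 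The interchange of $\Delta^{(2)}_{D(\bbGamma)}$ with the slice by $\wh\omega \in \mrm{C}_0(\wh{D(\bbGamma)})^*$ is justified at the multiplier-algebra level since $\Delta_{D(\bbGamma)}$ is a non-degenerate $*$-homomorphism.

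The paper proceeds differently, via exactly the ``structural alternative'' you sketch at the end: it picks a bounded net $(\omega_i)$ in $\LL^1(\wh{D(\bbGamma)})$ converging weak$^*$ to $\mu$, uses that $\Xi_0$ is the restriction of $\Xi_\infty = \Xi_1^*$ so that $\Xi_0^*(\mu)$ is the weak$^*$-limit of $\Xi_1(\omega_i)$, and then passes the limit through both sides using weak$^*$-continuity of $\Xi$ on $\LL^\infty(D(\bbGamma))$ and the defining relation $\lambda_{\wh{D(\bbGamma)}}\circ\Xi_1 = \Xi\circ\lambda_{\wh{D(\bbGamma)}}$. Your direct route avoids the approximation argument entirely and makes the role of $p_z$ explicit, at the cost of a little multiplier-algebra bookkeeping; the paper's route is shorter to write but relies on the earlier machinery of $\Xi_1,\Xi_\infty$ and the weak$^*$-continuity established in Corollary~\ref{corr:xi_to_fourier}. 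Either is fine.
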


\begin{proof}
We wish to show that for $\mu \in \mrm{C}_0(\wh{D(\bbGamma)})^*$, we have that
\[ (\id\otimes\Xi_0^*(\mu))(\ww^{D(\bbGamma) *}) = \Xi\big( (\id\otimes\mu)(\ww^{D(\bbGamma) *}) \big). \]
There is a bounded net $(\omega_i)_{i\in I}$ in $\LL^1(\wh{D(\bbGamma)})$ which converges weak$^*$ to $\mu$.  As $\Xi_0$ is the restriction of $\Xi_\infty = \Xi_1^*$, it follows that $\Xi_0^*(\mu)$ is the weak$^*$-limit of the net $(\Xi_1(\omega_i))_{i\in I}$.  As left slices of $\ww^{D(\bbGamma) *}$ land in $\mrm{C}_0(\wh{D(\bbGamma)})$, taking right slices is a weak$^*$-continuous operation, and hence
\begin{align*}
(\id\otimes\Xi_0^*(\mu))(\ww^{D(\bbGamma) *})
&= \lim_{i\in I} (\id\otimes\Xi_1(\omega_i))(\ww^{D(\bbGamma) *})
= \lim_{i\in I} \Xi\big( (\id\otimes \omega_i)(\ww^{D(\bbGamma) *}) \big) \\
&= \Xi\big( \lim_{i\in I} (\id\otimes \omega_i)(\ww^{D(\bbGamma) *}) \big)
= \Xi\big( (\id\otimes\mu)(\ww^{D(\bbGamma) *}) \big),
\end{align*}
as required.  Here we use the defining relation between $\Xi_1$ and $\Xi$, and that $\Xi$ is weak$^*$-continuous on $\LL^\infty(D(\bbGamma))$.
\end{proof}

Consider $\A(\wh\GGamma\backslash D(\GGamma)/\wh\GGamma)$, the image of $\A(D(\bbGamma))$ under the averaging map $\Xi$.  By the definition of the maps involved, and Lemma~\ref{lem:Xi0adj}, we have the commutative diagrams
\[ \begin{CD}
\A(D(\GGamma))   @>{\Xi}>> \A(\wh\GGamma\backslash D(\GGamma)/\wh\GGamma) \\
@A{\lambda_{\wh{D(\bbGamma)}}}A{\cong}A  @A{\cong}A{\lambda_{\wh{D(\bbGamma)}}}A \\
\LL^1(\wh{D(\bbGamma)})   @>{\Xi_1}>>  \Xi_1\big( \LL^1(\wh{D(\bbGamma)}) \big).
\end{CD}
\qquad\qquad
\begin{CD}
\B_r(D(\GGamma))   @>{\Xi}>> \B_r(\wh\GGamma\backslash D(\GGamma)/\wh\GGamma) \\
@AA{\cong}A  @A{\cong}AA \\
\mrm{C}_0(\wh{D(\bbGamma)})^*   @>{\Xi_0^*}>>  \Xi_0^*\big( \mrm{C}_0(\wh{D(\bbGamma)})^* \big).
\end{CD}
\]

As $\LL^\infty(\wh\bbGamma)\otimes\I \subseteq \LL^\infty(\wh{D(\bbGamma)})$, the map $\LL^1(\wh{D(\bbGamma)}) \to \LL^1(\wh\bbGamma); \mu\mapsto \mu(\cdot\otimes\I)$ is well-defined, and hence the map $\iota$ in the following proposition is well-defined.  Similarly, as $\mrm{C}(\wh\bbGamma)\otimes\I \subseteq \M(\mrm{C}_0(\wh{D(\bbGamma)}))$ and bounded functionals on a \cst-algebra $\mc{A}$ extend uniquely to strictly continuous functionals on $\M(\mc{A})$ with the same norm, the map $\iota^r$ in the following is well-defined.

\begin{proposition}\label{prop:iotamaps}
Define $\iota \colon \Xi_1(\LL^1(\wh{D(\bbGamma)})) \to \LL^1(\wh\bbGamma)$ by $\iota(\mu) = \mu(\cdot\,\otimes\I)$.  Then $\iota$ maps into $\mc Z\LL^1(\wh\bbGamma)$ and
\[ \wh\lambda(\iota(\mu))\otimes\I = \lambda_{\wh{D(\bbGamma)}}(\mu) \qquad
( \mu\in\Xi_1(\LL^1(\wh{D(\bbGamma)})) ). \]
As such, $\A(\wh\GGamma\backslash D(\GGamma)/\wh\GGamma) \subseteq \mc Z\A(\bbGamma)\otimes\I$.
Similarly, the map $\iota^r \colon \Xi_0^*(\mrm{C}_0(\wh{D(\bbGamma)})^*) \to \mrm{C}(\wh\bbGamma)^*$ given by $\iota^r(\mu) = \mu(\cdot\otimes\I)$ maps into $\mc{Z}(\mrm{C}(\wh\bbGamma)^*)$ and satisfies 
\[ (\id\otimes\iota^r(\mu))(\ww^*) \otimes \I = (\id\otimes\mu)(\ww^{D(\bbGamma)*})
\qquad (\mu\in \Xi_0^*(\mrm{C}_0(\wh{D(\bbGamma)})) ). \]
As such, $\B_r(\wh\GGamma\backslash D(\GGamma)/\wh\GGamma) \subseteq \mc Z\B_r(\bbGamma)\otimes\I$.
\end{proposition}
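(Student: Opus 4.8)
The plan is to prove both displayed identities by a single pairing computation and then to read off the centrality statements and the inclusions from Proposition~\ref{averagingdd}. Well-definedness of $\iota$ and $\iota^r$ is already granted by the remark preceding the statement. The first point to record is that, by the explicit form of $\Xi_1$ (Proposition~\ref{prop:xidd}) and of $\Xi_0^*$ (Lemma~\ref{lem:Xi0adj} together with Proposition~\ref{prop:xidd}), every $\mu$ in the relevant domain satisfies $\mu = \mu(p_z \cdot p_z)$, because $p_z^2 = p_z$. The key ingredient is then the \emph{absorption identity}
\[
p_z\,\lambda_{D(\bbGamma)}(\omega\otimes\wh\omega)^*\,p_z
= \overline{\wh\omega(\I)}\;p_z\bigl(\lambda_\bbGamma(\omega)^*\otimes\I\bigr)p_z
\qquad (\omega\in\ell^1(\bbGamma),\ \wh\omega\in\LL^1(\wh\bbGamma)),
\]
which I would deduce from formula~\eqref{eq:lambdadd} for $\lambda_{D(\bbGamma)}$, from $p_z = Z^*(\I\otimes p_e)Z$ with $Z$ unitary (so $p_z Z^* = Z^*(\I\otimes p_e)$), and from $p_e\,\wh\lambda(\wh\omega)^* = \overline{\wh\omega(\I)}\,p_e$; the last equality holds because by~\eqref{eq:wcmpt} the entry of $\wh\lambda(\wh\omega) = (\wh\omega\otimes\id)(\wh\ww)$ in the one-dimensional block $e\in\Irr(\wh\bbGamma)$ equals $\wh\omega(\I)$, and $p_e$ is the corresponding central projection of $\ell^\infty(\bbGamma)$. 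The point is that the two copies of $Z$ hidden in $\lambda_{D(\bbGamma)}(\omega\otimes\wh\omega)$ and in $p_z$ cancel, and $p_e$ collapses $\wh\lambda(\wh\omega)$ to a scalar.

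Next I would fix $\eta\in\LL^1(D(\bbGamma)) = \ell^1(\bbGamma)\wh\otimes\LL^1(\wh\bbGamma)$ of the elementary form $\ov\eta = \omega\otimes\wh\omega$, so $\eta = \ov\omega\otimes\ov{\wh\omega}$ as a normal functional on $\LL^\infty(D(\bbGamma)) = \ell^\infty(\bbGamma)\bar\otimes\LL^\infty(\wh\bbGamma)$. Using $(\eta\otimes\id)(\ww^{D(\bbGamma)*}) = \lambda_{D(\bbGamma)}(\ov\eta)^*$, then $\mu = \mu(p_z\cdot p_z)$, then the absorption identity, then $\mu = \mu(p_z\cdot p_z)$ once more together with the definition of $\iota$, one gets
\begin{align*}
\langle\eta,\,(\id\otimes\mu)(\ww^{D(\bbGamma)*})\rangle
&= \langle\mu,\,(\eta\otimes\id)(\ww^{D(\bbGamma)*})\rangle
= \langle\mu,\,p_z\,\lambda_{D(\bbGamma)}(\ov\eta)^*\,p_z\rangle \\
&= \overline{\wh\omega(\I)}\ \iota(\mu)\bigl(\lambda_\bbGamma(\omega)^*\bigr).
\end{align*}
On the other hand, writing $\wh\lambda(\iota(\mu)) = (\id\otimes\iota(\mu))(\ww^{\bbGamma*})$ and using the flip relation $\ww^{\wh\bbGamma} = \chi(\ww^{\bbGamma*})$ together with $(\ov\omega\otimes\id)(\ww^{\bbGamma*}) = \lambda_\bbGamma(\omega)^*$,
\begin{align*}
\langle\eta,\,\wh\lambda(\iota(\mu))\otimes\I\rangle
&= \overline{\wh\omega(\I)}\,\langle\ov\omega,\,\wh\lambda(\iota(\mu))\rangle
= \overline{\wh\omega(\I)}\ \iota(\mu)\bigl((\ov\omega\otimes\id)(\ww^{\bbGamma*})\bigr)
= \overline{\wh\omega(\I)}\ \iota(\mu)\bigl(\lambda_\bbGamma(\omega)^*\bigr).
\end{align*}
Since elementary tensors span a dense subspace of $\LL^1(D(\bbGamma))$, linearity and continuity give $(\id\otimes\mu)(\ww^{D(\bbGamma)*}) = \wh\lambda(\iota(\mu))\otimes\I$. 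For $\mu\in\Xi_1(\LL^1(\wh{D(\bbGamma)}))$ the left-hand side is exactly $\lambda_{\wh{D(\bbGamma)}}(\mu)$ (as $\ww^{\wh{D(\bbGamma)}} = \chi(\ww^{D(\bbGamma)*})$), giving the first identity; the argument for $\iota^r$ is word-for-word the same, with the bounded functional $\mu\in\Xi_0^*(\mrm C_0(\wh{D(\bbGamma)})^*)$ and with $(\id\otimes\iota^r(\mu))(\ww^{\bbGamma*})$ in place of $\wh\lambda(\iota(\mu))$.

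Finally, the commutative diagrams displayed before the proposition show that $\lambda_{\wh{D(\bbGamma)}}(\mu)$ lies in $\A(\wh\bbGamma\backslash D(\bbGamma)/\wh\bbGamma)$, and likewise $(\id\otimes\mu)(\ww^{D(\bbGamma)*})$ in $\B_r(\wh\bbGamma\backslash D(\bbGamma)/\wh\bbGamma)$; by Proposition~\ref{averagingdd} both of these spaces sit inside $\Xi(\LL^\infty(D(\bbGamma))) = \mc Z\ell^\infty(\bbGamma)\otimes\I$. Combining this with the identities just proved, $\wh\lambda(\iota(\mu))\in\mc Z\ell^\infty(\bbGamma)\cap\A(\bbGamma) = \mc Z\A(\bbGamma)$, that is $\iota(\mu)\in\mc Z\LL^1(\wh\bbGamma)$, and similarly $\iota^r(\mu)\in\mc Z(\mrm C(\wh\bbGamma)^*)$; the inclusions $\A(\wh\bbGamma\backslash D(\bbGamma)/\wh\bbGamma)\subseteq\mc Z\A(\bbGamma)\otimes\I$ and $\B_r(\wh\bbGamma\backslash D(\bbGamma)/\wh\bbGamma)\subseteq\mc Z\B_r(\bbGamma)\otimes\I$ then follow, since $\lambda_{\wh{D(\bbGamma)}}$ restricts to a surjection of $\Xi_1(\LL^1(\wh{D(\bbGamma)}))$ onto $\A(\wh\bbGamma\backslash D(\bbGamma)/\wh\bbGamma)$ and analogously in the $\B_r$-case. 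The one genuinely delicate step is the absorption identity — keeping the tensor legs in~\eqref{eq:lambdadd} straight and verifying the cancellation of the $Z$'s against $p_z$; everything downstream is routine bookkeeping with slices and the flip map.
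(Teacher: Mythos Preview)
Your proof is correct and follows essentially the same approach as the paper: your ``absorption identity'' is exactly the paper's key formula $\Xi_\infty\big((\omega\otimes\wh\omega\otimes\id)(\ww^{D(\bbGamma)*})\big) = \wh\omega(\I)\,p_z((\omega\otimes\id)(\ww^*)\otimes\I)p_z$ rewritten via $\Xi_\infty(y)=p_zyp_z$ and a conjugation change of variables, and the subsequent pairing argument and appeal to Proposition~\ref{averagingdd} for centrality are identical in spirit to the paper's.
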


\begin{proof}
We first claim that
\begin{equation} \label{eq:xiinfaction}
\Xi_\infty\big( (\omega\otimes\wh\omega\otimes\id_{D(\bbGamma)})(\ww^{D(\bbGamma)*}) \big)
= \wh\omega(\I) p_z ((\omega\otimes\id)(\ww^*)\otimes\I) p_z
\qquad (\omega\in\ell^1(\bbGamma), \wh\omega\in\LL^1(\wh\bbGamma)).
\end{equation}
Indeed, by \eqref{eq:formxiinfty}, if we set $x=\wh\lambda(\wh\omega)$, then $x_e = \wh\omega(\I)$ and so
\begin{align*}
\Xi_\infty\big( (\omega\otimes\wh\omega\otimes\id_{D(\bbGamma)})(\ww^{D(\bbGamma)}) \big)
&= \Xi_\infty(\lambda_{D(\bbGamma)}(\omega\otimes\wh\omega)) \\
&= x_e p_z(\lambda(\omega)\otimes\I)p_z
= \wh\omega(\I)p_z((\omega\otimes\id)(\ww)\otimes\I)p_z.
\end{align*}
Taking adjoints, and using that $\Xi_\infty$ is a $*$-map, yields \eqref{eq:xiinfaction}.

Let $\mu\in\Xi_1(\LL^1(\wh{D(\bbGamma)}))$.
As $\mu = \Xi_1(\mu) = \mu\circ\Xi_\infty$, using \eqref{eq:xiinfaction},
\begin{align*}
\la (\omega\otimes\wh\omega\otimes\id_{D(\bbGamma)})(\ww^{D(\bbGamma)*}), \mu \ra
&= \wh\omega(\I) \la p_z ((\omega\otimes\id)(\ww^*)\otimes\I) p_z, \mu \ra \\
&= \wh\omega(\I) \la (\omega\otimes\id)(\ww^*)\otimes\I, \mu (p_z\cdot p_z) \ra \\
&= \wh\omega(\I) \la (\omega\otimes\id)(\ww^*)\otimes\I, \mu \ra,
\end{align*}
where in the final step we again use that $\Xi_1(\mu)=\mu$.  By the definition of $\iota$ this equals
\begin{align*}
&\quad\;
\la (\omega\otimes\wh\omega\otimes\id_{D(\bbGamma)})(\ww^{D(\bbGamma)*}), \mu \ra=
\la (\omega\otimes\id)(\ww^*), \iota(\mu) \ra \wh\omega(\I)\\
&= \la (\id\otimes\iota(\mu))(\ww^*), \omega \ra \wh\omega(\I)
= \la \wh\lambda(\iota(\mu))\otimes\I, \omega\otimes\wh\omega \ra.
\end{align*}
As this holds for all $\omega,\wh\omega$, we have shown that
\[ \lambda_{\wh{D(\bbGamma)}}(\mu)
= (\id_{D(\bbGamma)}\otimes\mu)(\ww^{D(\bbGamma)*})
= \wh\lambda(\iota(\mu))\otimes \I, \]
as claimed.

As $\lambda_{\wh{D(\bbGamma)}}\circ\Xi_1 = \Xi\circ\lambda_{\wh{D(\bbGamma)}}$, we have shown that $\A(\wh\GGamma\backslash D(\GGamma)/\wh\GGamma) = \Xi(\A(D(\bbGamma)))$ is equal to
\[ \{ \wh\lambda(\iota(\mu)) \otimes \I \,|\, \mu\in \LL^1(\wh{D(\bbGamma)}),\,\mu=\Xi_1(\mu) \} \subseteq \A(\bbGamma) \otimes \I. \]
However, by Proposition~\ref{averagingdd}, $\A(\wh\GGamma\backslash D(\GGamma)/\wh\GGamma) \subseteq \Xi(\mrm{C}_0(D(\bbGamma))) = \mc Z\mrm{c}_0(\bbGamma)\otimes\I$.  Hence $\wh\lambda(\iota(\mu)) \in \A(\bbGamma)\cap\mc Z\mrm{c}_0(\bbGamma) = \mc Z\A(\bbGamma)$ for each $\mu=\Xi_1(\mu)$, and so $\iota(\mu)\in\mc Z\LL^1(\wh\bbGamma)$ for such $\mu$.

Exactly the same argument works for $\mu \in \Xi_0^*( \mrm{C}_0(\wh{D(\bbGamma)})^* )$ yielding the claims about $\iota^r$ and $\B_r(\wh\GGamma\backslash D(\GGamma)/\wh\GGamma)$.
\end{proof}

We can now give a simple criterion for when $\A(\wh\GGamma\backslash D(\GGamma)/\wh\GGamma) = \mc Z\A(\bbGamma)\otimes\I$.

\begin{theorem}\label{thm:main_fourierinv}
For any discrete quantum group $\bbGamma$ the following are equivalent:
\begin{enumerate}
\item[1)] $\iota\colon \Xi_1(\LL^1(\wh{D(\bbGamma)})) \to \mc{Z}\LL^1(\wh\bbGamma)$ is an isomorphism;
\item[2)] $\A(\wh\GGamma\backslash D(\GGamma)/\wh\GGamma) = \mc Z\A(\bbGamma)\otimes\I$.
\end{enumerate}
Furthermore, the following are equivalent:
\begin{enumerate}
\item[3)] $\iota^r\colon \Xi_0^*(\mrm{C}_0(\wh{D(\GGamma)})^*)\rightarrow \mc{Z} (\mrm{C}(\wh\GGamma)^*)$ is an isomorphism;
\item[4)] $\B_r(\wh\GGamma\backslash D(\GGamma)/\wh\GGamma) = \mc Z\B_r(\bbGamma)\otimes\I$.
\end{enumerate}
\end{theorem}
\begin{proof}
$ 1) \Leftrightarrow 2) $ As in Proposition~\ref{prop:iotamaps}, we identify $\A(\wh\GGamma\backslash D(\GGamma)/\wh\GGamma)$ with the image of $\Xi_1$, and identify $\mc Z\A(\wh\bbGamma) \otimes \I$ with $\mc Z\LL^1(\wh\bbGamma)$, so that $\iota$ gives the inclusion $
\A(\wh\GGamma\backslash D(\GGamma)/\wh\GGamma) \subseteq \mc Z\A(\bbGamma)\otimes\I$.  Thus $\iota$ is an isomorphism if and only if $\A(\wh\GGamma\backslash D(\GGamma)/\wh\GGamma) = \mc Z\A(\bbGamma)\otimes\I$. \\
$ 3) \Leftrightarrow 4) $ The proof is entirely analogous.
\end{proof}

We wish to identify the dual space of the image of $\Xi_1$, for which the next (well-known and standard) lemma is helpful.

\begin{lemma}\label{lem:dualprojimage}
Let $E$ be a Banach space, let $P\in\B(E)$ be a contractive idempotent and set $F=P(E)$. Then $F$ is a closed subspace of $E$ and $F^* \cong P^*(E^*)$ isometrically, for the natural pairing 
between $P^*(E^*)\subseteq E^*$ and $F\subseteq E$.
\end{lemma}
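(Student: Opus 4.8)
The plan is to realise both $F$ and $P^*(E^*)$ as fixed-point sets of the relevant contractive idempotents, and then to check that restriction of functionals implements the identification. First I would note that since $P$ is idempotent we have $F = P(E) = \{x \in E : Px = x\} = \ker(\id_E - P)$, so $F$ is norm-closed. Dually, $P^* \in \B(E^*)$ is again a contractive idempotent, and hence $P^*(E^*) = \{\phi \in E^* : P^*\phi = \phi\}$ is a closed subspace of $E^*$; in particular every $\phi \in P^*(E^*)$ satisfies $\langle \phi, x\rangle = \langle P^*\phi, x\rangle = \langle \phi, Px\rangle$ for all $x \in E$.

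Next I would consider the restriction map $\Phi \colon P^*(E^*) \to F^*$, $\Phi(\phi) = \phi|_F$, which is clearly well-defined and contractive. For injectivity, if $\phi|_F = 0$ then for every $x \in E$ we have $\langle \phi, x\rangle = \langle \phi, Px\rangle = 0$, since $Px \in F$, so $\phi = 0$. For surjectivity, given $g \in F^*$ I would extend it by Hahn--Banach to $\tilde g \in E^*$ with $\|\tilde g\| = \|g\|$ and set $\phi = P^*\tilde g \in P^*(E^*)$; then for $x \in F$ we get $\langle \phi, x\rangle = \langle \tilde g, Px\rangle = \langle \tilde g, x\rangle = g(x)$, so $\Phi(\phi) = g$. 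Finally, for the isometry it suffices to bound $\|\phi\|$ above by $\|\phi|_F\|$: for $x \in E$ one has $|\langle \phi, x\rangle| = |\langle \phi|_F, Px\rangle| \le \|\phi|_F\|\,\|P\|\,\|x\| \le \|\phi|_F\|\,\|x\|$ using $\|P\| \le 1$, whence $\|\phi\| \le \|\phi|_F\| = \|\Phi(\phi)\|$, and combined with contractivity of $\Phi$ this gives $\|\Phi(\phi)\| = \|\phi\|$.

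There is no genuine obstacle here; the only point requiring attention is that contractivity of $P$ (equivalently of $P^*$) is used precisely in the last estimate to upgrade a bicontinuous linear bijection $F^* \cong P^*(E^*)$ to an isometric one, and that the pairing between $P^*(E^*) \subseteq E^*$ and $F \subseteq E$ under which this isomorphism is taken is simply the restriction of the canonical pairing of $E$ with $E^*$.
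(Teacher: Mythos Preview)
Your proof is correct and follows essentially the same route as the paper: the paper identifies $F^*$ with $E^*/F^\perp$, observes $F^\perp = \ker P^*$, and then uses the splitting $E^* = P^*(E^*) \oplus \ker P^*$, whereas you work directly with the restriction map $P^*(E^*)\to F^*$; these are two packagings of the same argument, and both use contractivity of $P$ at exactly the same point to obtain the isometry.
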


\begin{proof}
By the Hahn-Banach Theorem, we identify $F^*$ with $E^* / F^\perp$ where $F^\perp = \{ \mu\in E^* \,|\, \mu(x) = 0 \ (x\in F) \}$.  Notice that for $\mu\in E^*$, we have that $\mu(x)=0$ for all $x\in F$ if and only if $P^*(\mu) = \mu\circ P = 0$, and so $F^\perp = \ker P^*$.  As we have the algebraic direct sum $E^* = P^*(E^*) \oplus \ker P^*$, we identify $E^* / F^\perp$ with $P^*(E^*)$ in a way which respects the dual pairing.  It remains to show that this identification is isometric.  For $\mu\in E^*$ we have
\begin{align*}
\|P^*(\mu)\| &= \sup\{ |P^*(\mu)(x)| \,|\, x\in E, \|x\|\leq 1 \}
= \sup\{ |\mu(P(x))| \,|\, x\in E, \|x\|\leq 1 \} \\
&= \sup\{ |\mu(y)|\,|\, y\in F, \|y\|\leq 1 \},
\end{align*}
as $\{ P(x)\,|\, x\in E, \|x\|\leq 1 \} = \{ y\in F \,|\, \|y\|\leq 1 \}$ because $P$ is contractive.  Hence $\|P^*(\mu)\| = \|\mu|_F\| = \|\mu+F^\perp\|_{E^*/F^\perp}$ as claimed.
\end{proof}

In the following, for a Banach space $E$ and closed subspaces $F\subseteq E$ and $G\subseteq E^*$ we define
\[ F^\perp = \{ \mu\in E^* \,|\, \mu(x)=0 \ (x\in F) \}, \qquad
{}^\perp G = \{ x\in E \,|\, \mu(x)=0 \ (\mu\in G) \}. \]
By the Hahn-Banach Theorem, we have the natural isomorphism $F^* = E^*/F^\perp$, and that $({}^\perp G)^\perp$ is the weak$^*$-closure of $G$. Furthermore, there is a canonical isometric identification $(E/F)^*=F^{\perp}$.

\begin{proposition}\label{prop:iotaontorange}
The Banach space adjoint $\iota^* \colon (\mc Z\LL^1(\wh\bbGamma))^* \to \Xi_\infty(\LL^\infty(\wh{D(\bbGamma)}))$ is the map
\[ (\mc Z\LL^1(\wh\bbGamma))^*  = \LL^\infty(\wh\bbGamma) / (\mc Z\LL^1(\wh\bbGamma))^\perp
\ni \wh x + (\mc Z\LL^1(\wh\bbGamma))^\perp \mapsto p_z(\wh x\otimes\I)p_z, \]
which has weak$^*$-dense image.

Furthermore, $\iota^r$ is weak$^*$-weak$^*$-continuous, and the Banach space pre-adjoint map is
\[ \iota^r_* \colon \mrm{C}(\wh\bbGamma) / {}^\perp(\mc{Z}(\mrm{C}(\wh\bbGamma)^*)) \to
\Xi_0(\mrm{C}_0(\wh{D(\bbGamma)})); \quad \wh x+{}^\perp(\mc Z(\mrm{C}(\wh\bbGamma)^*))
\mapsto p_z(\wh x\otimes\I)p_z, \]
which has norm-dense image.
\end{proposition}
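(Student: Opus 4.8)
The plan is to realise both $\iota^*$ and $\iota^r$ as (pre)adjoints by means of Lemma~\ref{lem:dualprojimage}, which computes the dual of the range of a contractive idempotent. First I would record that $\Xi_1$ is a contractive idempotent on $\LL^1(\wh{D(\bbGamma)})$ --- contractivity by Corollary~\ref{corr:xi_to_fourier} carried through the isomorphism $\A(D(\bbGamma))\cong\LL^1(\wh{D(\bbGamma)})$, idempotency because $\Xi$ is a conditional expectation (Lemma~\ref{lemma1}) --- and that $\Xi_0$ is a contractive idempotent on $\mrm{C}_0(\wh{D(\bbGamma)})$. Lemma~\ref{lem:dualprojimage} then identifies $\big(\Xi_1(\LL^1(\wh{D(\bbGamma)}))\big)^*$ isometrically with $\Xi_1^*(\LL^\infty(\wh{D(\bbGamma)}))=\Xi_\infty(\LL^\infty(\wh{D(\bbGamma)}))$, and $\big(\Xi_0(\mrm{C}_0(\wh{D(\bbGamma)}))\big)^*$ with $\Xi_0^*(\mrm{C}_0(\wh{D(\bbGamma)})^*)$, in each case for the natural restricted pairing. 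Combining the first identification with the Hahn-Banach identity $(\mc Z\LL^1(\wh\bbGamma))^*=\LL^\infty(\wh\bbGamma)/(\mc Z\LL^1(\wh\bbGamma))^\perp$, I would compute $\iota^*$ on a representative: for $\wh x\in\LL^\infty(\wh\bbGamma)$ and $\mu\in\Xi_1(\LL^1(\wh{D(\bbGamma)}))$ one has $\langle\iota^*(\wh x),\mu\rangle=\langle\wh x,\iota(\mu)\rangle=\mu(\wh x\otimes\I)$ straight from $\iota(\mu)=\mu(\cdot\otimes\I)$, and since $\mu=\Xi_1(\mu)=\mu(p_z\cdot p_z)$ (Proposition~\ref{prop:xidd}) this equals $\mu\big(p_z(\wh x\otimes\I)p_z\big)=\mu(\Xi_\infty(\wh x\otimes\I))$. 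As $p_z(\wh x\otimes\I)p_z\in\Xi_\infty(\LL^\infty(\wh{D(\bbGamma)}))$ this gives $\iota^*(\wh x+(\mc Z\LL^1(\wh\bbGamma))^\perp)=p_z(\wh x\otimes\I)p_z$, and when $\wh x$ runs over $\LL^\infty(\wh\bbGamma)$ these exhaust $p_z(\LL^\infty(\wh\bbGamma)\otimes\I)p_z$, which is the range of $\Xi_\infty$ by Proposition~\ref{prop:xidd}. So $\iota^*$ is onto, and as an operator with surjective adjoint is bounded below, $\iota$ is an isomorphism onto its range.

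For the statements about $\iota^r$ I would instead exhibit the pre-adjoint directly. Set $T\colon\mrm{C}(\wh\bbGamma)\to\Xi_0(\mrm{C}_0(\wh{D(\bbGamma)}))$, $T(\wh x)=p_z(\wh x\otimes\I)p_z$; this is a contraction, and has image $p_z(\mrm{C}(\wh\bbGamma)\otimes\I)p_z=\Xi_0(\mrm{C}_0(\wh{D(\bbGamma)}))$ by Proposition~\ref{prop:xidd} (using $p_z\in\mrm{C}_0(\wh{D(\bbGamma)})$, which keeps these products inside $\mrm{C}_0(\wh{D(\bbGamma)})$). I claim $T^*=\iota^r$ under the identification above: for $\mu\in\Xi_0^*(\mrm{C}_0(\wh{D(\bbGamma)})^*)$ and $\wh x\in\mrm{C}(\wh\bbGamma)$ we have $\langle T^*\mu,\wh x\rangle=\mu\big(p_z(\wh x\otimes\I)p_z\big)$ while $\langle\iota^r(\mu),\wh x\rangle=\mu(\wh x\otimes\I)$, so the claim reduces to $\mu(\wh x\otimes\I)=\mu\big(p_z(\wh x\otimes\I)p_z\big)$. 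This is the one delicate point, since $\wh x\otimes\I$ lies only in $\M(\mrm{C}_0(\wh{D(\bbGamma)}))$: I would argue that $\mu=\mu\circ\Xi_0$ on $\mrm{C}_0(\wh{D(\bbGamma)})$, that both $\mu$ and $\mu(p_z\cdot p_z)$ extend strictly continuously to $\M(\mrm{C}_0(\wh{D(\bbGamma)}))$ --- the latter because $y\mapsto p_z y p_z$ maps $\M(\mrm{C}_0(\wh{D(\bbGamma)}))$ strictly continuously into $\mrm{C}_0(\wh{D(\bbGamma)})$ (one multiplies on each side by the fixed element $p_z$ of $\mrm{C}_0(\wh{D(\bbGamma)})$) --- and hence that the two extensions agree on all of $\M(\mrm{C}_0(\wh{D(\bbGamma)}))$. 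This proves $T^*=\iota^r$, so $\iota^r$ is weak$^*$-weak$^*$-continuous.

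Finally I would check that $T$ annihilates ${}^\perp\big(\mc Z(\mrm{C}(\wh\bbGamma)^*)\big)$, so that it descends to the stated quotient: for $\wh x$ there and any $\mu\in\Xi_0^*(\mrm{C}_0(\wh{D(\bbGamma)})^*)$ one has $\langle\mu,T(\wh x)\rangle=\langle\iota^r(\mu),\wh x\rangle=0$, since $\iota^r(\mu)\in\mc Z(\mrm{C}(\wh\bbGamma)^*)$ by Proposition~\ref{prop:iotamaps} and such $\mu$ separate points of $\Xi_0(\mrm{C}_0(\wh{D(\bbGamma)}))$; hence $T(\wh x)=0$. The induced map $\mrm{C}(\wh\bbGamma)/{}^\perp\big(\mc Z(\mrm{C}(\wh\bbGamma)^*)\big)\to\Xi_0(\mrm{C}_0(\wh{D(\bbGamma)}))$ is the claimed $\iota^r_*$; and since $\iota^r$ takes values in $\mc Z(\mrm{C}(\wh\bbGamma)^*)$ --- which is weak$^*$-closed (as $\mc Z\ell^\infty(\bbGamma)$ is, multiplication on $\ell^\infty(\bbGamma)$ being separately weak$^*$-continuous) and therefore equals the annihilator of ${}^\perp\big(\mc Z(\mrm{C}(\wh\bbGamma)^*)\big)$ --- the pre-adjoint of $\iota^r$ is exactly $\iota^r_*$. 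Surjectivity of $\iota^r_*$ (equivalently of $T$) onto $\Xi_0(\mrm{C}_0(\wh{D(\bbGamma)}))$ is once more Proposition~\ref{prop:xidd}, and as before forces $\iota^r$ to be bounded below, hence an isomorphism onto its range. I expect the multiplier-algebra bookkeeping behind $\mu(\wh x\otimes\I)=\mu(p_z(\wh x\otimes\I)p_z)$ to be the only genuine obstacle; everything else is formal once Lemma~\ref{lem:dualprojimage} and Propositions~\ref{prop:xidd} and~\ref{prop:iotamaps} are available, together with the standard duality fact that a bounded operator is bounded below precisely when its (pre)adjoint is surjective.
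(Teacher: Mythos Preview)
Your proof is correct and follows essentially the same route as the paper: use Lemma~\ref{lem:dualprojimage} to identify $\Xi_1(\LL^1(\wh{D(\bbGamma)}))^*$ with $\Xi_\infty(\LL^\infty(\wh{D(\bbGamma)}))$, compute $\iota^*$ directly from the definition of $\iota$, invoke Proposition~\ref{prop:xidd} for surjectivity, and conclude via Hahn--Banach; then argue analogously for $\iota^r$. The paper's own proof is much terser for the $\iota^r$ half (it says only ``weak$^*$-weak$^*$-continuous by definition'' and ``one now gives the analogous argument''), whereas you carefully construct the pre-adjoint $T$, verify $T^*=\iota^r$ including the multiplier-algebra step, and check that $T$ descends to the quotient --- this is a genuine fleshing-out of what the paper leaves implicit, and your handling of the strict-continuity issue for $\mu(\wh x\otimes\I)$ is exactly what is needed. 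One small remark: your parenthetical justification that $\mc Z(\mrm{C}(\wh\bbGamma)^*)$ is weak$^*$-closed via $\mc Z\ell^\infty(\bbGamma)$ is a bit oblique; the cleaner reason is that $\mrm{C}(\wh\bbGamma)^*$ is itself a dual Banach algebra (because $\wh\bbGamma$ is compact, so $\wh\Delta$ lands in $\mrm{C}(\wh\bbGamma)\otimes\mrm{C}(\wh\bbGamma)$ and convolution is separately weak$^*$-continuous), whence its centre is weak$^*$-closed.
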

\begin{proof}
By definition, $\iota^* \colon \LL^\infty(\wh\bbGamma) \to \Xi_1(\LL^1(\wh{D(\bbGamma)}))^*$, and so the identification of $\Xi_1(\LL^1(\wh{D(\bbGamma)}))^*$ with $\Xi_\infty(\LL^\infty(\wh{D(\bbGamma)}))$ given by Lemma~\ref{lem:dualprojimage} shows that $\iota^*$ has the stated codomain.  The given formula for $\iota^*$ now follows easily from the definition of $\iota$.  From Proposition~\ref{prop:xidd}, the image of $\Xi_\infty$ is the weak$^*$-closure of $p_z(\LL^\infty(\wh\bbGamma)\otimes\I)p_z$, and thus $\iota^*$ has weak$^*$-dense image.

We observe that $\iota^r$ is weak$^*$-weak$^*$-continuous by definition, and that $\mrm{C}(\wh\bbGamma) / {}^\perp(\mc{Z}(\mrm{C}(\wh\bbGamma)^*))$ is the canonical Banach space pre-dual of $\mc{Z}(\mrm{C}(\wh\bbGamma)^*)$. One now gives the analogous argument for $\iota^r$.
\end{proof}

Motivated by Proposition~\ref{prop:iotaontorange}, we wish to study the map $\wh x \mapsto p_z(\wh x\otimes\I) p_z$ more closely.  By \cite[Section~7.1]{Brannan} there exists a normal UCP map $\wh\Delta^\sharp \colon \LL^\infty(\wh\bbGamma) \bar\otimes \LL^\infty(\wh\bbGamma) \to \LL^\infty(\wh\bbGamma)$ which satisfies
\[ \wh\Delta^\sharp\big( U^\alpha_{i,j} \otimes U^\beta_{k,l} \big)
= \delta_{\alpha,\beta} \delta_{j,k} \frac{1}{\dim_q(\alpha)} U^\alpha_{i,l}. \]
Define $\mf Q \colon \LL^\infty(\wh\bbGamma) \to \LL^\infty(\wh\bbGamma)$ by $\mf Q = \wh\Delta^\sharp \circ \wh\Delta^{\oon{op}}$.  Thus $\mf Q$ is normal UCP with
\begin{equation}\label{eq:defn_mfQ}
\mf Q(U^\alpha_{i,j}) = \delta_{i,j} \frac{1}{\dim_q(\alpha)} \sum_{k=1}^{\dim(\alpha)} U^\alpha_{k,k}
= \delta_{i,j} \frac{1}{\dim_q(\alpha)} \chi_\alpha.
\end{equation}
From this formula, it is clear that $\mf Q$ restricts to a UCP map $\mf Q_0 \colon \mrm{C}(\wh\bbGamma) \to \mrm{C}(\wh\bbGamma)$.

\begin{proposition}\label{prop:p_z_to_Q}
For any $\wh x\in \LL^\infty(\wh\bbGamma)$ we have that
\[ p_z(\wh x\otimes\I)p_z = Z^*(\mf Q(\wh x)\otimes p_e)Z. \]
\end{proposition}

\begin{proof}
Pre- and post-multiplying by $Z$ and $Z^*$, respectively, shows that the claim is equivalent to
\[ (\I\otimes p_e) Z(\wh x\otimes\I) Z^* (\I\otimes p_e) = \mf Q(\wh x)\otimes p_e
\qquad (\wh x\in \LL^\infty(\wh\bbGamma)). \]
We have $p_e \xi= ( \Lambda_h(\I) | \xi )\Lambda_h(\I)\,(\xi\in \LL^2(\wh\bbGamma))$, and so for any $T\in\B(\LL^2(\wh\bbGamma))$ we have $p_e T p_e = (\Lambda_h(\I)|T \Lambda_h(\I)) p_e$.  Thus the claim is equivalent to
\[ (\id\otimes \omega_{\Lambda_h(\I)}) \big(  Z(\wh x\otimes\I) Z^* \big) = \mf Q(\wh x)
\qquad (\wh x\in \LL^\infty(\wh\bbGamma)). \]
By weak$^*$-continuity, it suffices to check this claim for $\wh x\in\Pol(\wh\bbGamma)$, and then by linearity, it suffices to consider $\wh x = U^\alpha_{i,j}$ for $\alpha\in\Irr(\wh\bbGamma),1\le i,j\le \dim(\alpha)$.

Using
\begin{equation}\label{eq:op_comult}
\ww(\wh x\otimes\I)\ww^* = \Sigma \wh\ww^*(\I\otimes\wh x)\wh\ww \Sigma 
= \wh\Delta^{\oon{op}}(\wh x) \qquad (\wh x \in \LL^\infty(\wh\bbGamma))
\end{equation}
we compute
\begin{align*}
Z(U^\alpha_{i,j}\otimes\I)Z^*
&= (J\otimes\wh J)\ww (J\otimes\wh J) \ww(U^\alpha_{i,j}\otimes\I)\ww^* (J\otimes\wh J)\ww^* (J\otimes\wh J) \\
&= (J\otimes\wh J) \ww \Big(\sum_{k=1}^{\dim(\alpha)} J U^\alpha_{k,j} J \otimes \wh J U^\alpha_{i,k} \wh J\,\Big)
\ww^* (J\otimes\wh J).
\end{align*}
We next use that
\begin{equation}\label{eq14}
 J U^\alpha_{k,j} J = \wh R(U^\alpha_{k,j})^* = \wh\tau_{-i/2} (\wh S(U^\alpha_{k,j})^*) = \wh\tau_{-i/2}(U^\alpha_{j,k})
= \sum_{l,m=1}^{\dim(\alpha)} (\uprho_\alpha^{1/2})_{j,l}  (\uprho_\alpha^{-1/2})_{m,k} U^\alpha_{l,m},  
\end{equation}
and also observe that $\wh J U^\alpha_{i,k} \wh J \in \LL^\infty(\wh\bbGamma)'$ so $\I\otimes \wh J U^\alpha_{i,k} \wh J$ commutes with $\ww^*$. 
Using \eqref{eq:op_comult} again, we thus obtain, 
\begin{align}
Z(U^{\alpha}_{i,j}\otimes \I)Z^*
&=\sum_{k,l,m=1}^{\dim(\alpha)}  \overline{(\uprho_\alpha^{1/2})_{j,l}} \overline{(\uprho_\alpha^{-1/2})_{m,k}}
(J\otimes\wh J) \ww (U^\alpha_{l,m}\otimes\I) \ww^* (\I\otimes\wh J U^\alpha_{i,k} \wh J)(J\otimes\wh J) \notag \\
&= \sum_{k,l,m,n=1}^{\dim(\alpha)} (\uprho_\alpha^{1/2})_{l,j}  (\uprho_\alpha^{-1/2})_{k,m}
(J\otimes\wh J) (U^\alpha_{n,m} \otimes U^\alpha_{l,n})
(J\otimes\wh J)(\I\otimes U^\alpha_{i,k}) \notag \\
&= \sum_{k,l,m,n=1}^{\dim(\alpha)} (\uprho_\alpha^{1/2})_{l,j}  (\uprho_\alpha^{-1/2})_{k,m}
J U^\alpha_{n,m} J \otimes \wh JU^\alpha_{l,n}\wh J U^\alpha_{i,k} \notag \\ 
&= \sum_{k,l,m,n,r,s=1}^{\dim(\alpha)} (\uprho_\alpha^{1/2})_{l,j}  (\uprho_\alpha^{-1/2})_{k,m}
(\uprho_\alpha^{1/2})_{m,r} (\uprho_\alpha^{-1/2})_{s,n} U^\alpha_{r,s}
\otimes \wh JU^\alpha_{l,n}\wh J U^\alpha_{i,k}.
\label{eq15}
\end{align}
Here we used that $\uprho_\alpha$ is a self-adjoint matrix and equation \eqref{eq14}.
Next observe that $\wh J  \Lambda_h(a) = \Lambda_h(\wh\sigma_{i/2}(a)^*)$ for $a\in\Pol(\wh\bbGamma)$, and thus
\begin{align*}
\big( \Lambda_h(\I ) \big| \wh JU^\alpha_{l,n}\wh J U^\alpha_{i,k} \Lambda_h(\I) \big)
&= \big( \Lambda_h(\I ) \big| U^\alpha_{i,k} \wh JU^\alpha_{l,n}\wh J \Lambda_h(\I) \big) 
= \big( \Lambda_h((U^\alpha_{i,k})^*) \big| \Lambda_h(\wh \sigma_{i/2}(U^\alpha_{l,n})^*) \big) \\
&= h\big(U^\alpha_{i,k} \wh\sigma_{i/2}(U^\alpha_{l,n})^* \big)
= \sum_{t,u=1}^{\dim(\alpha)}  \overline{(\uprho_\alpha^{-1/2})_{l,t} (\uprho_\alpha^{-1/2})_{u,n}}
h\big( U^\alpha_{i,k} (U^\alpha_{t,u})^* \big) \\
&= \sum_{t,u=1}^{\dim(\alpha)}  (\uprho_\alpha^{-1/2})_{t,l} (\uprho_\alpha^{-1/2})_{n,u}
\delta_{i,t} \frac{(\uprho_\alpha)_{u,k}}{\dim_q(\alpha)}
= (\uprho_\alpha^{-1/2})_{i,l} \frac{(\uprho_\alpha^{1/2})_{n,k}}{\dim_q(\alpha)}.
\end{align*}
Hence applying $\id\otimes\omega_{\Lambda_h(\I)}$ to \eqref{eq15} gives
\begin{align*}
&\quad\;
(\id\otimes \omega_{\Lambda_h(\I)})\bigl( 
Z(U^{\alpha}_{i,j}\otimes \I)Z^*\bigr) \\
&=
\sum_{k,l,m,n,r,s=1}^{\dim(\alpha)}  (\uprho_\alpha^{1/2})_{l,j}  (\uprho_\alpha^{-1/2})_{k,m}
(\uprho_\alpha^{1/2})_{m,r} (\uprho_\alpha^{-1/2})_{s,n} U^\alpha_{r,s}
(\uprho_\alpha^{-1/2})_{i,l} \frac{(\uprho_\alpha^{1/2})_{n,k}}{\dim_q(\alpha)} \\
&= \frac{1}{\dim_q(\alpha)} \delta_{i,j}
\sum_{k=1}^{\dim(\alpha)} U^\alpha_{k,k} = \mf Q(U^\alpha_{i,j}),
\end{align*}
as required.
\end{proof}

\begin{corollary}\label{corr:links_centreL1_Q}
The maps $\iota^*$ and $\iota^r_*$ are given by
\begin{gather*}
\iota^* \colon (\mc Z\LL^1(\wh\bbGamma))^* = \LL^\infty(\wh\bbGamma) / (\mc Z\LL^1(\wh\bbGamma))^\perp \to \Xi_\infty(\LL^\infty(\wh{D(\bbGamma)})); \quad
\wh x + (\mc Z\LL^1(\wh\bbGamma))^\perp \mapsto Z^*(\mf Q(\wh x)\otimes p_e)Z, \\
\iota^r_* \colon \mrm{C}(\wh\bbGamma) / {}^\perp(\mc{Z}(\mrm{C}(\wh\bbGamma)^*)) \to
\Xi_0(\mrm{C}_0(\wh{D(\bbGamma)})); \quad \wh x+{}^\perp(\mc Z(\mrm{C}(\wh\bbGamma)^*))
\mapsto Z^*(\mf Q_0(\wh x)\otimes\I)Z.
\end{gather*}
In particular $(\mc Z\LL^1(\wh\bbGamma))^\perp \subseteq \ker\mf Q$ and ${}^\perp(\mc{Z}(\mrm{C}(\wh\bbGamma)^*)) \subseteq \ker\mf Q_0$.
Furthermore, 
\begin{itemize}
\item $\iota^*$ is injective if and only if $(\mc Z\LL^1(\wh\bbGamma))^\perp = \ker\mf Q$,
\item $\iota_r^*$ is injective if and only if ${}^\perp(\mc{Z}(\mrm{C}(\wh\bbGamma)^*)) = \ker\mf Q_0$,
\end{itemize}
and
\begin{itemize}
\item $\iota^*$ is surjective if and only if $\mf Q$ has weak$^*$-closed image,
\item $\iota_r^*$ is surjective if and only if $\mf Q_0$ has norm-closed image.
\end{itemize}
\end{corollary}
\begin{proof}
The formulas for $\iota^*$ and $\iota^r_*$ follow immediately from Proposition~\ref{prop:p_z_to_Q} and Proposition~\ref{prop:iotaontorange}. In particular, as $\iota^*$ is well-defined, it follows that $(\mc Z\LL^1(\wh\bbGamma))^\perp \subseteq \ker\mf Q$, and injectivity of $ \iota^* $ is equivalent to $(\mc Z\LL^1(\wh\bbGamma))^\perp = \ker\mf Q$. 
As $\iota^*$ has weak$^*$-dense image, and tensoring with $p_e$ and conjugating by $Z$ does not change being weak$^*$-closed, it follows that $\iota^*$ is surjective if and only if $\mf Q$ has weak$^*$-closed image.  The claims for $\mf Q_0$ are analogous.
\end{proof}

\subsection{One-sided averaging}

We continue Remark~\ref{rem:dd_one_sided}, where we showed that $\mrm{C}_0(D(\bbGamma)/\wh\bbGamma) = \{ (\id\otimes h)(x)\otimes\I \,|\, x\in\mrm{C}_0(D(\bbGamma)) \} = \mrm{c}_0(\bbGamma) \otimes \I$, and noticed that the averaging map from $\mrm{C}_0(D(\bbGamma)/\wh\bbGamma)$ to $\mrm{C}_0(\wh\bbGamma \backslash D(\bbGamma)/\wh\bbGamma)$ is given by $x\otimes \I \mapsto A(x)\otimes \I$, where $A$ is the map discussed at the start of Section~\ref{sec:unimodular}. 

The one-sided averaging map $ \Xi^r\colon \mrm{C}_0(D(\bbGamma)) \to \mrm{C}_0(D(\bbGamma)/\wh\bbGamma)$ is associated to the map $\Xi^r_1 \colon$ $ \LL^1(\wh{D(\bbGamma)}) \to \LL^1(\wh{D(\bbGamma)})$ given by $\mu\mapsto \mu(p_z\cdot) = \mu p_z$; compare with Remark~\ref{rem:onesided4} and Proposition~\ref{prop:xidd}.
The adjoint map is $\Xi^r_\infty \colon \LL^\infty(\wh{D(\bbGamma)}) \to \LL^\infty(\wh{D(\bbGamma)}); x\mapsto p_zx$ which maps $Z^*(\I\otimes x)Z(\wh x\otimes \I)$ to $x_e p_z(\wh x\otimes\I)$, and hence the image of $\Xi^r_\infty$ is the weak$^*$-closure of $\{ p_z(\wh x\otimes\I) \,|\, \wh x\in \LL^\infty(\wh\bbGamma)\}$. 

It seems difficult to express elements of the form $p_z(\wh x\otimes\I)$ in a simple way, so there seems to be no version of Proposition~\ref{prop:p_z_to_Q} in this setting.  However, we do have the following version of Proposition~\ref{prop:iotamaps}.

\begin{proposition}\label{prop2}
Define $\alpha \colon \{ \mu p_z \,|\, \mu\in \LL^1(\wh{D(\bbGamma)}) \} \to \LL^1(\wh\bbGamma)$ by $\alpha(\mu) = \mu(\cdot\otimes\I)$.  Then
\[ \wh\lambda(\alpha(\mu)) \otimes \I = \lambda_{\wh{D(\bbGamma)}}(\mu)
\qquad (\mu = \mu p_z \in \Xi_1^r(\LL^1(\wh{D(\bbGamma)}))). \]
As such,
\begin{equation}\label{eq16}
\A(D(\bbGamma)/\wh\bbGamma) \subseteq \A(\bbGamma) \otimes \I,
\end{equation}
with equality if and only if $\alpha$ is surjective.
\end{proposition}
\begin{proof}
Let $\mu\in \LL^1(\wh{D(\bbGamma)})$ with $\mu = \mu p_z$.  Then we obtain
\begin{align*}
\lambda_{\wh{D(\bbGamma)}}(\mu)
&= (\id\otimes\mu)(\ww^{D(\bbGamma) *})
= (\id\otimes\mu)((p_z)_{34}Z^*_{34} \wh{\ww}_{24}^* Z_{34} \ww_{13}^*) \\
&= (\id\otimes\mu)(Z^*_{34} (\I^{\otimes 3}\otimes p_e)\wh \ww_{24}^* Z_{34} \ww_{13}^*)
= (\id\otimes\mu)(Z^*_{34} (\I^{\otimes 3}\otimes p_e) Z_{34} \ww_{13}^*) \\
&= (\id\otimes\mu)((p_z)_{34} \ww_{13}^*)
= (\id\otimes\mu)(\ww_{13}^*)
= (\id\otimes\alpha(\mu))(\ww^*) \otimes \I
= \wh\lambda(\alpha(\mu)) \otimes \I
\end{align*}
as claimed, where we use that $(\I\otimes p_e)\wh\ww^* = \I\otimes p_e$.

By construction, we have $\Xi^r \circ \lambda_{\wh{D(\bbGamma)}} = \lambda_{\wh{D(\bbGamma)}} \circ \Xi^r_1$, and so
\[ \A(D(\bbGamma)/\wh\bbGamma) = \{ \lambda_{\wh{D(\bbGamma)}}(\mu) \,|\, \mu=\mu p_z \}
= \{ \wh\lambda(\alpha(\mu)) \otimes \I \,|\, \mu=\mu p_z \}
\subseteq \A(\bbGamma) \otimes \I. \]
Moreover, the last inclusion is an equality exactly when $\alpha$ is onto.
\end{proof}

As before, we shall study the Banach space adjoint $\alpha^*$.  As noted in Remark~\ref{rem:dd_one_sided}, we have $\mrm{c}_{00}(\bbGamma)\otimes\I = \Xi^r(\mrm{c}_{00}(\GGamma) \odot \Pol(\wh\GGamma)) \subseteq \Xi^r(\A(D(\bbGamma)))$, and so $\alpha$ has dense range, or equivalently, $\alpha^*$ is injective. Furthermore, we have
\[ \alpha^* \colon \LL^\infty(\wh\bbGamma) \to \{ p_zx \,|\, x\in \LL^\infty(\wh{D(\bbGamma)}) \}
= \{ p_z(\wh x\otimes\I) \,|\, \wh x \in \LL^\infty(\wh\bbGamma) \}^{- \textnormal{weak}^*};
\quad \wh x \mapsto  p_z(\wh x\otimes\I),
\]
here again using Lemma~\ref{lem:dualprojimage} to identify the dual of $\LL^1(\wh{D(\bbGamma)}) p_z$ with $p_z \LL^\infty(\wh{D(\bbGamma)})$. This implies immediately that $\alpha^*$ has weak$^*$-dense range, or equivalently, $\alpha$ is injective. So $\alpha$ is surjective if and only if $\alpha$ is an isomorphism, by the Open Mapping Theorem, and this in turn is equivalent to $\alpha^*$ being an isomorphism, equivalently, $\alpha^*$ being bounded below.
With this in mind, notice that for $\wh x \in \LL^\infty(\wh\bbGamma)$ we have
\begin{equation} \label{eq:alpha_star_Q1}
\|\alpha^*(\wh x)\|^2 = \|p_z(\wh x\otimes\I)\|^2
= \| p_z(\wh x\wh x^*\otimes\I)p_z \|
= \| \mf Q(\wh x\wh x^*) \|,
\end{equation}
the last equality following readily from Proposition~\ref{prop:p_z_to_Q}.

Recall from Section~\ref{sec:unimodularsub} that if $\bbGamma$ is unimodular then $\mf Q = F$ is the Haar-trace-preserving conditional expectation 
of $\LL^\infty(\wh\bbGamma)$ onto $\mathscr{C}_{\wh\bbGamma}$.

\begin{lemma}\label{lem:norm_to_cstar_ineq}
Let $\bbGamma$ be unimodular.  For $\delta>0$, the following are equivalent:
\begin{enumerate}[label=\arabic*)]
\item\label{lem:norm_to_cstar_ineq:one}
$\|\alpha^*(\wh x)\| \geq \delta \|\wh x\|$ for each $\wh x\in \LL^\infty(\wh\bbGamma)$;
\item\label{lem:norm_to_cstar_ineq:onea}
$\|\mf Q(\wh x)\| \geq \delta^2 \|\wh x\|$ for each $\wh x\in \LL^\infty(\wh\bbGamma)$ with $\wh x\geq 0$;
\item\label{lem:norm_to_cstar_ineq:oneb}
$\mf Q(\wh x) \geq \delta^2 \wh x$ for each $\wh x\in \LL^\infty(\wh\bbGamma)$ with $\wh x\geq 0$;
\item\label{lem:norm_to_cstar_ineq:two}
$\|\alpha^*(\wh x)\| \geq \delta \|\wh x\|$ for each $\wh x\in \mrm{C}(\wh\bbGamma)$;
\item\label{lem:norm_to_cstar_ineq:twoa}
$\|\mf Q_0(\wh x)\| \geq \delta^2 \|\wh x\|$ for each $\wh x\in \mrm{C}(\wh\bbGamma)$ with $\wh x\geq 0$;
\item\label{lem:norm_to_cstar_ineq:twob}
$\mf Q_0(\wh x) \geq \delta^2 \wh x$ for each $\wh x\in \mrm{C}(\wh\bbGamma)$ with $\wh x\geq 0$;
\end{enumerate}
\end{lemma}
\begin{proof}
\ref{lem:norm_to_cstar_ineq:one} $ \Leftrightarrow $ \ref{lem:norm_to_cstar_ineq:onea} and \ref{lem:norm_to_cstar_ineq:two} $\Leftrightarrow$ \ref{lem:norm_to_cstar_ineq:twoa}
follow from \eqref{eq:alpha_star_Q1}. 

\ref{lem:norm_to_cstar_ineq:onea} $\Rightarrow$ \ref{lem:norm_to_cstar_ineq:twoa} follows from the fact that $\mf Q$ restricts to a map $\mf{Q}_0\colon \mrm{C}(\wh\bbGamma) \to \mrm{C}(\wh\bbGamma)$.

\ref{lem:norm_to_cstar_ineq:onea} $\Rightarrow$ \ref{lem:norm_to_cstar_ineq:oneb} Given $\wh x\in\LL^\infty(\wh\bbGamma)$ and $\epsilon>0$, notice that
\[ (\epsilon \I + \mf Q(\wh x^*\wh x))^{-1/2} 
\mf Q(\wh x^*\wh x)
(\epsilon \I+ \mf Q(\wh x^*\wh x))^{-1/2} \leq \I. \]
As $\mf Q$ is a conditional expectation, it is a bimodule map.  Set $\wh y = \wh x (\epsilon \I+ \mf Q(\wh x^*\wh x))^{-1/2}$, so the previous equation shows that $\mf Q(\wh y^* \wh y) \leq \I$.  Thus $1 \geq \|\mf Q(\wh y^* \wh y)\| \geq \delta^2 \|\wh y^*\wh y\|$, so $\wh y^*\wh y \leq \delta^{-2} \I$.  Multiplying by $(\epsilon \I+ \mf Q(\wh x^*\wh x))^{1/2}$ on both sides yields
\[ \wh x^*\wh x \leq \delta^{-2} (\epsilon \I+ \mf Q(\wh x^*\wh x)) \quad\text{for all }\epsilon>0, 
\]
and hence $\mf Q(\wh x^*\wh x) \geq \delta^2 \wh x^*\wh x $ as claimed. 

\ref{lem:norm_to_cstar_ineq:twoa} $\Rightarrow $ \ref{lem:norm_to_cstar_ineq:twob} is analogous, and the reverse implications \ref{lem:norm_to_cstar_ineq:oneb} $\Rightarrow$ \ref{lem:norm_to_cstar_ineq:onea} and \ref{lem:norm_to_cstar_ineq:twob} $\Rightarrow$ \ref{lem:norm_to_cstar_ineq:twoa} are clear.

\ref{lem:norm_to_cstar_ineq:twob}$ \Rightarrow $\ref{lem:norm_to_cstar_ineq:oneb} follows from Kaplansky density.  Indeed, given $\wh x\in\LL^\infty(\wh\bbGamma)$ with $\wh x\geq 0$, there is a bounded net of positive elements $(a_i)_{i\in I}$ in $\mrm{C}(\wh\bbGamma)$ with $a_i \xrightarrow[i\in I]{} \wh x$ strongly, see \cite[Corollary~5.3.6]{KadisonRingrose} for example.  Then $(\mf Q(a_i)-\delta^2 a_i)_{i\in I}$ is a net of positive elements, by \ref{lem:norm_to_cstar_ineq:twob}, which converges weak$^*$ to $\mf Q(\wh x)-\delta^2 \wh x$, showing that $\mf Q(\wh x)-\delta^2 \wh x\geq 0$, as required.
\end{proof}

Now let $G$ be a compact group (always assumed to be Hausdorff) and set $\bbGamma = \wh G$.  Then $\mf Q$ is the conditional expectation of $\LL^\infty(G)$ onto the class function algebra
\[ \mathscr{C}_{G} = \{ f\in \LL^\infty(G) \,|\, f(t\cdot t^{-1}) = f(\cdot) \ (t\in G) \}, \]
and similarly at the $C^*$-algebra level, leading to $\mathscr{C}^0_G \subseteq \mrm{C}(G)$.  It is now readily seen that $\mf Q$ takes the form
\[ \mf Q(f) = \int_G f(t\cdot t^{-1}) \md t \qquad (f\in \LL^\infty(G)), \]
see \cite[Lemma~1.2]{KW_ClassFuncs} for example.
For $r\in G$ we denote by $\conjc(r) = \{ trt^{-1} \,|\, t\in G\}$ the conjugacy class of $r$, and we write $C_G(r) = \{ t\in G \,|\, tr=rt \}$ for the centraliser of $r$ in $G$. Since conjugation yields a continuous action of $G$ on itself, $\conjc(r)$ is a closed subset of $G$, and the stabiliser $C_G(r)$ of $ r \in G $ is a closed subgroup of $G$. 
The Orbit--Stabiliser theorem gives a bijection $G / C_G(r) \to \conjc(r); t C_G(r) \mapsto trt^{-1}$.

\begin{proposition}\label{prop:for_cmpt_gp}
Let $G$ be a compact group.  For $\delta>0$, the following are equivalent:
\begin{enumerate}[label=\arabic*)]
\item\label{prop:for_cmpt_gp:one}
$\|\mf Q(f)\| \geq \delta \|f\|$ for $f\in \mrm{C}(G)^+$;
\item\label{prop:for_cmpt_gp:two}
Each conjugacy class satisfies $|\!\conjc(r)| \leq \delta^{-1}$;
\item\label{prop:for_cmpt_gp:three}
Each centraliser $C_G(r)$ has finite index in $G$, with $[ G : C_G(r) ] \leq \delta^{-1}$.
\end{enumerate}
\end{proposition}
\begin{proof}
By the equivalence of \ref{lem:norm_to_cstar_ineq:twoa} and \ref{lem:norm_to_cstar_ineq:twob} in Lemma~\ref{lem:norm_to_cstar_ineq}, we see that condition \ref{prop:for_cmpt_gp:one} is equivalent to
\begin{equation}\label{eq:for_cmpt_gp:1}
\int_G f(trt^{-1}) \md t \geq \delta f(r) \qquad (f\in \mrm{C}(G)^+, r\in G).
\end{equation}

Denote by $\nu=\md t$ the Haar measure on $G$, which is regular (we follow Rudin's conventions \cite[Definition 2.15]{RudinRC}). Fix $r\in G$, and define $\theta \colon G \to G$ by $\theta(t) = trt^{-1}$, a continuous map.  Let $\mu\in M(G)$ be the pushforward of the Haar measure, $\mu = \nu\circ\theta^{-1}$. 

Then $\mu$ is regular, a fact surely known, but for completeness we give a quick proof. As $\nu$ is finite and $G$ compact, we only need to check that $\mu$ is inner regular (note $\mu(E)=1-\mu(G\setminus E)$ for every Borel set $E\subseteq G$). Let $E\subseteq G$ be Borel and $\epsilon>0$, so as $\nu$ is regular, there is a compact $L\subseteq \theta^{-1}(E)$ with $\nu(\theta^{-1}(E)\setminus L) < \epsilon$.  Then $K = \theta(L)$ is compact, as $\theta$ is continuous, and $L\subseteq \theta^{-1}(E) \implies K \subseteq E$.  Also $L \subseteq \theta^{-1}(K)$, and so $\mu(E\setminus K) = \nu(\theta^{-1}(E)\setminus\theta^{-1}(K)) \leq \nu(\theta^{-1}(E)\setminus L) <\epsilon$. We conclude that $\mu$ is regular as claimed. 

It follows from Riesz's theorem \cite[Theorem 6.19]{RudinRC} that $\mu$ is the unique regular measure on $ G $ with
\[ \int_G f(s) \md\mu(s) = \int_G f(\theta(t)) \md t = \int_G f(trt^{-1}) \md t\qquad(f\in\mrm{C}(G)). \]

By regularity of $\mu$ we find for each $\epsilon>0$ an open set $U$ with $r\in U$ and $\mu(U \setminus \{r\}) < \epsilon$. As $U$ is open and contains $r$, there is $f\colon G\to[0,1]$ continuous with $f(r)=1$ and $f$ supported in $U$.  Then
\begin{align*}
\nu(C_G(r)) \leq \int_G f(trt^{-1}) \md t
&\leq \nu\big( \{ t\in G \,|\, trt^{-1}\in U\} \big)
= \nu(\theta^{-1}(U)) = \mu(U) \\
&< \mu(\{r\}) + \epsilon
= \nu(\theta^{-1}(\{r\})) + \epsilon
= \nu(C_G(r)) + \epsilon.
\end{align*}
Together with \eqref{eq:for_cmpt_gp:1}, this gives
\[ \delta = \delta f(r) \leq \int_G f(trt^{-1}) \md t < \nu(C_G(r)) + \epsilon,
\]
and since $\epsilon>0$ was arbitrary we conclude $\delta \leq \nu(C_G(r))$.

Let $x_1=e, x_2,\dots,x_n\in G$ be such that the cosets $(x_j C_G(r))_{j=1}^n$ are disjoint.  By invariance of the Haar measure,
\[ 1 = \nu(G) \geq \sum_{j=1}^n \nu\big( x_j C_G(r) \big) = n \,\nu(C_G(r)) \geq n\delta, \]
so $n\leq \delta^{-1}$.  Thus $[ G : C_G(r) ] \leq \delta^{-1}$, and by the Orbit--Stabiliser theorem we get $|\!\conjc(r)| \leq \delta^{-1}$. 
This argument shows that \ref{prop:for_cmpt_gp:one} $\Rightarrow$ \ref{prop:for_cmpt_gp:two} $\Leftrightarrow$ \ref{prop:for_cmpt_gp:three}.

Conversely, assume that \ref{prop:for_cmpt_gp:two} holds.  For $r\in G$, let $(x_i C_G(r))_{i=1}^n$ be a complete set of cosets of $C_G(r)$, equivalently, $\{ x_i r x_i^{-1} \,|\, 1\leq i\leq n\} = \conjc(r)$.  Then for $f\in \mrm{C}(G)^+$,
\begin{align*}
\int_G f(trt^{-1}) \md t
&= \sum_{i=1}^n \int_{C_G(r)} f(x_i t r t^{-1} x_i^{-1}) \md t
= \sum_{i=1}^n \nu(C_G(r)) f(x_i r x_i^{-1}) \\
&= \nu(C_G(r)) \sum_{s\in \conjc(r)} f(s)
\geq \nu(C_G(r)) f(r).
\end{align*}
As $1 = \nu(G) = n\, \nu(C_G(r))$ we see that $\nu(C_G(r)) = n^{-1} \geq \delta$ and so the inequality in \eqref{eq:for_cmpt_gp:1} holds, thus condition \ref{prop:for_cmpt_gp:one} holds.
\end{proof}

Recall that the derived subgroup of an abstract group $H$ is the subgroup generated by all commutators $[g,h] = g^{-1}h^{-1}gh$ for $g,h \in H$, denoted by $H'$ or $[H,H]$. Since $t [g,h] t^{-1} = [tgt^{-1}, tht^{-1}]$ for $g,h,t\in H$ the subgroup $H'$ is normal.  As $[g,h] \in H'$ by definition, we see that $gh H' = hg H'$ and so $H/H'$ is abelian.  Notice that $t^{-1}rt = [t,r^{-1}] r \in H' r$, and so if $H'$ is finite, then $|\!\conjc(r)| \leq |H'r| = |H'|$, for any $r$.  A remarkable theorem of Neumann shows the converse.

\begin{theorem}[{\cite[Theorem~3.1]{neumann_gps_perm_subsets}}]
Let $H$ be a group such that there is a constant $n\in\mathbb N$ with $|\!\conjc(r)| \leq n$ for each $r\in H$.  Then $H'$ is finite.
\end{theorem}

Combining this result with Proposition \ref{prop:for_cmpt_gp}, Lemma \ref{lem:norm_to_cstar_ineq} and Proposition \ref{prop2} we obtain a characterisation of the compact groups with $\A(D(\wh{G})/G)=\A(\wh{G})\otimes \I$.

\begin{corollary}
Let $G$ be a compact group.  Then $\A(D(\wh G)/G) = \A(\wh G) \otimes \I$ if and only if $G'$ is finite.
\end{corollary}

It follows that there are many compact groups $G$ for which $\A(D(\wh G)/G)$ is a strict subset of $\A(\wh G)\otimes \I$, for example $G=\SU(n)\,$ for $n\ge 2$. 
Note that when $\bbGamma = \Gamma$ is a discrete group we have $\mrm{C}(\wh\bbGamma) = \mrm{C}^*_r(\Gamma)$, hence $\mathscr{C}_{\wh\Gamma} = \LL^\infty(\wh\bbGamma)$ and $\mf Q = \id$. So, in this case one trivially gets $\A(D(\Gamma)/\wh\Gamma) = \A(\Gamma) \otimes \I$.

Next we consider duals of free orthogonal quantum groups.

\begin{proposition}\label{prop1}
Let $N\ge 2,F\in \oon{GL}(N,\CC)$ with $\ov{F}F\in \RR\I$ and $\wh{\bbGamma}=O_F^+$ be the associated free orthogonal quantum group. If $\bbGamma$ is not unimodular, then $\A(D(\bbGamma)/\wh{\bbGamma})\subsetneq \A(\bbGamma)\otimes \I$. This conclusion in particular holds for $\wh{\bbGamma}=\SU_q(2)\,(q\in \left]-1,1\right[\setminus\{0\})$.
\end{proposition}

\begin{proof}
The compact quantum group $O_F^+$ was introduced in \cite{UQG}; let us recall some of its properties, see for instance \cite{TimmermannBook, Banica_On}. One can identify $\Irr(O_F^+)$ with $\ZZ_+$ in such a way that the fundamental representation corresponds to $1$, and the fusion rules are 
\begin{equation}\label{eq18}
n\tp m\simeq |n-m|\oplus (|n-m|+2)\oplus\cdots\oplus (n+m)\quad(n,m\in\ZZ_+).
\end{equation}
Furthermore every finite dimensional representation of $O_F^+$ is equivalent to its contragradient. From \eqref{eq18} it follows that the classical and quantum dimension functions are given by $\dim(n)=[n+1]_{q_c}$ and $\dim_q(n)=[n+1]_{q_q}$ for $n\in\ZZ_+$ and some $0<q_q\le q_c\le 1$, where $[n]_{q_x}$ are the $q$-numbers given by $[n]_{q_x}=\tfrac{q_x^{-n}-q_x^n}{q_x^{-1}-q_x}$ if $0<q_x<1$ and $[n]_{1}=n$. Since we assume that $O_F^+$ is not of Kac type, we have $q_q<q_c$. 

From Proposition \ref{prop2} we know that $\A(D(\bbGamma) / \wh\bbGamma)\subseteq \A(\bbGamma)\otimes \I$, and that equality of these two vector spaces is equivalent to 
\[
\alpha^*\colon \LL^{\infty}(\wh\bbGamma)\rightarrow 
\{p_z x \,|\, x\in \LL^{\infty}(\wh{D(\bbGamma)})\}; \;\wh{x}\mapsto p_z (\wh{x}\otimes \I)
\]
being an isomorphism. We will show that there is no $\delta>0$ for which the inequalities
\begin{equation}\label{eq22}
\|\mf{Q}(\chi_n\chi_n^*)\|\ge \delta \|\chi_n\chi_n^*\|\qquad(n\in\NN)
\end{equation}
hold, which by \eqref{eq:alpha_star_Q1} shows that $\alpha^*$ is not bounded from below and proves the claim.

Fix $n\in\NN$ and note that
\begin{equation}\label{eq19}
\mf{Q}(\chi_n\chi_n^*)=\mf{Q}(\chi_{n\stp n})=\sum_{k=0}^{n} 
\mf{Q}(\chi_{2k})=\sum_{k=0}^{n}
\frac{\dim(2k)}{\dim_q(2k)}\chi_{2k}=
\sum_{k=0}^{n}
\frac{[2k+1]_{q_c}}{[2k+1]_{q_q}}\chi_{2k}.
\end{equation}
To calculate norm of this element in $\mscr{C}_{O_F^+}$ we note that the spectrum of $\chi_1$ in $\mscr{C}_{O_F^+}$ is equal to $\left[-2,2\right]$ and the restricted Haar integral is the semicircle law, see~\cite[Corollary 6.4.12]{TimmermannBook}. Functional calculus then establishes an isomorphism $\mscr{C}_{O_F^+}\simeq \LL^{\infty}(\left[-2,2\right])$; 
in the unimodular case this observation was recorded e.g.~in \cite{RadialMASA}. 

Choose any $0<q<1$. The quantum group $\SU_q(2)$ can be constructed as an orthogonal quantum group for an appropriate matrix (\cite[Proposition 6.4.8]{TimmermannBook}), so by the above we obtain a uniquely determined isomorphism $\mscr{C}_{O_F^+}\simeq \mscr{C}_{\SU_q(2)}$ which maps the character $\chi_1$ to the character $\chi_1^{\SU_q(2)}\in \LL^{\infty}(\SU_q(2))$. Using this isomorphism and \eqref{eq19} we can calculate the norm of $\mf{Q}(\chi_n\chi_n^*)$ as follows:
\begin{equation}\label{eq20}
\|\mf{Q}(\chi_n\chi_n^*)\|=\bigl\|
\sum_{k=0}^{n} \frac{[2k+1]_{q_c}}{[2k+1]_{q_q}}\chi_{2k}\bigr\|_{\mscr{C}_{O_F^+}}\!=
\bigl\|\sum_{k=0}^{n} \frac{[2k+1]_{q_c}}{[2k+1]_{q_q}}\chi_{2k}^{\SU_q(2)}\bigr\|_{\mscr{C}_{\SU_q(2)}}\!=
\sum_{k=0}^{n} \frac{[2k+1]_{q_c}}{[2k+1]_{q_q}}(2k+1),
\end{equation}
where in the last equality we use that $\mrm{C}(\SU_q(2))$ has a continuous counit. Before we bound \eqref{eq20}, let us calculate the norm of $\chi_{n}\chi_n^*$ in a similar way:
\begin{equation}\label{eq21}
\|\chi_n\chi_n^*\|_{\mscr{C}_{O_F^+}}=
\|\chi_n\|^2_{\mscr{C}_{O_F^+}}=
\| \chi_{n}^{\SU_q(2)}\|^2_{\mscr{C}_{\SU_q(2)}}=
(n+1)^2.
\end{equation}
It is an elementary exercise that $\NN\ni n\mapsto \frac{[2n+1]_{q_c}}{[2n+1]_{q_q}}\in \RR$ is decreasing for large enough $n$, say for $n\ge n_0$. Assume $n\ge n_0^2$. We have by \eqref{eq20}
\begin{equation}\label{eq23}
\|\mf{Q}(\chi_n\chi_n^*)\| \le 
\sum_{k=0}^{\lfloor \sqrt{n}\rfloor} 
(2k+1) +
\frac{[2\lfloor \sqrt{n}\rfloor +1]_{q_c}}{[2\lfloor \sqrt{n}\rfloor +1]_{q_q}}
\sum_{k=\lfloor \sqrt{n}\rfloor +1}^{n}
(2k+1).
\end{equation}
If $q_c=1$, we can continue as follows:
\[\begin{split}
\|\mf{Q}(\chi_n\chi_n^*)\|&\le 
2 (\lfloor \sqrt{n}\rfloor +1)\tfrac{\lfloor \sqrt{n}\rfloor }
{2}+\lfloor \sqrt{n}\rfloor +1\\
&+
(2\lfloor \sqrt{n}\rfloor +1)\tfrac{q_q^{-1} - q_q}{q_q^{-2\lfloor \sqrt{n}\rfloor -1}-q_q^{2\lfloor \sqrt{n}\rfloor +1}}\bigl(
2 (n-\lfloor \sqrt{n}\rfloor ) \tfrac{n+\lfloor \sqrt{n}\rfloor +1}{2}+n-\lfloor \sqrt{n}\rfloor
\bigr),
\end{split}\]
consequently $\|\mf{Q}(\chi_n\chi_n^*)\|=\mc{O}(n)$ as $n\to\infty$. If $q_c<1$ we similarly obtain from \eqref{eq23}
\[\begin{split}
\|\mf{Q}(\chi_n\chi_n^*)\|&\le 
2 (\lfloor \sqrt{n}\rfloor +1)\tfrac{\lfloor \sqrt{n}\rfloor }
{2}+\lfloor \sqrt{n}\rfloor +1\\
&+
\tfrac{q_c^{-2\lfloor \sqrt{n}\rfloor -1}-q_c^{2\lfloor \sqrt{n}\rfloor +1}}{q_c^{-1} - q_c}
\,\tfrac{q_q^{-1} - q_q}{q_q^{-2\lfloor \sqrt{n}\rfloor -1}-q_q^{2\lfloor \sqrt{n}\rfloor +1}}\bigl(
2 (n-\lfloor \sqrt{n}\rfloor ) \tfrac{n+\lfloor \sqrt{n}\rfloor +1}{2}+n-\lfloor \sqrt{n}\rfloor
\bigr),
\end{split}\]
and, since the second summand is bounded by $ (\tfrac{q_c}{q_q})^{-2\lfloor \sqrt{n}\rfloor -1} $ times a polynomial in $ n $, we get 
$\|\mf{Q}(\chi_n\chi_n^*)\|=\mc{O}(n)$ as $n\to\infty$ in this case too. Together with \eqref{eq21} this shows that \eqref{eq22} cannot hold for any $\delta>0$. 
\end{proof}

Another class of examples is provided by certain infinite direct sums. For notational convenience we consider only countably-infinite direct sums, but the proof below can be easily extended to the general case.

\begin{proposition}
Let $(\bbGamma_n)_{n\in\NN}$ be a sequence of non-classical discrete quantum groups and let $\bbGamma=\bigoplus_{n=1}^{\infty}\bbGamma_n$ be its infinite direct sum. Then $\A(D(\bbGamma) / \wh\bbGamma)\subsetneq \A(\bbGamma)\otimes \I$.
\end{proposition}

\begin{proof}
As in the proof of Proposition \ref{prop1}, we will show that $\alpha^*$ is not bounded from below using \eqref{eq18}. Fix $n\in\NN$. Since $\bbGamma_n$ is not classical, there is $\beta\in \Irr(\wh\bbGamma_n)$ with $\dim(\beta)\ge 2$. 
Assume first that the real part of the off-diagonal matrix coefficient $U^{\beta}_{1,\dim(\beta)}$, taken with respect to any orthonormal basis $e_1,\dots, e_{\dim(\beta)}$, is nonzero. Define self-adjoint $\wh{y}_n\in \Pol(\wh\bbGamma_n)$ via $\wh{y}_n=t (U^{\beta}_{1,\dim(\beta)} + U^{\beta *}_{1,\dim(\beta)})$, where $t\in\RR$ is chosen such that $\|\wh{y}_n\|=1\in \oon{Sp}(\wh{y}_n)$. Observe that $\mf{Q}(\wh{y}_n)=0$ as $1\neq \dim(\beta)$ and furthermore $2\in \oon{Sp}(\I+\wh{y}_n)\subseteq \left[0,2\right]$. It follows that $\|\I+\wh{y}_n\|=2$. If the real part of $U^{\beta}_{1,\dim(\beta)}$ is zero, then we construct an element $\wh{y}_n$ with the same properties by using the imaginary part of $U^{\beta}_{1,\dim(\beta)}$ instead. 

For the definition and properties of $\bbGamma=\bigoplus_{n=1}^{\infty} \bbGamma_n$ we refer to \cite{WangTensor} (see also \cite[Section 3]{KrajczokSoltanExamples}). Let us only recall that the irreducible representations of $\wh\bbGamma$ are given by
\[
\{e^{\boxtimes\infty}, \beta_1\boxtimes \cdots\boxtimes \beta_K\boxtimes e^{\boxtimes\infty}\,|\,K\in\NN, \beta_1\in \Irr(\wh{\bbGamma}_1),\dotsc,\beta_K\in \Irr(\wh\bbGamma_K)\},
\]
where $e^{\boxtimes\infty}$ is the trivial representation on $\CC$, while $\beta_1\boxtimes \cdots\boxtimes \beta_K\boxtimes e^{\boxtimes\infty}$ is the representation on $\msf{H}_{\beta_1}\otimes \cdots\otimes \msf{H}_{\beta_K}$ with 
\begin{equation}\label{eq17}
U^{\beta_1\boxtimes \cdots\boxtimes \beta_K\boxtimes e^{\boxtimes\infty} }_{(i_1,\dotsc,i_K),(j_1,\dotsc,j_K)}=
U^{\beta_1}_{i_1,j_1}\otimes \cdots\otimes U^{\beta_K}_{i_K,j_K}\otimes \I^{\otimes\infty}.
\end{equation}
Now, choose $N\in\NN$ and consider $(\I+\wh{y}_1)\otimes \cdots\otimes (\I+\wh{y}_N)\otimes \I^{\otimes\infty}$ in $\Pol(\wh\bbGamma)$. Since each $\I+\wh{y}_n$ is positive, we can write 
this element as $\wh{x}\wh{x}^*$ for some $\wh{x}\in \mrm{C}(\wh\bbGamma)$. Then we have
\[
\|\wh x\|^2=\|\wh{x}\wh{x}^*\|=\|
(\I+\wh{y}_1)\otimes \cdots\otimes (\I+\wh{y}_N)\otimes \I^{\otimes\infty}\|=
\|\I+\wh{y}_1\|\cdots
\|\I+\wh{y}_N\|=2^N.
\]
Next observe that 
\[
\mf{Q}(z_1\otimes \cdots \otimes z_K \otimes \I^{\otimes \infty})=\mf{Q}(z_1)\otimes \cdots\otimes \mf{Q}(z_K)\otimes \I^{\otimes \infty}\quad(K\in\NN, z_1\in \Pol(\wh{\bbGamma}_1),\dotsc,z_K\in \Pol( \wh\bbGamma_K)),
\]
which easily follows from \eqref{eq17}. Using this description and $\mf{Q}(\wh{y}_n)=0$ we have
\[\begin{split}
&\quad\;
\mf{Q}(\wh{x}\wh{x}^*)=
\mf{Q}\bigl( (\I+\wh{y}_1)\otimes\cdots\otimes (\I+\wh{y}_N)\otimes \I^{\otimes\infty}\bigr)\\
&=
\sum_{z_1\in \{\I,\wh{y}_1\}}\cdots \sum_{z_N\in \{\I,\wh{y}_N\}} \mf{Q}(z_1)\otimes \cdots\otimes \mf{Q}(z_N)\otimes \I^{\otimes \infty}=\I^{\otimes \infty}.
\end{split}\]
We thus obtain $\|\wh{x}\|=2^{N-1}$ on the one hand, and $ \|\alpha^*(\wh{x})\|=\|\mf{Q}(\wh x \wh{x}^*)\|^{1/2}=1$ by using \eqref{eq:alpha_star_Q1} on the other. 
Hence $\alpha^*$ is not bounded from below.
\end{proof}

As a final remark, we consider briefly the second averaging map $\mrm{c}_0(\bbGamma)\otimes\I = \mrm{C}_0(D(\bbGamma)/\wh\bbGamma) \to \mrm{C}_0(\wh\bbGamma \backslash D(\bbGamma)/\wh\bbGamma)$ given by $x\otimes \I \mapsto A(x)\otimes \I$. As explained in Remark~\ref{rem:A_bad_fourier_nonunimod}, there exist non-unimodular $\bbGamma$ for which there are elements $x\in \A(\bbGamma)$ with $A(x)\not\in \A(\bbGamma)$. In this situation we cannot have $\A(D(\bbGamma) / \wh\bbGamma) = \A(\bbGamma)\otimes \I$, since $A(x)\otimes\I \not\in \mc Z\A(\bbGamma)\otimes \I$ while $\A(\wh\bbGamma \backslash D(\bbGamma)/\wh\bbGamma) \subseteq \mc Z\A(\bbGamma)\otimes \I$ by Proposition~\ref{prop:iotamaps}.

\subsection{Density of central algebras} \label{subsecdensity}

Recall that the \emph{quantum characters} are defined by
\[ \chi_\alpha^q = \sum_{i,j=1}^{\dim(\alpha)} (\uprho_\alpha)_{j,i} U^\alpha_{i,j}=
\wh\sigma_{-i/2}(\chi_\alpha) \qquad (\alpha\in\Irr(\wh\bbGamma)). \]
Then for any $\beta\in\Irr(\wh\bbGamma),1\le i,j\le\dim(\beta)$ we see that
\begin{equation}\label{eq:qc_adjoint}
h(\chi_\alpha^{q *} U^\beta_{i,j}) = 
\sum_{k,l=1}^{\dim(\alpha)} (\uprho_\alpha)_{k,l} h\big( (U^\alpha_{k,l})^* U^\beta_{i,j} \big)
=\frac{ \delta_{\alpha,\beta} }{\dim_q(\alpha)} \sum_{k=1}^{\dim(\alpha)} (\uprho_\alpha)_{k,j} (\uprho_\alpha^{-1})_{i,k} = \delta_{\alpha,\beta} \frac{\delta_{i,j}}{\dim_q(\alpha)}.
\end{equation}
Thus $\wh\lambda(h(\chi_\alpha^{q *} \cdot)) = \dim_q(\alpha)^{-1} p_\alpha$, and so
\[ \mc Z\mrm{c}_{00}(\bbGamma) = \wh\lambda\big( \lin\{ h(\chi_\alpha^{q *} \cdot)  \,|\, \alpha\in\Irr(\wh\bbGamma) \} \big). \]
We say that $\lin \{ h(\chi_\alpha^{q *} \cdot)  \,|\, \alpha\in\Irr(\wh\bbGamma) \} \subseteq \mc Z\LL^1(\wh\bbGamma)$ is the space of \emph{finitely-supported} elements in $\mc Z\LL^1(\wh\bbGamma)$.

\begin{lemma}\label{lem:preadjoint_Q}
As $\mf Q$ is normal, it has a pre-adjoint map $\mf Q_* \colon \LL^1(\wh\bbGamma) \to \LL^1(\wh\bbGamma)$, which satisfies
\begin{equation}\label{eq:preadjoint_Q}
\mf Q_*\big( h(U^{\alpha *}_{i,j}\cdot) \big)
= \frac{(\uprho_\alpha^{-1})_{j,i}}{\dim_q(\alpha)}
h(\chi_\alpha^{q *}\cdot)
\qquad (\alpha\in\Irr(\wh\bbGamma),1\leq i,j\leq\dim(\alpha))
\end{equation}
so that the space of finitely-supported element of $\mc Z\LL^1(\wh\bbGamma)$ is contained in the image of $\mf Q_*$ and also
\begin{equation}\label{eq:preadjoint_Q:2}
\mf Q_*\big( h(\chi_\alpha^{q *}\cdot) \big)
= \frac{\dim(\alpha)}{\dim_q(\alpha)}
h(\chi_\alpha^{q *}\cdot) \qquad (\alpha\in\Irr(\wh\bbGamma)).
\end{equation}
Let $\omega\in \mc Z\LL^1(\wh\bbGamma)$ be finitely-supported, and let $x\in\ker\mf Q$.  Then $\omega(x)=0$.
\end{lemma}

\begin{proof}
For $\beta\in\Irr(\wh\bbGamma),1\le s,t\le \dim(\beta)$ we compute that
\[ \mf Q_*\big( h(U^{\alpha *}_{i,j}\cdot) \big)(U^\beta_{s,t})
= h\big( U^{\alpha *}_{i,j}  \mf Q(U^\beta_{s,t}) \big)
= \frac{\delta_{s,t}}{\dim_q(\beta)} \sum_{r=1}^{\dim(\beta)}h\big( U^{\alpha *}_{i,j} U^\beta_{r,r} \big)
= \delta_{\alpha,\beta} \frac{\delta_{s,t}}{\dim_q(\alpha)^2} (\uprho_\alpha^{-1})_{j,i}, \]
while
\begin{align*}
\frac{(\uprho_\alpha^{-1})_{j,i}}{\dim_q(\alpha)}
h(\chi_\alpha^{q *}U^\beta_{s,t})
= \delta_{\alpha,\beta} \frac{\delta_{s,t}}{\dim_q(\alpha)^2} (\uprho_\alpha^{-1})_{j,i}.
\end{align*}
by \eqref{eq:qc_adjoint}. As elements $U^\beta_{s,t}$ form a basis of $\Pol(\wh\bbGamma)$, and $\Pol(\wh\bbGamma)$ is weak$^*$-dense in $\LL^\infty(\wh\bbGamma)$, this establishes \eqref{eq:preadjoint_Q}. It follows that
\[ \mf Q_*\big( h(\chi_\alpha^{q *}\cdot) \big)
= \sum_{i,j=1}^{\dim(\alpha)} (\uprho_\alpha)_{i,j} \mf Q_*\big( h(U^{\alpha *}_{i,j}\cdot) \big)
= \sum_{i,j=1}^{\dim(\alpha)} (\uprho_\alpha)_{i,j} \frac{(\uprho_\alpha^{-1})_{j,i}}{\dim_q(\alpha)}
h(\chi_\alpha^{q *}\cdot)
= \frac{\dim(\alpha)}{\dim_q(\alpha)} h(\chi_\alpha^{q *}\cdot), \]
which is \eqref{eq:preadjoint_Q:2}.

To show that any finitely-supported element $\omega\in\mc Z\LL^1(\wh\bbGamma)$ annihilates $\ker\mf Q$, it suffices to show that $h(\chi_\alpha^{q *} x) = 0$ for each $\alpha\in\Irr(\wh\bbGamma)$ and $x\in\ker\mf Q$.  However, with $\omega = h(\chi_\alpha^{q *}\cdot)$, we have
\[ h(\chi_\alpha^{q *} x) = \omega(x) =
\frac{\dim_q(\alpha)}{\dim(\alpha)} \mf Q_*(\omega)(x)
= \frac{\dim_q(\alpha)}{\dim(\alpha)} \omega(\mf Q(x)) = 0, \]
as required.
\end{proof}

The following is interesting in view of Lemma~\ref{lemma:dense_in_centre_fourier} above.

\begin{proposition}\label{prop:kerQ_to_centrec00}
The following are equivalent:
\begin{enumerate}
\item[1)]\label{prop:kerQ_to_centrec00:one}
$\mc Z\mrm{c}_{00}(\bbGamma)$ is dense in $\mc Z\A(\bbGamma)$ for the $\A(\bbGamma)$ norm;
\item[2)]\label{prop:kerQ_to_centrec00:two}
$(\mc Z\LL^1(\wh\bbGamma))^\perp = \ker\mf Q$.
\end{enumerate}
Furthermore, the following are equivalent:
\begin{enumerate}[resume]
\item[3)]\label{prop:kerQ_to_centrec00:onea}
$\mc Z\mrm{c}_{00}(\bbGamma)$ is weak$^*$-dense in $\mc Z\B_r(\bbGamma)$;
\item[4)]\label{prop:kerQ_to_centrec00:twoa}
${}^\perp (\mc Z(\mrm{C}(\wh\bbGamma)^*)) = \ker\mf Q_0$.
\end{enumerate}
\end{proposition}
\begin{proof}
$ 1) \Rightarrow 2) $ By assumption, the finitely-supported elements of $\mc Z\LL^1(\wh\bbGamma)$ are dense in $\mc Z\LL^1(\wh\bbGamma)$. 
Take $\omega \in \mc Z\LL^1(\wh\bbGamma)$, and let $(\omega_n)_{n\in\NN}$ be a sequence of finitely-supported central elements converging in norm to $\omega$. 
For $x\in\ker\mf Q$, Lemma~\ref{lem:preadjoint_Q} then implies $0 = \lim_{n\to\infty} \omega_n(x) = \omega(x)$.  
We conclude $\ker\mf Q \subseteq (\mc Z\LL^1(\wh\bbGamma))^\perp$.
Corollary~\ref{corr:links_centreL1_Q} gives the reverse inclusion $(\mc Z\LL^1(\wh\bbGamma))^\perp \subseteq \ker\mf Q$. 

$ 2) \Rightarrow 1) $ 
Towards a contradiction, suppose that $1)$ does not hold. Then, by Hahn-Banach, there is $x\in \LL^\infty(\wh\bbGamma)$ which annihilates the finitely-supported elements in $\mc Z\LL^1(\wh\bbGamma)$, such that there is $\omega\in \mc Z\LL^1(\wh\bbGamma)$ with $\omega(x)=1$. Thus $h(\chi_\alpha^{q *}x)=0$ for each $\alpha$.  From \eqref{eq:preadjoint_Q}, it follows that
\[
h(U^{\alpha *}_{i,j}\cdot)(\mf{Q}(x))= \mf Q_*\big( h(U^{\alpha *}_{i,j}\cdot) \big)(x)
= \frac{(\uprho_\alpha^{-1})_{j,i}}{\dim_q(\alpha)}
h(\chi_\alpha^{q *}x) = 0, \]
for each $\alpha,i,j$.  As $ \{ h(a^*\cdot) \,|\, a\in\Pol(\wh\bbGamma) \}$ is dense in $\LL^1(\wh\bbGamma)$ it follows that $\mf Q(x)=0$ and hence $x\in\ker\mf Q$.
By assumption $x\in (\mc{Z}\LL^1(\wh\bbGamma))^{\perp}$, which is a contradiction.

$3) \Rightarrow 4)$ By assumption, the finitely-supported elements in $\mc Z\LL^1(\wh\bbGamma)$ are weak$^*$-dense in $\mc Z(\mrm{C}(\wh\bbGamma)^*)$. Then given $\mu\in \mc Z(\mrm{C}(\wh\bbGamma)^*)$ we pick a net $(\omega_i)_{i\in I}$ of finitely-supported elements in $\mc Z\LL^1(\wh\bbGamma)$ converging weak$^*$ to $\mu$. For $x\in\ker\mf Q_0 \subseteq \ker\mf Q$ we again have $0 = \lim_{i\in I} \omega_i(x) = \mu(x)$. Thus ${}^\perp(\mc{Z}(\mrm{C}(\wh\bbGamma)^*)) \supseteq \ker\mf Q_0$, and together with Corollary~\ref{corr:links_centreL1_Q} 
this shows $4)$.

$4) \Rightarrow 3)$ Assume that $4)$ holds but $3)$ does not. This means that the space of finitely-supported elements of $\mc{Z}\LL^1(\wh\bbGamma)$ is not weak$^*$ dense in $\mc{Z}(\mrm{C}(\wh\bbGamma)^*)$. The weak$^*$ topology of $\mc{Z}(\mrm{C}(\wh\bbGamma)^*)$ is given by the canonical predual $\mrm{C}(\wh\bbGamma) / {}^{\perp}(\mc{Z}(\mrm{C}(\wh\bbGamma)^*))$, hence there is $\mu\in \mc Z(\mrm{C}(\wh\bbGamma)^*)$ and $x\in \mrm{C}(\wh\bbGamma)$ which annihilates the finitely-supported elements of $\mc Z\LL^1(\wh\bbGamma)$ and satisfies $\mu(x)=1$.  By $4)$, it follows that $x\not\in\ker\mf Q_0$ so $\mf Q(x)\not=0$, but the same argument as before shows that $\la x, \mf Q_*(\omega) \ra = 0$ for each $\omega\in\LL^1(\wh\bbGamma)$. This implies $\mf Q(x)=0$, which is a contradiction.
\end{proof}

\begin{corollary}\label{cor1}
If $\A(\wh\GGamma\backslash D(\GGamma)/\wh\GGamma) = \mc Z\A(\bbGamma)\otimes\I$ then $\mc Z\mrm{c}_{00}(\bbGamma)$ is dense in $\mc Z\A(\bbGamma)$.  Furthermore, if $\B_r(\wh\GGamma\backslash D(\GGamma)/\wh\GGamma) = \mc Z\B_r(\bbGamma)\otimes\I$ then $\mc Z\mrm{c}_{00}(\bbGamma)$ is weak$^*$-dense in $\mc Z\B_r(\bbGamma)$.
\end{corollary}

\begin{proof}
Assume that $\A(\wh\GGamma\backslash D(\GGamma)/\wh\GGamma) = \mc Z\A(\bbGamma)\otimes\I$. By Theorem~\ref{thm:main_fourierinv}, $\iota$ and hence also $\iota^*$ are isomorphisms. Using Corollary~\ref{corr:links_centreL1_Q} we see that $(\mc{Z}\LL^1(\wh\bbGamma))^{\perp}=\ker \mf{Q} $, and therefore Proposition~\ref{prop:kerQ_to_centrec00} gives the result. The case of 
the reduced Fourier-Stieltjes algebra is analogous.
\end{proof}

If $\GGamma$ is unimodular, the Haar state $ h $ on $\LL^\infty(\wh\bbGamma)$ is a trace we have the normal conditional expectation $F\colon \LL^\infty(\wh\bbGamma) \rightarrow \mathscr{C}_{\wh\bbGamma}$, considered already in Section \ref{sec:unimodularsub}. Comparing the definition of $\mf Q$ in \eqref{eq:defn_mfQ} with formula \eqref{eq:cond_exp_classfns} shows that $\mf Q = F$ in this case.

\begin{theorem}\label{thm:unimod_inv_fourier}
Let $\bbGamma$ be unimodular.  Then $\mf Q$ has weak$^*$-closed image, and $\iota^*$ is an isomorphism.  As such, $\A(\wh\GGamma\backslash D(\GGamma)/\wh\GGamma) = \mc Z\A(\bbGamma)\otimes\I$. 
\end{theorem}

\begin{proof}
The map $\mf Q = F$ is a weak$^*$-continuous idempotent and so has weak$^*$-closed image. According to Corollary~\ref{corr:links_centreL1_Q} this means that $\iota^*$ is surjective. 
Moreover, Proposition~\ref{unimodularaveraging} shows that $\mf Q_* = F_*$ is a projection onto $\mc Z\LL^1(\wh\bbGamma)$, and hence $\ker\mf Q = \ker F = (\mc Z\LL^1(\wh\bbGamma))^\perp$; 
for this we could also use Lemma~\ref{lemma:dense_in_centre_fourier} together with Proposition~\ref{prop:kerQ_to_centrec00}. 
Due to Corollary~\ref{corr:links_centreL1_Q} we conclude that $\iota^*$ is injective. The claim now follows from Theorem~\ref{thm:main_fourierinv}.
\end{proof}

Our next goal is to prove that density of $\mc{Z}\mrm{c}_{00}(\bbGamma)$ in $\mc{Z}\A(\bbGamma)$ holds under the assumption that $\bbGamma$ has central AP, 
and a similar claim for the Fourier-Stieltjes algebra. As a preparation we need to establish a couple of results concerning the weak$^*$ topology of $\M^l_{cb}(\A(\bbGamma))$ and central AP.

Let us first recall some facts from \cite[Section~4]{DKV_ApproxLCQG}. If $\GG$ is a locally compact quantum group and $\msf{H}$ a Hilbert space, then 
for $x\in \mrm{C}_0(\wh\GG)\otimes\K(\msf H)$ and $\omega\in \LL^1(\wh\GG)\wh\otimes \B(\msf H)_*$ the bounded functional
\[ \Omega_{x,\omega}\colon \M^l_{cb}(\A(\GG)) \rightarrow \mathbb C;\,
a \mapsto \langle (\Theta^l(a)\otimes\id)x, \omega \rangle \]
lies in the predual $Q^l(\A(\GG))$. Moreover, every element of $Q^l(\A(\GG))$ arises in this way, see \cite[Proposition~3.8]{DKV_ApproxLCQG}. We shall extend this result 
as follows, improving \cite[Proposition~1.3]{HaagerupKraus} in the classical setting.

\begin{lemma}\label{lemma3}
Let $\GG$ be a locally compact quantum group and let $\msf{H}$ be a Hilbert space. 
For any $x\in \mrm{C}_0(\wh\GG)\otimes\K(\msf H)$ and $\mu\in (\mrm{C}_0(\wh\GG)\otimes\K(\msf H))^*$, the bounded linear functional
\[ \Omega_{x,\mu}\colon \M^l_{cb}(\A(\GG)) \rightarrow \mathbb C;\,
a \mapsto \langle \mu, (\Theta^l(a)\otimes\id)x \rangle \]
is contained in $Q^l(\A(\GG))$.
\end{lemma}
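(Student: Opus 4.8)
The plan is to avoid any attempt to approximate the (possibly non-normal) functional $\mu$, and instead to reduce, using only linearity, norm-density of elementary tensors, and norm-closedness of $Q^l(\A(\GG))$, to a very concrete class of $x$, for which $\Omega_{x,\mu}$ will literally be the restriction to $\M^l_{cb}(\A(\GG))$ of an element of $\Lj$, hence of $Q^l(\A(\GG))$ by its very definition as the closure of the image of $\Lj\to\M^l_{cb}(\A(\GG))^*$. The reason one should not try to approximate $\mu$ by normal functionals is that these are only weak$^*$-dense, not norm-dense, in $(\CZGD\otimes\K(\msf H))^*$, whereas membership in $Q^l(\A(\GG))$ is only stable under norm-limits.

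First I would record the standard corepresentation identity $(\id\otimes\Theta^l(b))(\ww^{\GG *}) = (b\otimes\I)\ww^{\GG *}$ for $b\in\M^l_{cb}(\A(\GG))$: slicing the second leg against an arbitrary $\wh\omega\in\Ljd$ and using $\wh\lambda(\wh\omega) = (\id\otimes\wh\omega)(\ww^{\GG *})$ together with $b\,\wh\lambda(\wh\omega) = \wh\lambda(\Theta^l(b)_*\wh\omega)$ gives equality after every such slice. Since $\CZGD$ is the norm-closure of $\{(\omega\otimes\id)(\ww^{\GG *})\,:\,\omega\in\Lj\}$, slicing the first leg of this identity shows that the normal map $\Theta^l(b)$ carries $\CZGD$ into $\CZGD$, with $\Theta^l(b)\bigl((\omega\otimes\id)(\ww^{\GG *})\bigr) = \bigl((\omega b)\otimes\id\bigr)(\ww^{\GG *})$ for $\omega\in\Lj$, where $\omega b\in\Lj$ is the module action of Section~\ref{section preliminaries}. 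Hence $(\Theta^l(b)\otimes\id)$ is a completely bounded endomorphism of $\CZGD\otimes\K(\msf H)$ of norm $\le\|b\|_{cb}$, so $\|\Omega_{x,\mu}\|\le\|\mu\|\,\|x\|$ and $x\mapsto\Omega_{x,\mu}$ is bounded and linear. It therefore suffices to treat $x$ in the (linearly dense) set of elements $\bigl((\omega\otimes\id)(\ww^{\GG *})\bigr)\otimes k$ with $\omega\in\Lj$ and $k\in\K(\msf H)$.

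For such an $x$, writing $\mu_k\in\CZGD^{*}$ for $z\mapsto\mu(z\otimes k)$ and $b_\mu = (\id\otimes\mu_k)(\ww^{\GG *})\in\M(\CZG)\subseteq\Linf$, I would compute, for $a\in\M^l_{cb}(\A(\GG))$,
\[
\Omega_{x,\mu}(a) = \la \mu_k, \Theta^l(a)\bigl((\omega\otimes\id)(\ww^{\GG *})\bigr)\ra
= \la \mu_k, \bigl((\omega a)\otimes\id\bigr)(\ww^{\GG *})\ra
= \la \omega a, b_\mu\ra = \la b_\mu\,\omega, a\ra,
\]
using the identity above, then the standard iterated-slice (Fubini) property for $\ww^{\GG *}\in\M(\CZG\otimes\CZGD)$, and finally the module identities $(\omega a)(b_\mu) = \omega(a b_\mu) = (b_\mu\,\omega)(a)$. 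As $b_\mu\,\omega\in\Lj$, this exhibits $\Omega_{x,\mu}$ as the image of $b_\mu\,\omega$ under the canonical map $\Lj\to\M^l_{cb}(\A(\GG))^{*}$ obtained by restricting the $\Lj$--$\Linf$ pairing along the contractive inclusion $\M^l_{cb}(\A(\GG))\hookrightarrow\Linf$, so $\Omega_{x,\mu}\in Q^l(\A(\GG))$ by definition of the latter; the general case follows by linearity, density and norm-closedness.

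The step I expect to be the main obstacle is the first one: one must make sure that $\Theta^l(b)$ genuinely preserves the \cst-algebra $\CZGD$ --- so that $(\Theta^l(a)\otimes\id)x$ actually lies in $\CZGD\otimes\K(\msf H)$, where the possibly non-normal $\mu$ may legitimately be evaluated --- and that the whole reduction leans only on norm-density of simple tensors and norm-closedness of $Q^l(\A(\GG))$, never on weak$^*$-density of normal functionals. Everything after that is routine slice- and module-map bookkeeping.
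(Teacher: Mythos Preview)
Your proposal is correct and follows essentially the same route as the paper's proof: both reduce by linearity and norm-density to $x=\bigl((\omega\otimes\id)(\ww^{\GG *})\bigr)\otimes k$, use the identity $\Theta^l(a)\bigl((\omega\otimes\id)(\ww^{\GG *})\bigr)=\bigl((\omega a)\otimes\id\bigr)(\ww^{\GG *})$, slice the second leg of $\ww^{\GG *}$ against the functional induced by $\mu$ and $k$ to obtain an element of $\Linf$, and then recognise $\Omega_{x,\mu}$ as the image of a concrete element of $\Lj$. Your write-up is in fact a bit more explicit than the paper's about why $\Theta^l(a)$ preserves $\CZGD$ and why the density argument passes through $Q^l(\A(\GG))$, but the underlying argument is the same.
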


\begin{proof}
By continuity and linearity, it suffices to prove the result for $x=y\otimes\theta$ with $y\in \mrm{C}_0(\whG), \theta\in\K(\msf H)$. Let $\mu' \in \mrm{C}_0(\whG)^*$ 
be given by $\langle \mu', b \rangle = \langle \mu, b\otimes\theta \rangle$ for $b\in\mrm{C}_0(\whG)$, so that
\[ \Omega_{x,\mu}(a) = \langle \mu, \Theta^l(a)(y) \otimes \theta \rangle
= \langle \mu', \Theta^l(a)(y) \rangle. \]
By a further continuity argument, we may suppose that $y = (\omega\otimes\id)(\ww^*)$ for some $\omega\in \LL^1(\GG)$.  As $(a\otimes\I)\ww^* = (\id\otimes\Theta^l(a))(\ww^*)$, it follows that $\Theta^l(a)(y) = (\omega\otimes\id)((a\otimes\I)\ww^*)
= (\omega a\otimes\id)(\ww^*)$ for all $a\in\M^l_{cb}(\A(\GG))$. Consider $z = (\id\otimes\mu')(\ww^*)\in \LL^\infty(\GG)$, that is, $\langle z, \omega' \rangle = \langle \mu', (\omega'\otimes\id)(\ww^*) \rangle$ for any $\omega'\in\LL^1(\GG)$. Then we have
\begin{align*}
\Omega_{x,\mu}(a)
&= \langle \mu', (\omega a\otimes\id)(\ww^*) \rangle
= \langle z, \omega a \rangle
= \langle a, z\omega \rangle.
\end{align*}
Thus $\Omega_{x,\mu}$ agrees with the action of $z\omega\in \LL^1(\GG)$, considered as an element of $Q^l(\A(\GG))$.  It follows that $\Omega_{x,\mu}\in Q^l(\A(\GG))$, as required.
\end{proof}

The next result should be compared with \cite[Theorem~4.4]{DKV_ApproxLCQG} and \cite[Theorem~1.9(c)]{HaagerupKraus}.

\begin{lemma}\label{lemma4}
Let $\GG$ be a locally compact quantum group and let $X \subseteq \M^l_{cb}(\A(\GG))$ be a convex subset. If there is a net $(a_i)_{i\in I}$ in $X$ with $a_i \xrightarrow[i\in I]{} \I$ weak$^*$, then there is a net $(b_j)_{j\in J}$ in $X$ with $(\Theta^l(b_j)\otimes\id)(x) \xrightarrow[j\in J]{} x$ in norm, for each $x\in\mrm{C}_0(\whG)\otimes\K(\msf H)$ and any Hilbert space $\msf H$.
\end{lemma}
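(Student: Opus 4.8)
The plan is to use a standard convexity/weak-topology argument, converting weak$^*$-convergence in $\M^l_{cb}(\A(\GG))$ into a stronger topology by passing to convex combinations, in the spirit of \cite[Theorem~4.4]{DKV_ApproxLCQG} and \cite[Theorem~1.9(c)]{HaagerupKraus}. First I would fix a Hilbert space $\msf H$ and consider the locally convex topology $\mc T$ on $\M^l_{cb}(\A(\GG))$ generated by the seminorms $a\mapsto \|(\Theta^l(a)\otimes\id)(x) - (\Theta^l(\I)\otimes\id)(x)\|$ for $x\in\mrm{C}_0(\whG)\otimes\K(\msf H)$; more precisely, I want to show that $\I$ lies in the $\mc T$-closure of $X$. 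Since $X$ is convex and this topology is compatible with a duality, the $\mc T$-closure of $X$ equals its weak closure in that duality; so it suffices to check that the relevant dual functionals, when they annihilate $X-\I$ for the weak$^*$ topology, also annihilate $X-\I$ for $\mc T$. The precise mechanism: if $b_j\to\I$ in $\mc T$ fails for every net $(b_j)$ in $X$, then $\I$ is not in the $\mc T$-closure of $X$, hence (Hahn--Banach, convexity) there is a continuous linear functional $\Phi$ for the topology $\mc T$ separating $\I$ from $\overline{X}^{\mc T}$.

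Next I would identify the $\mc T$-continuous linear functionals on $\M^l_{cb}(\A(\GG))$. A $\mc T$-continuous functional is dominated by a finite sum of the defining seminorms, hence is of the form $a\mapsto \sum_{n=1}^N \langle \mu_n, (\Theta^l(a)\otimes\id)(x_n)\rangle$ for suitable $x_n\in\mrm{C}_0(\whG)\otimes\K(\msf H)$ and $\mu_n\in (\mrm{C}_0(\whG)\otimes\K(\msf H))^*$ — modulo a constant coming from the base point $\I$. This is exactly the shape of the functionals $\Omega_{x,\mu}$ from Lemma~\ref{lemma3}, and the key input is that Lemma~\ref{lemma3} tells us each $\Omega_{x,\mu}$ actually lies in the predual $Q^l(\A(\GG))$, i.e.\ is weak$^*$-continuous on $\M^l_{cb}(\A(\GG))$. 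Therefore every $\mc T$-continuous functional is weak$^*$-continuous. Consequently, if $\Phi$ separates $\I$ from $X$ in the topology $\mc T$, then $\Phi$ is weak$^*$-continuous and still separates $\I$ from $X$ weak$^*$; but $a_i\to\I$ weak$^*$ with $a_i\in X$ forces $\Phi(a_i)\to\Phi(\I)$, so $\Phi(\I)$ lies in the weak$^*$-closure of $\Phi(X)$, contradicting separation. Hence $\I\in\overline{X}^{\mc T}$, which is precisely the existence of the desired net $(b_j)_{j\in J}$ in $X$ with $(\Theta^l(b_j)\otimes\id)(x)\to x$ for all $x\in\mrm{C}_0(\whG)\otimes\K(\msf H)$.

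One technical point I would handle carefully: the statement asks for a \emph{single} net $(b_j)$ that works simultaneously for \emph{all} $x\in\mrm{C}_0(\whG)\otimes\K(\msf H)$ and, reading the statement generously, for all Hilbert spaces $\msf H$. For a fixed $\msf H$ the topology $\mc T$ already builds in all $x$, so the argument above yields a single net for that $\msf H$. To get uniformity over $\msf H$ as well, I would note that it suffices to treat $\msf H = \ell^2(\NN)$ (or more precisely, index by finite-dimensional $\msf H$ and pass to a net indexed by pairs), since $\K(\msf H)$ for separable infinite-dimensional $\msf H$ contains isometric copies of all $\M_n$, and a net working for $\ell^2(\NN)$ works for every finite-dimensional $\msf H$; alternatively one runs the Hahn--Banach argument in the product topology over a cofinal family of Hilbert spaces. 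I expect the main obstacle to be purely bookkeeping — making sure the constant term from the base point $\I$ is handled (replace $X$ by $X-\I$ and work with the affine hull), and checking that the $\mc T$-continuous dual is exactly $\operatorname{span}\{\Omega_{x,\mu}\}$ rather than something larger, which is where Lemma~\ref{lemma3} is doing the real work. The analytic content is entirely contained in Lemma~\ref{lemma3}; the rest is a clean application of the bipolar theorem / Hahn--Banach separation, exactly parallel to the proof of \cite[Theorem~4.4]{DKV_ApproxLCQG}.
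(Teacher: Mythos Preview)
Your proposal is correct and is essentially the same argument as the paper's, just organised dually: the paper first uses Lemma~\ref{lemma3} to obtain weak convergence $(\Theta^l(a_i)\otimes\id)(x)\to x$ in the target \cst-algebra, then applies Mazur's theorem to finite tuples in the product to upgrade to norm convergence, whereas you run the equivalent Hahn--Banach separation in the source space $\M^l_{cb}(\A(\GG))$ after identifying the $\mc T$-continuous functionals with elements of $Q^l(\A(\GG))$ via Lemma~\ref{lemma3}. One cosmetic point: your seminorms should be $a\mapsto\|(\Theta^l(a)\otimes\id)(x)\|$ (a seminorm in $a$), with $\mc T$-convergence $b_j\to\I$ then meaning exactly $\|(\Theta^l(b_j)\otimes\id)(x)-x\|\to 0$; and your worry about uniformity in $\msf H$ is handled, as in the paper, simply by building the net indexed over finite subsets of $\mrm{C}_0(\whG)\otimes\K(\msf H)$ for a fixed (say separable infinite-dimensional) $\msf H$.
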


\begin{proof}
By assumption and Lemma~\ref{lemma3}, for any $x\in \mrm{C}_0(\wh\GG)\otimes\K(\msf H)$ and $\mu\in (\mrm{C}_0(\wh\GG)\otimes\K(\msf H))^*$ we have that
\[ \lim_{i\in I} \langle \mu, (\Theta^l(a_i)\otimes\id)(x) \rangle
= \lim_{i\in I} \langle a_i, \Omega_{x,\mu} \rangle
= \langle \I, \Omega_{x,\mu} \rangle
= \langle \mu, x \rangle. \]
Hence $(\Theta^l(a_i)\otimes\id)(x) \xrightarrow[i\in I]{} x$ weakly for each $x$. Consequently, for any $n\in\NN$ and $x_1,\dots,x_n \in \mrm{C}_0(\whG)\otimes\K(\msf H)$, the weak closure  of the convex set $\bigl\{ \bigl((\Theta^l(a)\otimes\id)(x_k)\bigr)_{k=1}^{n} \,\big.|\, a\in X \bigr\}$, in the $n$-fold product of $\mrm{C}_0(\wh\GG)\otimes\K(\msf H)$ with itself, contains $(x_1,\dots,x_n)$. By Hahn-Banach, the same is true for the norm closure. In a standard way, we may now construct a (possibly) new net $(b_j)_{j\in J}$ in $X$ so that $(\Theta^l(b_j)\otimes\id)(x) \xrightarrow[j\in J]{} x$ in norm for each $x \in \mrm{C}_0(\wh\GG)\otimes\K(\msf H)$.
\end{proof}

\begin{proposition}\label{thm:approx_fs_fourier_central}
Let $\bbGamma$ be a discrete quantum group with the central approximation property.  Given $b\in \mc{Z} \B_r(\bbGamma)$, there is a net $(b_i)_{i\in I}$ in $\mc{Z} \mrm{c}_{00}(\bbGamma)\subseteq \mc{Z} \B_r(\GGamma)$ with $b_i\xrightarrow[i\in I]{} b$ weak$^*$.

Suppose further that $\bbGamma$ is centrally weakly amenable.  Then we can choose the net $(b_i)_{i\in I}$ to be bounded; more precisely, we can choose the net 
so that $\|b_i\| \leq \mc{Z}\Lambda_{cb}(\bbGamma) \|b\|$ for all $i\in I$.
\end{proposition}

\begin{proof}
Consider the convex set $\mc{Z} \mrm{c}_{00}(\bbGamma) \subseteq \M_{cb}^l(\A(\bbGamma))$. By assumption, this set contains a net converging weak$^*$ to $\I$, and so by Lemma~\ref{lemma4} there is a net $(a_i)_{i\in I}$ in $\mc{Z}\mrm{c}_{00}(\bbGamma)$ with $\Theta^l(a_i)(x) \xrightarrow[i\in I]{} x$ in norm for each $x\in \mrm{C}(\wh\bbGamma)$. For every $i\in I$, there is $\wh\omega_i\in\LL^1(\wh\bbGamma)$ with $a_i = \wh\lambda(\wh\omega_i)$, and so $\Theta^l(a_i)(x) = (\wh\omega_i\otimes\id)\Delta_{\wh\GGamma}(x)$ for $x\in \mrm{C}(\wh\bbGamma)$. It follows that $\wh\omega_i \star \mu \xrightarrow[i\in I]{} \mu$ weak$^*$ for each $\mu\in\mrm{C}(\wh\bbGamma)^*$. As $\LL^1(\wh\bbGamma)$ is an ideal in $\mrm{C}(\wh\bbGamma)^*$, given $b = \wh\lambda(\mu)\in \mc{Z}\B_r(\bbGamma)$, we obtain an approximating net in $\mc{Z} \mrm{c}_{00}(\bbGamma)\subseteq \mc{Z} \B_r(\GGamma)$ by setting $b_i = \wh\lambda(\wh\omega_ i\star \mu) = a_i b$, which converges weak$^*$ to $b$.

We now consider the case when $\bbGamma$ is centrally weakly amenable.  From the previous paragraph, it follows that
\[ \wh\omega_i \star \mu = \mu \circ \Theta^l(a_i)
\quad \implies \quad
\|b_i\|_{\A(\GG)} = \|a_ib\|_{\A(\GG)} =
\|\wh\omega_i \star \mu\| \leq \|\mu\| \|a_i\|_{cb}. \]
Hence we can work with the convex set $\{ a\in\mc Z\mrm{c}_{00}(\GGamma) \,|\, \|a\|_{cb} \leq  \mc{Z}\Lambda_{cb}(\bbGamma) \}$, and hence suppose that $\|a_i\|_{cb} \leq \mc Z\Lambda_{cb}(\bbGamma)$ for each $i$. Setting $b_i = a_i b$ as before then yields a net $(b_i)_{i\in I}$ with the desired properties.
\end{proof}

\begin{theorem}\label{prop:centralAP_density}
Let $\bbGamma$ be a discrete quantum group with the central approximation property. Then $(\mc Z\LL^1(\wh\bbGamma))^\perp = \ker\mf Q$ and $\mc Z\mrm{c}_{00}(\bbGamma)$ is dense in $\mc Z\A(\bbGamma)$. Furthermore, ${}^\perp (\mc Z(\mrm{C}(\wh\bbGamma)^*)) = \ker\mf Q_0$ and $\mc Z\mrm{c}_{00}(\bbGamma)$ is weak$^*$-dense in $\mc Z\B_r(\bbGamma)$.
\end{theorem}

\begin{proof}
We claim that there is a net $(a_i)_{i\in I}$ in $\mc Z\mrm{c}_{00}(\bbGamma)$ with $\Theta^l(a_i)(x) \xrightarrow[i\in I]{} x$ weak$^*$ for each $x\in\LL^\infty(\wh\bbGamma)$.  This can be shown by adapting the proof of \cite[Theorem~4.4]{DKV_ApproxLCQG}, which shows that when $(b_i)_{i\in I}$ is a net converging weak$^*$ to $\I$ in $\M_{cb}^l(\A(\bbGamma))$, and $f\in\ell^1(\bbGamma)$, then the net given by $a_i = b_i\star f$ will satisfy the property we need.  As $\bbGamma$ is discrete, we may take $f=\eps$ the counit, and then if $b_i$ is central also 
$a_i$ will be central.  For each $i$ let $\omega_i\in\mc Z\LL^1(\wh\bbGamma)$ with $\wh\lambda(\omega_i) = a_i$, so that in particular $\omega_i$ has finite support.

Let $x\in\ker\mf Q$, and $\omega\in\mc Z\LL^1(\wh\bbGamma)$.  Then $\omega_i \star \omega$ also has finite support, as $\wh\lambda(\omega_i \star \omega) = a_i \wh\lambda(\omega) \in \mc Z\mrm{c}_{00}(\bbGamma)$.  For each $i$, by \eqref{eq:preadjoint_Q:2} it follows that $\omega_i \star \omega = \mf Q_*(\nu_i)$ for some $\nu_i \in \mc Z\LL^1(\wh\bbGamma)$ with finite support.  Thus
\[ \omega(x) = \lim_{i\in I} \la \Theta^l(a_i)(x), \omega \ra
= \lim_{i\in I} \la x \star \omega_i, \omega \ra
= \lim_{i\in I} \la x, \mf Q_*(\nu_i) \ra = \lim_{i\in I} \nu_i(\mf Q(x)) = 0. \]
As $\omega\in\mc Z\LL^1(\wh\bbGamma)$ was arbitrary it follows that $x\in (\mc Z\LL^1(\wh\bbGamma))^\perp$. We thus have $\ker\mf Q \subseteq (\mc Z\LL^1(\wh\bbGamma))^\perp$, and 
hence $\ker\mf Q = (\mc Z\LL^1(\wh\bbGamma))^\perp$ by Corollary~\ref{corr:links_centreL1_Q}. Therefore Proposition~\ref{prop:kerQ_to_centrec00} yields the first claim.

Now let $\mu\in \mc Z(\mrm{C}(\wh\bbGamma)^*)$. Taking $X=\mc{Z}\mrm{c}_{00}(\bbGamma)$ in Lemma \ref{lemma4} we can find a net $(a_i)_{i\in I}$ in $\mc{Z}\mrm{c}_{00}(\bbGamma)$ such that $\Theta^l(a_i)(x)\xrightarrow[i\in I]{} x$ in norm for all $x\in \mrm{C}(\wh\bbGamma)$. We can write $a_i=\wh\lambda(\omega_i)$ with $\omega_i\in \mc{Z}\LL^1(\wh\bbGamma)$ finitely supported. Then $\omega_i\star\mu$ is also finitely supported in $\mc{Z}\LL^1(\wh\bbGamma)$. Consequently, for $x\in \ker\mf Q_0 \subseteq \ker \mf Q$ we get $\mu(x)=0$ by an analogous argument as above. We thus have $\ker\mf Q_0 \subseteq {}^\perp (\mc Z(\mrm{C}(\wh\bbGamma)^*))$, and the second claim now follows again from Corollary~\ref{corr:links_centreL1_Q} and Proposition~\ref{prop:kerQ_to_centrec00}.
\end{proof}

\begin{remark}
It is shown in \cite[Equation~(3.2)]{AlaghmandanCrann} that when $\bbGamma$ has the central ACPAP, see \cite[Definition 3]{DFY_CCAP}, then $\mc{Z} \LL^1(\wh\bbGamma)$ is the closed linear span of functionals of the form $h(\cdot \chi^q_{\alpha})$.  Recall that we can express the quantum character as $\chi_\alpha^q=\wh{\sigma}_{-i/2}(\chi_\alpha)$, so in particular, 
$\chi_{\ov\alpha}^{ q *} = \wh{\sigma}_{-i/2}(\chi_{\ov\alpha})^*
= \wh{\sigma}_{i/2}(\chi_{\ov\alpha}^*) = \wh{\sigma}_{i/2}(\chi_{\alpha})$.  Thus for $x\in\Pol(\wh\bbGamma)$ we see that
\[ h(x \chi^q_{\alpha})
= h(x \wh{\sigma}_{-i/2}(\chi_{\alpha}))
= h(x \wh{\sigma}_{-i}(\wh{\sigma}_{i/2}(\chi_{\alpha})) )
= h(x \wh{\sigma}_{-i}(\chi_{\ov\alpha}^{ q *}) )
= h(\chi_{\ov\alpha}^{q *} x). \]
Hence $h(\cdot \chi_{\alpha}^{q})=h(\chi_{\ov\alpha}^{q *}\cdot)$, and so the linear span of the functionals $h(\cdot \chi_{\alpha}^{q})$ agrees with the finitely-supported functionals in $\mc{Z} \LL^1(\wh\bbGamma)$, compare the discussion before Lemma~\ref{lem:preadjoint_Q}. Thus Theorem~\ref{prop:centralAP_density} improves this result from \cite{AlaghmandanCrann} by showing that it holds whenever $\bbGamma$ merely has the central AP.
\end{remark}

With reference to Theorem~\ref{thm:main_fourierinv}, it would be interesting to know if there is any relation between $\iota$ being an isomorphism, and $\iota^r$ being an isomorphism.  In general, we do not know of any such relation, though in special cases we can say something, as follows.

\begin{proposition}\label{prop:ws_dense_im}
Suppose there exists $K>0$ such that, for each $\mu\in \mc Z(\mrm{C}(\wh\bbGamma)^*)$, there is a net $(\omega_i)_{i\in I}$ in $\mc Z\LL^1(\wh\bbGamma)$ converging weak$^*$ to $\mu$ and with $\|\omega_i\| \leq K\|\mu\|$ for each $i\in I$.  If $\iota$ is an isomorphism, then also $\iota^r$ is an isomorphism.
\end{proposition}

Note that by Proposition \ref{thm:approx_fs_fourier_central} the assumption in Proposition \ref{prop:ws_dense_im} holds whenever $\bbGamma$ is centrally weakly amenable.

\begin{proof}
Recall that we can identify the dual space of $\mrm{C}(\wh\bbGamma)/{}^{\perp}(\mc{Z}(\mrm{C}(\wh\bbGamma)^*))$ with $\mc{Z}(\mrm{C}(\wh\bbGamma)^*)$. Consequently, the hypothesis implies that
\[ \sup\big\{ |\omega(\wh a)| \,|\, \omega\in \mc Z\LL^1(\wh\bbGamma), \|\omega\|\leq 1\big\}
\geq K^{-1} \big\| \wh a + {}^\perp(\mc{Z}(\mrm{C}(\wh\bbGamma)^*)) \big\|_{ \mrm{C}(\wh\bbGamma) / {}^\perp(\mc{Z}(\mrm{C}(\wh\bbGamma)^*)) } \]
for each $\wh a \in \mrm{C}(\wh\bbGamma)$.  In turn, this shows that the natural map
\[ \mrm{C}(\wh\bbGamma) / {}^\perp(\mc{Z}(\mrm{C}(\wh\bbGamma)^*)) \to \LL^\infty(\wh\bbGamma) / (\mc Z\LL^1(\wh\bbGamma))^\perp=(\mc Z\LL^1(\wh\bbGamma))^* \]
is bounded below.  We have the commutative diagram
\[ \begin{CD}
  \mrm{C}(\wh\bbGamma) / {}^\perp(\mc{Z}(\mrm{C}(\wh\bbGamma)^*))
  @>{\iota^r_*}>>
  \Xi_0(\mrm{C}_0(\wh{D(\bbGamma)})) \\
  @VVV @VVV \\
  \LL^\infty(\wh\bbGamma) / (\mc Z\LL^1(\wh\bbGamma))^\perp
  @>{\iota^*}>>
  \Xi_\infty(\LL^\infty(\wh{D(\bbGamma)}))
\end{CD} \]
As just remarked, the left-hand vertical map is bounded below, while by construction (compare Proposition~\ref{prop:xidd}) the right-hand vertical map is isometric.  It follows that if $\iota^*$ is bounded below, then also $\iota^r_*$ is bounded below.  As $\iota_*^r$ has norm-dense image by Proposition~\ref{prop:iotaontorange}, we conclude that $\iota^r$ is an isomorphism as claimed.
\end{proof}

\subsection{Counter-examples}

We now show that there are non-unimodular discrete quantum groups for which $\A(\wh\bbGamma \backslash D(\bbGamma)/\wh\bbGamma)\subsetneq \mc{Z}\A(\bbGamma)\otimes \I$. 
This is the case in particular for $\bbGamma=\wh{\SU_q(2)}$, see Theorem \ref{thm1} below. 

Let us begin by estimating the norms of characters in certain quotient spaces. For more on the link between strong amenability and spectral properties of characters, in particular the Kesten amenability criterion, we refer to \cite[Section~3.4]{Brannan}.

\begin{lemma}\label{lemma2}
Let $\bbGamma$ be a discrete quantum group and let $\alpha\in\Irr(\wh\bbGamma)$.
\begin{enumerate}
\item[1)] The inequalities
\[
\|\chi_\alpha + (\mc{Z}\LL^1(\wh\bbGamma))^{\perp}\|_{\LL^{\infty}(\wh\bbGamma)/ (\mc{Z}\LL^1(\wh\bbGamma))^{\perp}}\le \|\chi_\alpha + {}^{\perp}(\mc{Z}(\mrm{C}(\wh\bbGamma)^*))\|_{\mrm{C}(\wh\bbGamma)/ {}^{\perp}(\mc{Z}(\mrm{C}(\wh\bbGamma)^*))}\le \|\chi_\alpha\|\le \dim(\alpha)
\]
hold.
\item[2)] If $\bbGamma$ is strongly amenable, then $\|\chi_\alpha + {}^{\perp}(\mc{Z}(\mrm{C}(\wh\bbGamma)^*))\|_{\mrm{C}(\wh\bbGamma)/ {}^{\perp}(\mc{Z}(\mrm{C}(\wh\bbGamma)^*))}=\dim(\alpha)$. 
\item[3)] If $\bbGamma$ is strongly amenable and centrally  weakly amenable, then
\[
\tfrac{\dim(\alpha)}{\mc{Z} \Lambda_{cb}(\bbGamma)}\le \|\chi_\alpha + (\mc{Z}\LL^1(\wh\bbGamma))^{\perp}\|_{\LL^{\infty}(\wh\bbGamma)/ (\mc{Z}\LL^1(\wh\bbGamma))^{\perp}}.
\]
\end{enumerate}
\end{lemma}

\begin{proof}
$ 1) $ Since $\mc{Z}\LL^1(\wh\bbGamma)\subseteq \mc{Z}( \mrm{C}(\wh\bbGamma)^*)$, we have ${}^{\perp}(\mc{Z}(\mrm{C}(\wh\bbGamma)^*))\subseteq (\mc{Z}\LL^1(\wh\bbGamma))^{\perp}$ and the first point easily follows. 

$ 2) $ Assume that $\bbGamma$ is strongly amenable. Then the counit defines an element $\eps\in \mrm{C}(\wh\bbGamma)^*$ of norm $1$. Observe that $\eps\in \mc{Z} (\mrm{C}(\wh\bbGamma)^*)$, thus for $x\in {}^{\perp} (\mc{Z}(\mrm{C}(\wh\bbGamma)^*))\subseteq \mrm{C}(\wh\bbGamma)$ we have $\eps(x)=0$. Consequently we can define a functional $\tilde{\eps}$ of norm $1$ by 
\[
\tilde{\eps}\colon \mrm{C}(\wh\bbGamma)/ {}^{\perp}(\mc{Z}(\mrm{C}(\wh\bbGamma)^*))\ni y + 
 {}^{\perp}(\mc{Z}(\mrm{C}(\wh\bbGamma)^*))
\mapsto \eps(y)\in \CC.
\]
Hence
\[
\|\chi_\alpha + {}^{\perp}(\mc{Z}(\mrm{C}(\wh\bbGamma)^*))\|_{\mrm{C}(\wh\bbGamma)/ {}^{\perp}(\mc{Z}(\mrm{C}(\wh\bbGamma)^*))}\ge 
\bigl|\tilde{\eps}\bigl(
\chi_\alpha + {}^{\perp}(\mc{Z}(\mrm{C}(\wh\bbGamma)^*)) \bigr)\bigr|=\dim(\alpha),
\]
and in conjunction with part $1)$ this completes the proof of $2)$. 

$3) $ Assume in addition that $\bbGamma$ is centrally weakly amenable, so that there is a net $(\omega_\lambda)_{\lambda\in\Lambda}$ in $\mc Z\LL^1(\wh\bbGamma)$ such that with $a_\lambda = \wh\lambda(\omega_\lambda)$ we have $\|\Theta^l(a_\lambda)\|_{cb} \leq \mc{Z} \Lambda_{cb}(\bbGamma)$ for each $\lambda$, and $a_\lambda \xrightarrow[\lambda\in \Lambda]{} \I$ pointwise. Define $\mu\in\ell^1(\bbGamma)$ by $\mu(e^\beta_{i,j}) = \delta_{\alpha,\beta} \delta_{i,j}$.  By \eqref{eq:wcmpt} we find $(\mu\otimes\id)(\ww^*) = (\id\otimes\mu)(\wh\ww) = \chi_\alpha$, and so
\[ \omega_\lambda(\chi_\alpha) = (\mu\otimes\omega_\lambda)(\ww^*) = \mu(a_\lambda) \xrightarrow[\lambda\in\Lambda]{} \mu(\I) = \dim(\alpha). \]
As $\Theta^l(a_\lambda)(x) = (\omega_\lambda\otimes\id)\wh\Delta(x)$ for $x\in \LL^\infty(\wh\bbGamma)$ we see that $\omega_\lambda = \omega_\lambda \star \eps = \eps \circ \Theta^l(a_\lambda)$ as a functional on $\mrm{C}(\wh\bbGamma)$, so in particular $\|\omega_\lambda\| \leq \mc{Z} \Lambda_{cb}(\bbGamma)$.  In fact, as $\bbGamma$ is strongly amenable, it follows from \cite[Proposition~3.1]{CBMultipliers} that $\|\omega_\lambda\| = \|\Theta^l(a_\lambda)\|_{cb}$.  Since $\omega_\lambda$ is central we can view it as a functional on $(\mc Z\LL^1(\wh\bbGamma))^* = \LL^{\infty}(\wh\bbGamma)/(\mc{Z}\LL^1(\wh\bbGamma))^{\perp}$ with norm $\|\omega_\lambda\|$, and so
\[ \|\chi_\alpha + (\mc{Z}\LL^1(\wh\bbGamma))^{\perp}\|_{\LL^{\infty}(\wh\bbGamma)/
(\mc{Z}\LL^1(\wh\bbGamma))^{\perp}}
\geq \limsup_{\lambda\in \Lambda} \la \chi_\alpha, \tfrac{\omega_\lambda}{\|\omega_\lambda\|} \ra
\geq \tfrac{\dim(\alpha)}{\mc{Z}\Lambda_{cb}(\bbGamma)}, \]
which establishes $3)$.
\end{proof}

The next theorem shows that the properties described in Theorem~\ref{thm:main_fourierinv} do not always hold. In particular, since the dual of $\SU_q(2)$ 
is strongly amenable and centrally weakly amenable they do not hold for $\bbGamma=\wh{\SU_q(2)}$ with $q\in \left]-1,1\right[\setminus\{0\}$.

\begin{theorem}\label{thm1}
Let $\bbGamma$ be a discrete quantum group which is strongly amenable and non-unimodular.
\begin{enumerate}
\item[1)] If $\bbGamma$ has central AP then $\B_r(\wh\bbGamma\backslash D(\bbGamma) / \wh\bbGamma )\subsetneq \mc{Z}\B_r(\bbGamma)\otimes \I$.
\item[2)]
If $\bbGamma$ is centrally weakly amenable then $\A(\wh\bbGamma\backslash D(\bbGamma) / \wh\bbGamma)\subsetneq \mc{Z}\A(\bbGamma)\otimes \I$.
\end{enumerate}
\end{theorem}

\begin{proof}
By Corollary~\ref{corr:links_centreL1_Q}, $\iota^r_*$ is a surjection if and only if $\mf{Q}_0$ has norm-closed image. If this is the case, then $\mf Q_0$ drops to an isomorphism $\mrm{C}(\wh\bbGamma) / \ker\mf Q_0 \to \mf Q_0(\mrm{C}(\wh\bbGamma))$ by the Open Mapping Theorem. In particular, there exists $\delta>0$ with
\[ \|\mf Q_0(x) \| \geq \delta \| x + \ker\mf Q_0 \|_{\mrm{C}(\wh\bbGamma) / \ker\mf Q_0} \qquad (x\in \mrm{C}(\wh\bbGamma)). \]
By \eqref{eq:defn_mfQ}, this shows that
\[ \frac{\dim(\alpha)}{\dim_q(\alpha)} \|\chi_\alpha\| \geq \delta \| \chi_\alpha + \ker\mf Q_0 \|_{\mrm{C}(\wh\bbGamma) / \ker\mf Q_0} \qquad (\alpha\in\Irr(\wh\bbGamma)). \]

$1)$ Suppose $\bbGamma$ has central AP, so by Theorem~\ref{prop:centralAP_density}, $\ker\mf Q_0 = {}^\perp (\mc Z(\mrm{C}(\wh\bbGamma)^*))$. Using Lemma~\ref{lemma2} and 
our assumption that $\bbGamma$ is strongly amenable, we hence conclude that
\[ \frac{\dim(\alpha)^2}{\dim_q(\alpha)} 
= \frac{\dim(\alpha)}{\dim_q(\alpha)} \|\chi_\alpha\|
\geq \delta \| \chi_\alpha + \ker\mf Q_0 \| = \delta \dim(\alpha)
\implies
\dim(\alpha) \geq \delta\dim_q(\alpha), \]
for each $\alpha\in\Irr(\wh\bbGamma)$, which means that $\bbGamma$ must be unimodular.
Indeed, given any $\alpha\in\Irr(\wh\bbGamma)$ and $n\geq 1$, let $\alpha^{\stp n}$ have decomposition $\alpha^{\stp n} = \bigoplus_{i\in I} \beta_i$ for some finite index set $I$ and $\beta_i\in\Irr(\wh\bbGamma)$.  As the usual dimension and quantum dimension are additive and multiplicative and respect equivalence,
\[ \dim_q(\alpha)^n = \dim_q(\alpha^{\stp n})
= \sum_{i\in I} \dim_q(\beta_i)
\leq \sum_{i\in I} \delta^{-1} \dim(\beta_i)
= \delta^{-1} \dim(\alpha^{\stp n})
= \delta^{-1} \dim(\alpha)^n. \]
Letting $n\rightarrow\infty$ shows that $\dim_q(\alpha) = \dim(\alpha)$, whence $\bbGamma$ is unimodular.
This is a contradiction to our assumptions, so $\iota^r_*$ cannot be surjective, and hence cannot be an isomorphism. This means that $\iota^r$ cannot be an isomorphism either (see for instance \cite[Chapter~VI, Proposition~1.9]{conway}), 
and hence Theorem~\ref{thm:main_fourierinv} yields the claim. 

$2)$ Suppose now that $\bbGamma$ is centrally weakly amenable. We again proceed by contradiction, assuming that $\iota^*$ is a surjection. Corollary~\ref{corr:links_centreL1_Q} then implies that $\mf Q$ has weak$^*$-closed image, so $\mf Q$ has also norm-closed image (see for example \cite[Chapter~VI, Theorem~1.10]{conway}). Hence there exists $\delta>0$ with 
\[ \frac{\dim(\alpha)}{\dim_q(\alpha)} \|\chi_\alpha\| \geq \delta \| \chi_\alpha + \ker\mf Q \|_{\LL^\infty(\wh\bbGamma) / \ker\mf Q} \qquad (\alpha\in\Irr(\wh\bbGamma)). \]
By Theorem~\ref{prop:centralAP_density} we have $(\mc Z\LL^1(\wh\bbGamma))^\perp = \ker\mf Q$, and so by Lemma~\ref{lemma2} we obtain
\[ \frac{\dim(\alpha)^2}{\dim_q(\alpha)} = 
\frac{\dim(\alpha)}{\dim_q(\alpha)} \|\chi_\alpha\|
\geq \delta \frac{\dim(\alpha)}{\mc Z\Lambda_{cb}(\bbGamma)}
\implies
\dim(\alpha) \geq \frac{\delta}{\mc Z\Lambda_{cb}(\bbGamma)} \dim_q(\alpha). \]
This gives again a contradiction to our assumption that $\bbGamma$ is not unimodular. In the same way as above, it follows that $\iota^*$ cannot be surjective, and hence cannot be an isomorphism. We conclude that $\iota$ cannot be an isomorphism, and then Theorem~\ref{thm:main_fourierinv} completes again the proof. 
\end{proof}

\begin{remark}
As already indicated above, Theorem \ref{thm1} applies in particular to $\bbGamma=\wh{\SU_q(2)}$. One can obtain the conclusion $\A(\wh\bbGamma \backslash D(\bbGamma) / \wh\bbGamma)\subsetneq \mc{Z}\A(\bbGamma)\otimes \I$ also in another way in this case, which we now briefly sketch. By \cite[Remark 31]{DFY_CCAP}, there exists a bounded central functional $\omega\in \mc{Z} (\mrm{C}(\wh\bbGamma)^*)$ which cannot be written as a linear combination of positive functionals in $\mc{Z} (\mrm{C}(\wh\bbGamma)^*)$. 
Based on this one can show $\B_r(\wh\bbGamma \backslash D(\bbGamma) / \wh\bbGamma)\subsetneq \mc{Z}\B_r(\bbGamma)\otimes \I$, and as $\mc{Z}\Lambda_{cb}(\bbGamma)=1$ by \cite[Theorem 24]{DFY_CCAP}, Proposition \ref{prop:ws_dense_im} and Theorem \ref{thm:main_fourierinv} imply $\A(\wh\bbGamma \backslash D(\bbGamma) / \wh\bbGamma)\subsetneq \mc{Z}\A(\bbGamma)\otimes \I$.
\end{remark}

\subsection{Summary}

We conclude with a brief summary of our main results in this section. By Theorem~\ref{thm:main_fourierinv}, we have $\A(\wh\GGamma\backslash D(\GGamma)/\wh\GGamma) = \mc Z\A(\bbGamma)\otimes\I$ if and only if $\iota$ is an isomorphism.  This is equivalent to $\iota^*$ being an isomorphism, a problem which can be split into two subproblems:
\begin{itemize}
\item Injectivity of $\iota^*$. By Corollary~\ref{corr:links_centreL1_Q} this is equivalent to $(\mc Z\LL^1(\wh\bbGamma))^\perp = \ker\mf Q$, which is further equivalent to $\mc Z\mrm{c}_{00}(\bbGamma)$ being dense in $\mc Z\A(\bbGamma)$, see Proposition~\ref{prop:kerQ_to_centrec00}. These conditions hold for unimodular $\bbGamma$, Theorem~\ref{thm:unimod_inv_fourier}, and when $\bbGamma$ has the central AP, Theorem~\ref{prop:centralAP_density}.
\item Surjectivity of $\iota^*$. This is equivalent to $\iota^*$ having weak$^*$-closed image and, by Corollary~\ref{corr:links_centreL1_Q}, also equivalent to $\mf Q$ having weak$^*$-closed image. It follows from Theorem~\ref{thm1} that this property does not always hold.
\end{itemize}
The situation for $\B_r(\wh\GGamma\backslash D(\GGamma)/\wh\GGamma)$ and $\mc Z\B_r(\bbGamma)\otimes\I $ is entirely analogous.  We leave open whether it is always true that $(\mc Z\LL^1(\wh\bbGamma))^\perp = \ker\mf Q$, and whether $\iota^*$ being surjective could be equivalent to $\bbGamma$ being unimodular.

\bibliographystyle{plain}
\bibliography{bibliografia}

\begin{thebibliography}{10}

\bibitem{AlaghmandanCrann}
M.~Alaghmandan and J.~Crann.
\newblock Character density in central subalgebras of compact quantum groups.
\newblock {\em Canad. Math. Bull.}, 60(3):449--461, 2017.

\bibitem{ARANO_DELAAT_WAHL_fourier}
Y.~Arano, T.~de~Laat, and J.~Wahl.
\newblock The {F}ourier algebra of a rigid {$C^*$}-tensor category.
\newblock {\em Publ. Res. Inst. Math. Sci.}, 54(2):393--410, 2018.

\bibitem{Doublecrossed}
S.~Baaj and S.~Vaes.
\newblock Double crossed products of locally compact quantum groups.
\newblock {\em J. Inst. Math. Jussieu}, 4(1):135--173, 2005.

\bibitem{Banica_On}
T.~Banica.
\newblock Th\'{e}orie des repr\'{e}sentations du groupe quantique compact libre
  {${\rm O}(n)$}.
\newblock {\em C. R. Acad. Sci. Paris S\'{e}r. I Math.}, 322(3):241--244, 1996.

\bibitem{BT_Amenability}
E.~B\'{e}dos and L.~Tuset.
\newblock Amenability and co-amenability for locally compact quantum groups.
\newblock {\em Internat. J. Math.}, 14(8):865--884, 2003.

\bibitem{Brannan}
M.~Brannan.
\newblock Approximation properties for locally compact quantum groups.
\newblock In {\em Topological quantum groups}, volume 111 of {\em Banach Center
  Publ.}, pages 185--232. Polish Acad. Sci. Inst. Math., Warsaw, 2017.

\bibitem{BrownOzawa}
N.~P. Brown and N.~Ozawa.
\newblock {\em $C^*$-Algebras and Finite-Dimensional Approximations}.
\newblock Graduate Studies in Mathematics, Volume 88. American Mathematical
  Society, 2008.

\bibitem{CHAPOVSKY_VAINERMAN}
Yu.~A. Chapovsky and L.~I. Vainerman.
\newblock Compact quantum hypergroups.
\newblock {\em J. Operator Theory}, 41(2):261--289, 1999.

\bibitem{conway}
J.~B. Conway.
\newblock {\em A course in functional analysis}, volume~96 of {\em Graduate
  Texts in Mathematics}.
\newblock Springer-Verlag, New York, second edition, 1990.

\bibitem{CrannHereditary}
J.~Crann.
\newblock On hereditary properties of quantum group amenability.
\newblock {\em Proc. Amer. Math. Soc.}, 145(2):627--635, 2017.

\bibitem{DAS_FRANZ_WANG}
B.~Das, U.~Franz, and X.~Wang.
\newblock Invariant {M}arkov semigroups on quantum homogeneous spaces.
\newblock {\em J. Noncommut. Geom.}, 15(2):531--580, 2021.

\bibitem{Daws_mults_si}
M.~Daws.
\newblock Multipliers, self-induced and dual {B}anach algebras.
\newblock {\em Dissertationes Math.}, 470:62, 2010.

\bibitem{DawsCPMults}
M.~Daws.
\newblock Completely positive multipliers of quantum groups.
\newblock {\em Internat. J. Math.}, 23(12):1250132, 23, 2012.

\bibitem{Daws_Bohr}
M.~Daws.
\newblock Remarks on the quantum {B}ohr compactification.
\newblock {\em Illinois J. Math.}, 57(4):1131--1171, 2013.

\bibitem{DFSW_HAP}
M.~Daws, P.~Fima, A.~Skalski, and S.~White.
\newblock The {H}aagerup property for locally compact quantum groups.
\newblock {\em J. Reine Angew. Math.}, 711:189--229, 2016.

\bibitem{DKSS_ClosedSub}
M.~Daws, P.~Kasprzak, A.~Skalski, and P.~M. So{\l}tan.
\newblock Closed quantum subgroups of locally compact quantum groups.
\newblock {\em Adv. Math.}, 231(6):3473--3501, 2012.

\bibitem{DKV_ApproxLCQG}
M.~Daws, J.~Krajczok, and C.~Voigt.
\newblock The approximation property for locally compact quantum groups.
\newblock {\em Adv. Math.}, 438:109452, 79, 2024.

\bibitem{DFY_CCAP}
K.~De~Commer, A.~Freslon, and M.~Yamashita.
\newblock C{CAP} for universal discrete quantum groups.
\newblock {\em Comm. Math. Phys.}, 331(2):677--701, 2014.
\newblock With an appendix by Stefaan Vaes.

\bibitem{dqv_amen_bicrossed}
P.~Desmedt, J.~Quaegebeur, and S.~Vaes.
\newblock Amenability and the bicrossed product construction.
\newblock {\em Illinois J. Math.}, 46(4):1259--1277, 2002.

\bibitem{DixmiervNA}
J.~Dixmier.
\newblock {\em von {N}eumann algebras}, volume~27 of {\em North-Holland
  Mathematical Library}.
\newblock North-Holland Publishing Co., Amsterdam-New York, 1981.

\bibitem{Freslon}
A.~Freslon.
\newblock Examples of weakly amenable discrete quantum groups.
\newblock {\em J. Funct. Anal.}, 265(9):2164--2187, 2013.

\bibitem{RadialMASA}
A.~Freslon and R.~Vergnioux.
\newblock The radial {MASA} in free orthogonal quantum groups.
\newblock {\em J. Funct. Anal.}, 271(10):2776--2807, 2016.

\bibitem{HaagerupKraus}
U.~Haagerup and J.~Kraus.
\newblock Approximation properties for group {$C^*$}-algebras and group von
  {N}eumann algebras.
\newblock {\em Trans. Amer. Math. Soc.}, 344(2):667--699, 1994.

\bibitem{HNR_MultsNewClass}
Z.~Hu, M.~Neufang, and Z.-J. Ruan.
\newblock Multipliers on a new class of {B}anach algebras, locally compact
  quantum groups, and topological centres.
\newblock {\em Proc. Lond. Math. Soc. (3)}, 100(2):429--458, 2010.

\bibitem{CBMultipliers}
Z.~Hu, M.~Neufang, and Z.-J. Ruan.
\newblock Completely bounded multipliers over locally compact quantum groups.
\newblock {\em Proc. Lond. Math. Soc. (3)}, 103(1):1--39, 2011.

\bibitem{JungeNeufangRuan}
M.~Junge, M.~Neufang, and Z.-J. Ruan.
\newblock A representation theorem for locally compact quantum groups.
\newblock {\em Internat. J. Math.}, 20(3):377--400, 2009.

\bibitem{KadisonRingrose}
R.~V. Kadison and J.~R. Ringrose.
\newblock {\em Fundamentals of the theory of operator algebras. {V}ol. {I}},
  volume~15 of {\em Graduate Studies in Mathematics}.
\newblock American Mathematical Society, Providence, RI, 1997.
\newblock Elementary theory, Reprint of the 1983 original.

\bibitem{KalantarKasprzakSkalskiVergnioux}
M.~Kalantar, P.~Kasprzak, A.~Skalski, and R.~Vergnioux.
\newblock Noncommutative {Furstenberg} boundary.
\newblock {\em Anal. PDE}, 15(3):795--842, 2022.

\bibitem{KrajczokModular}
J.~Krajczok.
\newblock Modular properties of type {I} locally compact quantum groups.
\newblock {\em J. Operator Theory}, 87(2):319--354, 2022.

\bibitem{KrajczokSoltanExamples}
J.~Krajczok and P.~M. So{\l}tan.
\newblock Examples of compact quantum groups with
  {{\(\mathsf{L}^\infty(\mathbb{G})\)}} a factor.
\newblock {\em J. Funct. Anal.}, 286(6):57, 2024.
\newblock Id/No 110297.

\bibitem{KW_ClassFuncs}
J.~Krajczok and M.~Wasilewski.
\newblock On the von {N}eumann algebra of class functions on a compact quantum
  group.
\newblock {\em J. Funct. Anal.}, 283(5):Paper No. 109549, 29, 2022.

\bibitem{KrausRuan}
J.~Kraus and Z.-J. Ruan.
\newblock Approximation properties for {K}ac algebras.
\newblock {\em Indiana Univ. Math. J.}, 48(2):469--535, 1999.

\bibitem{KustermansUniversal}
J.~Kustermans.
\newblock Locally compact quantum groups in the universal setting.
\newblock {\em Internat. J. Math.}, 12(3):289--338, 2001.

\bibitem{KV_LCQGs}
J.~Kustermans and S.~Vaes.
\newblock Locally compact quantum groups.
\newblock {\em Ann. Sci. \'{E}cole Norm. Sup. (4)}, 33(6):837--934, 2000.

\bibitem{KVVN}
J.~Kustermans and S.~Vaes.
\newblock Locally compact quantum groups in the von {N}eumann algebraic
  setting.
\newblock {\em Math. Scand.}, 92(1):68--92, 2003.

\bibitem{Lance_HilbMod}
E.~C. Lance.
\newblock {\em Hilbert {$C^*$}-modules}, volume 210 of {\em London Mathematical
  Society Lecture Note Series}.
\newblock Cambridge University Press, Cambridge, 1995.
\newblock A toolkit for operator algebraists.

\bibitem{MRW_Hms_QGs}
R.~Meyer, S.~Roy, and S.~L. Woronowicz.
\newblock Homomorphisms of quantum groups.
\newblock {\em M\"{u}nster J. Math.}, 5:1--24, 2012.

\bibitem{NeshveyevTuset}
S.~Neshveyev and L.~Tuset.
\newblock {\em Compact quantum groups and their representation categories},
  volume~20 of {\em Cours Sp\'{e}cialis\'{e}s [Specialized Courses]}.
\newblock Soci\'{e}t\'{e} Math\'{e}matique de France, Paris, 2013.

\bibitem{neumann_gps_perm_subsets}
B.~H. Neumann.
\newblock Groups covered by permutable subsets.
\newblock {\em J. London Math. Soc.}, 29:236--248, 1954.

\bibitem{Pedersen_Book}
G.~K. Pedersen.
\newblock {\em {$C\sp{\ast} $}-algebras and their automorphism groups},
  volume~14 of {\em London Mathematical Society Monographs}.
\newblock Academic Press, Inc. [Harcourt Brace Jovanovich, Publishers],
  London-New York, 1979.

\bibitem{Pisier}
G.~Pisier.
\newblock {\em Tensor products of {$C^*$}-algebras and operator spaces---the
  {C}onnes-{K}irchberg problem}, volume~96 of {\em London Mathematical Society
  Student Texts}.
\newblock Cambridge University Press, Cambridge, 2020.

\bibitem{PVcstartensor}
S.~Popa and S.~Vaes.
\newblock Representation theory for subfactors, {$\lambda$}-lattices and {$\rm
  C^*$}-tensor categories.
\newblock {\em Comm. Math. Phys.}, 340(3):1239--1280, 2015.

\bibitem{Ruan_AmenHopf}
Z.-J. Ruan.
\newblock Amenability of {H}opf von {N}eumann algebras and {K}ac algebras.
\newblock {\em J. Funct. Anal.}, 139(2):466--499, 1996.

\bibitem{RudinRC}
W.~Rudin.
\newblock {\em Real and complex analysis}.
\newblock McGraw-Hill Book Co., New York, third edition, 1987.

\bibitem{SW_MultUniII}
P.~M. So{\l}tan and S.~L. Woronowicz.
\newblock From multiplicative unitaries to quantum groups. {II}.
\newblock {\em J. Funct. Anal.}, 252(1):42--67, 2007.

\bibitem{TakesakiII}
M.~Takesaki.
\newblock {\em Theory of operator algebras. {II}}, volume 125 of {\em
  Encyclopaedia of Mathematical Sciences}.
\newblock Springer-Verlag, Berlin, 2003.
\newblock Operator Algebras and Non-commutative Geometry, 6.

\bibitem{TimmermannBook}
T.~Timmermann.
\newblock {\em An invitation to quantum groups and duality}.
\newblock EMS Textbooks in Mathematics. European Mathematical Society (EMS),
  Z\"{u}rich, 2008.
\newblock From Hopf algebras to multiplicative unitaries and beyond.

\bibitem{Tomatsuamenablediscrete}
R.~Tomatsu.
\newblock Amenable discrete quantum groups.
\newblock {\em J. Math. Soc. Japan}, 58(4):949--964, 2006.

\bibitem{DaeleAlgebraic}
A.~Van~Daele.
\newblock An algebraic framework for group duality.
\newblock {\em Adv. Math.}, 140(2):323--366, 1998.

\bibitem{DaeleLCQG}
A.~Van~Daele.
\newblock Locally compact quantum groups. {A} von {N}eumann algebra approach.
\newblock {\em SIGMA Symmetry Integrability Geom. Methods Appl.}, 10:Paper 082,
  41, 2014.

\bibitem{UQG}
A.~Van~Daele and S.~Wang.
\newblock Universal quantum groups.
\newblock {\em Internat. J. Math.}, 7(2):255--263, 1996.

\bibitem{VoigtYuncken}
C.~Voigt and R.~Yuncken.
\newblock {\em Complex semisimple quantum groups and representation theory},
  volume 2264 of {\em Lecture Notes in Mathematics}.
\newblock Springer, Cham, [2020] \copyright 2020.

\bibitem{VYplancherel}
C.~Voigt and R.~Yuncken.
\newblock The {P}lancherel formula for complex semisimple quantum groups.
\newblock {\em Ann. Sci. \'{E}c. Norm. Sup\'{e}r. (4)}, 56(1):299--322, 2023.

\bibitem{WangTensor}
S.~Wang.
\newblock Tensor products and crossed products of compact quantum groups.
\newblock {\em Proc. London Math. Soc. (3)}, 71(3):695--720, 1995.

\end{thebibliography}

\end{document}